\let\setminus=\smallsetminus
\renewcommand{\leq}{\leqslant}
\renewcommand{\geq}{\geqslant}
\renewcommand{\ge}{\geq}
\renewcommand{\le}{\leq}
\let\rho=\varrho
\let\phi=\varphi
\newcommand{\id}{\normalfont\text{id}}
\DeclareMathOperator{\Aut}{Aut}
\renewcommand{\subset}{\subseteq}
\renewcommand{\supset}{\supseteq}
\newcommand{ \N } { \mathbb{N} }
\newcommand{ \Z } { \mathbb{Z} }
\def\calCommandfactory#1{%
   \expandafter\def\csname c#1\endcsname{\mathcal{#1}}}
\def\frakCommandfactory#1{%
   \expandafter\def\csname frak#1\endcsname{\mathfrak{#1}}}
\newcounter{ctr}
  \edef\X{\@Alph\c@ctr}
  \edef\Y{\@alph\c@ctr}
 \def\lowfwd #1#2#3{{\mathop{\kern0pt #1}\limits^{\kern#2pt\raise.#3ex
 \vbox to 0pt{\hbox{$\scriptscriptstyle\rightarrow$}\vss}}}}
 \def\lowbkwd #1#2#3{{\mathop{\kern0pt #1}\limits^{\kern#2pt\raise.#3ex
 \vbox to 0pt{\hbox{$\scriptscriptstyle\leftarrow$}\vss}}}}
 \def\fwd #1#2{{\lowfwd{#1}{#2}{15}}}
 \def\vS{{\vec S}_{\aleph_0}}
 \def\vS{{\hskip-1pt{\fwd S3}\hskip-1pt}}
 \def\vM{{\hskip-1pt{\fwd{M}{3.5}}\hskip-1pt}}
 \def\vN{{\hskip-1pt{\fwd{N}{3.5}}\hskip-1pt}}
 \def\vNdash{{\hskip-1pt{\fwd{N'}{3.5}}\hskip-1pt}}
 \def\ve{\kern-1.5pt\lowfwd e{1.5}2\kern-1pt}
 \def\ev{\kern-1pt\lowbkwd e{0.5}2\kern-1pt}
 \def\vf{\kern-2pt\lowfwd f{2.5}2\kern-1pt}
 \def\vs{\lowfwd s{1.5}1}
 \def\sv{\lowbkwd s{0}1}
 \def\vsi{\lowfwd {s_i}11}
 \def\vsiplus{\lowfwd {s_{i+1}}{-7}1}
 \def\vsdash{{\mathop{\kern0pt s\lower.5pt\hbox{${}
     \scriptstyle'$}}\limits^{\kern0pt\raise.02ex
     \vbox to 0pt{\hbox{$\scriptscriptstyle\rightarrow$}\vss}}}}
 \def\svdash{{\mathop{\kern0pt s\lower.5pt\hbox{${}
     \scriptstyle'$}}\limits^{\kern0pt\raise.02ex
     \vbox to 0pt{\hbox{$\scriptscriptstyle\leftarrow$}\vss}}}}
 \def\vr{\lowfwd r{1.5}2}
 \def\rv{\lowbkwd r02}
 \def\vri{\lowfwd {r_i}11}
 \def\rvi{\lowbkwd {r_i}11}
 \def\vrdash{{\mathop{\kern0pt r\lower.5pt\hbox{${}
     \scriptstyle'$}}\limits^{\kern0pt\raise.02ex
     \vbox to 0pt{\hbox{$\scriptscriptstyle\rightarrow$}\vss}}}}
 \def\rvdash{{\mathop{\kern0pt r\lower.5pt\hbox{${}
     \scriptstyle'$}}\limits^{\kern0pt\raise.02ex
     \vbox to 0pt{\hbox{$\scriptscriptstyle\leftarrow$}\vss}}}}
 \def\vSd{{\mathop{\kern0pt S\lower-1pt\hbox{${}
      \scriptstyle'$}}\limits^{\kern2pt\raise.1ex
      \vbox to 0pt{\hbox{$\scriptscriptstyle\rightarrow$}\vss}}}}
\newtheorem{theorem}{Theorem}[section] 
\newtheorem{proposition}[theorem]{Proposition}
\newtheorem{corollary}[theorem]{Corollary}
\newtheorem{lemma}[theorem]{Lemma}
\newtheorem{mainresult}{Theorem}
\newtheorem{reduction}{Reduction}
\newenvironment{customthm}[1]
  {\innercustomthm}
  {\endinnercustomthm}
\theoremstyle{definition}
\newtheorem{example}[theorem]{Example}
\newtheorem{definition}[theorem]{Definition}
\newtheorem{construction}[theorem]{Construction}
\theoremstyle{remark}
\newcommand{\proper}{regular}
\newcommand{\CS}{\cZ}
\newcommand{\CSr}{\cZ_r}
\newcommand{\rh}{r/2}
\newcommand{\rhoh}{\varrho/2}
\newcommand{\act}{\cdot}
\newcommand{\loc}{{G_r}}
\renewcommand{\O}{O}
\newcommand{\minor}{\preccurlyeq}
\newcommand{\ballpres}{-ball-pre\-serv\-ing}
\newcommand{\accessible}{{accessible}}
\newcommand{\deckr}{{\cD_r}}
\newcommand{\normcl}[2]{\langle #1\, \rangle_{#2}^\triangleleft}
\newcommand{\pres}[2]{\langle\, #1 \mid #2 \, \rangle}
\newcommand{\free}[1]{F(#1)}
\DeclareMathOperator{\cay}{Cay}
\newcommand{\cayley}[2]{\cay(#1, #2)}
\newcommand{\can}{canonical}
\newcommand{\dcan}{$\cD$-canon\-ic\-al}
\newcommand{\dpcan}{$\cD(p)$-canon\-ic\-al}
\begin{document}
	\setlength{\fboxsep}{0pt}
	\setlength{\fboxrule}{.1pt}
	\vspace*{-24mm} 
	\title{Canonical Graph Decompositions via Coverings
	\ifArXiv\\[\smallskipamount] \rm\smaller Extended version of~\cite{GraphDec}, ArXiv only\fi}
	
\newif\ifArXiv \ArXivfalse 

	\ifArXiv\vspace*{-9pt}\fi 

\def\?#1{\vadjust{\vbox to 0pt{\vss\vskip-8pt\leftline{%
     \llap{\hbox{\vbox{\pretolerance=-1
     \doublehyphendemerits=0\finalhyphendemerits=0
     \hsize25truemm\tolerance=10000\small
     \lineskip=0pt\lineskiplimit=0pt
     \rightskip=0pt plus25truemm\baselineskip8pt\noindent
     \hskip0pt        
     #1\endgraf}\hskip0.1truemm}}}\vss}}}
	\newcount\commentno
\def\COMMENT#{}%
\def\td{tree-decom\-pos\-ition}
\def\gd{graph-decom\-pos\-ition}
\def\|{\!\!\restriction\!\!}
\def\phiT{\phi}

\newtheorem*{example*}{Example}

	\author{Reinhard Diestel}
	
	\author{Raphael~W.\ Jacobs}
	
	\author{Paul Knappe}
	
	\author{Jan Kurkofka}
	
	\keywords{Graph, local, global, connectivity, covering space, tangle, canonical \td, Cayley graph, finitely generated group, accessible group}
	\@namedef{subjclassname@2020}{\textup{2020} Mathematics Subject Classification}
	\subjclass[2020]{05C83, 57M15, 05C25, 05C40, 05C63, 05C38}
	
	\begin{abstract}
		We present a canonical way to decompose finite graphs into highly connected local parts.
		The decomposition depends only on an integer parameter whose choice sets the intended degree of locality.
		The global structure of the graph, as determined by the relative position of these parts, is described by a coarser \emph{model}.
		This is a simpler graph determined by the decomposition, not imposed.
		
		The model and decomposition are obtained as projections of the tangle-tree structure of a covering of the given graph that reflects its local structure while unfolding its global structure. In this way, the tangle theory from graph minors is brought to bear canonically on arbitrary graphs, which need not be tree-like.
		
		Our theorem extends to locally finite quasi-transitive graphs, and in particular to locally finite Cayley graphs.
		It thereby yields a canonical decomposition of finitely generated groups into local parts, whose relative structure is displayed by a graph.
	\end{abstract}
	
	\maketitle
	
	\vspace{-3mm}

\section{\boldmath Introduction}
	
\noindent
	The question to what extent graph invariants---the chromatic number, say, or connectivity---are of local or global character, and how their local and global aspects interact, drives much of the research in graph theory both structural and extremal~\cite{ScottICM}.
	In this paper we offer such studies a possible formal basis.
	
	We show that every finite graph~$G$ decomposes canonically into local parts which, between them, form a global structure displayed by a simpler graph~$H$.
	Both $H$ and the decomposition $(H, (G_h)_{h\in H})$ of~$G$ into parts indexed by the nodes of~$H$ are unique once we have set an integer parameter~$r>0$ to define our desired threshold between `local' and `global':
	cycles in~$G$ of length~$\le r$ are considered as local, and are reflected inside the parts~$G_h$ of the decomposition, while the cycles of $H$ reflect only global cycles of~$G$, longer cycles that are not generated by the short ones.
	
	\vskip-3pt

	\begin{figure}[ht]
		\center
		\includegraphics[scale=4]{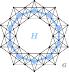}
	\vskip-8pt
		\caption{The global structure of $G$ is displayed by a cycle~$H$. Its local parts are~$K^5$s.}
		\label{fig:CycleDef}
	\end{figure}
	\vskip-5pt

	Our main result reads as follows:
	

	\begin{mainresult} \label{main:KeyTheorem}
		Let $G$ be any finite graph, and $r>0$ an integer.
		Then $G$ has a unique canonical decomposition modelled on another finite graph~$H=H(G,r)$ that displays its $r$-global structure.
	\end{mainresult}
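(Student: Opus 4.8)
The plan is to realise $H$ and the parts $(G_h)_{h\in H}$ as projections, along a suitable covering, of the canonical tangle-tree structure of a graph that is `$r$-locally like $G$' but globally tree-like. We may assume $G$ connected, treating components separately. Let $N\trianglelefteq\pi_1(G)$ be the subgroup generated by the homotopy classes of the cycles of $G$ of length at most~$r$, and let $p\colon\hat G\to G$ be the covering of $G$ corresponding to~$N$, with deck transformation group $\Gamma=\pi_1(G)/N$ acting freely on~$\hat G$ so that $p$ is the quotient map. By the choice of $N$, a closed walk of $G$ lifts to a cycle of $\hat G$ exactly when its class lies in~$N$; in particular every cycle of $G$ of length $\le r$ lifts to a cycle, so $\hat G$ still carries the \emph{local} (short-cycle) structure of $G$ --- indeed $p$ restricts to an isomorphism on balls of bounded radius --- whereas the \emph{global} cycles of $G$, those not generated by the short ones, are torn open, and $\hat G$ becomes tree-like in the sense relevant to tangle theory. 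Since $G$ is finite, $\hat G$ is connected, locally finite, and quasi-transitive under~$\Gamma$. All of this depends only on $G$ and~$r$, and every automorphism of $G$ lifts to an automorphism of $\hat G$ normalising~$\Gamma$; this is the source of canonicity.

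Next I would apply the canonical-tree-decomposition machinery that distinguishes all the (robust) tangles of a graph --- for finite graphs the Carmesin--Diestel--Hundertmark--Stein decomposition, in general its locally finite analogue due to Carmesin --- to the graph~$\hat G$. This produces a \td{} $(T,(\hat G_t)_{t\in T})$ of $\hat G$ of bounded adhesion whose torsos are the highly connected local parts and whose decomposition tree~$T$ records how these fit together. Canonicity of the construction means it is invariant under $\Aut(\hat G)$, hence in particular under $\Gamma$ and under the lift of $\Aut(G)$: thus $\Gamma$ acts on~$T$ and on the family of parts, without inversions and compatibly with~$p$. The key point to establish here is that, because $\hat G$ is locally finite and quasi-transitive under~$\Gamma$ and the \td{} has bounded adhesion, $\Gamma$ has only finitely many orbits on $V(T)$; this is where quasi-transitivity upstairs is converted into finiteness of the model downstairs (and it is precisely the lemma needed for the quasi-transitive extension of the theorem).

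Now set $H:=T/\Gamma$ and, for $h\in H$ represented by $t\in V(T)$, let $G_h:=p(\hat G_t)$. Since $p$ is a local isomorphism, $T$ is connected, and $\Gamma$ acts on $T$ without inversions with finitely many orbits, one checks that $(H,(G_h)_{h\in H})$ is a well-defined \gd{} of $G$ modelled on the finite graph~$H$, and that the parts $G_h$ inherit the high connectivity of the~$\hat G_t$. That $H$ \emph{displays the $r$-global structure} is then verified by lifting: a cycle of $G$ of length $\le r$ lifts to a cycle of $\hat G$ that, by the locality built into the \td{} of $\hat G$, lies inside a single part $\hat G_t$ and hence maps into a single part~$G_h$, whereas a cycle of $H$ lifts to a path of $T$ that is not contracted by the quotient and thereby witnesses a cycle of $G$ not generated by short ones. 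Finally, uniqueness and canonicity are inherited step by step: the covering $p\colon\hat G\to G$ is determined by $G$ and~$r$; the canonical tangle-tree \td{} of $\hat G$ is unique; and both are equivariant under the lift of~$\Aut(G)$, so the projected decomposition $(H,(G_h)_{h\in H})$ is unique and $\Aut(G)$-invariant.

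The main obstacle, I expect, is not any single computation but the interface between these steps: one must choose which tangles of $\hat G$ to distinguish so that the resulting \td{} is at once \emph{coarse enough} to have finitely many $\Gamma$-orbits and \emph{fine enough}, sufficiently `$r$-local', that every short cycle of $G$ really is realised inside a single part, and one must show that the deck action on this canonical \td{} is well-behaved --- free on parts up to~$p$, without inversions on~$T$, with finitely many orbits --- so that passing to the quotient neither loses nor duplicates anything. Making `$r$-local' precise on the covering side and matching it with the bounded-adhesion tangle-tree decomposition of $\hat G$ is the technical heart of the argument.
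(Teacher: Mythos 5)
Your proposal follows the same overall route as the paper: construct the $r$-local cover $\hat G$ from the normal subgroup of $\pi_1(G)$ generated by short cycles, apply canonical tangle-tree machinery to $\hat G$, and project via the deck group $\Gamma$ to get $H = T/\Gamma$ and the parts $G_h = p(\hat G_t)$. The canonicity argument via lifting automorphisms is also the paper's.

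There is, however, a genuine gap at the step you pass over in a single sentence: ``this produces a \td\ $(T,(\hat G_t)_{t\in T})$ of $\hat G$ of bounded adhesion.'' For an \emph{infinite} locally finite graph such as $\hat G$, none of this is automatic, and it is precisely the technical heart of the proof. First, a canonical nested set of separations distinguishing all tangles of $\hat G$ (a tree of tangles) exists in great generality, but there is no a priori bound on the order of its separations; one needs $\hat G$ to be \emph{tangle-accessible}, and this must be proved. Second, even a nested set of bounded-order separations need not induce a tree-decomposition at all when the graph is infinite: the set can have ``limit'' behaviour under the separation order, and one must prove that the nested set is \emph{exhaustive} (every strictly increasing $\omega$-chain of oriented separations in it has empty intersection of right sides) before the machinery of splitting stars produces a decomposition tree. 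Your proposal identifies ``finitely many $\Gamma$-orbits on $V(T)$'' as the key point, but that comes only \emph{after} one has a bounded-adhesion tree-decomposition; the orbit count then follows cheaply from the Thomassen--Woess finiteness of tight bounded-order separations under a quasi-transitive group. The hard direction is the other way.

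Concretely, the paper's proof first shows $\hat G$ is \emph{accessible} in the Thomassen--Woess sense by invoking a theorem of Hamann: a locally finite quasi-transitive graph whose binary cycle space is generated by bounded-length cycles is accessible, and by construction the cycle space of the $r$-local cover is generated by cycles of length at most~$r$. It then bootstraps accessibility to \emph{tangle}-accessibility (the new and nontrivial step), by refining an end-distinguishing tree-decomposition inside its parts, which are shown to be $1$-ended and quasi-transitive under the stabiliser, and applying a chain-termination argument there. Only then does the canonical tree of tangles of $\hat G$ have bounded order and induce an actual tree-decomposition, from which your quotient construction proceeds as you describe. Without this accessibility analysis, the claim that the canonical \td\ of $\hat G$ exists with bounded adhesion is unsupported, and the rest of your argument has nothing to project.
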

	
	\noindent
	See \cref{subsec:Background,subsec:ConstructionH,subsec:MainThmPrecise} below for a summary of the definitions needed here. More details are given in  \cref{def:r-globalstructure} and the text following it.%
	\COMMENT{}

\medbreak
	
	Since $H=H(G,r)$ and the associated decomposition of~$G$ are canonical, they are invariants of~$G$, and we can study how they interact with other graph invariants as $r$ ranges between~$1$ and~$|G|$.
	For example, we may study how the chromatic number, connectivity, or edge density of $G$ is reflected in~$H$, and is in this sense global, or is reflected in the parts~$G_h$ of the decomposition of~$G$, and is in that sense local.

	\cref{main:KeyTheorem} extends to connected Cayley graphs even when these are only locally finite.
	It thus implies a canonical decomposition theorem for finitely generated groups into local parts, whose relative structure is displayed by a graph:
	
	\begin{mainresult} \label{main:CayleyThm}
		Let $\Gamma$ be a group given with a finite set~$S$ of generators.
		For every integer $r>0$, the Cayley graph~$\cayley{\Gamma}{S}$ has a unique canonical decomposition modelled on a graph $H=H(\Gamma,S,r)$ that displays the $r$-global structure of the group~$\Gamma$ as presented by~$S$.
	\end{mainresult}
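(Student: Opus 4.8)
The plan is to obtain this statement from the machinery behind \cref{main:KeyTheorem} by feeding it the Cayley graph and then reading off the $\Gamma$-action for free from canonicity. Write $G:=\cayley{\Gamma}{S}$. If $\Gamma$ is finite there is nothing new, so assume $\Gamma$ is infinite; then, since $S$ generates $\Gamma$, the graph $G$ is connected, since $S$ is finite it is locally finite, and left multiplication embeds $\Gamma$ into $\Aut(G)$ with a single vertex orbit, so $G$ is infinite, locally finite and vertex\nobreakdash-transitive. The first task is to check that the construction producing $H(G,r)$ together with the canonical decomposition of $G$ --- passing to the covering $\tilde G\to G$ that keeps the $\le r$\nobreakdash-cycles of $G$ local while unfolding its longer, global cycles, taking the canonical tangle\nobreakdash-tree structure of $\tilde G$, and projecting it back down along $\tilde G\to G$ --- survives the weakening of ``finite'' to ``connected, locally finite''.

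What makes this feasible is that already for finite $G$ the covering $\tilde G$ is in general an \emph{infinite} (but locally finite) graph, so the tangle theory behind \cref{main:KeyTheorem} is being applied in the locally finite setting from the outset; passing from finite $G$ to locally finite $G$ only makes $\tilde G$ ``more infinite'', not qualitatively different. Concretely I would: (i) construct $\tilde G\to G$ for locally finite $G$ exactly as before and note that $\tilde G$ is connected and locally finite; (ii) invoke the canonical tangle\nobreakdash-tree and nested\nobreakdash-separation\nobreakdash-system theory for locally finite graphs to obtain a canonical, $\Aut(\tilde G)$\nobreakdash-invariant tree\nobreakdash-decomposition of $\tilde G$ distinguishing the relevant tangles, with the same part\nobreakdash-structure and local\nobreakdash-finiteness properties used in the finite proof; (iii) project along $\tilde G\to G$ to define $H:=H(\Gamma,S,r)$ and the decomposition $(H,(G_h)_{h\in H})$ of $G$, verifying that projection still commutes with the operations used in the finite case and that $H$ is again a graph, and locally finite since $G$ is. Steps (i)--(iii) amount to re\nobreakdash-running, with minor adaptations, the arguments already given for \cref{main:KeyTheorem}.

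For the group\nobreakdash-theoretic content --- that $H$ and its decomposition display the $r$\nobreakdash-global structure of $\Gamma$ \emph{as presented by $S$}, i.e.\ form an invariant of the pair $(\Gamma,S)$ carrying a $\Gamma$\nobreakdash-action --- I would argue purely from canonicity. Since the covering $\tilde G\to G$ depends only on $(G,r)$, every $\alpha\in\Aut(G)$ lifts to automorphisms of $\tilde G$; the tangle\nobreakdash-tree structure of $\tilde G$ is canonical, hence invariant under all of $\Aut(\tilde G)$, so its projection $(H,(G_h)_h)$ is invariant under $\Aut(G)$. Restricting to the copy of $\Gamma$ in $\Aut(G)$ given by left translations yields the asserted $\Gamma$\nobreakdash-action on $H$ permuting the parts $G_h$ compatibly; as $\Gamma$ acts transitively on $V(G)$ with finitely many orbits on the separations involved, $H$ and the list of isomorphism types of the $G_h$ are finite data attached to $(\Gamma,S,r)$. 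Uniqueness of $(H,(G_h)_h)$ is inherited verbatim from the uniqueness in \cref{main:KeyTheorem}: the canonical tangle\nobreakdash-distinguishing tree\nobreakdash-decomposition of $\tilde G$ is unique, and hence so is its projection to $G$.

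The main obstacle I expect is step (ii): making the canonical tangle\nobreakdash-tree theory deliver in the locally finite but infinite graph $\tilde G$ everything it delivers for finite graphs --- enough tangles to separate the local parts, a canonical nested separation system that stays genuinely tree\nobreakdash-like (no pathological infinite chains collapsing the decomposition tree) and still yields a locally finite model $H$ after projection, with the $G_h$ remaining the highly connected local pieces one wants. If the development of \cref{main:KeyTheorem} is already phrased for locally finite $\tilde G$ --- as the covering construction essentially forces --- this obstacle is largely already dispatched, and \cref{main:CayleyThm} follows by specialisation together with the canonicity argument above; otherwise one needs the infinite analogues of tangle\nobreakdash-tree duality and of the canonical tree\nobreakdash-decomposition theorem, which are available for tangles of finite order.
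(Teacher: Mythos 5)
Your high-level strategy matches the paper's: both Theorems~\ref{main:KeyTheorem} and~\ref{main:CayleyThm} are there deduced from a common generalisation, \cref{main:KeyTheoremGen}, which asserts the claimed canonical decomposition for every connected, locally finite, \emph{quasi-transitive} graph. A locally finite Cayley graph is transitive and connected, so specialising \cref{main:KeyTheoremGen} gives \cref{main:CayleyThm} immediately, exactly as you envisage.

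The genuine gap is in your step~(ii), which you flag as the main obstacle but then wave away: you suggest that if the finite case is ``already phrased for locally finite $\tilde G$'' the obstacle is ``largely already dispatched''. It is not. The existence of the canonical tree of tangles $N(\loc)$ of \cref{def:N(G)}, and the fact that it induces a genuine tree-decomposition of $\loc$, both hinge on showing that $\loc$ is \emph{tangle-accessible} (\cref{thm:Entanglements}), and this is not automatic even when $G$ is finite. The paper establishes it only after (a) proving that $\loc$ is \emph{accessible}, by showing its cycle space is generated by its short cycles (\cref{thm:integralCycleSpace}) and applying Hamann's accessibility criterion (\cref{thm:HamannAccessible}) --- or, in the Cayley case specifically, via the finitely presented group $\Gamma_r$ with $\loc=\cayley{\Gamma_r}{S}$ from \cref{thm:localcoveringofcayleyiscayley} together with Dunwoody's theorem that finitely presented groups are accessible (\cref{cor:LocCovCayleyAccessible}) --- and then (b), most substantially, upgrading accessibility to tangle-accessibility through the exhaustiveness argument of \cref{sec:TangleAccessibility}: \cref{thm:NestedSetIsTDGen}, proved via the $1$-ended case \cref{thm:NestedIsTDOneEnded} and a partition of the ambient nested system by the splitting stars of an end-distinguishing sub-tree-decomposition. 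Step (b) is precisely the content of \cref{maincor:TangleAccessibility}; it uses the quasi-transitivity of $\loc$ (\cref{lem:G_rIsQuasitransitive}) in an essential way and is not supplied by any off-the-shelf ``infinite analogue of tangle--tree duality for tangles of finite order''. Your plan therefore has the right shape, but the crux is not an invocation of existing locally finite tangle theory; it is the new accessibility-to-tangle-accessibility transfer, which your proposal does not address.
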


Our results have further group-theoretical implications. We shall see that finitely generated groups~$\Gamma$ extend to%
 	\COMMENT{}
	finitely presented groups~$\Gamma_r$ that describe their $r$-{\em local\/} structure, the aspect of their structure that complements their $r$-global structure given by \cref{main:CayleyThm}. Similarly, we shall see that the $r$-local structure of any finite graph~$G$ that complements its $r$-global structure as described in \cref{main:KeyTheorem} is displayed by a graph~$\loc$ modelled on the Cayley graph of a finitely presented group.%
	\COMMENT{}
	This graph~$\loc$ is typically infinite. It describes the local structure of~$G$ by way of a canonical covering $\loc\to G$. See \cref{sec:LocalCoverings} for details. Just recently, similar graph-decomposition methods have been used by Carmesin, Kontogeorgiou, Kurkofka and Turner to prove a low-order Stallings-type theorem for finite nilpotent groups~\cite{carmesin2024stallings}. More group-theoretical applications of this approach have been obtained by Carmesin~\cite{Localkseps}.

\medbreak
	
	The idea for our construction of $H=H(G,r)$ was inspired by the aims of Carmesin~\cite{Local2seps}.
	It is to apply the theory of tangles from graph minor theory not to $G$ itself, but to the covering space~$\loc$ of~$G$ that reflects its short cycles, those of length~$\le r$, while unfolding in a tree-like way any longer cycles not generated by the short ones.
	This tree-likeness of this graph~$\loc$ enables us to use tangles to structure~$\loc$ as customary in graph minor theory: by canonical tangle-distinguishing \td s.
	Projecting this structure back to~$G$ defines~$H$ and the decom\-po\-si\-tion of~$G$ modelled on~$H$.
	
	Our original proof of \cref{main:KeyTheorem} also used group-theoretic tools in an essential way.
	In order to apply tangle-distinguishing \td s to our covering graph~$\loc$, we needed that $\loc$ resembles a Cayley graph of some finitely presented group that we could decompose using Stallings' theorem~\cite{stallings1972group}, and then use~\cite{ThomassenWoess} to analyse its ends.
	We formalised this resemblance by showing that $\loc$ had a decomposition into bounded-sized finite parts modelled on a Cayley graph of its group of deck transformations over~$G$.
	This group is finitely presented not only when $G$ is finite (\cref{main:KeyTheorem}), but also when it is the Cayley graph of an arbitrary finitely generated group (\cref{main:CayleyThm}).
	We have since been able to replace these group-theoretical tools by purely combinatorial arguments, which is how we present our proof here.
	
	In the remainder of this introduction we indicate just enough background to be able to make the statement of \cref{main:KeyTheorem} precise, and to give a brief sketch of its proof.

	\vskip 1.5 \smallskipamount\subsection{\boldmath Tree-decom\-pos\-itions vs.\ \gd s} \label{subsec:Background}
	
	Early in their seminal work on graph minors, Robertson and Seymour~\cite{GM} introduced \emph{\td s} of graphs as a way to display their global structure when this is tree-like.
	However, not all graphs are globally tree-like.
	For those that are not, \td s are a poor fit to display their global structure.
	
	Yet, conversely, few graphs are totally homogeneous at all levels of local or global focus: most have some clusters that are particularly highly connected.
	These clusters then form a global structure between them, even if this is not tree-like.
	\cref{main:KeyTheorem} makes this global structure visible in the form of a graph $H$ that need not be a tree.
	Indeed, $H$~will change with the graph~$G$ being decomposed and with the locality parameter~$r$ we choose:
	the global structure of~$G$ is \emph{found} by our theorem, not imposed as in the case of \td s.
	
	Let us make this precise.
	Let $H$ be a graph.
	A~\emph{decomposition} of a graph~$G$ \emph{modelled on}~$H$, or an {\em $H$-decom\-pos\-ition\/} of~$G$, is a pair $(H,\cV)$ consisting of $H$ and a family $\cV = (V_h)_{h\in H}$ of sets of vertices of~$G$ that are associated with the nodes $h$ of~$H$ in such a way that
	\begin{itemize}
		\item $\bigcup_{h\in H} G[V_h] = G$;
		\item for every vertex $v\in G$, the set~$W_v$ of nodes~$h$ of~$H$ whose $V_h$ contains~$v$ is connected in~$H$.
	\end{itemize}
	The decompositions of a graph that are modelled on trees are thus precisely its \td s~\cite{DiestelBook25}.

Given a decomposition as above, we may choose spanning subgraphs $G_h \subseteq G[V_h]$ and ${H_v\subseteq H[W_v]}$ so that the~$G_h$ still cover all the edges of~$G$ and the $H_v$ are still connected. We then call these~$G_h$ the {\it parts\/} of our decomposition, and the~$H_v$ its {\it co-parts\/}; see \cref{def:GrDec} for details.
	
	Our decompositions and choice of their parts will be \emph{canonical} in that they commute with graph isomorphisms:
	if $(G_h)_{h\in H}$ and~$(G'_{h'}{)}_{h'\in H'}$ are the families of parts of decompositions of graphs $G$ and~$G'$ that witness%
	\COMMENT{}
	\cref{main:KeyTheorem}, then any graph isomorphism $\sigma\colon G\to G'$ maps the parts~$G_h$ of~$G$ to the parts~$G'_{h'}$ of~$G'$ in such a way that $h\mapsto h'$ is a graph isomorphism~$H\to H'$.
	In particular, the graph~$H$ on which \hbox{\cref{main:KeyTheorem}} models a given graph~$G$ is unique, up to isomorphism, for every choice of~$r$.

\vskip 1.5 \smallskipamount\subsection{\boldmath Construction of \texorpdfstring{\boldmath $H=H(G,r)$}{H=H(G,r)}} \label{subsec:ConstructionH}
	
	The construction of $H=H(G,r)$ is not hard to describe.
	It works as follows.
	
	Given $G$ and~$r$, let $\pi_1^r(G,v_0)$ denote the subgroup of the fundamental group $\pi_1(G,v_0)$ of~$G$, based at a vertex~$v_0$, that is generated by the elements represented by a walk in~$G$ from $v_0$ to a cycle of length at most~$r$, round it, and back along the access path.
	This~$\pi_1^r(G,v_0)$ is a normal subgroup of the fundamental group of~$G$, independent of the choice of basepoint;%
   \COMMENT{}
   let $p_r\colon\loc\to G$ denote the normal covering of~$G$ with $\pi_1^r(G,v_0)$ as characteristic subgroup.
	
	By our choice of~$r$, the cover~$\loc$ reflects all the short cycles of~$G$, as well as those of its longer cycles that are generated in~$\pi_1(G)$ by the short ones.
	The other longer cycles of~$G$ are usually unfolded to 2-way infinite paths, or \emph{double rays}.
	(One can construct examples where $\loc$ covers~$G$ with finitely many sheets, but those are rare.)
	Thus, $\loc$ mirrors~$G$ in its `$r$-local' aspects, but not in its `$r$-global' aspects, where it is simply tree-like.
	This is why we call $\loc$ the \emph{$r$-local\/} cover of $G$.
	
	As a consequence of this tree-likeness of the global aspects of~$\loc$, tree-decomposi\-tions will be a better fit for~$\loc$ than they were for~$G$, whose global aspects could include long cycles that would not fittingly be captured by \td s.
	We exploit this by applying to~$\loc$, not to~$G$, the tree-decompo\-sitions that represent the state of the art from the theory of graph minors:
	those that distinguish all the maximal blocks, tangles and ends of~$G$ efficiently, and are canonical in the sense described earlier~\cites{ProfilesNew,InfiniteSplinters}.
	We fix one such canonical \td\ $\cT_r = (T_r,(V_t)_{t\in T_r})$ that is particularly natural, and use it to define
	the $r$-global structure of~$G$.
	More on this in \cref{subsec:MainThmPrecise}.
	
	Since our \td\ $\cT_r$ of~$\loc$ is canonical, the group $\deckr$ of deck-transforma\-tions of~$\loc$ over~$G$ acts on its model, the tree~$T_r$.
	The canonical model~$H$ for our $r$-local decomposition~$\cG_r$ of~$G$ is then obtained as the orbit space~$T_r/\deckr$, which is a graph.
	The parts $G_h$ of~$\cG_r$ are the projections to~$G$ of the parts~$G[V_t]$ of~$\cT_r$ whenever $h$ is the $\deckr$-orbit of~$t$.%
   \COMMENT{}

\vskip 1.5 \smallskipamount\subsection{\boldmath Details of \cref{main:KeyTheorem}}\label{subsec:MainThmPrecise}
	
	In order to make \cref{main:KeyTheorem} precise we have to define what it means for a graph~$H$ to `display the $r$-global structure' of a given graph~$G$.
	We shall first give a more%
	\COMMENT{}
	formal definition of the `$r$-global structure' of~$G$, and then define what it means that $H$ displays it.
	Let $r > 1$ be given.
	
	Consider the $r$-local cover~$\loc$ of~$G$ from \cref{subsec:ConstructionH}.
	This is unique up to isomorphisms (of coverings of~$G$), and it is clearly a graph.
	The $r$-global structure of~$G$ is formally defined as a particular canonical {\em tree of tangles\/} of~$\loc$%
   \COMMENT{}
   along with the group $\deckr$ of deck transformations of $\loc$ over~$G$.
	Such a `tree of tangles' is not a graph.
	It is a central notion in the theory of graph minors; let us indicate briefly what it means.
	
	Our covering graph~$\loc$ may have highly cohesive regions, or `clusters'.
	These will include%
	\COMMENT{}
	lifts of the local clusters in~$G$ that our $H$-decom\-pos\-ition of~$G$ is meant to distinguish, in that they should live in distinct parts~$G_h$.
	There are various notions of such clusters in graph theory.
	The most general of these are known as \emph{blocks}~\cites{confing,CG14:isolatingblocks} and as \emph{tangles}~\cite{GMX}.
	When $\loc$ is infinite, as it usually will be, it will also have \emph{ends}.
	Ends can be formalised as infinite tangles, and all blocks except some irrelevant small ones also induce tangles.
	We shall therefore use the term `tangle' from now on to cover all three of these: blocks, ends, and traditional tangles in the sense of Robertson and Seymour~\cite{GM}.
	Details will be given in \cref{subsec:TanglesBackground}.

	All the tangles of our $r$-local cover~$G_r$ of~$G$ can be `distinguished' by a nested set~$N$ of separations of~$\loc$, in the following sense.
	Given any separation of~$\loc$, of order~$k$ say, every tangle `of order~${>k}$' in~$\loc$ will lie essentially on one of the two sides of this separation, and thereby \emph{orient} it towards that side.
	A~set $N$ of separations is said to \emph{distinguish} the tangles of~$\loc$ if for any two of them that orient some separation of $\loc$ differently there is such a separation even in~$N$. If $N$ is nested and even contains, for every pair of tangles, such a separation of minimum order (among all the separations of~$\loc$ that distinguish this pair of tangles), we say that $N$ distinguishes the tangles of~$\loc$ {\em efficiently\/} and call it a \emph{tree of tangles} for~$\loc$.
	
	While $\loc$ can have many trees of tangles, one of these stands out.
	Let $D$ be the set of all separations of~$\loc$ that distinguish some pair of tangles efficiently.
	Now given any pair of tangles that can be distinguished by some separation of~$\loc$, associate with this pair  the set of all those separations in $D$ which distinguish it efficiently and cross as few other separations in $D$ as possible.
	Let $N$ be the union of all these sets.
	This set $N$ clearly distinguishes every pair of tangles efficiently, and it is known to be nested~\cite{carmesin2022entanglements}.%
	\COMMENT{}
		
	Thus, $N$~is a tree of tangles for~$\loc$. It is canonical in the sense mentioned in \cref{subsec:Background}, since it is defined purely in terms of invariants of~$\loc$.
	In particular, it is invariant under the deck transformations of~$\loc$ over~$G$ (whose group we denote by~$\deckr$).
	It is thus a well-defined set of separations of~$\loc$ that is unique, given $G$ and~$r$.
	In recognition of the fact that this tree of tangles of $\loc$ distinguishes the lifts of all the local clusters in~$G$ and structures them in~$\loc$, we call it, together with~$\deckr$ (which reflects this global structure of $\loc$ back to~$G$), the \emph{$r$-global structure} of~$G$.
	
	This $r$-global structure of $G$ appears at the level of $G$ as the quotient $N/\deckr$.%
   \COMMENT{}
	This is not a graph.
	But we show that it can be displayed by a graph, the graph $H$ from \cref{main:KeyTheorem}, as follows.%
	\COMMENT{}
	
	Every nested set $N$ of separations of a finite graph can be converted into a \td\ of that graph so that the separations of that graph which correspond to edges of the decomposition tree (the model of the \td) are precisely those in~$N$.
	Infinite graphs in general can have trees of tangles that cannot be represented by a \td\ in this way, because they can have limits under the natural partial ordering of graph separations~\cite{ASS}. 
	For our particular $\loc$, however, we shall be able to prove that~$N$, our canonical tree of tangles for $\loc$, does not in fact have problematic limits,\vadjust{\penalty-200} and thus gives rise to a canonical \td\ $\cT_r = (T_r,(V_t)_{t\in T_r})$ of~$\loc$.

The orbit space $H = T_r/\deckr$ of the tree~$T_r$ is a graph. We say that~$H$, and the $H$-decom\-pos\-ition $\cG_r = (H,(V_h)_{h\in H})$ of~$G$ such that $V_h = p_r(V_t)$ whenever $h$ is the $\deckr$-orbit of~$t$, {\em display\/} the $r$-global structure~$(N, \deckr)$ of~$G$.%
   \COMMENT{}
	As parts~$G_h$ of~$\cG_r$ we choose the projections to~$G$ of the parts~$G_r[V_t]$ of~$\cT_r$ with $t\in h$. These~$G_h$ need not be induced in~$G$ although the~$G_r[V_t]$ are induced in~$G_r$, since distinct lifts~$V_t$ of~$V_h$ can be adjacent in~$G_r$. We remark that $H$ is finite if $G$ is finite.

	\vskip 1.5 \smallskipamount\subsection{\boldmath Overview of the proofs of \cref{main:KeyTheorem} and \cref{main:CayleyThm}} \label{subsec:IntroOverviewProof}
	
	We will prove the following common generalisation of these two theorems: every connected, locally finite, and quasi-transitive graph $G$ has a \gd\ that displays its $r$-global structure.
	(A~graph is \emph{quasi-transitive} if its automorphism group has finitely many vertex-orbits.)
	We will see that, if $G$ is quasi-transitive, then $\loc$ is quasi-transitive too.\looseness=-1
	
	Our key task will be to prove that the quasi-transitive graph $\loc$ is \emph{tangle-accessible}; this means that all its tangles are pairwise distinguished by separations of some finitely bounded order.
	For such tangle-accessible~$\loc$ our tree of tangles~$N$ exists~\cite{carmesin2022entanglements}, and the separations in $N$ also have finitely bounded order.
	This implies that $N$ has no problematic limits.
	In particular, $N$~gives rise to a canonical \td\ of $\loc$, which will in turn define a canonical \gd\ of $G$ that displays its $r$-global structure.
	
	To show that the quasi-transitive graph $\loc$ is tangle-accessible, we will first use a result of Hamann~\cite{hamann2018accessibility} to see that $\loc$ is \emph{accessible}: that the ends of $\loc$ can be pairwise distinguished by separations of finitely bounded order. By our above considerations there will then exist a canonical \td\ of~$\loc$, of finitely bounded adhesion, that distinguishes all the ends of~$\loc$.
	We will find a specific such \td, $\cT_{\rm end}$, whose infinite parts are 1-ended and themselves quasi-transitive, even just under automorphisms that extend to automorphisms of~$\loc$.
	
	This will allow us to refine $\cT_{\rm end}$ in each of its parts so that the arising canonical \td~$\cT$ distinguishes all the tangles of $\loc$, not just its ends.
	This $\cT$ will again have finitely bounded adhesion.%
   \COMMENT{}
	It is not the canonical \td\ of~$\loc$ we ultimately seek (the one whose projection to~$G$ defines our desired $H$-decom\-pos\-ition of~$G$), but it witnesses that $\loc$ is tangle-accessible and thus completes our proof.\looseness=-1

\vskip 1.5 \smallskipamount\subsection{\boldmath A windfall from the proof}\label{subsec:windfall}
	
	The concept of accessibility for locally finite graphs was introduced by Thomassen and Woess~\cite{ThomassenWoess}. It follows their graph-theoretic characterisation of the finitely generated groups that are accessible, in the sense of Wall~\cite{WallAccessibility}, by Stallings's group splitting theorem~\cite{stallings1972group}.%
	\COMMENT{}
	Hamann, Lehner, Miraftab, and R\"{u}hmann~\cite{hamann2018stallings} extended this characterization to quasi-transitive graphs that need not be Cayley graphs.%
	\COMMENT{}
	
	In the course of our proof we show that not only the ends of an accessible quasi-transitive graph can be distinguished by separations of bounded order, but all its other tangles can be too:
			
\begin{mainresult}\label{maincor:TangleAccessibility}
	Accessible locally finite, quasi-transitive graphs are in fact%
	\COMMENT{}
	tangle-accessible.
	\end{mainresult}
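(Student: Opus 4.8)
The plan is to run, for an arbitrary accessible, locally finite, quasi-transitive graph~$G$, the argument outlined in \cref{subsec:IntroOverviewProof} for the cover~$\loc$, but starting from accessibility as a hypothesis rather than deriving it. Write $\Gamma=\Aut(G)$. Since $G$ is accessible, its ends are distinguished by separations of some finitely bounded order, and together with the known constructions of canonical tree-decompositions (\cite{ProfilesNew,InfiniteSplinters}, and in the quasi-transitive setting \cite{ThomassenWoess,hamann2018stallings}) this yields a canonical \td\ $\cT_{\rm end}=(T,(V_t)_{t\in T})$ of~$G$ of finitely bounded adhesion whose edge-separations distinguish all ends of~$G$. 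The first real step is to upgrade $\cT_{\rm end}$ so that, in addition, every infinite part~$V_t$ is one-ended and is quasi-transitive \emph{relative to~$G$}, i.e.\ quasi-transitive under the group of those automorphisms of~$V_t$ that extend to automorphisms of~$G$. One-endedness of the parts is arranged by further splitting any part that still sees two ends along a bounded-order separation distinguishing them --- such a separation exists by accessibility --- which keeps the adhesion bounded; quasi-transitivity relative to~$G$ follows from canonicity of~$\cT_{\rm end}$ together with quasi-transitivity of~$G$, which forces $\Gamma$ to act on~$T$ with only finitely many orbits and hence each part to carry a quasi-transitive action of a node-stabiliser.

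Next I would localise the problem to a single part. Since the edge-separations of~$\cT_{\rm end}$ are nested and of bounded order, any two tangles of~$G$ that they distinguish are distinguished by separations of bounded order; so it remains to treat, for each part~$V_t$, the tangles of~$G$ captured by~$V_t$, and these are distinguished in~$G$ exactly when they are distinguished inside the torso of~$V_t$. For a finite part this is immediate, with a bound given by its (bounded) size. For an infinite part~$P$ --- one-ended, locally finite, and quasi-transitive relative to~$G$ --- the point is the elementary observation that in a one-ended locally finite graph every separation of finite order has a \emph{finite} side, since the unique infinite component of the complement of its (finite) separator must lie on one side; hence every tangle of~$P$ other than its unique end is distinguished from that end by a separation of finite order and therefore ``lives in'' a finite part of~$P$. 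I would then prove the key lemma: \emph{in a one-ended, locally finite, quasi-transitive graph there is a finite bound~$k$ such that every two tangles are distinguished by a separation of order~$<k$}; equivalently, such a graph is tangle-accessible. This gives, within each part~$P$, a canonical \td\ of adhesion~$<k$ distinguishing all tangles of~$P$, via the canonical tree-of-tangles machinery of \cite{carmesin2022entanglements}, now applied with a genuinely bounded order.

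Finally I would refine $\cT_{\rm end}$ by substituting into each part the canonical bounded-adhesion \td\ just produced (for the finite parts, their own canonical \td s distinguishing all tangles, of bounded adhesion since those parts have bounded size). Since everything in sight is canonical and all adhesions involved are finitely bounded, the resulting refinement~$\cT$ is a canonical \td\ of~$G$ of finitely bounded adhesion that distinguishes \emph{all} tangles of~$G$; its edge-separations then form a nested set of separations of~$G$ of bounded order distinguishing every pair of tangles, which is precisely the assertion that $G$ is tangle-accessible.

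I expect the main obstacle to be the key lemma on one-ended quasi-transitive graphs, together with the part of the first step that makes the infinite parts of~$\cT_{\rm end}$ one-ended and quasi-transitive relative to~$G$. For the lemma, one-endedness and local finiteness alone do not suffice: a one-ended locally finite graph can contain, for each~$d$, a finite region of treewidth~$\ge d$ attached to the rest along only~$d$ vertices, and tangles concentrated on such regions then need distinguishing separations of unbounded order. Ruling this out is exactly where quasi-transitivity must be exploited --- one has to show that a quasi-transitive locally finite graph cannot harbour arbitrarily ``thickly attached'' high-treewidth finite regions, so that only boundedly many tangle types occur and bounded-order separations suffice. Making this quantitative, in a way compatible with canonicity and with the amalgamation of the per-part decompositions into a single bounded-adhesion~$\cT$, is the crux of the proof.
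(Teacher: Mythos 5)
Your proposal follows the same high-level blueprint as the paper's proof (and as the outline in \cref{subsec:IntroOverviewProof}): reduce to the one-ended case via a canonical end-distinguishing \td\ with one-ended, quasi-transitive parts, and rest everything on a key lemma about one-ended locally finite quasi-transitive graphs. The packaging differs, though. You propose to prove directly that such one-ended graphs are tangle-accessible, build a per-part \td\ from this, and then amalgamate. The paper instead starts with the canonical tree of tangles $N'(G)$ of~\cite{InfiniteSplinters} (\cref{thm:Splinters}), which already distinguishes \emph{all} tangles of~$G$ but carries no a priori order bound, and proves that $N'(G)$ is \emph{exhaustive} --- hence point-finite, hence of finitely bounded order --- by reducing the exhaustiveness claim to the one-ended parts of a suitable auxiliary \td\ (\cref{thm:NestedSetIsTDGen,thm:NestedSetsInducesTreeDecTechnical}). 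This sidesteps entirely the amalgamation step you flag at the end as delicate: no per-part \td\ has to be constructed or glued, since $N'(G)$ already does the distinguishing globally and one only has to bound its orders.

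The genuine gap is that your key lemma --- tangle-accessibility of one-ended, connected, locally finite, quasi-transitive graphs, or the equivalent claim about $\omega$-chains in a canonical nested set of tight finite-order separations --- is not proved. You correctly identify it as the crux and even give a counterexample showing that one-endedness alone does not suffice, but you do not indicate how quasi-transitivity closes the gap. The paper's proof of the corresponding statement (\cref{thm:NestedIsTDOneEnded}) hinges on an idea absent from your sketch. In a one-ended locally finite graph every finite-order separation has a finite side, so in any strictly increasing chain $(A_0,B_0)<(A_1,B_1)<\cdots$ in the nested set all the $A_i$ are finite. The decisive mechanism is \cref{lem:FiniteAutomCross}: for a tight separation $\{A,B\}$ with $A$ finite, any automorphism~$\phi$ that maps a separator vertex (having a neighbour in $B\setminus A$) into $A\setminus B$ forces $\{A,B\}$ and $\{\phi(A),\phi(B)\}$ to cross. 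Quasi-transitivity supplies such a~$\phi$ whenever the separators along the chain keep shifting into the smaller side, contradicting nestedness of the $\Gamma$-canonical set; one then shows the separators must stabilise, and local finiteness rules out an infinite chain with a fixed separator. Without this crossing argument (or something playing the same role), the ``arbitrarily thickly attached high-treewidth finite region'' obstruction you describe is not actually ruled out, and the argument does not close.
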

	
\noindent
	Applied to Cayley graphs of groups, \cref{maincor:TangleAccessibility} yields a structural description of all accessible groups that refines their structural decomposition by Stallings's theorem.%
   \COMMENT{}

\vskip 1.5 \smallskipamount\subsection{\boldmath Applications}\label{subsec:applications}
	
We think of the decomposition theory advanced in this paper, and in particular of \cref{main:KeyTheorem}, not so much as a tool with which to attack existing problems in graph theory, but as a natural way to view graphs and to analyse their structure from first principles. We do believe that our decompositions have the potential to interact with other graph invariants, by splitting them into a local and a global aspect as indicated after the statement of \cref{main:KeyTheorem}. But how exactly this can happen may not be straightforward, and may require non-trivial structural analysis. This will be worthwhile if, but also only if, our new invariant of $H(G,r)$ and the associated decomposition of~$G$ are considered as a natural lens through which graph structure may be viewed. We believe they are.

Still, let us sketch one particular potential application of graph decompositions of this type, just to indicate the idea. \cref{main:KeyTheorem} includes, as the special case of $r=|G|$, the canonical strengthening of the so-called {\em tree of tangles\/} theorem of Robertson and Seymour~\cite{GMX}. This proves the existence of a \td\ that separates out the most highly connected regions of a graph~$G$ in a tree-like way. If $G$ is not globally tree-like, this decomposition may be trivial or otherwise fail to capture the global structure of~$G$.
If $G$ is globally tree-like, which happens when the graph does not contain some other fixed graph as a minor but is much larger than it, the parts of such a \td\ can be fleshed out in a way that Robertson and Seymour~\cite{GMXVI} have described in detail: they are each `nearly' embeddable in one of a bounded number of surfaces. Both this bound and the notion of `nearly' depend only on the excluded minor, not on the graph being decomposed.

For smaller~$r$ than $r=|G|$, our \cref{main:KeyTheorem} no longer returns a canonical \td\ of~$G$ as in the Robertson-Seymour result. Instead, it returns the projection to~$G$ of that \td\ applied to its $r$-local cover. The parts of that projection, those of our canonical $H$-decomposition of~$G$ for $H=H(G,r)$, are not yet fleshed out by structural descriptions in terms of surface embeddings. But it should be possible to do that if it can be done for the \td\ of the cover~--%
   \COMMENT{}
   which Esperet, Giocanti and Legrand-Duchesne~\cite{esperet2023structure} have indeed done recently, albeit with a slightly different canonical \td\ than ours. The result would be a structure theorem, for arbitrary finite graphs~$G$ and a given locality parameter~$r$, that $G$ has a canonical $H$-decomposition for $H=H(G,r)$ whose parts we expect to be%
   \COMMENT{}
   nearly embeddable in surfaces in which~$G$, as a whole, does not embed.\looseness=-1

Pursuing applications such as this, which seek to apply global structural descriptions of tree-like graphs to the (tree-like) coverings of~$G$ in order to then translate them into self-contained theorems about $G$ itself that describe its local structure, we in particular have to be able to describe the decomposition of~$G$ and its parts without reference to~$G_r$ and the covering map, as happens in its definition. We have indeed shown in a sequel to this paper~\cite{LocalSeps} that such a finite description always exists, under only mild assumptions about $G$ and~$r$. This can then form the basis for further structural analysis of the parts. Carmesin and Frenkel have used such a finite description of low-order graph-decompositions~\cite{Local2seps} to develop an efficient algorithm for computing them~\cite{carmesin2023apply}. They have implemented their algorithm and tested it successfully on road networks.

\vskip 1.5 \smallskipamount\subsection{\boldmath Structure of the paper}
	
	In \cref{sec:ToolsAndTerminology} we collect together notation and basic facts with an emphasis on coverings.
	In~\cref{sec:GraphDecomps} we introduce \gd s, and show how we can obtain them from canonical \td s via coverings.
	Local coverings are introduced in \cref{sec:LocalCoverings}, where we also investigate their properties as well as local coverings of some special classes of graphs.
	In \cref{sec:OverviewProof} we define formally what it means for a \gd\ to `display the $r$-global structure' of a graph, and outline what remains to be done to prove \cref{main:KeyTheorem}.
	The proof is then completed in \cref{sec:LocalCoveringsAreAccessible,sec:TangleAccessibility,sec:ProofMainTheorem}.

\section*{\boldmath Acknowledgement}

A~referee suggested that we point out the following more topological way to define the $r$-local cover of~$G$. Rather than taking the covering space of~$G$ with $\pi_1^r(G,v_0)$ as characteristic subgroup, we could glue a disc on every cycle of length at most~$r$ in~$G$ and then work with the universal cover of the resulting 2-complex. We are happy to invite the reader to keep this alternative perspective in mind as the paper proceeds.

\bigbreak\section{\boldmath Terminology} \label{sec:ToolsAndTerminology}

\noindent
	The purpose of this section is to introduce our terminology wherever this may not be obvious or differ from standard usage.
	\ifArXiv\else
	Formal definitions of terms not defined here can be found in the extended version of this paper~\cite{GraphDecArXiv}, which also contains supplementary proofs and examples.%
	\fi

\vskip 1.5 \smallskipamount\subsection{\boldmath Graphs}
	Our graph-theoretic terminology follows~\cite{DiestelBook25}, with the following adaptations.

	A {\em graph\/}~$G$ consists of a set $V(G)$ of {\em vertices\/} or {\em nodes\/}, a~set $E(G)$ of {\em edges\/} that is disjoint from~$V(G)$, and an {\em incidence\/} map that assigns to every edge~$e$ an unordered pair $uv=\{u,v\}$ of vertices, its {\em endvertices\/}. Note that our graphs may have loops and parallel edges: we allow~$u=v$, and distinct edges may have the same pair of endvertices.

The triples $(e,u,v)$ and~$(e,v,u)$ are the two {\em orientations\/} of an edge~$e$; we think of $(e,u,v)$ as {\em $e$~oriented from~$u$ to~$v$}. When $G$ has no parallel edges, e.g.\ when $G$ is a tree, we abbreviate $(e,u,v)$ to~$(u,v)$.%
   \COMMENT{}

A graph is \emph{locally finite} if each of its vertices is incident with only finitely many edges, and {\em finite\/} if its vertex and edge sets are both finite.%
   \COMMENT{}
   	An \emph{isomorphism} between graphs $G$ and~$H$ is a pair $\phi = (\phi_V, \phi_E)$ of bijections $\phi_V \colon V(G) \to V(H)$ and $\phi_E \colon E(G) \to E(H)$ that commute with the incidence maps in $G$ and~$H$. We abbreviate both $\phi_V$ and~$\phi_E$ to~$\varphi$.

\vskip 1.5 \smallskipamount\subsection{\boldmath Groups}
	Our group-theoretic terminology largely follows~\cite{loh2017geometric}.%
	\COMMENT{}
   	Given sets $R\subset S$ we write $\free{S}$ for the free group that $S$ generates, $\normcl{R}{\free{S}}$~for its normal subgroup generated by~$R$, and $\pres{S}{R}$ for the quotient group $\free{S} / \normcl{R}{\free{S}}$ presented with generators~$S$ and relators~$R$. Given a group~$\Gamma$ with a set $S$ of generators, we write~$\phi_{\Gamma, S}$ for the canonical homomorphism $\free{S} \to \Gamma$ that extends the identity on~$S$.

\vskip 1.5 \smallskipamount\subsection{\boldmath Interplay of graphs and groups}\label{sec:interplayofgraphsandgroups}

When a group $\Gamma$ acts on a graph~$G$, we denote the action of an element $g\in\Gamma$ on a vertex or edge $x\in G$ by $x\mapsto g\act x$. We write $G/\Gamma$ for the graph with vertex set $\{\Gamma \cdot v \colon v \in V(G) \}$ and edge set $\{\Gamma \cdot e \colon e \in E(G)\}$ in which the edge $\Gamma\cdot e$ has endvertices $\Gamma\cdot u$ and $\Gamma\cdot v$, where $u$ and $v$ are the endvertices of~$e$ in~$G$.
	Note that $G/\Gamma$ can have loops and parallel edges even if $G$ does not.
	We call $G/\Gamma$ the \emph{orbit graph} of the action of~$\Gamma$ on~$G$.
	
	We say that $\Gamma$ acts \emph{transitively} on~$G$ if it does so with only one vertex orbit, and \emph{quasi-transitively} if it does so with only finitely many vertex orbits.%
	\COMMENT{}
	The graph~$G$ itself is \emph{(quasi-) transitive} if its full automorphism group acts (quasi-) transitively on its vertices.
	
	Let $\Gamma$ be a group and $S\subset\Gamma$, usually a set of generators.
	The \emph{Cayley graph} $\cayley{\Gamma}{S}$ is the graph with vertex set~$\Gamma$, edge set $\{(g,s) : g \in \Gamma,\ s \in S\}$ and incidence map $(g,s)\mapsto \{g,gs\}$. We allow $S$ to contain inverse pairs of elements. If it contains both $s$ and~$s^{-1}$, say, and $gs=h$ in~$\Gamma$, then $\cayley{\Gamma}{S}$ has two edges incident with $g$ and~$h$: the edge~$(g,s)$ and the edge~$(h,s^{-1})$.%
   \COMMENT{}
   Every group acts on its Cayley graphs by left-multiplication, as $h \act g := hg$ and $h \act (g,s) := (hg,s)$.

\vskip 1.5 \smallskipamount\subsection{\boldmath Topological and covering spaces}\label{subsec:coverings} 

In our topological notation we follow Hatcher~\cite{Hatcher}. In particular, when $X$ is a path-connected topological space and $\alpha\colon [0,1]\to X$ a path, we write $\alpha^{-}\colon t\mapsto \alpha(1-t)$ for the path that traverses $\alpha$ backwards.
	Given two paths $\alpha$ and $\beta$ with $\alpha(1)=\beta(0)$ we write $\alpha\beta$ for their concatenation, the path which traverses $\alpha$ first and then $\beta$, both at double speed. Multiplication in the fundamental group is denoted by $[\alpha][\beta]:=[\alpha\beta]$.%
   \COMMENT{}

For a normal subgroup~$S$ of $\pi_1(X,x_0)$ and any $x \in X$ we write $S_{x}$ for the subgroup $\{\,[\alpha\gamma\alpha^-]:[\gamma]\in S\,\}$ of $\pi_1(X,x)$, where $\alpha$ is any path from $x$ to~$x_0$ in~$X$. Since $S$ is normal, this definition is independent of the choice of~$\alpha$;%
	\COMMENT{}
	in particular,%
	\COMMENT{}
	$S_{x}\,$is again normal. When $X$ is a graph~$G$, we call~$S$%
	\COMMENT{}
	\emph{\can} if for every vertex $x \in G$ and for every automorphism $\phi$ of~$G$ we have~$\phi_\ast(S_x) = S_{\phi(x)}$.%
	\COMMENT{}

When we view a graph~$G$ as a 1-complex, as we freely will, we use the term `path' for graph-theoretical paths in the sense of~\cite{DiestelBook25}, and say {\em topological path\/} for continuous maps $[0,1]\to G$. 
	
In addition to Hatcher's~\cite{Hatcher}*{Ch.1.3} we use standard covering space terminology as in J\"anich~\cite{Janich}*{Ch.9}.
We refer to covering projections $p \colon (C,\hat x_0) \to (X,x_0)$ as {\em coverings\/}, reserving the term {\em covering space\/}, or simply {\em cover\/}, for~$C$. All covering spaces we consider are connected.
	The group of deck transformations of a covering~$p$ is denoted by~$\cD(p)$. As generally with automorphisms, we write multiplication in~$\cD(p)$ as $\varphi\circ\psi =: \varphi\psi$.
A~normal covering is {\em canonical\/} if its characteristic subgroup is \can.%
	\COMMENT{}

\vskip 1.5 \smallskipamount\subsection{\boldmath Topological paths versus graph-theoretic walks}
	
	In this section we provide a graph-theoretical description of the fundamental group of a graph, following Stallings~\cite{stallings1983topology}, on which our terminology is based.
	
	A \emph{walk} (of \emph{length}~$k$) in a graph~$G$ is a non-empty alternating sequence $v_0 e_0 v_1 \ldots  v_{k-1} e_{k-1} v_k$ of vertices and edges in~$G$ such that $e_i$ has endvertices $v_i$ and~$v_{i+1}$ for all~$i<k$.
	A walk $W$ is \emph{closed} with \emph{base vertex}~$v_0$ if $v_0 = v_k$. We say that $W$ is \emph{based} at~$v_0$, or simply a closed walk {\em at}~$v_0$.
	A walk is \emph{trivial} if it has length~0.
	A~walk~$W'$ is a \emph{subwalk} of $W$ if $W' = v_i e_i v_{i+1} \ldots v_{j-1} e_{j-1} v_j$ for some indices $i,j$ with~$0 \leq i \leq j \leq k$. The inverse sequence of a walk~$W$ is its {\em reverse\/}, or $W$ {\em traversed backwards}; we denote this walk by~$W^-$.
	
	Let $G$ be any connected graph.
	A walk in~$G$ is \emph{reduced} if it contains no subwalk of the form $u e v e u$ where $u,v$ are vertices and $e$ is an edge.
	Note that every walk $W$ can be turned into a reduced walk~$W'$ based at the same vertex%
	\COMMENT{}
	by iteratively replacing subwalks of the form $u e v e u$ with the trivial walk~$u$.
	We call $W'$ the \emph{reduction} of~$W$ and remark that this is well-defined.
	We call two walks (\emph{combinatorially}) \emph{homotopic} if their reductions are equal.
	This defines an equivalence relation~${\sim}$ on the set of all walks in~$G$.
	
	Now let $x_0\in G$ be any vertex.
	Let $\cW(G,x_0)$ be the set of all closed walks in~$G$ that are based at~$x_0$.
	Then $\pi'_1(G,x_0):=(\cW(G,x_0)/{\sim},\,{\cdot}\,)$ is a group, where $[W_1]\cdot [W_2]:=[W_1 W_2]$ and $W_1 W_2$ is the concatenation of the two walks~$W_1$ and~$W_2$.
	Any walk in~$G$ defines (up to re-parametrisation) a topological path in the $1$-complex~$G$ which traverses the vertices and edges of the walk in the same order and direction.
	This defines a group isomorphism between $\pi'_1(G,x_0)$ and the fundamental group~$\pi_1(G,x_0)$ of the $1$-complex~$G$~\cite{stallings1983topology}.
	For notational simplicity we will work with combinatorial walks instead of topological paths, and do not always distinguish between walks and the homotopy classes they represent. For example, we may say that a closed walk lies in a subgroup $S$ of $\pi_1(G, x_0)$ when we mean that its combinatorial homotopy class maps to an element of~$S$ under the above group isomorphism.

\bigbreak\section{\boldmath Graph-decom\-pos\-itions} \label{sec:GraphDecomps}
	
\noindent
	In this section we introduce \gd s. We show how \gd s of a graph~$G$ can be obtained canonically from canonical \td s of coverings graphs of~$G$ (\cref{main:TreeDecToGraphDec}).
	
	\vskip 1.5 \smallskipamount\subsection{\boldmath Decompositions}
	Since \gd s generalise \td s, we recall the notion of \td s before we introduce the new notion of \gd s.
	
	\begin{definition}[\bf Tree-decom\-pos\-ition]
		Let $G$ be a graph, $T$ a tree, and $\cV=(V_t)_{t\in T}$ be a family of vertex sets $V_t\subset V(G)$ indexed by the nodes $t$ of~$T$. The pair $(T,\cV)$ is called a \emph{\td} of~$G$ if it satisfies the following two conditions:
		\begin{enumerate}[label=(T\arabic*)]
			\item\label{ax:TreeDecomp1} $G=\bigcup_{t\in T}G[V_t]$;
			\item\label{ax:TreeDecomp2} for every vertex~$v\in G$, the set of nodes~$t$ of~$T$ such that $v\in V_t$ is connected in~$T$.
		\end{enumerate}
		The sets~$V_t$ of vertices are the {\em bags\/} of this decomposition. We sometimes call the subgraphs~$G[V_t]$ they induce the decomposition's \emph{parts}, and $T$ its \emph{decomposition tree}.
	\end{definition}

	\noindent In this paper, all bags of \td s are required to be non-empty.
	If $G$ is connected, this implies that  for edges $tt'$ of~$T$ the sets $V_t \cap V_{t'}$ are non-empty too~\cite{DiestelBook25}*{Lemma 12.3.1}.
	
	A~precursor to our notion of \gd s was suggested by Diestel and K\"uhn~\cite{GMhierarchies}. But their notion is not quite the same as ours, and we shall not rely on~\cite{GMhierarchies}.%
	\COMMENT{}
	
	\begin{definition}[\bf Graph-decom\-pos\-ition]%
   \label{def:GrDec}
		Let $G$ and $H$ be graphs, and let $\cV=(V_h)_{h\in H}$ be a family of sets~$V_h$ of vertices of~$G$ indexed by the nodes $h$ of~$H$.
		The pair $(H,\cV)$ is called a \emph{\gd\ with model~$H$}, or \emph{$H$-decom\-pos\-ition}, of~$G$ if it satisfies the following two conditions:
		\begin{enumerate}[label=(H\arabic*)]
			\item $G=\bigcup_{h\in H}G[V_h]$; \label{ax:GraphDecomp1}
		\item for every vertex $v\in G$, the subgraph~$H[W_v]$ of~$H$ induced by $W_v := \{\,h\in H\mid v\in V_h\,\}$ is connected.\hskip-0pt
\label{ax:GraphDecomp2}
		\end{enumerate}
		The decomposition is {\em honest\/} if $V_h\ne\emptyset$ for all nodes~$h$ and $V_h\cap V_{h'}\ne\emptyset$ for all edges $hh'$ of~$H$.

The sets~$V_h$ in our decomposition are its {\em bags\/}; the sets~$W_v$ its {\em co-bags\/}. The maximum of its bag sizes~$|V_h|$, if it exists, is the {\em width\/} of our decomposition,%
   \footnote{When $H$ is a tree, this differs from the traditional width parameter for \td s by~1.}
   the maximum of its co-bag sizes~$|W_v|$ its  {\em co-width\/}, or {\em spread\/}. If all the bags are finite but their sizes are not necessarily bounded, we say that the decomposition has {\em finite width\/}, and similarly for spread.

Note the duality of\vskip-18pt\par
 $$v\in V_h\Leftrightarrow h\in W_v.$$
This enables us to reconstruct the families $\cV=(V_h)_{h\in H}$ and $\cW=(W_v)_{v\in G}$ from each other when $G$ and~$H$ are given.%
   \COMMENT{}
    We call them {\em dual\/} to each other.

We often flesh out the bags and co-bags by choosing subgraphs~$G_h$ of~$G$ on~$V_h$ and $H_v$ of~$H$ on~$W_v$ in such a way that \cref{ax:GraphDecomp1} and \cref{ax:GraphDecomp2} still hold, as
		\begin{enumerate}[label={\rm(H\arabic*$'$)}]
			\item $G=\bigcup_{h\in H}G_h$; \label{ax:GraphDecomp1parts}
		\item for every vertex $v\in G$, the subgraph~$H_v$ of~$H$ is connected.
\label{ax:GraphDecomp2coparts}
		\end{enumerate}

\noindent
These~$G_h$, then, are our chosen \emph{parts} of the decomposition~$(H,\cV)$; the~$H_v$ our chosen \emph{co-parts}. We may then refer to the decomposition with these choices simply by citing the families $\cG = (G_h)_{h\in H}$ and $\cH = (H_v)_{v\in G}$ together with $G$ and~$H$.%
   \COMMENT{}
   Note that $\cV$ can be reconstructed from either of these families, and that it satisfies \cref{ax:GraphDecomp1} and \cref{ax:GraphDecomp2} if $\cG$ and~$\cH$ satisfy \ref{ax:GraphDecomp1parts} and~\ref{ax:GraphDecomp2coparts}.
	\end{definition}

Honest \gd s into connected parts are closed under duality:

\ifArXiv

	\begin{lemma} \label{lem:DualGraphDecomp}
		If $(H,\cV)$ is an honest \gd\ of~$G$ that has%
   \COMMENT{}
   connected parts, then its dual $(G,\cW)$ is an honest \gd\ of~$H$ that has connected parts.
	\end{lemma}

	\begin{proof}
		The decomposition~$(G,\cW)$ satisfies~\cref{ax:GraphDecomp1}, because $(H,\cV)$ is honest.%
   \COMMENT{}
		It satisfies~\cref{ax:GraphDecomp2}, since $(H,\cV)$ has connected parts~$G_h$, so the $G[V_h]\supseteq G_h$ from \cref{ax:GraphDecomp2} for~$(G,\cW)$ are connected too. Thus, $(G,\cW)$ is a \gd. We can choose connected parts $H_v := H[W_v]$ for it by \cref{ax:GraphDecomp2} for~$(H,\cV)$.
		
		Let us show that $(G,\cW)$ is honest. Its bags~$W_v$ are non-empty by \cref{ax:GraphDecomp1} for $(H,\cV)$, which ensures that $v$ lies in some~$V_h$ so that $W_v\owns h$. Similarly, every edge $e = vv'$ of~$G$ lies in some~$G[V_h]$ by~\cref{ax:GraphDecomp1}, so that $h\in W_v \cap W_{v'}\ne\emptyset$.
	\end{proof}

\else

	\begin{lemma}[\cite{GraphDecArXiv}] \label{lem:DualGraphDecomp}
		If $(H,\cV)$ is an honest \gd\ of~$G$ that has connected parts, then its dual $(G,\cW)$ is an honest \gd\ of~$H$ that has connected parts.
	\end{lemma}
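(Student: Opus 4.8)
The plan is to verify, one after another, that the dual $(G,\cH)$ satisfies \cref{ax:GraphDecomp1} and \cref{ax:GraphDecomp2}, that its parts are connected, and that it is honest, in each case by unwinding the definition $H_v = H[\,\{h\in H : v\in G_h\}\,]$. The one bookkeeping fact I would use throughout is that $H_v$ is an \emph{induced} subgraph of~$H$: a pair of vertices of~$H$ that both lie in~$H_v$ spans, inside~$H_v$, every edge (loop or parallel edge included) that it spans in~$H$. Dually, the graph that \cref{ax:GraphDecomp2} requires to be connected for $(G,\cH)$ at a node $h\in H$ is $G[\,\{v\in G : h\in V(H_v)\}\,]$; since $h\in V(H_v)$ is equivalent to $v\in V(G_h)$, this graph equals $G[V(G_h)]$.

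I would first check \cref{ax:GraphDecomp1} for $(G,\cH)$, i.e.\ that $H=\bigcup_{v\in G}H_v$ as a graph. For a vertex $h$ of~$H$ the part~$G_h$ is connected, hence non-empty (connected graphs being non-empty, per~\cite{DiestelBook16}), so any $v\in V(G_h)$ gives $h\in V(H_v)$. For an edge $hh'$ of~$H$, honesty of $(H,\cG)$ provides a vertex $v\in G_h\cap G_{h'}$, so $h,h'\in V(H_v)$, and as $H_v$ is induced, $hh'\in E(H_v)$. For \cref{ax:GraphDecomp2} for $(G,\cH)$: at a node $h\in H$ the relevant graph is $G[V(G_h)]$ by the remark above, which is a supergraph of the connected graph~$G_h$ on the same vertex set and is hence connected.

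Two things then remain. The parts~$H_v$ of $(G,\cH)$ are connected --- but that is exactly \cref{ax:GraphDecomp2} for the original decomposition $(H,\cG)$, which holds by hypothesis. And $(G,\cH)$ is honest: given an edge $vv'$ of~$G$, \cref{ax:GraphDecomp1} for $(H,\cG)$ places $vv'$ in some part~$G_h$, so $v,v'\in V(G_h)$ and therefore $h\in V(H_v)\cap V(H_{v'})$, which is non-empty. I do not expect a genuine obstacle here; the only point deserving care is the passage between the (possibly non-induced) parts~$G_h$ and the induced subgraphs $G[V(G_h)]$ and $H[\,\cdot\,]$ appearing in the dual --- using that adding edges to a connected graph on a fixed vertex set preserves connectedness, and that an induced subgraph automatically contains every edge between its vertices.
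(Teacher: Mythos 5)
Your proof is correct and takes essentially the same route as the paper's: verify (H1) via honesty (an edge $hh'$ of $H$ lies in $H_v$ for any $v\in G_h\cap G_{h'}$, using that $H_v$ is induced), verify (H2) via the equivalence $v\in V(G_h)\Leftrightarrow h\in V(H_v)$ and connectedness of $G_h$, observe the parts $H_v$ are connected by (H2) for $(H,\cG)$, and derive honesty of the dual from (H1) for $(H,\cG)$. You are in fact slightly more careful than the published proof in one spot: you explicitly cover the isolated vertices of $H$ in (H1) by noting that each $G_h$ is connected, hence non-empty; the paper's proof only addresses edges of $H$ and leaves this implicit.
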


\fi
	
\noindent
Within the realm of honest \gd s into connected parts, dualising is clearly an involution.

\medbreak
	
	An \emph{isomorphism} between \gd s $(H,\cV)$ and $(H',\cV')$ of graphs $G$ and~$G'$, where $\cV = (V_h)_{h\in H}$ and $\cV' = (V'_{h'}{)}_{h'\in H'}$,%
   \COMMENT{}
   is a pair $(\phi, \psi)$ of isomorphisms $\phi\colon G\to G'$ and $\psi\colon H\to H'$ such that $\varphi(V_h)=V'_{\psi(h)}$ for every node $h\in H$.%
   \COMMENT{}
If we have chosen families $\cG = (G_h)_{h\in H}$ and $\cG' = (G'_{h'}{)}_{h'\in H'}$ of parts $G_h\subset G[V_h]$ and $G'_{h'}\subset G'[V'_{h'}]$ for these decompositions, we call our pair $(\phi, \psi)$%
   \COMMENT{}
   an \emph{isomorphism} between $(H,\cG)$ and $(H',\cG')$ if $\varphi(G_h)=G'_{\psi(h)}$ for every node $h\in H$.%
   \COMMENT{}

\vskip 1.5 \smallskipamount\subsection{\boldmath Properties of \gd s} \label{subsec:GraphDecProperties}
	
	A~\emph{separation} of a set $X$ is an unordered pair $\{A,B\}$ of subsets $A,B$ of $X$ such that $A \cup B = X$. The sets $A$ and $B$ are the \emph{sides} of this separation, their intersection $A\cap B$ is the associated {\em separator\/} of~$X$, and $|A \cap B|$ is its \emph{order}. The separation $\{A,B\}$ is {\em proper\/} if neither $A$ nor~$B$ equals~$X$. The \emph{orientations} of $\{A, B\}$ are the two ordered pairs $(A,B)$ and~$(B,A)$; these are \emph{oriented separations} of~$X$. We denote the two orientations of a separation~$s$ as $\vs$ and~$\sv$. There are no default orientations: once we have called one of the two orientations~$\vs$, the other will be~$\sv$, and conversely. A~\emph{separation} of a graph~$G$ is a separation $\{A,B\}$ of its vertex set~$V(G)$ such that $G$ has no edge between $A \setminus B$ and~$B \setminus A$.

	Let $\cT = (T, \cV)$ be a \td\ of a graph~$G$.
	Every orientation $(t,t')$ of an edge $e \in T$ \emph{induces} an oriented separation $\alpha_\cT(t,t')$ of $G$, as follows. 
	Let $T_t$ and $T_{t'}$ be the components of $T - e$ containing $t$ and~$t'$, respectively.%
   \COMMENT{}
	By \cref{ax:TreeDecomp1}, $\alpha_\cT(t,t') := (A_t, A_{t'})$ is an oriented separation of $G$ with sides $A_t = \bigcup_{s \in T_t} V_s$ and~$A_{t'} = \bigcup_{s \in T_{t'}} V_{s}$.
	Similarly, $\alpha_\cT(e):=\{A_t, A_{t'}\}$ is an unoriented separation of~$G$.%
   \COMMENT{}
	By~\cref{ax:TreeDecomp2}, the separator $A_t\cap A_{t'}$ of $\{A_t, A_{t'}\}$ equals~$V_t \cap V_{t'}$~\cite{DiestelBook25}*{Lemma 12.3.1}; this is the \emph{adhesion set} of~$(T, \cV)$ at the edge~$e\in T$.
	A~\td\ has \emph{finitely bounded adhesion} if there exists $K \in \N$ such that all adhesion sets of $(T, \cV)$ have order at most~$K$.
	
Notice that, in the above example, the vertex sets of the components $T_t$ and $T_{t'}$ of~$T-e$ formed a separation of the set~$V(T)$. It is not a separation of the graph~$T$, since the edge~$e$ lies in neither componenent, but it induces a separation of the graph~$G$. In general, given a \gd\ of~$G$ modelled on a graph~$H$, separations of the set~$V(H)$ induce separations of the graph~$G$ in the same way:
	
	\begin{lemma} \label{lem:graphdecompseparator}
		Let $(H,(V_h)_{h \in H})$ be a \gd\ of a graph~$G$.
		Every separation $\{U, W\}$ of the set~$V(H)$ induces a separation $\{A,B\}$ of the graph~$G$ with $A:=\bigcup_{h \in U} V_h$ and~$B:= \bigcup_{h \in W} V_h$.
		The separator of $\{A,B\}$ is equal to 
		$S := \large(\bigcup_{h \in U \cap W} V_h \large)\cup \bigcup_{hh' \in F} (V_h \cap V_{h'}) $,
		where~$F = E_H(U \setminus W, W \setminus U)$.
	\end{lemma}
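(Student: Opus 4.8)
The plan is to verify the three claims of the lemma directly from the definitions: that $\{A,B\}$ is a separation of the graph $G$, that $A\cup B=V(G)$, and that the separator $A\cap B$ has the stated form. First I would note that $A\cup B=V(G)$ is immediate: since $\{U,W\}$ is a separation of $V(H)$ we have $U\cup W=V(H)$, so $A\cup B=\bigcup_{h\in U\cup W}V(G_h)=\bigcup_{h\in H}V(G_h)=V(G)$ by \cref{ax:GraphDecomp1}. Next, to see that $G$ has no edge between $A\setminus B$ and $B\setminus A$, suppose $e=vv'$ is such an edge. By \cref{ax:GraphDecomp1} there is some $h\in H$ with $e\in G_h$, hence $v,v'\in V(G_h)$. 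If $h\in U$ then $v'\in A$, contradicting $v'\in B\setminus A$; if $h\in W$ then $v\in B$, contradicting $v\in A\setminus B$. So $\{A,B\}$ is a separation of the graph $G$.

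The bulk of the work is identifying the separator $A\cap B$. Let $C$ denote the claimed set $\bigl(\bigcup_{h\in U\cap W}V(G_h)\bigr)\cup\bigcup_{hh'\in F}\bigl(V(G_h)\cap V(G_{h'})\bigr)$, where $F=E_H(U\setminus W,W\setminus U)$. For the inclusion $C\subseteq A\cap B$: a vertex in $V(G_h)$ for some $h\in U\cap W$ clearly lies in both $A$ and $B$; and if $hh'\in F$ with $h\in U\setminus W$ and $h'\in W\setminus U$, then $V(G_h)\cap V(G_{h'})\subseteq A\cap B$. For the reverse inclusion, take $v\in A\cap B$. Then the set $S_v:=\{h\in H: v\in G_h\}$ meets both $U$ and $W$, and by \cref{ax:GraphDecomp2} the graph $H_v=H[S_v]$ is connected. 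If $S_v$ meets $U\cap W$ we are done, with $v$ lying in the first summand of $C$. Otherwise $S_v$ is partitioned into the nonempty sets $S_v\cap(U\setminus W)$ and $S_v\cap(W\setminus U)$; since $H_v$ is connected, it contains an edge $hh'$ with $h\in U\setminus W$ and $h'\in W\setminus U$, so $hh'\in F$ and $v\in V(G_h)\cap V(G_{h'})$, placing $v$ in the second summand of $C$.

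I do not expect any genuine obstacle here; this is a routine unpacking, and the one point that requires a moment's care is the case analysis for $v\in A\cap B$, where the connectedness axiom \cref{ax:GraphDecomp2} for the \gd\ is exactly what forces such a $v$ either to lie in a part indexed by $U\cap W$ or to lie in the adhesion across an $F$-edge — mirroring the familiar fact for \td s that the separator equals $V_t\cap V_{t'}$ at an edge. The analogue with \td s (\cite{DiestelBook16}*{Lemma 12.3.1}) is the guiding model, and the argument for \gd s is the same with "edge of $T-e$ crossing the bipartition" replaced by "edge of $H_v$ crossing the bipartition $U\mid W$", together with the extra possibility, absent in the tree case, that $H_v$ already meets $U\cap W$.
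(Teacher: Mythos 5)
Your proof is correct and follows essentially the same route as the paper's: verify the separation property via \cref{ax:GraphDecomp1}, then locate the separator using the connectedness of $H_v$ from \cref{ax:GraphDecomp2}, splitting into the case where $H_v$ meets $U\cap W$ and the case where it must contain an $F$-edge. The only cosmetic difference is that the paper reaches the crossing edge by choosing an $h$--$h'$ path in $H_v$ of minimum length, whereas you argue via a bipartition of $V(H_v)$; these are the same observation.
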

	
	\begin{proof}
		By \cref{ax:GraphDecomp1}, every edge $e$ of $G$ is contained in some part~$G[V_h]$. 
		Then $h\in U$ or $h\in W$, and $e\in G[A]$ or $e\in G[B]$ respectively.
		In particular, there is no edge in~$G$ between $A \setminus B$ and~$B \setminus A$.
		Thus, $\{A, B\}$ is a separation of~$G$.
		
		It is immediate from the definition of $A$ and $B$ that~$A \cap B \supseteq S$.
		To show $A \cap B \subseteq S$, let $v \in A \cap B$ be any vertex.
		By definition of $A$ and~$B$ there exist $h\in U$ with $v\in V_h$ and $h'\in W$ with $v\in V_{h'}$. Thus, $h,h'\in W_v$. Since $H[W_v]$~is connected~\cref{ax:GraphDecomp2}, it contains an $h$--$h'$ path from $U$ to~$W$; choose $h$ and~$h'$ so that this path has minimum length. Since $\{U,W\}$ is a separation of~$V(H)$, this means that either $h=h'\in U\cap W$ or $hh'\in F$. In either case, $v\in V_h\cap V_{h'}\subset S$ as desired.
	\end{proof}

	Recall that an \emph{end} of a graph~$G$ is an equivalence class of rays in~$G$, where two rays in~$G$ are \emph{equivalent} if for every finite set~$X$ of vertices of~$G$ the two rays have subrays in the same component of~$G-X$.
	As a first reflection of the relationship between the separations of $H$ and~$G$ observed in \cref{lem:graphdecompseparator}, let us show that the ends of a graph~$G$ often correspond naturally%
   \COMMENT{}
   to those of its models~$H$.

	To define this relationship, we use a tangle-like description of ends known as directions.
	A \emph{direction} in a graph~$G$ is a map~$f$, with domain the set of all finite vertex sets of~$G$, that assigns to every finite vertex set $X\subset V(G)$ a component $f(X)$ of~$G-X$ so that $f(X)\supset f(X')$ whenever~$X\subset X'$.
	Every end $\omega$ of~$G$ defines a direction $f_\omega$ in~$G$ by letting $f_\omega(X)$ be the component of $G-X$ that contains a subray of one (equivalently:~every) ray in~$\omega$.
	The map $\omega\mapsto f_\omega$ is a bijection between the ends of~$G$ and its directions~\cite{Ends}*{Theorem 2.2}.
	Therefore, in order to define a bijection between the ends of $H$ and~$G$ for an $H$-decom\-pos\-ition of~$G$, it suffices to define a bijection between the directions of~$H$ and of~$G$.
	
	Let $G$ be any graph, and let $(H,(V_h)_{h\in H})$ be a \gd\ of~$G$ of finite width. Every direction $f$ in~$G$ defines a direction $g_f$ in~$H$, as follows.
	For every finite set $Y\subset V(H)$, the set $X_Y:=\bigcup_{h\in Y}V_h$ of vertices in~$G$ is finite.
	Every $W_v$ with $v \in f(X_Y)$ lies entirely outside~$Y\!$, and hence by \cref{ax:GraphDecomp2} lies in one component of~$H-Y$.%
    \COMMENT{}
   This is the same component of~$H-Y$ for all these~$v$. Indeed, for adjacent $v,v'\in f(X_Y)$ consider any $h\in V(H)$ such that $v,v'\in V_h$, as given by~\cref{ax:GraphDecomp1}. Then $h\in W_v\cap W_{v'}$. In particular, $W_v\cap W_{v'}\ne\emptyset$, so the components of~$H-Y$ that contain~$W_v$ and~$W_{v'}$, respectively, are identical. As $f(X_Y)$ is connected, this implies that all of $\bigcup_{v \in f(X_Y)} W_v$ lies in one component of~$H-Y$;%
    \COMMENT{}
    let $g_f(Y)$ be this component. It is straightforward to verify that $g_f$ is a direction in~$H$.
		
	\begin{lemma}\label{lem:GraphDecDisplayingEnds}
   		For every honest $H$-decom\-pos\-ition of~$G$ of finite width and spread, and with connected parts, the map $f\mapsto g_f$ between the directions of~$G$ and those of~$H$ is bijective.
	\end{lemma}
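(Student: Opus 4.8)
The plan is to prove bijectivity by exhibiting an explicit two‑sided inverse of $f\mapsto g_f$: the ``dual'' map $g\mapsto f_g$ that turns a direction of~$H$ into one of~$G$, built by the very recipe defining $g_f$ but with the roles of $G$ and~$H$ swapped.

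First I would define~$f_g$. Given a direction~$g$ in~$H$ and a finite set $X\subseteq V(G)$, point‑finiteness makes $Y_X:=\bigcup_{v\in X}H_v$ a finite subset of~$V(H)$, so $g(Y_X)$ is a well‑defined component of $H-Y_X$; let $f_g(X)$ be the component of $G-X$ containing $\bigcup_{h\in g(Y_X)}G_h$. The checks that this is well defined mirror those already made for~$g_f$: the node set $g(Y_X)$ induces a connected subgraph of~$H$, so \emph{honesty} together with connectedness of the parts makes $\bigcup_{h\in g(Y_X)}G_h$ connected; it is non‑empty because the parts are; and it meets no vertex of~$X$, since a part~$G_h$ with $h\in g(Y_X)$ containing some $v\in X$ would force $h\in H_v\subseteq Y_X$, contradicting $h\notin Y_X$. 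Finally $X\subseteq X'$ gives $Y_X\subseteq Y_{X'}$, hence $g(Y_X)\supseteq g(Y_{X'})$, and therefore $f_g(X)\supseteq f_g(X')$; so $f_g$ is a direction in~$G$.

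Next I would verify $f_{g_f}=f$. Fix a direction~$f$ of~$G$ and a finite $X\subseteq V(G)$, and set $X^{*}:=X_{Y_X}=\bigcup_{h\in Y_X}V(G_h)$; this is finite (parts are finite) and contains~$X$, since by~\cref{ax:GraphDecomp1} each $v\in X$ lies in some part~$G_h$, whence $h\in Y_X$ and $v\in V(G_h)\subseteq X^{*}$. By definition $g_f(Y_X)$ is the component of $H-Y_X$ containing $\bigcup_{v\in f(X^{*})}H_v$. Choosing any $v\in f(X^{*})$ and a part $G_h\ni v$, we get $h\in H_v\subseteq g_f(Y_X)$, and $G_h$ meets $f(X^{*})\subseteq f(X)$ (directions are monotone). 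Hence $\bigcup_{h'\in g_f(Y_X)}G_{h'}$ is a connected subgraph of $G-X$ meeting~$f(X)$, so the component of $G-X$ containing it --- which is precisely $f_{g_f}(X)$ --- is $f(X)$. As $X$ was arbitrary, $f_{g_f}=f$.

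Finally I would verify $g_{f_g}=g$ by the symmetric argument. Fix a direction~$g$ of~$H$ and a finite $Y\subseteq V(H)$, and set $Y^{*}:=Y_{X_Y}=\bigcup_{v\in X_Y}H_v$ (finite, and $\supseteq Y$, using that the parts are non‑empty), so that $f_g(X_Y)$ is the component of $G-X_Y$ containing $\bigcup_{h\in g(Y^{*})}G_h$. Picking $h_0\in g(Y^{*})$ and a vertex $v_0\in V(G_{h_0})$ gives $v_0\in G_{h_0}\subseteq f_g(X_Y)$, hence $h_0\in H_{v_0}\subseteq g_{f_g}(Y)$, while also $h_0\in g(Y^{*})\subseteq g(Y)$; two components of $H-Y$ sharing~$h_0$ coincide, so $g_{f_g}(Y)=g(Y)$ for every~$Y$, i.e.\ $g_{f_g}=g$. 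Thus $f\mapsto g_f$ is a bijection with inverse $g\mapsto f_g$. No single step is a genuine obstacle; the places needing care are the well‑definedness of~$f_g$ (where honesty and finiteness of the parts are used) and the two ``doubling‑up'' passages from $X$ to $X^{*}$ and from $Y$ to $Y^{*}$, which is exactly where monotonicity of directions is essential.
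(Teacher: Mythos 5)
Your proof is correct and follows essentially the same route as the paper: you construct the candidate inverse $g\mapsto f_g$ by the same recipe (via the finite set $Y_X$ of nodes whose parts meet $X$, using honesty and connectedness of parts to see that $\bigcup_{h\in g(Y_X)}G_h$ is a connected subgraph of $G-X$), and you verify $f_{g_f}=f$ and $g_{f_g}=g$ by the same enlargement trick, passing from $X$ to $X_{Y_X}$ and from $Y$ to $Y_{X_Y}$ and invoking monotonicity of directions. The paper phrases the two verifications slightly differently (showing that both sides contain the non-empty set $f(X_{Y_X})$ resp.\ $g(Y_{X_Y})$), but that is only a cosmetic reorganisation of what you wrote.
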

	
	\noindent The bijection in \cref{lem:GraphDecDisplayingEnds} between the directions of $G$ and~$H$ induces a bijection between their ends, by~~\cite{Ends}*{Theorem 2.2} as mentioned earlier. That bijection is in fact a homeomorphism between the end spaces of $G$ and~$H$; see~\cite{DiestelBook25} for definitions.
	
	\begin{proof}[Proof of \cref{lem:GraphDecDisplayingEnds}]
Let the given decomposition of~$G$ be denoted as~$(H,\cV)$, where $\cV = (V_h)_{h\in H}$ as usual, and denote its (connected) parts by~$G_h$.	
We establish an inverse to the map $f \mapsto g_f$ by constructing a map $g\mapsto f_g$ from the directions of~$H$ to those of~$G$ such that $f_{g_f} = f$ for all~$f$%
		\COMMENT{}
		and $g_{f_g} = g$ for all~$g$.
		
		Let a direction $g$ of~$H$ be given. 
		To define~$f_g$, let a finite set $X\subset V(G)$ be given. Let $Y_X$ be the set of nodes $h \in H$ whose bag~$V_h$ contains some vertex of~$X$.
		Note that $Y_X$ is finite, since all the~$W_v$ are finite.
		The subgraph $\bigcup_{h \in g(Y_X)} G_h$ of~$G-X$ is connected, because $g(Y_X)\subset H-Y$ is connected, the parts $G_h\subset G$ are connected, and $(H, \cV)$ is honest.
		Let $f_g(X)$ be the unique component of $G-X$ that contains~$\bigcup_{h \in g(Y_X)} G_h$.

		To check that $f_g$ is a direction, let $X \subseteq X'$ be two finite sets of vertices of~$G$.
		Then~$Y_X \subseteq Y_{X'}$.
		Since $g$ is a direction, we have $g(Y_X) \supseteq g(Y_{X'})$.
		Hence $f_g(X)\supset \bigcup_{h \in g(Y_X)} G_h \supseteq \bigcup_{h \in g(Y_{X'})} G_h$.
		Now since $X \subseteq X'$, every component of $G - X'$ is contained in a unique component of $G - X$.
		As $f_g(X')$ and $f_g(X)$ both contain $\bigcup_{h \in g(Y_{X'})} G_h\ne\emptyset$,%
		\COMMENT{}
		the unique component of $G - X$ containing $f_g(X')$ is~$f_g(X)$.
		
For a proof that $f = f_{g_f}$ for every direction $f$ of~$G$, we show that for every finite set~$X \subseteq V(G)$ both $f(X)$ and~$f_{g_f}(X)$ contain~$f(X_{Y_X})\ne\emptyset$, and must therefore coincide.%
		\COMMENT{}
	As $X\subset X_{Y_X}$ we have ${f(X)\supseteq f(X_{Y_X})}$, since $f$ is a direction. For a proof of $f_{g_f}(X)\supseteq f(X_{Y_X})$ let $v\in f(X_{Y_X})$ be given. By~\cref{ax:GraphDecomp1} there is an~$h\in H$ such that $v\in V_h$.%
		\COMMENT{}
		Then $h\in W_v\subset g_f(Y_X)$ by definition of~$g_f$ and the choice of~$v$,%
		\COMMENT{}
		so $G_h\subset f_{g_f}(X)$ by definition of~$f_{g_f}$.%
		\COMMENT{}
		In particular, $v\in G_h\subset f_{g_f}(X)$ as desired.
		
To show $g = g_{f_g}$ for every direction $g$ of $H$, we show that for every finite set $Y$ of nodes of $H$ both $g(Y)$ and $g_{f_g}(Y)$ contain $g(Y_{X_Y}) \neq \emptyset$, and must therefore coincide.%
		\COMMENT{}
		Since $Y \subseteq Y_{X_Y}$, we have $g(Y) \supseteq g(Y_{X_Y})$ because $g$ is a direction of $H$.%
		\COMMENT{}
		For a proof of $g_{f_g}(Y) \supseteq g(Y_{X_Y})$, let $h \in g(Y_{X_Y})$ be given. Then $V_h \neq \emptyset$ since $(H, \cV)$ is honest;%
   \COMMENT{}
   pick $v \in V_h$.%
		\COMMENT{}
		Now $v \in V_h \subseteq f_g(X_Y)$ by definition of $f_g$ and the choice of~$h$,%
		\COMMENT{}
		so $h \in W_v \subseteq g_{f_g}(Y)$ by the definition of~$g_{f_g}$, as desired.%
		\COMMENT{}	\end{proof}

Next, let us show that not only do the ends of $H$ and~$G$ correspond bijectively, under the assumptions from \cref{lem:GraphDecDisplayingEnds}, but the sizes of the separators needed to distinguish these ends correspond too.

A finite-order separation $\{A,B\}$ of $G$ \emph{distinguishes} two ends $\omega_1,\omega_2$ of~$G$ if for some (and hence every) ray in~$\omega_1$ some subray lies in~$G[A]$ and for some (and hence every) ray in~$\omega_2$ some subray lies in~$G[B]$.
	Following Thomassen and Woess~\cites{ThomassenWoess}, let us call a graph \emph{\accessible} if there exists an integer $K \in \N$ such that every two of its ends are distinguished by a separation of order at most~$K$. Compare~\cref{subsec:windfall} for the origin of this notion.

\begin{lemma} \label{lem:AccessibleDecomp}
	Let $G$ be a graph with an honest $H$-decom\-pos\-ition of finitely bounded width, finite spread, and with connected parts. If $H$ is \accessible, so is~$G$.
\end{lemma}

\begin{proof}
	Let $(H,(V_h)_{h\in H})$ be the decomposition of~$G$ assumed to exist, let $n\in\N$ be an upper bound on the size of its parts, and let $k\in\N$ be such that every two ends of~$H$ are distinguished by a separation of order at most~$k$. We show that every two ends of~$G$ are distinguished by a separation of order at most~$nk$.%
   \COMMENT{}

Let $\omega_1$ and $\omega_2$ be two distinct ends of $G$, and let $f_1 := f_{\omega_1}$ and $f_2 := f_{\omega_2}$ be the two directions of~$G$ they define.
		Consider the directions $g_1 := g_{f_1}$ and $g_2 := g_{f_2}$ of $H$ as defined just before \cref{lem:GraphDecDisplayingEnds}.
		By~\cite{Ends}*{Theorem 2.2}, there exist ends $\eta_1$ and $\eta_2$ of $H$ which induce these directions $g_1$ and~$g_2$.
		Since $H$ is \accessible, it has a separation $\{U_1, U_2\}$ of order at most~$k$ such that, for $i=1,2$, the end~$\eta_i$ is represented by a ray in~$H[U_i]$, and thus $g_i(U_1\cap U_2)\subset H[U_i]$.

Let us apply \cref{lem:graphdecompseparator} to $\{U_1, U_2\}$ to obtain a separation $\{A_1, A_2 \}$ of $G$ with $A_i = \bigcup_{h \in U_i} V_h$. Note that $A_1 \cap A_2 = \bigcup_{h \in U_1 \cap U_2} V_h$: as $\{U_1,U_2\}$ is a separation not just of~$V(H)$ but of the graph~$H$, the set~$F$ in \cref{lem:graphdecompseparator} is empty.%
	\COMMENT{}
	Hence $\{A_1, A_2\}$ has order at most~$nk$; it remains to show that it distinguishes $\omega_1$ and~$\omega_2$.

By definition of~$f_i$, each~$\omega_i$ has a ray in~$f_i(A_1 \cap A_2)$. So it suffices to show that $V(f_i(A_1 \cap A_2))\subset A_i$, for $i=1,2$. For every $v\in G$ that is not in $A_1\cap A_2 = \bigcup_{h \in U_1 \cap U_2} V_h$, the graph $H[W_v]$ avoids~$U_1\cap U_2$, and hence by~\cref{ax:GraphDecomp2} lies in a component of~$H - (U_1\cap U_2) \ne \emptyset$.%
	\COMMENT{}
	For $v \in f_i(A_1 \cap A_2)$ this component is $g_i(U_1\cap U_2)\subset H[U_i]$, by definition of $g_{f_i} = g_i$. Pick $h\in W_v$ for any such~$v$. Then $v\in V_h$ and $h\in U_i$. This implies $v\in A_i$, as desired, by definition of~$A_i$.
	\end{proof}

\subsection{\boldmath From \td s to \gd s}\label{subsec:treedecomptographdecomp}
	
	In the remainder of this section we show how \gd s of a graph can be obtained from \td s of its normal covers, and how all these can be chosen canonically.
	The main result in this section will be \cref{main:TreeDecToGraphDec}.
	
	A \td~$\cT = (T,\cV)$ of a connected graph $G$ is \emph{regular} if all the separations of~$G$ induced by the edges of~$T$ are proper: if none has the form $\{A,V(G)\}$. In particular, then, $\cT$~cannot have empty adhesion sets and hence no empty bags,%
   \COMMENT{}
    so regular \td s of connected graphs are honest.%
	\COMMENT{}
	 All \td s relevant to us will be regular.

We shall be interested particularly in \td s~$\cT$ on which the automorphisms~$\phi$ of the graph being decomposed act naturally, in that they extend to automorphisms $(\phi,\psi)$ of~$\cT$. These automorphisms~$\psi$ are easily seen to be unique when $\cT$ is \proper:

\begin{lemma}\ifArXiv\else\cite{GraphDecArXiv}\fi\label{lem:atmostoneautom}
	If $\cT = (T,\cV)$ is a \proper\ \td\ of a graph~$G$, then for every automorphism $\phi$ of~$G$ there exists at most one automorphism $\psi$ of~$T$ such that $(\phi,\psi)$ is an automorphism of~$\cT$.\ifArXiv\else\qed\fi
	\end{lemma}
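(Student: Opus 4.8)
The plan is to assume that $(\phi,\psi)$ and $(\phi,\psi')$ are both automorphisms of $\cT$ and to deduce $\psi=\psi'$. Set $\rho:=\psi'\circ\psi^{-1}\in\Aut(T)$. From $\phi(V_u)=V_{\psi(u)}=V_{\psi'(u)}$ for every node $u$ of $T$ we obtain, after replacing $u$ by $\psi^{-1}(u)$, that $V_{\rho(u)}=V_u$ for every node $u$. Thus it suffices to show: if $\cT=(T,\cV)$ is regular and $\rho\in\Aut(T)$ satisfies $V_{\rho(u)}=V_u$ for all $u$, then $\rho=\id$. The basic fact I will use throughout is that such a $\rho$ also preserves the separations induced by the edges of $T$: for an oriented edge $(s,s')$ of $T$, the automorphism $\rho$ maps the component of $T-ss'$ containing $s$ onto the component of $T-\rho(s)\rho(s')$ containing $\rho(s)$, so if $\alpha_\cT(s,s')=(A_s,A_{s'})$ then $\alpha_\cT(\rho(s),\rho(s'))=(A_s,A_{s'})$ as well, the equality of the sides following from $V_{\rho(u)}=V_u$.

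Now suppose for contradiction that $\rho\ne\id$. By the classification of tree automorphisms, $\rho$ inverts an edge, or fixes a node without being the identity, or translates along an axis (the axis can also be produced directly, by translating along a shortest path from a node $u$ to $\rho(u)$). Each case is excluded by the defining property of regularity: no edge of $T$ induces a separation one of whose sides equals $V(G)$. \textbf{Inversion:} if $\rho(s)=s'$ and $\rho(s')=s$, the basic fact applied to $(s,s')$ gives $A_s=A_{s'}$; with $A_s\cup A_{s'}=V(G)$ this forces $A_s=V(G)$, a contradiction. \textbf{Fixed node:} since the fixed-node set of a tree automorphism is a subtree, there is an edge $t't$ with $\rho(t')=t'$ and $\rho(t)=:t''\ne t$, where $t''$ is a neighbour of $t'$. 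By the basic fact, the side $A_t$ of $\alpha_\cT(t',t)$ and the side $A_{t''}$ of $\alpha_\cT(t',t'')$ opposite $t'$ satisfy $A_t=A_{t''}$. The components of $T-t'$ containing $t$ and $t''$ are disjoint, so by (T2) every vertex of $A_t=A_{t''}$ lies in $V_{t'}$; hence $A_t\subseteq V_{t'}$, and since $V_{t'}$ is contained in the $t'$-side of $\alpha_\cT(t',t)$, that side equals $V(G)$ — contradiction.

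\textbf{Axis:} here $T$ contains a double ray $R=(\dots,r_{-1},r_0,r_1,\dots)$ with $\rho(r_i)=r_{i+k}$ for a fixed $k\ge1$; in particular $V_{r_i}=V_{r_{i+k}}$. Applying the basic fact at the edges $r_ir_{i+1}$ and $r_{i+k}r_{i+k+1}$ equates the union of bags over the component of $T-r_ir_{i+1}$ containing $r_i$ with the union over the properly larger component of $T-r_{i+k}r_{i+k+1}$ containing $r_{i+k}$, so (T2) forces $V_{r_{i+1}}\subseteq V_{r_i}$ for every $i$; with the periodicity this makes all $V_{r_i}$ equal to a single set $W$. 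Hence $\alpha_\cT(r_0r_1)=\{A^-,A^+\}$ has $A^-\cap A^+=V_{r_0}\cap V_{r_1}=W$ and $A^-\cup A^+=V(G)$, so by regularity $A^-\ne V(G)$ and we may pick $v\in A^+\setminus A^-$. Since $r_0$ lies on the $A^-$-side we have $v\notin V_{r_0}=W$, hence $v\notin V_{r_i}$ for every $i$; therefore the subtree $T_v:=T[\{u:v\in V_u\}]$, connected by (T2) and nonempty, avoids $R$ and so lies in a single component $C$ of $T-R$, a branch attached to exactly one $r_j$ (and $j\ge1$, as $C$ meets the $A^+$-side). But $\rho$ maps $C$ to the branch attached to $r_{j+k}\ne r_j$, and $V_{\rho(u)}=V_u$ places $v$ in a bag of that branch too; thus $T_v$ meets two distinct components of $T-R$, impossible for a connected subtree disjoint from $R$. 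This contradiction completes the proof.

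I expect the axis case to be the main obstacle: it requires producing the axis (or invoking the Tits classification cleanly), proving that all bags along the axis coincide, and then combining the $\rho$-periodicity of the off-axis branches with axiom (T2) to reach the contradiction. By contrast, the inversion and fixed-node cases each collapse to a single line once the separation-preservation observation of the first paragraph is available.
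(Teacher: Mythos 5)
Your proof is correct, and it takes a genuinely different route from the paper's. Both proofs begin with the same reduction (set $\rho:=\psi'\circ\psi^{-1}$ and show that if $(\id,\rho)$ is an automorphism of $\cT$ then $\rho=\id$) and both rely on the same basic fact that such a $\rho$ preserves the oriented separations induced by edges. From there the paths diverge: the paper shows directly that, for each induced oriented separation $(A,B)$, the set $T_{(A,B)}$ of oriented edges inducing it is a finite directed path — using regularity to rule out both a reversal (which would force $(A,B)=(B,A)$), an incomparable pair (which would force one side to equal $V(G)$), and a ray (since along any ray the growing sides exhaust $V(G)$ by (T1)) — and then observes that a finite directed path has no nontrivial automorphism. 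You instead invoke the classification of tree automorphisms into inversions, elliptic elements and hyperbolic translations, and rule out each nontrivial type via regularity, with the hyperbolic case requiring the most work (proving all bags along the axis coincide, then deriving a contradiction from the periodicity of the off-axis branches). Your argument is clean and correct; what the paper's approach buys is self-containment — it avoids importing the Tits-type trichotomy, building the needed structure from the partial order on induced separations alone — while your approach buys a more conceptual case split that makes the role of regularity in each scenario vivid. Minor presentational point: in your fixed-node case it would be worth saying explicitly why the fixed set is nonempty in that case (the classification delivers it), and in the axis case it would help to flag that the trichotomy is mutually exclusive or, if not, that you only need coverage, not exclusivity.
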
%
\ifArXiv
\begin{proof}
Consider automorphisms $\psi,\psi'$ of~$T$ such that both $(\phi,\psi)$ and $(\phi,\psi')$ are automorphisms of~$\cT$. Then so is $(\id,\psi'\circ\psi^{-1})$. If $\psi'\circ\psi^{-1} = \id$, then $\psi'=\psi$. Thus, it is left to show that the only automorphism $\psi$ of $T$ for which $(\id,\psi)$ is an automorphism of $\cT$ is $\psi = \id$.
\endgraf
Let us show first that it is enough to show that for every oriented separation $(A,B)$ induced by $\cT$, the graph $T_{(A,B)}$ which consists of all oriented edges of $T$ that induce $(A,B)$ is a finite directed path.%
\COMMENT{}
Assuming this, let~$(\id, \psi)$ be any automorphism of $\cT$. Let $(t,t')$ be an oriented edge of $T$, and let $(A,B)$ be the separation induced by $(t,t')$. Since $(\id, \psi)$ is an automorphism of $\cT$, the automorphism $\psi$ of $T$ maps every oriented edge of $T$ to an oriented edge of $T$ that induces the same separation. Thus, $\psi$ restricts to an automorphism of $T_{(A,B)}$. As the only automorphism of a finite directed path is the identity map, the restriction of $\psi$ to $T_{(A,B)}$ is the identity map. Hence, $(\psi(t), \psi(t')) = (t,t')$. Since the edge oriented $(t,t')$ of $T$ was chosen arbitrarily, $\psi$ is the identity map on the whole decomposition tree $T$.
\endgraf
It remains to show, then, that $T_{(A,B)}$ is a finite directed path, for every separation $(A,B)$ induced by $\cT$. Let~$(A,B)$ be a separation induced by $\cT$. Since $\cT$ is regular, $T_{(A,B)}$ does not contain $(t,t')$ and $(t',t)$ for any edge $tt'$ of~$T$; otherwise, $(A,B)=(B,A)$, and thus $(A,B)=(V(G),V(G))$ - contradicting the fact that $\cT$ is regular. Let $(t_0,t_1)$ and $(t_2,t_3)$ be two oriented edges of $T$ which both induce~$(A,B)$, then either~$(t_0,t_1) \leq (t_2,t_3)$ or~$(t_2,t_3) \leq (t_0,t_1)$, i.e., the unique $\{t_0,t_1\}$--$\{t_2,t_3\}$ path in $T$ is $t_1Tt_2$ or $t_3Tt_0$, respectively. Indeed, if not it follows from the compatibility of the partial order on the oriented edges of $\cT$ and its induced separations%
	\COMMENT{}
	that $(A,B) \leq (B,A)$ or $(B,A) \leq (A,B)$; in particular, $A = V(G)$ or $B = V(G)$~-- a contradiction to $\cT$ being regular. In both cases this compatibility also shows that all oriented edges of $T$ between these two edges induce $(A,B)$ as well.%
\COMMENT{}
This shows that the underlying graph of $T_{(A,B)}$ is connected and also that at each node $t$ of $T_{(A,B)}$ there is at most one outgoing edge which induces $(A,B)$ and at most one outgoing edge which induces $(B,A)$. Using~\cref{ax:TreeDecomp1}, it is not hard to show that for every ray $t_0t_1t_2\dots$ in $T$ we have $V(G) = \bigcup_{i \in \N} A_i$, where $(A_i,B_i)$ is the separation induced by $(t_i,t_{i+1})$.%
   \COMMENT{}
   Since $\cT$ is regular, $A,B \neq V(G)$, and thus the underlying graph of $T_{(A,B)}$ does not contain a ray. Hence, $T_{(A,B)}$ is finite.%
\COMMENT{}
So, $T_{(A,B)}$ is a finite directed path.
\end{proof}

We believe that \cref{lem:atmostoneautom} extends to $H$-decompositions for graphs~$H$ other than trees, with an appropriate adaptation of the regularity requirement. For example, the extended version is easy to prove when $H$ is 2-connected and its bonds induce proper separations of~$G$, since this implies that $V_h\ne V_{h'}$ for distinct nodes $h,h'\in H$.%
   \COMMENT{}
   \medbreak
	\fi

	Let $\Gamma$ be a group of automorphisms of~$G$. A \gd\ $(H,\cV)$ or~$(H,\cG)$ of~$G$ is \emph{$\Gamma$-canonical} if $\Gamma$ acts on~$(H,\cV)$ or~$(H,\cG)$, respectively, via $\phi\mapsto (\phi, \psi)$%
	\COMMENT{}
	for suitable automorphisms $\psi =: \psi_\phi$ of~$H$.%
	\COMMENT{}
	Once the action of~$\Gamma$ on~$(H,\cV)$ or~$(H,\cG)$ is fixed, or if it is unique by \cref{lem:atmostoneautom}, we usually write~$\phi$ instead of~$\psi_\phi$ to reduce clutter.
If $(H,\cV)$ or~$(H,\cG)$ is $\Aut(G)$-canonical, we just call it a \emph{canonical} \gd\ of~$G$.
	When $(H,\cV)$ is a \td, this definition formalises the notion of canonicity used informally for \td s in the literature~\cite{confing,CDHH13CanonicalAlg}.%
	\COMMENT{}

\medbreak\penalty-200
	
	The \gd s given by the following construction will be the key objects in \cref{main:TreeDecToGraphDec}.
	
\begin{construction}[Graph-decom\-pos\-itions defined by \td s via coverings] \label{def:GraphDecompViaCovering}
		\hfill\break
   Let $p\colon C\to G$ be any normal covering of a connected graph~$G$. Let $\cD\subset \Aut(C)$ be the group of deck transformations, and let $\cT=(T,\cV_T)$ be any regular \dcan\ \td\ of~$C$.
	By \cref{lem:atmostoneautom},%
   \COMMENT{}
   every $\phi\in\cD$ acts uniquely on~$\cT$, and in particular on~$T$, so that $\phi(C[V_t])=C[V_{\phiT(t)}]$%
	\COMMENT{}
   for all $t\in T$.%
	\COMMENT{}
		The map $t\mapsto p(C[V_t])$ is constant on the orbits in~$V(T)$ under the action of~$\cD$ on~$T$, since for every $\phi \in \cD$ we have 
		\begin{equation}\label{eq:constant}
			p(C[V_t]) = p(\phi(C[\,V_t\,])) = p(C[V_{\phiT(t)}]).
		\end{equation}
		Let $H$ be the orbit graph%
  \COMMENT{}
   $T/\cD$ of the action of $\cD$ on~$T$.
		Let $\cV_G := (V_h)_{h\in H}$, where $V_h = p(V_t)$ for any~$t\in h$. These~$V_h$ are well-defined%
	\COMMENT{}
	by~\cref{eq:constant}. We shall prove in a moment that $(H,\cV_G)$ is a \gd\ of~$G$.

Let $\cG:=(G_h)_{h\in H}$, where $G_h = p(C[V_t])$ for any~$t\in h$. These~$G_h$ are well-defined%
	\COMMENT{}
	by~\cref{eq:constant}; we choose them as the parts of~$(H,\cV_G)$. Note that, while the parts~$C[V_t]$ of~$(T,\cV_T)$ were induced subgraphs of~$C$, their images~$G_h$ under~$p$ need not be induced in~$G$.

Let $\cH = (H_v)_{v\in G}$ be the family of subgraphs $H_v$ of~$H$ whose nodes and edges are the $\cD$-orbits in~$T$ of the nodes and edges of~$T_{\hat v} := T[\{\,t\in T\mid \hat v\in V_t\}]$,%
   \COMMENT{}
   where $\hat v$ is any vertex of~$C$ in~$p^{-1}(v)$. These $H_v$ are independent of the choice of~$\hat v$ since, as $p$ is normal, $\cD$~acts transitively on the fibres of~$p$.%
   \COMMENT{}
   Note that these~$H_v$ are spanning subgraphs of~$H[\{\,h\mid v\in V_h\}]$; we may thus choose these~$H_v$ as the co-parts of~$(H,\cV_G)$. Once more, the~$H_v$ need not be induced subgraphs of~$H$, although the co-parts~$T_{\hat v}$ of~$(T,\cV_T)$ are induced in~$T$.%
   \COMMENT{}

We say that $\cT$ \emph{defines, via~$p$,} the \gd\ $(H,\cV_G)$ of~$G$ with parts~$G_h$ and co-parts~$H_v$.%
   \COMMENT{}
	\end{construction}

\cref{def:GraphDecompViaCovering} can also be applied to \gd s, rather than \td s, of the normal cover~$C$. But we shall not need that level of generality here.
	
\begin{lemma}\label{lem:tdcDefinesHdecomp}
   Every $(H,\cV_G)$ constructed as in \cref{def:GraphDecompViaCovering} is an honest \gd\ of~$G$. Its parts~$G_h$ and co-parts~$H_v$ satisfy \cref{ax:GraphDecomp1parts} and~\cref{ax:GraphDecomp2coparts}.%
   \COMMENT{}
	\end{lemma}
	
\begin{proof}
With its induced parts~$C[V_t]$ and co-parts $T_{\hat v}$, the given \td\ $(T,\cV_T)$ of~$C$ satisfies not only \cref{ax:GraphDecomp1} and~\cref{ax:GraphDecomp2} but even \cref{ax:GraphDecomp1parts} and~\cref{ax:GraphDecomp2coparts}. It is straightforward to check that $(H,\cV_G)$ inherits \cref{ax:GraphDecomp1parts} and~\cref{ax:GraphDecomp2coparts} from~$(T,\cV_T)$ for our choice of parts~$G_h$ and co-parts~$H_v$.
As $(T,\cV_T)$ is regular, and hence honest, $(H,\cV_G)$~is honest too.%
   \COMMENT{}
	\end{proof}

\ifArXiv
	The \gd s $(H, \cG)$ from \cref{def:GraphDecompViaCovering} also satisfy an analogue of \cref{ax:GraphDecomp2} for edges:
	\begin{enumerate}[label=(H\arabic*)]
		\setcounter{enumi}{2}
		\item \label{ax:GraphDecomp3} For every edge~$e\in G$, the graph $H[\{\,h\in H\mid e\in G_h\,\}]$ is connected.
	\end{enumerate}%
   \COMMENT{}
	
	\noindent 
Note that \cref{ax:GraphDecomp3} refers to a given choice of parts~$G_h$. This is deliberate, and we are always free to choose these as~$G[V_h]$. But in the context of \cref{def:GraphDecompViaCovering} that would be unnatural when $G_h$ is not an induced subgraph of~$G$, because $G$ has `external' edges on~$V_h$ that are not $p$-images of edges of some~$C[T_t]$ with $t\in h$.\looseness=-1

The decompositions $(H, \cG)$ from \cref{def:GraphDecompViaCovering} also satisfy the analogue

	\begin{enumerate}[label={\rm(H\arabic*$'$)}]
		\setcounter{enumi}{2}
		\item \label{ax:GraphDecomp3edgecobags} For every edge~$e\in G$, the subgraph~$H_e$ of~$H$ is connected
	\end{enumerate}

\noindent
   of \cref{ax:GraphDecomp2coparts} for edges if we define the~$H_e$ in the natural way: as the subgraphs of~$H$ whose nodes and edges are the $\cD$-orbits in~$T$ of the subtree $T_{\hat u}\cap T_{\hat v}$ of~$T$, where $\hat e = \hat u\hat v$ is any lift of~$e=uv$ under~$p$.
The proofs of \cref{ax:GraphDecomp3} with the~$G_h$ from \cref{def:GraphDecompViaCovering}, and of~\cref{ax:GraphDecomp3edgecobags} with the~$H_e$ chosen as above, are analogous to the proofs for vertices: they reduce these properties to the analogous properties of the \td~$(T,\cV_T)$ of~$C$.\looseness=-1

Condition~\cref{ax:GraphDecomp3} is a familiar property of \td s. It hinges on the fact that any non-empty intersection of two subtrees of a (decomposition) tree is connected. Connected subgraphs of other graphs~$H$ can have disconnected intersections, though, which is why \cref{ax:GraphDecomp3} can fail in \gd s not induced by \cref{def:GraphDecompViaCovering}:
	\begin{example*}
		Let $G:=K^2$ with vertices~$u,v$ and let~$H:=C_4=abcda$. Let $G_a:=G_c:=G$, let $G_b:=(\{u\},\emptyset)$ and let~$G_d:=(\{v\},\emptyset)$.
		Then $(H,\cG)$ satisfies \cref{ax:GraphDecomp1parts} and \cref{ax:GraphDecomp2}. However, the set $\{\,h\in H\mid uv\in G_h\}=\{a,c\}$ is disconnected in~$H$.\qed
	\end{example*}
	
	\else
The \gd s $(H, \cG)$ from \cref{def:GraphDecompViaCovering} also satisfy the analogues of \cref{ax:GraphDecomp2} and~\cref{ax:GraphDecomp2coparts} for edges~$e$ of~$G$, instead of vertices~$v$. While this is familiar from \td s, it is not the case for \gd s in general. See~\cite{GraphDecArXiv} for more. \medbreak
	\fi

	The graph~$H$ obtained in \cref{def:GraphDecompViaCovering} can in general be infinite even when $G$ is finite.%
	\COMMENT{}
	However this will not happen if $\cT$ has finite spread (which will always be the case):

	
\begin{lemma}\ifArXiv\else\cite{GraphDecArXiv}\fi \label{lem:FiniteDecGraphPointFinite}
	The co-parts~$H_v$%
   \COMMENT{}
   of the \gd\ $(H, \cV_G)$ constructed in \cref{def:GraphDecompViaCovering} are finite if the \td~$\cT\!$ defining it has finite spread. If both the~$H_v$ and $G$ are finite, then $H$ is finite.\looseness=-1\ifArXiv\else\qed\fi
	\end{lemma}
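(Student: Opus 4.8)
The plan is to reduce both assertions to a single observation. Fix a vertex $v$ of $G$ and, for the moment, a lift $\hat v\in p^{-1}(v)$. I claim that $H_v:=\{h\in H: v\in G_h\}$ is precisely the image of the node set $T_{\hat v}:=\{t\in T:\hat v\in V_t\}$ under the quotient map $V(T)\to V(H)=V(T)/\cD$. One inclusion is immediate from the definition of the parts: if $\hat v\in V_t$ then $v=p(\hat v)\in p(C[V_t])=G_h$ for the $\cD$-orbit $h$ of~$t$. For the converse, let $h\in H_v$ and pick $t\in h$; since $v\in G_h=p(C[V_t])$, some lift $\hat w\in p^{-1}(v)$ lies in~$V_t$. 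Because $p$ is a normal covering, $\cD$ acts transitively on the fibre~$p^{-1}(v)$, so some $\phi\in\cD$ maps $\hat w$ to~$\hat v$; and since $\cT$ is \dcan\ and regular, $\phi$ acts on $\cT$ in the sense of \cref{lem:atmostoneautom}, so that $\hat v=\phi(\hat w)\in\phi(V_t)=V_{\phi(t)}$. Hence $\phi(t)\in T_{\hat v}$, and $\phi(t)$ lies in the same $\cD$-orbit~$h$ as~$t$, which proves the claim.

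The first assertion now follows at once: if $\cT$ is point-finite, then $T_{\hat v}$ is finite for every lift $\hat v$, hence so is its image $H_v$ for every vertex~$v$ of~$G$; that is, $\cH$ is point-finite. For the vertex set of~$H$ in the second assertion, assume $\cH$ is point-finite and $G$ is finite. Since $\cT$ is regular, every part $V_t$ is non-empty, so every part $G_h=p(C[V_t])$ contains a vertex of~$G$; thus each node $h$ of $H$ lies in $H_v$ for some $v\in V(G)$, and $V(H)=\bigcup_{v\in V(G)}H_v$ is a finite union of finite sets, hence finite.

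It remains to bound $E(H)$. Here the plan is to run the same argument for adhesion sets in place of parts. Every edge $hh'$ of $H$ comes from an edge $tt'$ of $T$ with $t\in h$, $t'\in h'$, and its adhesion set $V_t\cap V_{t'}$ is non-empty by regularity (as in the proof of \cref{lem:tdcDefinesHdecomp}); moreover $p(V_t\cap V_{t'})$ is a non-empty subset of the finite set~$V(G)$ that depends only on the orbit $hh'$, by the same computation as in~\eqref{eq:constant} applied to the adhesion sets. Only finitely many subsets of $V(G)$ arise this way; fixing such a subset~$S$, a vertex $v\in S$ and a lift $\hat v$, the transitivity argument above shows that every edge of $H$ whose associated subset is $S$ is represented by an edge of $T$ with both endvertices in~$T_{\hat v}$. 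When $\cT$ is point-finite — which, by the first assertion, is exactly the regime producing a point-finite~$\cH$ that one wants here — the set $T_{\hat v}$ is finite, so as a subtree of~$T$ it spans only finitely many edges; hence only finitely many edges of $H$ are associated with~$S$, and summing over the finitely many~$S$ gives that $E(H)$ is finite.

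I expect the one genuinely load-bearing step to be the key observation of the first paragraph, and within it the inclusion $H_v\subseteq\{[s]:s\in T_{\hat v}\}$: the point is to combine transitivity of the deck action on a single fibre with $\cD$-canonicity of~$\cT$ so as to slide an arbitrary lift of $v$ onto the prescribed fibre vertex $\hat v$ while controlling which bag of~$\cT$ it lands in. The only other delicate point is the use of point-finiteness of~$\cT$ in the $E(H)$-bound; if one insists on deriving finiteness of~$H$ from point-finiteness of~$\cH$ alone, one would instead have to argue directly that a finitely generated group (such as $\cD$, a quotient of the finite-rank free group $\pi_1(G)$) acting on a tree with finitely many vertex orbits has finitely many edge orbits, which is where the remaining work would go.
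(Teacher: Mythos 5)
Your first paragraph (the key observation that $H_v$ is the image of $T_{\hat v}$ under $T\to T/\cD$, proved by sliding an arbitrary lift of $v$ onto $\hat v$ via transitivity of $\cD$ on fibres and $\cD$-canonicity of $\cT$) and the second paragraph up to $V(H)$ finite are exactly the paper's argument. The gap is in the third paragraph, where you bound $E(H)$.

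The lemma's second assertion assumes only that $\cH$ is point-finite (and $G$ finite), not that $\cT$ is point-finite; your $E(H)$-argument explicitly invokes point-finiteness of $\cT$ to conclude that $T_{\hat v}$ is a finite subtree, so it proves a weaker statement than the one in the lemma. You flag this yourself at the end, but the fallback you sketch (edge-orbit finiteness for a finitely generated group acting on a tree with finitely many vertex orbits) is both non-trivial and unnecessary. The simpler observation, which the paper uses, is that point-finiteness of $\cH$ already controls the edges: by definition $H_v = H[\{h : v\in G_h\}]$ is a finite \emph{graph}, so each $H_v$ has finitely many edges as well as nodes. Together with $H \subseteq \bigcup_{v\in V(G)} H_v$ — which holds because every part $V_t$ and every adhesion set $V_t\cap V_{t'}$ of the regular tree-decomposition $\cT$ of the connected graph $C$ is non-empty, so $T = \bigcup_{\hat v\in V(C)} T_{\hat v}$ as graphs, including edges — finiteness of $G$ then gives $H$ finite directly, with no need to detour through $\cT$ or through adhesion sets of $\cH$. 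In short, you should drop the third paragraph and instead note that honesty (non-empty $G_h\cap G_{h'}$ for $hh'\in E(H)$, which you already know from \cref{lem:tdcDefinesHdecomp}) forces every edge of $H$ to lie in some $H_v$, and each such $H_v$ is finite as a graph by the point-finiteness hypothesis on $\cH$.
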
%
   \COMMENT{}
	\ifArXiv\begin{proof} The first assertion is immediate from the definition of~$H_v$; note that $H_v$ has only finitely many edges as well as nodes, since all its edges are orbits of edges of some finite tree~$T_{\hat v}\subset T$.%
   \COMMENT{}
	\endgraf
		As~$\cT$, being regular by assumption, is honest, we have~$T \subseteq \bigcup_{\hat{v} \in V(C)} T_{\hat v}$.
		Since $H = T/\cD$, this implies~$H \subseteq \bigcup_{v \in V(G)} H_v$.
		Hence  if both the~$H_v$ and~$G$ are finite, then so is~$H$.
	\end{proof}
	\fi
	
	For our proof of \cref{main:KeyTheorem} we want the \gd s~$(H,\cG)$ of~$G$ obtained via \cref{def:GraphDecompViaCovering} to be canonical. If all the automorphisms of~$G$ are induced via~$p$ by automorphisms of~$C$ then, as we shall see in \cref{lem:canonicity}, the canonicity of the \td~$\cT$ in \cref{def:GraphDecompViaCovering} will ensure this.%
	\COMMENT{}
	In \cref{lem:liftautomorphism} we provide a sufficient condition on~$p$ to ensure this,%
   \COMMENT{}
   that all the automorphisms of~$G$ `lift' to~$C$ in this way.
	
	In general, given a covering $p \colon C \to G$ of a connected graph~$G$, a \emph{lift} of an automorphism~$\phi$ of~$G$ is an automorphism $\hat{\phi}$ of~$C$ such that $p \circ \hat{\phi} = \phi \circ p$.
	Lifts of automorphisms are unique up to composition with deck transformations, since for any two lifts $\hat{\varphi},\hat{\varphi}'$ of~$\varphi$ their composition $\hat{\varphi}'\circ \hat{\varphi}^{-1}$ is a deck transformation.
	
\cref{lem:liftautomorphism} below says that the automorphisms of~$G$ all lift to~$C$ if the covering $p\colon C\to G$ is canonical (see \cref{subsec:coverings}):
	
	\begin{lemma}[\cite{djokovic1974automorphisms}*{Theorem 1}]\label{lem:liftautomorphism}
		Let $G$ be a connected graph, and $p \colon C \to G$ a \can\ normal covering.
		Then every automorphism $\phi$ of $G$ lifts to an automorphism of~$C$.%
	\COMMENT{}
	\end{lemma}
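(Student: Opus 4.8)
The plan is to reduce the statement to the standard lifting criterion for covering spaces (Hatcher~\cite{Hatcher}*{Prop.~1.33}): a continuous map from a connected, locally path-connected space into~$G$ lifts through~$p$ as soon as the image of its fundamental group lies in the characteristic subgroup of~$p$. Fix a basepoint $\hat x_0\in C$, put $x_0:=p(\hat x_0)$, and let $S:=S_{x_0}=p_*\pi_1(C,\hat x_0)$ be the characteristic subgroup of~$p$. Since $p$ is a \emph{normal} covering, $p_*\pi_1(C,\hat x)=S_{p(\hat x)}$ for every $\hat x\in C$, independently of which point of a fibre $\hat x$ is; this is the reason the subgroups $S_x$ were introduced in \cref{subsec:coverings}.

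First I would construct a lift of~$\phi$. Consider the map $\phi\circ p\colon C\to G$, which sends $\hat x_0$ to $\phi(x_0)$. Its fundamental-group image is $(\phi\circ p)_*\pi_1(C,\hat x_0)=\phi_*(S_{x_0})$, and this equals $S_{\phi(x_0)}$ precisely because $p$ is \can. On the other hand, for any $\hat x_1\in p^{-1}(\phi(x_0))$ we have $p_*\pi_1(C,\hat x_1)=S_{\phi(x_0)}$ by normality. Hence the lifting criterion applies — with equality of subgroups, not merely containment — and for a fixed choice of $\hat x_1\in p^{-1}(\phi(x_0))$ there is a continuous map $\hat\phi\colon C\to C$ with $p\circ\hat\phi=\phi\circ p$ and $\hat\phi(\hat x_0)=\hat x_1$. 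On each sheet of the covering, $\hat\phi$ is obtained by composing $p$, then~$\phi$, then a local inverse of~$p$, so it carries vertices to vertices and edges to edges: it is a morphism of graphs.

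Next I would show $\hat\phi$ is a graph automorphism by exhibiting an inverse. Running the same argument for $\phi^{-1}$, but with the basepoints $\hat x_0$ and $\hat x_1$ interchanged, produces a lift $\psi\colon C\to C$ of $\phi^{-1}\circ p$ with $\psi(\hat x_1)=\hat x_0$; here one invokes canonicity of~$p$ for the automorphism $\phi^{-1}$ at the vertex $\phi(x_0)$, which gives $(\phi^{-1})_*(S_{\phi(x_0)})=S_{x_0}$. Then $\psi\circ\hat\phi$ is a lift of~$p$ through~$p$ fixing~$\hat x_0$, so by uniqueness of lifts on the connected space~$C$ it equals~$\id_C$; symmetrically, $\hat\phi\circ\psi=\id_C$. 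Thus $\hat\phi$ is a bijective graph morphism whose inverse~$\psi$ is also a graph morphism, i.e.\ an automorphism of~$C$, and it lifts~$\phi$ by construction.

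I do not expect a genuine obstacle here — this is essentially~\cite{djokovic1974automorphisms}*{Theorem~1}. The only point requiring care is the bookkeeping of basepoints, making sure the canonicity hypothesis $\phi_*(S_x)=S_{\phi(x)}$ is invoked at the correct vertex and for both $\phi$ and~$\phi^{-1}$. If one prefers to stay within the combinatorial framework of \cref{sec:ToolsAndTerminology}, the same proof can be phrased with walks: send a vertex $\hat v$ of~$C$ to the endpoint of the lift, started at~$\hat x_1$, of the walk $\phi(p(\hat W))$ for any $\hat x_0$--$\hat v$ walk $\hat W$ in~$C$; well-definedness is exactly the assertion that $\phi$ maps closed walks at~$x_0$ representing elements of $S_{x_0}$ to closed walks representing elements of $S_{\phi(x_0)}$, which is what canonicity says, and bijectivity again follows by applying the construction to~$\phi^{-1}$.
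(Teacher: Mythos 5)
Your proof is correct, and the application of the lifting criterion together with the bookkeeping of basepoints and of canonicity for both $\phi$ and $\phi^{-1}$ is exactly as it should be. The paper does not give a proof of this lemma at all — it cites it to Djokovi\'c's Theorem~1 — and your argument is the standard one underlying that reference, so there is no alternative route in the paper to compare against.
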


	
	\begin{lemma}\label{lem:canonicity}
		The \gd s $(H,\cG)$ from \cref{def:GraphDecompViaCovering} are canonical if $\cT\!$ and $p$ are canonical.%
	\COMMENT{}
	\end{lemma}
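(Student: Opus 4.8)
The plan is to build, for every automorphism $\phi$ of~$G$, an automorphism $\psi_\phi$ of~$H$ for which $(\phi,\psi_\phi)$ is an automorphism of~$\cH$, by lifting $\phi$ to~$C$, transporting it through the canonical \td~$\cT$ to an automorphism of the decomposition tree~$T$, and then pushing this down to the orbit graph $H=T/\cD$. First I would use that $p$ is canonical: by \cref{lem:liftautomorphism} there is a lift $\hat\phi\in\Aut(C)$ of~$\phi$, so $p\circ\hat\phi=\phi\circ p$. Since $\cT$ is canonical, $\hat\phi$ acts on~$\cT$; I let $\psi_{\hat\phi}$ be an automorphism of~$T$ with $(\hat\phi,\psi_{\hat\phi})$ an automorphism of~$\cT$, which is the unique such automorphism of~$T$ by \cref{lem:atmostoneautom} as $\cT$ is regular. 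The same applies to each deck transformation $\delta\in\cD$, giving the unique automorphisms $\psi_\delta$ of~$T$ through which $\cD$ acts on~$T$, and with respect to which $H=T/\cD$.

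Next I would show that $\psi_{\hat\phi}$ descends to a graph automorphism $\bar\psi_{\hat\phi}$ of~$H$. The crucial point is that $\hat\phi$ normalises~$\cD$ inside $\Aut(C)$: for $\delta\in\cD$ one has $p\circ(\hat\phi\delta\hat\phi^{-1})=\phi\circ p\circ\delta\circ\hat\phi^{-1}=\phi\circ p\circ\hat\phi^{-1}=p$, so $\hat\phi\delta\hat\phi^{-1}\in\cD$. Feeding this through the uniqueness part of \cref{lem:atmostoneautom}, applied to composites of automorphisms of~$\cT$, yields the identity $\psi_{\hat\phi}\circ\psi_\delta\circ\psi_{\hat\phi}^{-1}=\psi_{\hat\phi\delta\hat\phi^{-1}}$ for every $\delta\in\cD$. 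Hence $\psi_{\hat\phi}$ carries the $\cD$-orbit $\{\psi_\delta(t):\delta\in\cD\}$ of a node~$t$ onto the $\cD$-orbit of $\psi_{\hat\phi}(t)$, and likewise for edges, so it induces an automorphism $\bar\psi_{\hat\phi}$ of $H=T/\cD$.

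I would then check that $(\phi,\bar\psi_{\hat\phi})$ is an automorphism of~$\cH$ by the computation $G_{\bar\psi_{\hat\phi}(h)}=p(C[V_{\psi_{\hat\phi}(t)}])=p(C[\hat\phi(V_t)])=p(\hat\phi(C[V_t]))=\phi(p(C[V_t]))=\phi(G_h)$, valid for any $h\in H$ and $t\in h$ since $G_h=p(C[V_t])$, $\hat\phi(V_t)=V_{\psi_{\hat\phi}(t)}$, and $p\circ\hat\phi=\phi\circ p$. Finally I would verify that $\phi\mapsto(\phi,\bar\psi_{\hat\phi})$ is an action of $\Aut(G)$ on~$\cH$. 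It does not depend on the chosen lift, because any two lifts of~$\phi$ differ by some $\delta\in\cD$, with $\psi_{\delta\hat\phi}=\psi_\delta\circ\psi_{\hat\phi}$ by uniqueness, and $\psi_\delta$ sends every node of~$T$ into its own $\cD$-orbit, hence descends to $\id_H$. And it is a homomorphism, because $\hat\phi_1\circ\hat\phi_2$ is a lift of $\phi_1\circ\phi_2$ with $\psi_{\hat\phi_1\hat\phi_2}=\psi_{\hat\phi_1}\circ\psi_{\hat\phi_2}$, which descends to $\bar\psi_{\hat\phi_1}\circ\bar\psi_{\hat\phi_2}$, while $\id_G$ lifts to $\id_C$ and hence $\bar\psi=\id_H$.

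The step I expect to need the most care is the descent from~$T$ to~$T/\cD$: one must check that \cref{lem:atmostoneautom} genuinely lets one convert the normalisation $\hat\phi\,\cD\,\hat\phi^{-1}=\cD$ in $\Aut(C)$ into the conjugation identity among the~$\psi$'s in $\Aut(T)$, and that independence of the lift and functoriality survive the passage to the orbit graph. Everything else is a routine unwinding of \cref{def:GraphDecompViaCovering} and the definitions of `canonical' for coverings and for \td s.
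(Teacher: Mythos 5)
Your proposal is correct and follows essentially the same route as the paper: lift $\phi$ to $\hat\phi$ via \cref{lem:liftautomorphism}, act on $T$ uniquely by \cref{lem:atmostoneautom}, descend to $H=T/\cD$, and check independence of the lift and that $\phi\mapsto(\phi,\psi)$ is a homomorphism. You spell out more fully the two points the paper states tersely — that $\hat\phi$ normalises $\cD$ (hence permutes $\cD$-orbits) and that the induced $\bar\psi_{\hat\phi}$ satisfies $\phi(G_h)=G_{\bar\psi_{\hat\phi}(h)}$ — but these are exactly the details the paper leaves to the reader, so there is no substantive difference.
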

	
\begin{proof}
To prove $(H,\cG)$ to be canonical, we have to find for every automorphism $\phi$ of~$G$ an automorphism $(\phi,\psi)$ of~$(H,\cG)$ such that $\phi\mapsto (\phi,\psi)$ is a group homomorphism.%
\COMMENT{}

Let $\phi$ be an automorphism of $G$. As $p$ is canonical, \cref{lem:liftautomorphism} tells us that $\phi$~lifts to an automorphism~$\hat{\phi}$ of~$C$.%
	\COMMENT{}
	Since $\cT$ is canonical, $\hat\phi$~acts on the decomposition tree~$T$, uniquely by \cref{lem:atmostoneautom},%
	\COMMENT{}
	as of course does~$\cD$.%
	\COMMENT{}
	Since $\hat\phi$ is a lift of~$\phi$, its action on~$T$ is well defined on the orbits of~$\cD$ in~$T$,%
	\COMMENT{}
	i.e., $\psi\colon \cD\act t \mapsto \cD\act \hat{\phiT}(t)$ is a well-defined automorphism of~$H = T/\cD$.%
	\COMMENT{}
	The definition of $\psi$ is also independent of the choice of~$\hat\phi$, given~$\phi$, since $\hat\phi$ is unique up to composition with deck transformations.%
	\COMMENT{}

	\ifArXiv
It remains to check that our choice of~$\psi$ makes $\phi \mapsto (\phi,\psi)$ into a group homomorphism from the automorphisms of~$G$ to those of~$(H,\cG)$.
Let $\phi_0,\phi_1$ be automorphisms of~$G$, with lifts ${\hat\phi_0}$ and~${\hat\phi_1}$. Then ${\hat\phi_0}\circ {\hat\phi_1}$ is a lift of~$\phi_0\circ \phi_1$. So it follows directly from the definition of $\psi$ that the automorphism induced on the orbit graph $H = T/\cD$ by the action of ${\hat\phi_0}\circ {\hat\phi_1}$ on~$T$ is $\psi_0\circ \psi_1$, where $\psi_0$ and $\psi_1$ are the automorphisms induced by the action of ${\hat\phi_0}$ and ${\hat\phi_1}$, respectively. So, $\phi\mapsto (\phi,\psi)$ is a group homomorphism, and thus $(H,\cG)$ is canonical.
	\else
It is routine to check~\cite{GraphDecArXiv} that our choice of~$\psi$ makes $\phi \mapsto (\phi,\psi)$ into a group homomorphism from the automorphisms of~$G$ to those of~$(H,\cG)$.
	\fi
	\end{proof}

	Our main result of this section now follows directly from  \cref{lem:tdcDefinesHdecomp,lem:FiniteDecGraphPointFinite,lem:canonicity}:
	
	\begin{theorem} \label{main:TreeDecToGraphDec}
		Let $G$ be any connected graph, and let $p\colon C \to G$ be a normal covering of~$G$.
		Then every regular \dpcan\ \td~$\cT$ of~$C$ defines an honest \gd\ $(H,\cG)$ of~$G$ via \cref{def:GraphDecompViaCovering}.
		
		If $\cT\!$ and~$p$ are canonical, then $(H,\cG)$ is canonical.
		If $\cT\!$ has finite spread, then so does~$(H,\cG)$. If $(H,\cG)$ has finite spread and $G$ is finite, then $H$ is finite.
		\qed
	\end{theorem}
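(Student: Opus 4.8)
The plan is to read off the theorem as a packaging of three results already established in this section: \cref{lem:tdcDefinesHdecomp}, \cref{lem:canonicity}, and \cref{lem:FiniteDecGraphPointFinite}. First I would confirm that \cref{def:GraphDecompViaCovering} genuinely produces a well-defined pair $\cH=(H,\cG)$ under the hypotheses. Here the regularity of $\cT$ is exactly what is needed: it lets us invoke \cref{lem:atmostoneautom}, so that each deck transformation $\phi\in\cD$ acts on the decomposition tree~$T$ in a \emph{unique} way compatible with~$\cT$, which makes the orbit graph $H=T/\cD$ unambiguous; and \cref{eq:constant} then shows that the part $G_h:=p(C[V_t])$ does not depend on the choice of representative $t\in h$. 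Granting this, \cref{lem:tdcDefinesHdecomp}---whose proof verifies \cref{ax:GraphDecomp1} and \cref{ax:GraphDecomp2}---gives that $\cH$ is an honest \gd\ of~$G$, which is (more than) the first assertion of the theorem.

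For the second assertion, if in addition $\cT$ and~$p$ are canonical then \cref{lem:canonicity} applies directly: $p$ canonical means, by \cref{lem:liftautomorphism}, that every automorphism $\phi$ of~$G$ lifts to an automorphism $\hat\phi$ of~$C$; $\cT$ canonical means $\hat\phi$ acts on~$T$ (uniquely, again by \cref{lem:atmostoneautom}); and one checks that the induced map $\psi\colon\cD\act t\mapsto\cD\act\hat\phi(t)$ on~$H$ is well defined, is independent of the chosen lift~$\hat\phi$ (two lifts differ by a deck transformation), and that $\phi\mapsto(\phi,\psi)$ is a group homomorphism. Hence $\cH$ is $\Aut(G)$-canonical, i.e.\ canonical. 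Finally, \cref{lem:FiniteDecGraphPointFinite} delivers the last two assertions: point-finiteness of~$\cT$ forces each $H_v$ to be finite, since every node of~$H_v$ has a representative inside the finite subtree $T_{\hat v}\subset T$ for a fixed lift~$\hat v$ of~$v$; and when $G$ is finite as well, regularity of~$\cT$ (no empty parts) gives $H=T/\cD\subset\bigcup_{v\in V(G)}H_v$, a finite union of finite sets.

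So at the level of this theorem there is essentially no obstacle---it is an assembly of prior work, and the proof is one sentence: apply \cref{lem:tdcDefinesHdecomp}, \cref{lem:canonicity} and \cref{lem:FiniteDecGraphPointFinite}. The one point to watch is not over- or under-claiming relative to those lemmas: \cref{lem:tdcDefinesHdecomp} in fact yields honesty of~$\cH$, which is stronger than what the theorem states, so nothing is lost by citing it. If I were proving the statement from scratch, the genuinely substantive step hidden behind it is \cref{ax:GraphDecomp2} in \cref{lem:tdcDefinesHdecomp}: given nodes $h_0,h_1$ of~$H$ with $v\in G_{h_0}\cap G_{h_1}$, one must choose lifts $\hat v_0\in V_{t_0}$ and $\hat v_1\in V_{t_1}$ with $t_i\in h_i$, use transitivity of~$\cD$ on the fibre $p^{-1}(v)$ to find $\phi\in\cD$ with $\phi(\hat v_0)=\hat v_1$, observe via \cref{ax:TreeDecomp2} that $\hat v_1$ then lies in every part along the path $t_0' T t_1$ where $t_0'=\hat\phi(t_0)$, and project that path to the desired $h_0$--$h_1$ walk in~$H$. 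Everything else is bookkeeping, and the finiteness statements are immediate once one notes that regular \td s have no empty parts.
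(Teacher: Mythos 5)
Your proposal matches the paper exactly: the theorem is stated as an immediate consequence of \cref{lem:tdcDefinesHdecomp}, \cref{lem:FiniteDecGraphPointFinite} and \cref{lem:canonicity}, which is precisely the assembly you describe, and your elaboration of why the hypotheses of those lemmas are met (regularity feeding \cref{lem:atmostoneautom}, well-definedness via \cref{eq:constant}, etc.) is consistent with how the paper sets them up in \cref{def:GraphDecompViaCovering} and the surrounding text.
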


\ifArXiv

	\vskip 1.5 \smallskipamount\subsection{\boldmath Examples} \label{subsec:LocalCoveringsExamples}
	In this section we illustrate \cref{main:TreeDecToGraphDec} by a number of examples highlighting various aspects of \cref{def:GraphDecompViaCovering}.%
   \COMMENT{}
	All the coverings considered in this section are local coverings as defined in~\cref{sec:LocalCoverings}; formal proofs can be found in~\cite{FundGroupPlane}.
		
		\begin{figure}[ht]
			\center
			\includegraphics[height=11\baselineskip]{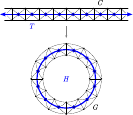}
			\caption{The global structure of $G$ is displayed by a cycle. Its local parts are $K^4$s.}
			\label{fig:Donut}
		\end{figure}
		
		\begin{example}
			Let~$G$ be the graph depicted in \cref{fig:Donut}. It is formed from twelve~$K^4$s by glueing them along pairs of vertices in the shape of a cycle.
			The graph~$C$ at the top of \cref{fig:Donut} is a normal cover of~$G$.
			The deck transformation group~$\cD$ of this covering is isomorphic to the infinite cyclic group~$\Z$; it acts on~$C$ by shifting the~$K^4$s to the left or right by multiples of~12. 
			
			Let $\cT = (T, \cV)$ be the canonical \td\ of $C$ whose decomposition tree~$T$ is a double ray and whose parts are the copies of~$K^4$ in~$C$.
			The model $H = T/\cD$ of the \gd\ $(H ,\cG)$ defined by~$\cT$ (\cref{def:GraphDecompViaCovering}) is a cycle of length~12; its family $\cG$ of parts consists of the twelve $K^4$s which form~$G$.\qed
		\end{example}
		
		Our next example justifies why we do not require the parts of a \gd\ of a graph~$G$ to be induced subgraphs of~$G$.
		The example consists of a \gd\ of~$G$ that is defined by a \td\ of a normal cover of~$G$, just as we need it for \cref{def:GraphDecompViaCovering}. But one of its parts is not an induced subgraph of~$G$.
		
		\begin{example}
			Let $G$ be the graph depicted in \cref{fig:noinducedsubgraphs} which is formed by a $K^6$ and a ladder with contracted first and last rungs such that the two respective contraction vertices are identified with two distinct vertices of the~$K^6$.
			The graph~$C$ at the top of \cref{fig:noinducedsubgraphs} is a normal cover of~$G$.
			The deck trans\-formation group~$\cD$ of this cover is the infinite cyclic group~$\Z$, which acts on~$C$ by shifting.

		\begin{figure}[ht]
			\center
			\includegraphics[height=14\baselineskip]{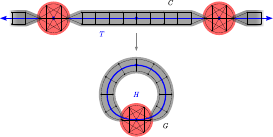}
			\caption{The grey part of~$H$ is not an induced subgraph of~$G$}
			\label{fig:noinducedsubgraphs}
		\end{figure}
		
\cref{fig:noinducedsubgraphs} shows a canonical \td ~$\cT = (T, \cV)$ of~$C$.
			Its decomposition tree~$T$ is a double ray; its parts alternate between red $K^6$s and copies of the modified ladder. 
			All adhesion sets have size one.
			The \gd ~$(H, \cG)$ of~$G$ defined by~$\cT$ as in \cref{def:GraphDecompViaCovering} has a model~$H = T/\cD$ on two vertices, which are joined by two parallel edges.
			It has two parts, the $K^6$ and the modified ladder.
			The latter is not an induced subgraph of~$G$: its two endpoints are joined by an edge of the~$K^6$, which is an edge of~$G$ but not an edge of the ladder part.\qed
		\end{example}
		
		Our final example shows that if $(H, \cG)$ is a \gd\ defined by $(T, \cV)$ via a normal covering $p\colon C \to G$, then the decomposition tree $T$ of~$C$ need not be a covering space of the model~$H$ of~$G$.
		Indeed, one can show that $T$~is a cover of $H$ if and only if the deck transformation group $\cD(p)$ acts properly discontinuously on $T$, i.e., if and only if every deck transformation of $p$ either fixes all the parts of $(T, \cV)$ or none (cf.~\cite{Hatcher}*{Proposition 1.40}).

		\begin{example}
			Let $G$ be a graph obtained from two copies of some quadrangulation of the torus by glueing them together at a single vertex.
			Note that every quadrangulation of the torus is covered by the infinite $(\Z \times \Z)$-grid.
			Thus, a normal cover~$C$ of~$G$ is obtained by identifying infinitely many $(\Z\times\Z)$-grids in single vertices to form an $\aleph_0$-regular `tree of grids'.
			The deck transformation group~$\cD$ of this cover is isomorphic to the free product~$(\Z \times \Z) * (\Z \times \Z)$.%
   \COMMENT{}
			Let~$(T,\cV)$ be the canonical \td\ of~$C$ into its $(\Z\times\Z)$-grids.
			The \gd ~$(H,\cG)$ defined by~$(T,\cV)$ then has~$H = K^2$ as its model, one node for each of two the quadrangulated tori of~$G$.
			Since~$H$ has nodes of degree~1 but $T$ does~not, $T$~cannot cover~$H$.\qed
		\end{example}

\else

In~\cites{GraphDecArXiv, FundGroupPlane}%
   \COMMENT{}
   we give a number of examples highlighting different aspects of \cref{def:GraphDecompViaCovering}.

\fi

\bigbreak\section{\boldmath Local coverings} \label{sec:LocalCoverings}

\noindent
	Our aim in this section is to construct \gd s $(H,\cG)$ of a given graph~$G$ that display its global structure only, leaving the local aspects of the structure of~$G$ to the parts of~$\cG$. We shall use the framework of \cref{def:GraphDecompViaCovering} to achieve this, but will have to choose the covering $p\colon C\to G$ on which it is based specifically to suit our aim. Our choice of this covering will depend only on our intended level of `locality', a~parameter~$r$ we shall be free to choose in order to set a threshold between `local' and `global'.

After defining such `local coverings' formally in \cref{subsec:ClosedWalksCycles,subsec:DefLocalCover}, we show in \cref{subsec:ballpres} that these coverings admit a natural hierarchy as their locality parameter~$r$ grows.

In \cref{subsec:finitegraphs} we show that these local covers~$G_r$ of finite graphs~$G$ always resemble Cayley graphs of finitely presented groups: every $G_r$~has a \gd\ of finitely bounded width modelled on a Cayley graph of~$\cD(p)$, the group of deck transformations of~$p$, which we show is finitely presented.

In \cref{subsec:cayley} we translate the entire relationship between a graph and its local covers to groups: we show that when $G$ is a locally finite Cayley graph of a finitely generated group~$\Gamma$, then its local covers~$G_r$ are Cayley graphs of finitely presented groups that extend~$\Gamma$.%
 	\COMMENT{}

\vskip 1.5 \smallskipamount\subsection{\boldmath Closed walks which stem from cycles} \label{subsec:ClosedWalksCycles}

Let $G$ be any connected graph, and let $x_0\in G$ be any vertex.
We say that a closed walk $W$ in $G$ based at $x_0$ \emph{stems} from a closed walk $Q$ in~$G$ if $W$ can be written as $W = W_0 Q W_0^{-}$ where the \emph{base walk} $W_0$ starts at~$x_0$ and ends at the base vertex of~$Q$.
A closed walk \emph{once around} a cycle $\O$ is a closed walk in~$\O$ which traverses every edge of~$\O$ exactly once.
If $W$ stems from a closed walk once around a cycle $\O$ in~$G$, then $W$ \emph{stems} from~$\O$.

Let $\cO$ be some fixed set of cycles in~$G$.
Given a vertex $x_0 \in G$, denote by $\pi_1^{\cO}(G,x_0)$ the subgroup of $\pi_1(G, x_0)$ generated by the closed walks%
	\COMMENT{}
	at~$x_0$ that stem from a cycle in~$\cO$.
For such a walk $W = W_0 Q W_0^{-}$ its conjugation $U W U^- = (U W_0) Q (U W_0)^{-}$ with any closed walk $U$ based at $x_0$ stems from the same cycle as~$W$, as only the base walk has changed.
Thus, $\pi_1^{\cO}(G,x_0)$ is normal in~$\pi_1(G, x_0)$.%
   \COMMENT{}

We say that a closed walk $W$ in~$G$ is \emph{generated} by~$\cO$ if the following equivalent%
	\ifArXiv\else~\cite{GraphDecArXiv}\fi\ assertions hold:

\begin{enumerate}
	\item there exists a vertex $x_0 \in G$ such that some walk in $\pi_1^{\cO}(G,x_0)$ stems from $W$;
	\item for every vertex $x_0 \in G$, every closed walk at~$x_0$ in~$G$ which stems from $W$ is in~$\pi_1^{\cO}(G,x_0)$.
\end{enumerate}

\noindent
	In particular, the closed walks at any vertex~$x_0$ that are generated by~$\cO$ are precisely those in~$\pi_1^{\cO}(G,x_0)$. Moreover,\penalty-200\ $S^\cO := \pi_1^\cO(G, x_0)$ is \can\ as a subgroup of~$\pi_1(G, x_0)$ if and only if $\cO$ is invariant under the automorphisms~$\phi$ of~$G$: for every vertex $x\in G$ we have $\phi_\ast(S^\cO_x) = S^{\phi(\cO)}_{\phi(x)} = S^\cO_{\phi(x)}$.%
   \COMMENT{}

	\ifArXiv
	While the implication (ii)$\to$(i) is trivial, let us show (i)$\to$(ii).
    To see (ii), let $x_0\in G$ be any vertex and let $W_0 W W_0^-$ be any closed walk at~$x_0$ that stems from~$W$.
    By~(i), there are a vertex $x_0'\in G$ and a closed walk $W':= W_0' W (W'_0)^-\in\pi_1^{\cO}(G,x_0')$ that stems from~$W$.

As $G$ is connected, we may assume that $x_0=x_0'$. Indeed, let $U$ be any walk in~$G$ from~$x_0$ to~$x'_0$. Then $UW'U^-$ clearly stems from~$W$;%
   \COMMENT{}
   let us show that it lies in~$\pi_1^\cO(G,x_0)$ and hence satisfies~(i).%
   \COMMENT{}
   Write $W'\in \pi_1^{\cO}(G,x_0')$ in terms of generators, say $W' = W_1\dots W_n$ where each~$W_i$ is a walk from~$x'_0$ to a cycle in~$\cO$, round it, and back. Then every $UW_i U^-$ lies in~$\pi_1^\cO (G,x_0)$, and hence so does ${UW_1U^-\!\dots UW_nU^- = UW'U^-}$.%
   \COMMENT{}

Thus, $x'_0 = x_0$. Since $W_0 W W_0^-$ can be obtained from $W_0' W (W'_0)^-\in \pi_1^{\cO}(G,x_0)$ by conjugation with $W_0 (W_0')^-$, and since $\pi_1^{\cO}(G,x_0)$ is normal, it follows that $W_0 W W_0^-\in\pi_1^{\cO}(G,x_0)$.
	\medbreak
	\fi
We shall need the following well-known relationship between the fundamental group of a graph and the fundamental cycles of its spanning trees (see~\cite{Hatcher}%
	\COMMENT{}
	for a proof).
Let~$T$ be a spanning tree of a graph~$G$, rooted at a vertex~$x_0$. Given an edge $e=uv$ not on~$T$, there is a unique cycle in the graph~$T+e$, the {\em fundamental cycle\/} of~$e$ with respect to~$T$. The walk $x_0TuvTx_0$ and its reverse $x_0TvuTx_0$ stem from this cycle; they are the only two reduced closed walks at~$x_0$ in $T + e$ that traverse~$e$ exactly once.
For every such edge~$e$ we pick one of these two closed walks and call it the \emph{fundamental closed walk} through~$e$ with respect to~$T$.\looseness=-1

\begin{lemma}\label{lem:FundGroupGenByFundCycles}
	Let $G$ be a connected graph, and let $T$ be a spanning tree of $G$ rooted at a vertex~$x_0$.
	Then the fundamental group $\pi_1(G, x_0)$ is freely generated by the fundamental closed walks with respect to~$T$.\qed
\end{lemma}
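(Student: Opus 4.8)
The statement is classical, and one could simply invoke the topological route: collapsing the contractible subcomplex~$T$ turns~$G$ into a wedge of circles indexed by the edges of~$G$ not in~$T$, the collapse map $G\to G/T$ is a homotopy equivalence, and the fundamental closed walk through an edge~$e\notin T$ maps to the generator corresponding to the $e$-circle, whence the claim by van Kampen. I would instead give a self-contained combinatorial proof inside the walk framework set up above, which also makes transparent \emph{which} closed walks are the free generators. Write~$E'$ for the set of edges of~$G$ not in~$T$; for each $e\in E'$ fix one of its orientations, with tail~$u$ and head~$v$, and choose the fundamental closed walk $c_e=x_0Tu\,e\,vTx_0$ to traverse~$e$ along that orientation. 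Let $F=F(E')$ be the free group on~$E'$ and let $\Psi\colon F\to\pi_1(G,x_0)$ be the homomorphism with $\Psi(e)=[c_e]$.

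First I would build a \emph{recording} map $\Phi\colon\cW(G,x_0)\to F$ sending a closed walk~$W$ at~$x_0$ to the element of~$F$ spelt out by the subsequence of non-tree edges it traverses, each taken with exponent~$+1$ or~$-1$ according to whether~$W$ crosses it along or against its fixed orientation. This~$\Phi$ is multiplicative for concatenation, and it is unchanged under deletion of a subwalk $u\,e\,v\,e\,u$ (this contributes nothing if $e\in T$, and $e^{\pm1}e^{\mp1}=1$ if $e\in E'$); hence~$\Phi$ is constant on $\sim$-classes and descends to a homomorphism $\bar\Phi\colon\pi_1(G,x_0)\to F$. Since~$c_e$ meets~$E'$ only in~$e$, crossed along its orientation, $\bar\Phi([c_e])=e$, and therefore $\bar\Phi\circ\Psi=\id_F$.

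It remains to prove $\Psi\circ\bar\Phi=\id$. I would do this by induction on the number of non-tree edges traversed by a closed walk~$W$ at~$x_0$, using the elementary tree facts that a backtrack-free walk in~$T$ between two vertices is the unique $T$-path joining them, so every walk in~$T$ reduces to that path and every closed walk in~$T$ is null-homotopic. If~$W$ uses no non-tree edge, then $[W]=1=\Psi(\bar\Phi([W]))$. Otherwise let~$e$ be the first non-tree edge on~$W$, so $W=W_1\,e\,W_2$ with~$W_1$ a walk in~$T$ from~$x_0$ to the tail of~$e$ in this traversal; replacing~$W_1$ by the corresponding $T$-path and inserting, just after the copy of~$e$, a $T$-path back to~$x_0$ followed by its reverse rewrites~$W$, up to~$\sim$, as $c_e^{\varepsilon}\cdot W'$, where $\varepsilon=\pm1$ is the exponent recorded for this crossing and~$W'$ is a closed walk at~$x_0$ with one fewer non-tree edge and $\bar\Phi([W'])=e^{-\varepsilon}\bar\Phi([W])$. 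The induction hypothesis applied to~$W'$ then gives $\Psi(\bar\Phi([W]))=[c_e]^{\varepsilon}\,\Psi(\bar\Phi([W']))=[c_e]^{\varepsilon}[W']=[W]$. Hence~$\bar\Phi$ and~$\Psi$ are mutually inverse isomorphisms carrying the free generators of~$F$ to the classes~$[c_e]$, which is exactly the assertion.

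No step here is genuinely hard; the only points needing care are the bookkeeping of orientations — since non-tree edges may be loops or may occur in parallel, the recorded exponents and the phrase `traverses~$e$ along its orientation' must be read at the level of oriented edges rather than endvertex pairs — and a clean formulation of the tree facts invoked, both of which are routine.
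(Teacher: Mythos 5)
Your proof is correct, but it takes a genuinely different route from the paper's: the paper gives no proof at all, citing Hatcher for the standard topological argument (collapse the contractible subtree $T$; $G/T$ is a wedge of circles indexed by $E(G)\setminus E(T)$). You instead give a self-contained combinatorial proof inside the walk framework of \cref{sec:ToolsAndTerminology}, exhibiting explicit mutually inverse homomorphisms $\bar\Phi$ (recording non-tree edge crossings) and $\Psi\colon e\mapsto[c_e]$. This buys an argument that never leaves the paper's $\pi'_1$ language and makes the free basis explicit; the Hatcher route is shorter but leans on the combinatorial-to-topological dictionary.

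One caveat you flag and call routine deserves more weight. Your $\bar\Phi$ descends to $\sim$-classes only if the \emph{direction} of each loop traversal is part of the data of a walk. Under the paper's stated conventions it is not: a walk is a vertex--edge alternating sequence, so a loop $e$ at $u$ only ever appears as $\dots u\,e\,u\dots$, and the reduction rule deletes subwalks of the form $u\,e\,v\,e\,u$ including the case $v=u$. Consequently, for a loop $e\notin T$ the walk $c_e c_e$ reduces to the trivial walk, so $[c_e]^2=1$ in $\pi'_1(G,x_0)$ and $\bar\Phi$ would have to send a trivial class to $e^2\neq 1$. The lemma itself (and the isomorphism $\pi'_1\cong\pi_1$ that the paper attributes to Stallings) are only correct once one adopts the Serre-graph convention of walks as sequences of \emph{oriented} edges, with reduction deleting only consecutive mutually inverse oriented edges. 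Under that convention your proof goes through verbatim; but it is a real modification of the paper's walk definitions, not merely careful bookkeeping.
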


\noindent
	In particular,%
	\COMMENT{}
	every closed walk in $G$ is generated by the set of fundamental cycles with respect to any fixed spanning tree of~$G$.	If $G$ is planar, its closed walks are also freely generated by walks%
	\COMMENT{}
	around its inner faces~\cite{FundGroupPlane}, which need not arise as the fundamental cycles with respect to any spanning tree. We shall not make use of this fact here.

\vskip 1.5 \smallskipamount\subsection{\boldmath Definition of local coverings} \label{subsec:DefLocalCover}

\noindent Let $G$ be any connected graph, let $x_0\in G$ be any vertex, and let~$r \in\N$.
The \emph{$r$-local subgroup} of $\pi_1(G,x_0)$ is the group
\[
\pi_1^r(G,x_0):=\pi_1^{\cO_r}(G,x_0)
\]
where $\cO_r$ denotes the set of all cycles in $G$ of length at most~$r$.
When the base point is given implicitly or does not matter, we refer to it simply as~$\pi_1^r(G)$.

The \emph{$r$-local covering} of~$G$, denoted by $p_r\colon\loc\to G$, is the unique covering of~$G$ with characteristic subgroup~$\pi_1^r(G)$.
As $\pi_1^r(G)$~is normal and \can\ (\cref{subsec:ClosedWalksCycles}), so is~$p_r$.%
	\COMMENT{}
We call $\loc$ the \emph{$r$-local covering graph} of~$G$, or its \emph{$r$-local cover} for short.
In the context of $r$-local coverings, we may think of $r$ as the \emph{locality parameter}.
We call a covering $p \colon C \to G$ \emph{local} if $p$ is equivalent to $p_r$ for some~$r \in \N$.

If the value of~$r$ is clear from the context, we refer to the cycles in~$\cO_r$ as the \emph{short} cycles in~$G$, and say that a~closed walk in~$G$ is \emph{generated by short cycles} if it is generated by~$\cO_r$.
Note that, by standard covering space theory, a walk in~$\loc$ is closed if and only if its projection to~$G$ is a closed walk in~$G$ generated by short cycles.%
	\COMMENT{}

\vskip 1.5 \smallskipamount\subsection{\boldmath Local coverings are ball-preserving} \label{subsec:ballpres}

All coverings, by definition, are local homeomorphisms. For $r$-local coverings of graphs we can extend these local homeomorphisms to larger neighbourhoods. How large exactly depends on~$r$, and our next goal is to quantify this.

Let $G$ be any connected graph, and~$\rho \in \N$.
As usual, we write $d_G(x,y)$ for the distance in~$G$ between two vertices $x$ and~$y$.
Given a vertex $v \in G$, let us write $B_G(v,\rhoh)$ for the (combinatorial) \emph{$(\rhoh)$-ball} in~$G$ {\em around~$v$}, the subgraph of~$G$ formed by the vertices at distance at most $\rhoh$ from~$v$ and all the edges $xy\in G$ on these vertices that satisfy
\[
d_G(v,x)+1+d_G(y,v)\le \rho.
\]
Note that $B_G(v,\rhoh)$ lacks all the edges $xy$ where $x$ and~$y$ have distance exactly~$\rho/2$ from~$v$~-- which, of course, can occur only when $\rho$ is even.%
   \COMMENT{}
   Instead, it consists of precisely those vertices and edges of~$G$ that lie on closed walks at~$v$ of length at most~$\rho$. In particular, $B_G(v, \rhoh)$ contains all the cycles of length at most~$\rho$ in~$G$ that meet~$v$.
Given a set $X$ of vertices of~$G$, we write $B_G(X, \rhoh):= \bigcup_{x \in X} B_G(x, \rhoh)$ for the \emph{$(\rhoh)$-ball} in $G$ {\em around~$X$.}

A covering $p \colon C \to G$ is said to \emph{preserve $(\rhoh)$-balls\/} if, for every lift $\hat{v}$ of a vertex $v\in G$, the covering~$p$ maps $B_{C}(\hat{v},\rhoh)$ isomorphically to~$B_G(v,\rhoh)$.%
	\footnote{A notion that could be regarded as a precursor of the preservation of $(\rhoh)$-balls has been studied by Georgakopoulos~\cite{georgakopoulos2017covers}.}
	In particular, then, $p$~projects all the cycles of length at most~$\rho$ in~$C$ isomorphically to cycles of length at most~$\rho$ in~$G$. Note that every covering $C \to G$ preserves 1-balls.%
   \COMMENT{}


The preservation of $(\rhoh)$-balls can be reduced to local injectivity of the covering map\ifArXiv\else~\cite{GraphDecArXiv}\fi:

\begin{lemma}\label{rem:ballpresdistanceincover}
	The following two statements are equivalent for coverings $p \colon C \to G$ and~$\rho \in \N$:
	\begin{enumerate}
		\item $p$ preserves $(\rhoh)$-balls;
		\item for every vertex $v \in G$, the distance in~$C$ between two distinct lifts of $v$ is greater than~$\rho$. \ifArXiv\else\qed\fi
	\end{enumerate} 
\end{lemma}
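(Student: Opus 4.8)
The plan is to reduce both implications to statements about lifting short closed walks, using three standard facts: a covering map is a locally bijective graph homomorphism; walks in~$G$ lift uniquely to walks in~$C$ once a starting vertex is fixed; and, as recorded just after its definition, $B_G(v,\rhoh)$ consists of exactly those vertices and edges of~$G$ that lie on some closed walk at~$v$ of length at most~$\rho$ (and likewise $B_C(\hat v,\rhoh)$ in~$C$). In particular, projecting a closed walk at a lift~$\hat v$ of length $\le\rho$ gives a closed walk at~$v=p(\hat v)$ of length $\le\rho$, so $p$ always maps $B_C(\hat v,\rhoh)$ \emph{into} $B_G(v,\rhoh)$; the content of~(i) is precisely that this restriction is an isomorphism onto $B_G(v,\rhoh)$.

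For (i)$\to$(ii) I would argue by contradiction: assume $p$ preserves $(\rhoh)$-balls, and suppose some vertex~$v$ has two distinct lifts $\hat v_1\ne\hat v_2$ with $d_C(\hat v_1,\hat v_2)\le\rho$. Choose a geodesic~$P$ in~$C$ from~$\hat v_1$ to~$\hat v_2$; then $Q:=p(P)$ is a closed walk at~$v$ of length $\le\rho$, so $Q$ lies in $B_G(v,\rhoh)$. By hypothesis $p$ restricts to an isomorphism $B_C(\hat v_1,\rhoh)\to B_G(v,\rhoh)$, and $\hat v_1$ is the unique lift of~$v$ in $B_C(\hat v_1,\rhoh)$ (by injectivity of that isomorphism), so the image of~$Q$ under the inverse isomorphism is a walk in~$C$ that starts and ends at~$\hat v_1$ and projects to~$Q$. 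By uniqueness of lifts this walk equals~$P$; hence $P$ is closed and $\hat v_2=\hat v_1$, a contradiction.

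For (ii)$\to$(i) I would fix a lift~$\hat v$ of a vertex~$v$ and check that the restriction $p\colon B_C(\hat v,\rhoh)\to B_G(v,\rhoh)$ is a bijection, hence (being a graph homomorphism that is bijective on vertices and on edges) an isomorphism of graphs. Surjectivity: a vertex~$x$, or an edge~$e$, of $B_G(v,\rhoh)$ lies on a closed walk~$W$ at~$v$ of length $\le\rho$; lifting~$W$ at~$\hat v$ yields a walk ending at some lift~$\hat v'$ of~$v$ with $d_C(\hat v,\hat v')\le|W|\le\rho$, so $\hat v'=\hat v$ by~(ii); the lifted walk is thus a closed walk at~$\hat v$ of length $\le\rho$ passing through a lift of~$x$ (resp.\ of~$e$), which therefore lies in $B_C(\hat v,\rhoh)$ and is mapped to~$x$ (resp.\ $e$). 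Injectivity on vertices: if $\hat x_1,\hat x_2\in B_C(\hat v,\rhoh)$ both lie over a vertex~$x$, then each $\hat x_i$ lies on a closed walk at~$\hat v$ of length $\le\rho$, so $d_C(\hat v,\hat x_i)\le\rho/2$ and hence $d_C(\hat x_1,\hat x_2)\le d_C(\hat x_1,\hat v)+d_C(\hat v,\hat x_2)\le\rho$; since $\hat x_1,\hat x_2$ are lifts of the same vertex, (ii) forces $\hat x_1=\hat x_2$. Injectivity on edges then follows: two edges of $B_C(\hat v,\rhoh)$ lying over an edge~$e=xy$ of~$G$ have their endpoints over~$x$ and over~$y$, which by the vertex case agree; so these two edges share an endpoint and project to~$e$, hence coincide by the local bijectivity of~$p$ there.

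I expect no serious obstacle here; the argument is mostly bookkeeping. The two points that need a little care are: working with the `closed walk of length $\le\rho$' description of the balls rather than the distance-plus-edge-condition definition (this makes the half-integer radius~$\rhoh$ cause no trouble, and it is what makes the lifting arguments go through cleanly), and the presence of loops and parallel edges in our graphs, which is harmless once one compares two lifts of an edge via a common endpoint and invokes local bijectivity of the covering map. Conceptually, the equivalence just says that $p$ preserves $(\rhoh)$-balls if and only if no closed walk in~$G$ of length at most~$\rho$ lifts to a non-closed walk in~$C$.
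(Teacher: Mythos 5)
Your proof is correct and takes essentially the same approach as the paper's: use the ``closed walk of length $\le\rho$'' description of the balls, derive (i)$\to$(ii) by lifting a short closed walk both via uniqueness of path-lifting and via the ball isomorphism and comparing, and derive (ii)$\to$(i) by checking surjectivity and vertex-then-edge injectivity via (ii) and local bijectivity of~$p$. The only substantive difference is cosmetic: the paper additionally chooses $v,\hat v_1,\hat v_2$ at minimum distance so that $p(P)$ is a genuine cycle before invoking the ball isomorphism, whereas you work directly with the closed walk $p(P)$, which slightly streamlines that direction.
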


	\ifArXiv
	\begin{proof}
For~(i)$\to$(ii), suppose for a contradiction that there exists a vertex~$v \in G$ that has distinct lifts~$\hat{v}_1$ and~$\hat{v}_2$ at a distance of at most~$\rho$.
Choose~$v$, $\hat{v}_1$ and~$\hat{v}_2$ so that~$\hat{v}_1$ and~$\hat{v}_2$ have minimum distance in~$C$.
Let~$P$ be a shortest~$\hat{v}_1$--$\hat{v}_2$~path in~$C$; its internal vertices then lie in distinct fibres of~$p$. Hence $p(P) =:O$ is a cycle of length $d_C(\hat{v}_1, \hat{v}_2) \le \rho$, which thus lies in the~$(\rho/2)$-ball around~$v$.%
   \COMMENT{}
   By~(i), the $(\rho/2)$-ball around $\hat{v}_1$ contains a cycle~$Q$ such that $p(Q)=O$. Now $Q$ and~$P$, viewed as walks from~$\hat v_1$, are distinct lifts of the same closed walk at~$v$ in~$G$, which contradicts the uniqueness of path-lifting.

   For (ii)$\to$(i), consider a lift~$\hat{v}$ of a vertex~$v$ of~$G$. Recall that $B_G(v,\rhoh)$ consists of precisely those vertices and edges of~$G$ that lie on closed walks at~$v$ of length at most~$\rho$, and similarly for~$\hat v$ in~$C$. Since closed walks of length at most~$\rho$ at~$\hat v$ project to closed walks of length at most~$\rho$ at~$v$, our $p$~maps $B_C(\hat{v},\rhoh)$ to $B_{G}(v,\rhoh)$. It does so surjectively, since such walks at~$v$ lift to such walks at~$\hat v$ by~(ii). It does so injectively on the vertices of~$B_C(\hat{v},\rhoh)$ by (ii).%
   \COMMENT{}
   This implies that $p$ is injective also on non-parallel edges of~$B_C(\hat{v},\rhoh)$. But parallel edges cannot project to the same edge either, because $p$ is $1$-\ballpres.
	\end{proof}
	\fi

Local coverings, by definition, preserve local structure as encoded in short cycles. Our next lemma says that they preserve local structure also in the more usual sense of copying small neighbourhoods:

\begin{lemma}\label{lem:BallLifting}
	For every $r\in\N$, the $r$-local covering $p_r \colon \loc \to G$ preserves $(\rh)$-balls.
\end{lemma}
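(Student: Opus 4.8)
The plan is to apply the distance criterion of \cref{rem:ballpresdistanceincover} with $\rho:=r$: it then suffices to prove that in $\loc$ any two distinct lifts of a vertex $v$ of~$G$ lie at distance greater than~$r$.

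The combinatorial core is the claim that \emph{every closed walk $W$ in $G$ of length at most $r$ lies in $\pi_1^r(G,x_0)$}, where $x_0$ denotes the base vertex of~$W$; that is, $W$ is generated by the short cycles of~$G$. I would prove this by induction on the length $k$ of~$W$. For $k\le 1$ the walk is trivial or a single loop, hence a short cycle. If $W$ has a repeated vertex $v_i=v_j$ with $0\le i<j\le k$ and $(i,j)\ne(0,k)$, write $W=W_1W_2W_3$, where $W_1$ runs from $x_0$ to $v_i$, the closed subwalk $W_2$ runs from $v_i$ to $v_j=v_i$, and $W_3$ runs from $v_j$ to~$x_0$. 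Then $W_2$ (based at $v_i$) and the closed walk $W_1W_3$ (based at $x_0$) both have length strictly less than~$k$, so by the induction hypothesis each is generated by short cycles. Since $[W]=[W_1W_2W_1^-]\cdot[W_1W_3]$ and $\pi_1^r(G,\,\cdot\,)$ is invariant under the base-point change $W_1(\,\cdot\,)W_1^-$ (this is the computation used in \cref{subsec:ClosedWalksCycles} to establish normality), it follows that $W$ is generated by short cycles. If $W$ has no such repeated vertex, then either $k=2$ and its two edges coincide, so $W$ reduces to the trivial walk, or $W$ is a cycle of length $k\le r$ and stems from itself; in both cases $W\in\pi_1^r(G,x_0)$. (A shorter route to the claim: $W$ lies inside $B_G(x_0,\rh)$; take a BFS spanning tree of this ball rooted at~$x_0$, note that because geodesics of~$G$ from $x_0$ to ball vertices stay inside the ball, its fundamental cycles all have length at most~$r$, and invoke \cref{lem:FundGroupGenByFundCycles}.)

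With the claim in hand, suppose for contradiction that a vertex $v$ of~$G$ has distinct lifts $\hat v_1\ne\hat v_2$ in~$\loc$ at distance at most~$r$, and let $P$ be a shortest $\hat v_1$--$\hat v_2$ path in~$\loc$. Its projection $W:=p_r(P)$ is a closed walk at~$v$ in~$G$ of length at most~$r$, hence generated by the short cycles of~$G$ by the claim. By the characterisation of closed walks in local covers recalled in \cref{subsec:DefLocalCover}, the lift of~$W$ starting at~$\hat v_1$ --- which is $P$ itself, by uniqueness of path lifting --- is then closed, contradicting $\hat v_1\ne\hat v_2$. So distinct lifts of~$v$ have distance greater than~$r$ in~$\loc$, and \cref{rem:ballpresdistanceincover} yields that $p_r$ preserves $(\rh)$-balls.

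The main obstacle is the claim: the induction requires a careful case distinction to see that a short closed walk is either decomposable at a repeated interior vertex into strictly shorter closed walks or else is literally a cycle, and one must keep track of base points and use the normality of $\pi_1^r(G,\,\cdot\,)$ when gluing the pieces back together. Everything downstream of the claim is a routine application of unique path lifting.
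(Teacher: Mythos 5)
Your proof is correct and follows essentially the same route as the paper's: reduce via \cref{rem:ballpresdistanceincover} to showing distinct lifts of a vertex lie at distance $>r$, project a hypothetical short path to a closed walk $W$ of length $\le r$, and observe that $W$ lies in $\pi_1^r(G,x_0)$, contradicting unique path lifting. The only difference is that where the paper writes ``by a straightforward direct argument, $W$ is generated by closed walks at~$v$ stemming from short cycles,'' you actually spell that argument out, and both your induction on walk length (splitting at a repeated interior vertex and using the conjugation/normality mechanism from \cref{subsec:ClosedWalksCycles}) and your alternative via a BFS spanning tree of $B_G(x_0,\rh)$ together with \cref{lem:FundGroupGenByFundCycles} are sound.
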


\begin{proof}
	By \cref{rem:ballpresdistanceincover} it is enough to show that no two lifts $\hat{v}_1,\hat{v}_2$ of a vertex $v \in G$ are connected in~$\loc$ by a path of length at most~$r$. The projection of any such path~$P$ is a closed walk~$W$ at~$v$ of length at most~$r$. By a straightforward direct argument, $W$~is generated by closed walks at~$v$ stemming from short cycles,%
	\COMMENT{}
	and thus lies in~$\pi_1^r(G,v)$. Hence $W$ lifts to a closed walk at~$\hat v_1$ in~$G_r$, by definition of~$p_r$,%
   \COMMENT{}
   which contradicts the fact that its unique such lift~$P$ is a path.
\end{proof}

Our $r$-local covering~$p_r$ is universal amongst all the $(\rh)$\ballpres\ coverings of~$G$:%
	\COMMENT{}

\begin{lemma} \label{lem:MaxBallPreservingCover}
	For every $(\rh)$\ballpres\ covering $p\colon C\to G$ there is a covering $q\colon \loc\to C$ such that $p_r = p\circ q$, and all such coverings $q$ are equivalent.
	
\end{lemma}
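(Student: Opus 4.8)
The plan is to apply the lifting criterion from covering space theory. Fix a basepoint $x_0\in G$ together with a lift $\hat x_0\in\loc$ of $x_0$ under $p_r$ and a lift $\tilde x_0\in C$ of $x_0$ under $p$. A continuous map $q\colon(\loc,\hat x_0)\to(C,\tilde x_0)$ with $p\circ q=p_r$ exists if and only if $(p_r)_\ast(\pi_1(\loc,\hat x_0))\subseteq p_\ast(\pi_1(C,\tilde x_0))$; see Hatcher~\cite{Hatcher}*{Ch.1.3} (here $\loc$ is connected, hence path-connected and locally path-connected, so the criterion applies). By the choice of $p_r$ we have $(p_r)_\ast(\pi_1(\loc,\hat x_0))=\pi_1^r(G,x_0)$, so the task reduces to verifying $\pi_1^r(G,x_0)\subseteq p_\ast(\pi_1(C,\tilde x_0))$. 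Once $q$ exists it is automatically a covering: it is a fibrewise map over $G$ between two coverings of $G$ (pick open sets in $G$ evenly covered by both $p$ and $p_r$), hence a covering map onto its image, and this image is all of $C$ because $C$ is connected; compare J\"anich~\cite{Janich}*{Ch.9}.

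To prove the inclusion $\pi_1^r(G,x_0)\subseteq p_\ast(\pi_1(C,\tilde x_0))$, recall that $\pi_1^r(G,x_0)=\pi_1^{\cO_r}(G,x_0)$ is generated by the closed walks $W=W_0\,Q\,W_0^-$ at $x_0$ that stem from a cycle $\O\in\cO_r$, where $Q$ runs once around $\O$ and the base walk $W_0$ joins $x_0$ to the base vertex $v$ of $Q$. By covering space theory, $W\in p_\ast(\pi_1(C,\tilde x_0))$ if and only if the unique lift of $W$ to $C$ starting at $\tilde x_0$ is closed. I would lift $W_0$ first, ending at some lift $\tilde v\in p^{-1}(v)$. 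Since $\O$ has length at most $r$ and meets $v$, it lies in $B_G(v,\rh)$, and because $p$ preserves $(\rh)$-balls the map $p$ restricts to an isomorphism $B_C(\tilde v,\rh)\to B_G(v,\rh)$; hence $\O$ lifts to a cycle $\tilde\O$ of the same length through $\tilde v$, and $Q$ lifts to a closed walk once around $\tilde\O$ at $\tilde v$. Finally $W_0^-$ lifts to the reverse of the lift of $W_0$ already fixed, and therefore returns to $\tilde x_0$. So the lift of $W$ is closed, giving $W\in p_\ast(\pi_1(C,\tilde x_0))$; as this argument is valid for every choice of $\tilde x_0\in p^{-1}(x_0)$, the inclusion holds at every lift (alternatively, normality of $\pi_1^r(G,x_0)$ makes the choice of $\tilde x_0$ irrelevant, since the subgroups $p_\ast(\pi_1(C,\tilde x_0))$ form one conjugacy class).

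For uniqueness, let $q,q'\colon\loc\to C$ be coverings with $p\circ q=p\circ q'=p_r$. Choose $\hat y\in q^{-1}(q'(\hat x_0))$; then $\hat y$ and $\hat x_0$ both lie in $p_r^{-1}(x_0)$, so since $p_r$ is a normal covering there is a deck transformation $\phi$ of $p_r$ with $\phi(\hat x_0)=\hat y$. Now $q\circ\phi$ and $q'$ are both lifts of $p_r$ through $p$ and agree at $\hat x_0$, so by the uniqueness part of the lifting lemma $q\circ\phi=q'$; thus $\phi^{-1}$ is an isomorphism of coverings from $q'$ to $q$, proving that $q$ and $q'$ are equivalent. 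The one step that really needs care is the subgroup inclusion in the second paragraph, and within it the point that the \emph{entire} cycle $\O$, not merely its base vertex, lies inside the ball $B_G(v,\rh)$ on which $p$ is an isomorphism; this fact was recorded when $(\rh)$-balls were introduced, where it was noted that $B_G(v,\rh)$ contains every cycle of length at most $r$ meeting $v$. Everything else is routine covering space theory.
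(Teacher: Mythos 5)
Your proof is correct and takes essentially the same route as the paper's: both reduce to showing that the characteristic subgroup of $p$ contains $\pi_1^r(G,x_0)$, and both verify this by lifting a generating walk $W=W_0QW_0^-$ and using the $(\rh)$-ball-preservation of $p$ to see that the sub-lift of $Q$ is already closed. You merely spell out more explicitly, via the lifting criterion and deck transformations, the existence, covering-property, and uniqueness of $q$, all of which the paper bundles into its appeal to the Galois correspondence.
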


\begin{proof}
	By the Galois correspondence of coverings we only have to show that the characteristic subgroup of~$p$ contains that of~$p_r$.%
	\COMMENT{}
	The latter is generated by the closed walks~$W$ in~$G$ that stem from a short cycle~$O$, so it suffices to show that such~$W$ lift, via~$p$, to closed walks in~$C$.

Our $W$ has the form $W_0 Q W_0^-$, where $Q$ is a walk once around~$O$. Pick a lift~$\hat W$ of~$W$ to~$C$. This includes, as subwalks, lifts $\hat W_0$, $\hat{Q}$ and~$\hat W_0^-$ of $W_0$, $Q$ and~$W_0^-$. Since $O$ is short and $p$ preserves $(\rh)$-balls, $\hat{Q}$~is~closed.%
	\COMMENT{}
	Therefore $\hat W_0^-$ is the reverse of~$\hat W_0$, and $\hat{W} = \hat{W}_0 \hat{Q} \hat{W}_0^-$ is a closed walk in~$C$.
\end{proof}

Let us now see how the $r$-local coverings of~$G$ are related for various values of~$r$. When $r\le r'$, the $(r'/2)$-balls in~$G$, which the $r'$-local covering $p_{r'}\colon G_{r'}\to G$ preserves by \cref{lem:BallLifting}, include the $(\rh)$-balls in~$G$. We may thus apply \cref{lem:MaxBallPreservingCover} with $p:= p_{r'}$ to obtain a covering $q_{r,r'}\colon\! G_r \to G_{r'}$ such that $p_r = p_{r'}\circ q_{r,r'}$, as shown in \cref{fig:r-hierarchy}.

	\vskip-6pt
\begin{figure}[ht]
		\center
\begin{tikzcd}
    G_r \arrow[rdddd, "p_r"'] \arrow[rrdd, swap, "{q_{r,r'}}"] \arrow[rr, "q", bend left=10] &    & (G_{r'})_r \arrow[dd, "{p_{r,r'}}"] \arrow[ll, "q'", bend left=10] \\
        &    &    \\
        &    &    G_{r'} \arrow[ddl, "p_{r'}"]     \\
        &    &     \\
        &    G     &                                    
\end{tikzcd}
	\caption{The interaction of local coverings for different values of~$r$}
		\label{fig:r-hierarchy}
	\end{figure}

Moreover, $G_{r'}$ has its own $r$-local covering $p_{r,r'}\colon\! (G_{r'})_r \to G_{r'}$. We can compose this with~$p_{r'}$ to obtain a covering $p_{r'} \circ p_{r,r'}\colon\! (G_{r'})_r \to G$. This composition of two universal local coverings is itself universal,%
	\COMMENT{}
	as it is equivalent to the $r$-local covering~$p_r$ of~$G$:

\begin{lemma} \label{lem:rlocalCoveringInBetween}
    Given $r \le r'$, the coverings $p_{r,r'}$ and~$q_{r,r'}$ of~$G_{r'}$ are equivalent, while $p_r$ and $p_{r'} \circ p_{r,r'}$ are equivalent as coverings of~$G$. In particular, $(G_{r'})_r$ and~$\loc$ are isomorphic graphs.
\end{lemma}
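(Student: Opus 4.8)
The plan is to prove the first assertion, $p_{r,r'}\sim q_{r,r'}$, by comparing characteristic subgroups, and then to read off the remaining two claims. Fix compatible base points $x_0\in G$, a lift $\hat x_0\in G_{r'}$ of~$x_0$, and a lift $\hat x_0'\in\loc$ of~$\hat x_0$, with respect to which all fundamental groups and induced maps below are taken. Recall from \cref{subsec:DefLocalCover} that $p_{r,r'}$ is the covering of~$G_{r'}$ with characteristic subgroup $\pi_1^r(G_{r'})$, which is normal. The covering $q_{r,r'}$ was produced by \cref{lem:MaxBallPreservingCover} applied with $p:=p_{r'}$, and satisfies $p_r=p_{r'}\circ q_{r,r'}$. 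Now $(p_{r'})_*$ embeds $\pi_1(G_{r'})$ onto the subgroup $\pi_1^{r'}(G)$ of~$\pi_1(G)$, and $(p_r)_*=(p_{r'})_*\circ(q_{r,r'})_*$ has image $\pi_1^r(G)$; hence the characteristic subgroup of~$q_{r,r'}$ is $((p_{r'})_*)^{-1}(\pi_1^r(G))$. This is meaningful because $\pi_1^r(G)\subseteq\pi_1^{r'}(G)$ (every cycle of length $\le r$ is one of length $\le r'$), and it is normal, being the preimage of a normal subgroup under a homomorphism. So, by the Galois correspondence of coverings, it will suffice to establish the identity
\begin{equation}\label{eq:rlocInBetween}
	(p_{r'})_*\,\pi_1^r(G_{r'})=\pi_1^r(G)
\end{equation}
of subgroups of~$\pi_1(G)$: by injectivity of~$(p_{r'})_*$ this forces $((p_{r'})_*)^{-1}(\pi_1^r(G))=\pi_1^r(G_{r'})$, so $p_{r,r'}$ and $q_{r,r'}$ have the same (normal) characteristic subgroup and are therefore equivalent coverings of~$G_{r'}$. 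Composing this equivalence with~$p_{r'}$ then shows $p_r=p_{r'}\circ q_{r,r'}$ equivalent to $p_{r'}\circ p_{r,r'}$ as coverings of~$G$, and the graph isomorphism underlying this equivalence is the asserted isomorphism $(G_{r'})_r\cong\loc$.

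It remains to prove~\cref{eq:rlocInBetween}. For the inclusion $\subseteq$: as $r\le r'$, \cref{lem:BallLifting} gives that $p_{r'}$ preserves $(r'/2)$-balls, so it maps every cycle of length $\le r$ in~$G_{r'}$ isomorphically onto a cycle of length $\le r$ in~$G$, and hence carries every closed walk of~$G_{r'}$ that stems from a short cycle to a closed walk of~$G$ that stems from a short cycle; since such walks generate $\pi_1^r(G_{r'})$ and $\pi_1^r(G)$ respectively, $\subseteq$ follows. For $\supseteq$, take a generator of $\pi_1^r(G)$: a closed walk $W=W_0\,Q\,W_0^-$ at~$x_0$ with $Q$ once around a cycle~$O$ of length $\le r$. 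Since $\pi_1^r(G)\subseteq\pi_1^{r'}(G)$ is the characteristic subgroup of~$p_{r'}$, the walk~$W$ lifts to a closed walk $\hat W$ in~$G_{r'}$ containing subwalk-lifts $\hat W_0$ of~$W_0$ and $\hat Q$ of~$Q$. Exactly as in the proof of \cref{lem:MaxBallPreservingCover}, since $O$ is short and $p_{r'}$ preserves $(r'/2)$-balls, the lift~$\hat Q$ is a closed walk once around a cycle of length $\le r$ in~$G_{r'}$ (the lift of~$O$ starting where~$\hat Q$ does), so $\hat W=\hat W_0\,\hat Q\,\hat W_0^-$ stems from a short cycle in~$G_{r'}$; thus $[\hat W]\in\pi_1^r(G_{r'})$ and $[W]=(p_{r'})_*[\hat W]\in(p_{r'})_*\pi_1^r(G_{r'})$. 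As these $[W]$ generate~$\pi_1^r(G)$, this proves $\supseteq$ and hence~\cref{eq:rlocInBetween}.

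I expect no genuine obstacle here. Almost everything is bookkeeping with base points and the standard Galois correspondence; the only step carrying mathematical content is the inclusion $\supseteq$ of~\cref{eq:rlocInBetween}, whose heart — that a lift of a closed walk once around a short cycle is again a closed walk once around a short cycle — is precisely the ball-preservation argument already run in the proof of \cref{lem:MaxBallPreservingCover}. The one point requiring care is keeping the fundamental-group identifications and base points consistent throughout.
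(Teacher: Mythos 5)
Your proof is correct, but it takes a genuinely different route from the paper's. The paper never touches fundamental groups: it applies the universal property of \cref{lem:MaxBallPreservingCover} three times. First, since $p_{r'}\circ p_{r,r'}$ preserves $(\rh)$-balls, \cref{lem:MaxBallPreservingCover} gives a covering $q\colon G_r\to(G_{r'})_r$ with $p_r=p_{r'}\circ p_{r,r'}\circ q$; second, since $q_{r,r'}$ inherits $(\rh)$-ball preservation from $p_r=p_{r'}\circ q_{r,r'}$, \cref{lem:MaxBallPreservingCover} gives $q'\colon(G_{r'})_r\to G_r$ with $p_{r,r'}=q_{r,r'}\circ q'$; third, from $p_r\circ(q'\circ q)=p_r=p_r\circ\id_{G_r}$ and \cref{lem:MaxBallPreservingCover} again, $q'\circ q$ is equivalent to $\id_{G_r}$, hence a homeomorphism, which forces $q$ and $q'$ to each be homeomorphisms witnessing the two asserted equivalences. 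Your proof instead identifies characteristic subgroups directly, reducing everything to the identity $(p_{r'})_*\,\pi_1^r(G_{r'})=\pi_1^r(G)$, which you verify by the ball-preservation argument at the level of generators. What the paper's route buys is brevity and the fact that no base-point bookkeeping is needed at all; what your route buys is a concrete and independently useful fact, namely that under the embedding $(p_{r'})_*\colon\pi_1(G_{r'})\hookrightarrow\pi_1(G)$ the $r$-local subgroup of the cover corresponds exactly to the $r$-local subgroup of the base. This identity makes the idempotence $(\loc)_r\cong\loc$ and the equality $\pi_1^r(\loc)=\pi_1(\loc)$ of \cref{thm:integralCycleSpace} transparent, which in the paper require separate invocations of \cref{lem:MaxBallPreservingCover}.
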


\begin{proof}
    Since both $p_{r'}$ and $p_{r,r'}$ preserve $(\rh)$-balls, so does their composition~$p_{r'} \circ p_{r,r'}$. Choosing this as $p$ in \cref{lem:MaxBallPreservingCover} yields a covering $q\colon G_r\to (G_{r'})_r$ such that $p_r = p_{r'} \circ p_{r,r'}\circ q$. Since $p_r = p_{r'} \circ q_{r,r'}$ and both $p_r$ and $p_{r'}$ preserve $(\rh)$-balls, so does $q_{r,r'}$. So by \cref{lem:MaxBallPreservingCover} with $p:= q_{r,r'}$, we obtain a covering $q'\colon (G_{r'})_r \to G_r$ with $p_{r,r'} = q_{r,r'} \circ q'$. Combining all the above, we have
    \begin{equation*}
        p_r = p_{r'} \circ p_{r,r'} \circ q = p_{r'} \circ q_{r,r'} \circ q' \circ q = p_r \circ q' \circ q.
    \end{equation*}

    As a concatenation of two coverings, $q'\circ q$ is a covering (of~$G_r$ by itself), as of course is the identity~$\id_{G_r}$. Since $p_r \circ \id_{G_r} = p_r = p_r \circ (q' \circ q)$, as shown above,%
   \COMMENT{}
   \cref{lem:MaxBallPreservingCover} applied with $p=p_r$ yields that $\id_{G_r}$ and $q' \circ q$ are equivalent coverings of~$G_r$.
    In particular, $q' \circ q$ is a homeomorphism.%
   \COMMENT{}
    As $q$ is surjective, being a covering, this implies that $q$ is a homeomorphism, which in turn implies that $q'$ is a homeomorphism. Hence $p_{r,r'}$ and~$q_{r,r'}$ are equivalent as coverings of~$G_{r'}$ as witnessed by~$q'$, while $p_r$ and $p_{r'} \circ p_{r,r'}$ are equivalent as coverings of~$G$ as witnessed by~$q$.
	\end{proof}

\noindent
   Choosing $r=r'$ in \cref{lem:rlocalCoveringInBetween} shows that taking $r$-local covers is idempotent: $(\loc)_r=\loc$. 

\ifArXiv\goodbreak\fi\medbreak

By and large, the connectivity of~$\loc$ increases as the locality parameter~$r$ grows and $\loc$ changes from a tree, which is minimally connected, to the original graph~$G$. How exactly the connectivity of~$\loc$ evolves along the hierarchy of local coverings given by~\cref{lem:rlocalCoveringInBetween} is studied in~\cite{LocalSeps}.

\vskip 1.5 \smallskipamount\subsection{\boldmath Cycle spaces} \label{subsec:IntegralCycleSpace}

As is customary in graph theory, we call the first homology group of a graph its {\em cycle space\/}. Taken over the integers it is its {\em integral\/} cycle space; taken over $\Z/2\Z$ it is its {\em binary\/} cycle space.

To formalise this, let $G$ be any connected graph and $r\in\N$. For every edge $e$ of~$G$ pick an arbitrary fixed orientation~$\ve$; this makes~$G$ into a 1-complex. The {\em integral\/} or {\em binary cycle space\/} of~$G$, then, is the $R$-module~$\CS(G)$%
	\COMMENT{}
	for $R=\Z$ or~$R=\Z/2\Z$ that consists of all $E(G)\to R$ functions with finite support whose sum of the values of the incoming edges at any given vertex equals the sum of the values of its outgoing edges.\looseness=-1

Every closed walk $W = v_0 e_0 v_1 \dots v_{k-1} e_{k-1} v_k$ once around a cycle in~$G$ induces an element $z_W$ of $\CS(G)$ by letting $z_W(e_i) := 1$ if $\ve_i = (e_i,v_i, v_{i+1})$ and $z_W(e_i) := -1$ if $\ve_i = (e_i,v_{i+1}, v_i)$ and $z_W(e_i) := 0$ elsewhere.
Note that the reverse $W^{-}$ of~$W$ induces~$z_{W^{-}} = -z_W$.
As is well known and easy to see, the (functions~$z_W$ of walks~$W$ once around) cycles generate all of~$\CS(G)$.
We write~$\CSr(G)$ for the submodule of~$\CS(G)$  generated by the cycles of length at most~$r$.

\begin{lemma}\label{thm:integralCycleSpace}
	For the $r$-local cover~$\loc$ of~$G$ we have $\CSr(\loc)=\CS(\loc)$.%
	\COMMENT{}
\end{lemma}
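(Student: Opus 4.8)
The plan is to reduce the statement to the level of the fundamental group, where the $r$-local cover~$\loc$ is by construction built to ``see'' exactly the short cycles of~$G$, and then to abelianise. Since the classes $z_O$ of closed walks once around cycles~$O$ of~$\loc$ generate $\CS(\loc)$, it suffices to prove $z_O\in\CSr(\loc)$ for every cycle~$O$ of~$\loc$; the argument is identical for $R=\Z$ and $R=\Z/2\Z$, so we do not distinguish between the two.

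First I would fix a cycle~$O$ of~$\loc$, pick $\hat v\in O$, and put $v:=p_r(\hat v)$. Walking once around~$O$ starting at~$\hat v$ and projecting gives a closed walk $W:=p_r(O)$ at~$v$, and since~$O$ is a closed walk in the cover, $[W]$ lies in the characteristic subgroup of~$p_r$, which is $\pi_1^r(G,v)=\pi_1^{\cO_r}(G,v)$; equivalently (cf.\ \cref{subsec:DefLocalCover}), $W$~is generated by short cycles. Writing~$W$ as a product of generators of this subgroup and their inverses shows that $W$ is combinatorially homotopic to a concatenation $U_1Q_1^{\eps_1}U_1^-\cdots U_nQ_n^{\eps_n}U_n^-$, where each $\eps_i\in\{+1,-1\}$, each~$U_i$ is a walk in~$G$ starting at~$v$, and each~$Q_i$ is a walk once around a cycle of~$G$ of length at most~$r$.

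Next I would lift this combinatorial homotopy back to~$\loc$, starting at~$\hat v$: the lift of~$W$ is~$O$ itself, and the lift of the concatenation is a closed walk at~$\hat v$ that is combinatorially homotopic to~$O$. Lifting the factors one after another, each generator $U_iQ_i^{\eps_i}U_i^-$ --- being itself an element of~$\pi_1^r(G,v)$, hence of $(p_r)_\ast\pi_1(\loc,\hat v)$ --- lifts to a closed walk at~$\hat v$, which therefore has the form $\hat U_i\hat Q_i^{\eps_i}\hat U_i^-$ with $\hat Q_i$ closed; and $\hat Q_i$ runs once around a cycle of~$\loc$ of length at most~$r$, since $Q_i$ does (this can also be read off from the ball-preservation of~$p_r$, \cref{lem:BallLifting}). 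Hence $O$ is combinatorially homotopic to $\hat U_1\hat Q_1^{\eps_1}\hat U_1^-\cdots\hat U_n\hat Q_n^{\eps_n}\hat U_n^-$.

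It then remains to pass to the cycle space. The map $W\mapsto z_W$ is invariant under reduction, additive under concatenation, and sends $W^-$ to $-z_W$, so it is well defined on combinatorial homotopy classes; applying it to the two homotopic walks above, the contributions of~$\hat U_i$ and~$\hat U_i^-$ cancel and one obtains
\[
z_O=\sum_{i=1}^{n}\eps_i\,z_{\hat Q_i}\in\CSr(\loc),
\]
because each $\hat Q_i$ runs once around a cycle of length at most~$r$. As~$O$ was arbitrary, this gives $\CSr(\loc)=\CS(\loc)$. The one step requiring genuine care is the lifting step --- that a factorisation of~$W$ into conjugates of short cycles of~$G$ lifts to a factorisation of~$O$ into conjugates of short \emph{closed} walks of~$\loc$; this rests on unique path lifting together with the fact that each such conjugate already lies in the characteristic subgroup of~$p_r$. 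Everything else is routine bookkeeping with the classes~$z_W$.
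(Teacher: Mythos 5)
Your proof is correct, but it takes a genuinely different route from the paper's. The paper first establishes the group-level identity $\pi_1^r(\loc)=\pi_1(\loc)$ as a formal consequence of the covering hierarchy built up in \cref{subsec:ballpres}: by \cref{lem:MaxBallPreservingCover} the identity $\id_{\loc}$ and $q_{r,r}$ are equivalent coverings of $\loc$, by \cref{lem:rlocalCoveringInBetween} $q_{r,r}$ is equivalent to the $r$-local covering $p_{r,r}$ of $\loc$, and hence $p_{r,r}$ and $\id_{\loc}$ have the same characteristic subgroup; it then abelianises. You instead argue directly on a cycle $O$ of $\loc$: project it, factor the projection in $\pi_1^r(G,v)$ into conjugates of short cycles, and lift the factorisation back --- using that each conjugate lies in the characteristic subgroup of $p_r$ (so lifts to a closed walk, by unique path lifting) and that the middle pieces $Q_i$ lift to short cycles of $\loc$ (either again via the characteristic subgroup, or more concretely via ball preservation, \cref{lem:BallLifting}) --- and then apply the map $W\mapsto z_W$, which is well defined on combinatorial homotopy classes. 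The key lifting step is sound: since $(p_r)_\ast$ is injective with image $\pi_1^r(G,v)$, combinatorial homotopies between closed walks in this subgroup lift, so $O$ is indeed homotopic in $\loc$ to the lifted product. In effect your factorisation of $O$ shows $[O]\in\pi_1^r(\loc,\hat v)$, so you are re-proving the same intermediate identity $\pi_1^r(\loc)=\pi_1(\loc)$, just by an explicit combinatorial argument rather than by invoking the idempotency machinery of \cref{lem:rlocalCoveringInBetween}; your version is more self-contained, while the paper's is shorter given what has already been developed.
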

\begin{proof}%
   \COMMENT{}
	Let us show first that $\pi_1^r(\loc) = \pi_1(\loc)$. By definition of~$q_{r,r}$ we have $p_r = p_r \circ q_{r,r}$. 
	By \cref{lem:MaxBallPreservingCover} applied with $p = p_r$, it follows from~$p_{r} \circ \id_{G_r} = p_r = p_r \circ q_{r,r}$
   \COMMENT{}
   that $\id_{G_{r}}$ and~$q_{r,r}$ are equivalent coverings of~$G_r$.
	But $q_{r,r}$ is equivalent also to~$p_{r,r}$, by \cref{lem:rlocalCoveringInBetween}. Hence $p_{r,r}$ and $\id_{G_r}$  are equivalent coverings of~$G_r$, and thus have the same characteristic subgroups. These are $\pi_1^r(\loc)$ and~$\pi_1(\loc)$, respectively.

	Since the first homology group of a path-connected space is the abelianisation of its fundamental group,%
	\COMMENT{}
	the fact that $\pi_1^r(\loc) = \pi_1(\loc)$ implies~$\CSr(\loc) = \CS(\loc)$.
\end{proof}

As pointed out for~$\loc$ in the last line of the above proof, we have $\CS(G) = \CSr(G)$ if the fundamental group of~$G$ is generated by short cycles. When $G$ is planar the converse holds too; this is shown in \cite{FundGroupPlane}.

\vskip 1.5 \smallskipamount\subsection{\boldmath Local coverings of finite graphs}\label{subsec:finitegraphs}


\ifArXiv
		While local coverings of finite graphs are often infinite, this is not necessarily the case. The following example due to Bowler~\cite{NathanComm2021} exhibits finite graphs with $n$-sheeted coverings for any~$n\in\N$.
	\endgraf
	
		\begin{figure}[ht]
		\center
		\includegraphics[height=14.5\baselineskip]{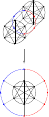}
		\caption{A graph whose $4$-local covering is finite.}
		\label{fig:finitelocalcovering}
        \vskip-6pt
	\end{figure}
	
\begin{example*}
	Let~$G'$ be the graph depicted in \cref{fig:finitelocalcovering}.
	We construct the desired graph~$G$ from~$G'$ in that we identify the two paths of length $3$ drawn in red and blue which partition the outer cycle of $G'$, and we do this identification according to the indicated direction of the two paths.
	In other words, if we view~$G'$ embedded in the plane, symmetrically as drawn here, with the outer cycle equal to $S^1$, then we construct~$G$ from~$G'$ by taking the quotient of this outer cycle under the antipodal map $x \mapsto -x$ on~$S^1$. 
	Now taking two copies of~$G'$ and glueing them together by identifying their outer cycles yields a $2$-sheeted covering of~$G$, and one can check that this is indeed the $4$-local covering of~$G$ by \cref{lem:MaxBallPreservingCover}.
	\endgraf
	The above construction can easily be adapted to yield $n$-sheeted local coverings by partitioning the directed outer cycle of (a possibly larger version of) $G'$ into $n$ paths of the same length at least $3$ which we then identify as above to obtain~$G$.
	The respective $4$-local coverings then arise by glueing $n$ copies of this larger $G'$ along their outer cycles.\qed
	\end{example*}

\else

While local coverings of finite graphs are often infinite, this is not necessarily the case; in~\cite{GraphDecArXiv} we present finite graphs with $n$-sheeted coverings for any~$n\in\N$ constructed by Bowler~\cite{NathanComm2021}.

\fi

Our aim in this section is to prove that the local covers of finite graphs resemble Cayley graphs of finitely presented groups. We first show that every normal cover of a finite graph has a \gd\ of finitely bounded width, modelled on a Cayley graph of its group of deck transformations. This group is always finitely generated.%
	\COMMENT{}
	In the case of our local coverings we shall see that it is even finitely presented.

Our first result, the~structure theorem for arbitrary normal covers of finite graphs, can be seen as a \v{S}varc-Milnor lemma~\cites{vsvarc1955volume, milnor1968note} that provides additional structural information:

\begin{theorem}\label{thm:graphdecompoffinitewidthalongcayley}
	Let $p \colon C \to G$ be a normal covering of a finite connected graph~$G$.
	Then the group $\cD$ of all deck transformations of~$p$ has a finite generating set~$S$ such that $C$ has a \gd\ modelled on~$D:=\cayley{\cD}{S}$ with connected parts of finitely bounded size. If $\hat{x}_0\in C$ is a~lift of any vertex~$x$ of~$G$ then these parts~$C_\phi$, one for each~$\phi\in\cD$, can be chosen as~$C_\phi = B_C(\phi(\hat{x}_0),|G|)$.
	
	With these~$C_\phi$ the \gd\ $(D, (C_\phi)_{\phi \in \cD})$ of~$C$ is honest, and the map $f \mapsto g_f$ defined in \cref{subsec:GraphDecProperties} induces a bijection between the ends of~$C$ and those of~$\cD$.
	If $\cD$ is an accessible group, then $C$ is an accessible graph.
\end{theorem}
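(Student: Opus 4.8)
The plan is to obtain the final statement as an immediate consequence of \cref{lem:AccessibleDecomp}(i), applied to the honest $D$-decomposition $(D,(C_\phi)_{\phi\in\cD})$ of~$C$ — with $D=\cayley{\cD}{S}$ and $C_\phi=B_C(\phi(\hat x_0),|G|)$ — furnished by the preceding parts of the theorem. That lemma requires a point-finite, honest decomposition into connected parts of finitely bounded size whose model is an accessible graph; honesty, connectedness of the parts, and the finite bound on their sizes are exactly what the earlier parts of the theorem provide, so only two further points would remain to be supplied.

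First, I would check point-finiteness. Since $G$ is finite it is locally finite, and local finiteness passes to covering spaces, so $C$ is locally finite; hence $B_C(\hat v,|G|)$ is a finite graph for every vertex $\hat v$ of~$C$. A deck transformation $\phi$ can satisfy $\hat v\in C_\phi=B_C(\phi(\hat x_0),|G|)$ only if $\phi(\hat x_0)$ lies in this finite ball, and since the deck transformations of a connected covering act freely the map $\phi\mapsto\phi(\hat x_0)$ is injective; so only finitely many $\phi\in\cD$ have $\hat v\in C_\phi$, i.e.\ $D_{\hat v}$ is finite. (This is the same point already needed to invoke \cref{lem:GraphDecDisplayingEnds} for the ends bijection, so in the write-up it would merely be recalled.) Second, I would note that $D=\cayley{\cD}{S}$ is an accessible graph precisely because $\cD$ is an accessible group: this is the Thomassen--Woess correspondence~\cite{ThomassenWoess} between accessibility of a finitely generated group (in Wall's sense) and accessibility of its Cayley graphs with respect to finite generating sets, and $S$ is finite. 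With these two points, \cref{lem:AccessibleDecomp}(i) gives that $C$ is accessible.

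I do not expect a genuine obstacle at this step: the substance of the theorem lies in constructing the \gd\ and bounding its parts, and in \cref{lem:AccessibleDecomp}(i) itself, whose proof already handles the transport of ends along the direction bijection $f\mapsto g_f$ and the bounding of separator orders by the product of the part-size bound with the distinguishing bound in~$D$. The only care needed here is the bookkeeping for point-finiteness and the routine translation between accessibility of the group $\cD$ and accessibility of its Cayley graph.
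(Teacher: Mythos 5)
Your proposal addresses only the last sentence of the theorem (the transfer of accessibility from $\cD$ to~$C$) plus the auxiliary checks of point-finiteness and the ends bijection, and on those three items your reasoning is sound and coincides exactly with what the paper does: the paper also verifies point-finiteness in the course of the $S_v$-finiteness argument, invokes Lemma~\ref{lem:GraphDecDisplayingEnds} for the ends bijection, and invokes Lemma~\ref{lem:AccessibleDecomp}(i) together with the Thomassen--Woess correspondence \cite{ThomassenWoess} between accessibility of $\cD$ and accessibility of its locally finite Cayley graph~$D$.

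However, the bulk of the theorem --- and the substance of the paper's proof --- is the \emph{construction} that your proposal takes as given. You invoke ``the honest $D$-decomposition $(D,(C_\phi)_{\phi\in\cD})$ of~$C$ \dots\ furnished by the preceding parts of the theorem,'' but there are no preceding parts: the existence of a finite generating set $S$ for which $(D,(C_\phi))$ is a graph-decomposition, its honesty, and the finite bound on part sizes are themselves part of the claim to be proved. Concretely, what is missing is the definition $S_v := \{\phi^{-1}\phi' \mid \hat v_0\in C_\phi\cap C_{\phi'}\}$ and $S := \bigcup_{v\in V(G)} S_v$; the verification that $S$ is finite (which the paper derives from the freeness of the deck action together with finiteness of the ball $B_C(\hat v_0,|G|)$ --- the same fact you use for point-finiteness, so that part would have carried over); the verification of \cref{ax:GraphDecomp1} by lifting shortest paths in~$G$; the verification of \cref{ax:GraphDecomp2} by showing each $D_{\hat v}$ is complete and that $\cD$ acting by left-multiplication transports $D_{\hat v_0}$ to $D_{\hat v}$; the argument that $S$ generates $\cD$ (via \cref{lem:graphdecompseparator} and connectedness of~$C$); and the honesty argument. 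None of these appear, and they cannot simply be deferred since they are exactly what the theorem asserts. So while the reduction you carry out at the end is correct, the proof is incomplete in a way that omits the theorem's main content rather than a routine step.
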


\begin{proof}
	Choose $(C_\phi)_{\phi \in \cD}$ as in the statement of the theorem. As these~$C_\phi$ depend only on~$\cD$, not on the choice of generators for our Cayley graph~$\cayley{\cD}{S}$, we can first verify~\cref{ax:GraphDecomp1parts} and choose~$S$ later.
	
Since $C$ is connected, every vertex is incident with some edge. To verify~\cref{ax:GraphDecomp1parts} it is therefore enough to show that every edge of $C$ is contained in some part~$C_\phi$. Every edge of~$C$ is a lift $\hat{e} = \hat{u}\hat{v}$ of an edge $e=uv$ of~$G$. These have distance at most $|G|-1$ from~$x$ in~$G$.
	Pick a shortest $u$--$x$ path~$P$ in~$G$, lift it to a path~$\hat{P}$ starting at~$\hat{u}$, and let $\hat{x}_1$ be the last vertex of~$\hat{P}$.
	As $\hat P$ has length at most~$|G|-1$, this shows that $\hat{e}$ lies in~$B_{C}(\hat{x}_1, |G|)$.
	Since $p$ is normal, there exists a deck transformation $\phi \in \cD$ such that $\phi(\hat{x}_0) = \hat{x}_1$. Thus~$\hat{e} \in C_{\phi}$, completing the proof of~\cref{ax:GraphDecomp1parts}.
	
	Let us now find a finite generating set $S$ of $\cD$ such that, for $D = \cayley{\cD}S$, the pair $(D, (C_\phi)_{\phi \in \cD})$ will satisfy~\cref{ax:GraphDecomp2} and thus be a \gd\ of~$C$. We shall%
	\COMMENT{}
	have to choose~$S$ so that, for every lift~$\hat v\in C$ of a vertex~$v$ of~$G$, the subgraph~$D_{\hat v}$ of~$D$ induced by~$\{ \phi \in \cD \colon \hat{v} \in C_\phi \}$ is connected.

To define~$S$, fix of every $v\in G$ one lift~$\hat v_0$. We shall pick enough generators for~$S$ to make every~$D_{\hat v_0}$ connected, and then deduce the same for the other lifts~$\hat v$ of each~$v$. In fact, we shall add to~$S$ for every two nodes of~$D_{\hat v_0}$ a generator that will make them adjacent in~$\cayley{\cD}S$, and hence in~$D_{\hat v_0}$.

The pairs of nodes of~$D_{\hat v_0}$ are the pairs $(\phi,\phi')\in\cD^2$ such that $\hat{v}_0 \in C_\phi \cap C_{\phi'}$. To give~$D_{\hat v_0}$ an edge $(\phi,s)$%
	\COMMENT{}
	from $\phi$ to~$\phi'$ we need to add $s = \phi^{-1}\phi'$ to~$S$. Let $S_v$ be the set of all these~$s$, setting
 $$S_v := \{\, \phi^{-1}\phi'\mid \hat{v}_0 \in C_\phi \cap C_{\phi'}\}\subset \cD,$$
 and let~$S := \bigcup_{v \in V(G)} S_v$. Then the subgraphs $D_{\hat v_0}$ of $D = \cayley{\cD}S$ are complete for all~$v\in G$.

For a proof that each~$S_v$, and hence~$S$, is finite, it suffices to show that $\hat v_0\in C_\phi$ for only finitely many~$\phi$.%
	\COMMENT{}
	For every such~$\phi$ there is a path in~$C$ of length at most~$|G|$ from~$\hat v_0$ to~$\phi(x_0)$. But $B_C(\hat{v}_0,|G|)$ is finite, and the~$\phi(x_0)$ are distinct for different~$\phi$, by the uniqueness of path lifting.%
	\COMMENT{}
	So there are only finitely many such~$\phi$, as required.

To prove~\cref{ax:GraphDecomp2} for lifts~$\hat v$ other than~$\hat v_0$, recall from \cref{sec:interplayofgraphsandgroups} that $\cD$ acts on~$D$ by left-multiplication. Since our covering~$p$ is normal, there exists $\psi\in\cD$ such that $\hat v = \psi(\hat v_0)$. Then $\psi\act D_{\hat v_0} = \{\,\psi\phi\mid \phi\in D_{\hat v_0}\}$, and any two of these nodes $\psi\phi$ and~$\psi\phi'$%
   \COMMENT{}
   are joined by the edge $(\psi\phi,s)$ for $s = \phi^{-1}\phi'\in S_v$. So it suffices to check that the node set of $\psi\act D_{\hat v_0}$ equals that of~$D_{\psi(\hat v_0)} = D_{\hat v}$.

By definition, $D_{\psi(\hat v_0)}$%
	\COMMENT{}
	consists of those $\chi\in\cD$ for which $\psi(\hat v_0)\in C_\chi$, i.e., for which $\psi(\hat v_0)$ has distance at most~$|G|$ from~$\chi(\hat x_0)$. With $\chi =: \psi\phi$%
	\COMMENT{}
	the latter is equivalent to%
	\COMMENT{}
	$\hat v_0$ having distance at most~$|G|$ from~$\phi(\hat x_0)$, i.e.\ to $\phi\in D_{\hat v_0}$. Hence
	$$\chi\in D_{\psi(\hat v_0)} \Leftrightarrow \phi\in D_{\hat v_0}\Leftrightarrow \psi\phi\in \psi\act D_{\hat v_0}\Leftrightarrow \chi\in \psi\act D_{\hat v_0}$$
	as desired. This completes the proof that $(D, (C_\phi)_{\phi \in \cD})$ is a \gd.
	
For a proof that $S$ generates~$\cD$, we have to show that $D = \cayley{\cD}{S}$ is connected. This follows from the connectedness of~$C$ by \cref{lem:graphdecompseparator}, since the parts~$C_\phi$ of our \gd\ are non-empty.

	Next, we check that our \gd\ $(D, (C_\phi)_{\phi \in \cD})$ is honest. As the $C_\phi$ are non-empty, we only have to show that $C_\phi\cap C_{\phi'}\ne\emptyset$ for any adjacent nodes $\phi,\phi'$ of~$D$. As $D = \cayley{\cD}{S}$ there exists $s\in S$ such that $\phi' = \phi s$.%
	\COMMENT{}
	This~$s$ lies in some~$S_v$.%
	\COMMENT{}
	This means, by definition of~$S_v$, that there exist $\phi_0,\phi'_0\in\cD$ such that $s = \phi_0^{-1}\phi'_0$ and $\hat v_0\in C_{\phi_0}\cap C_{\phi'_0}$. Let $\psi := \phi\phi_0^{-1}$. Then $\phi = \psi\phi_0$ and $\phi' = \phi s = \psi\phi_0 s = \psi\phi'_0$.%
	\COMMENT{}
	Hence $\hat v_0\in C_{\phi_0}\cap C_{\phi'_0}$ implies that $\psi(\hat v_0)\in C_{\psi\phi_0}\cap C_{\psi\phi'_0} = C_\phi\cap C_{\phi'}$, since the action of~$\cD$ on itself by left-multiplication commutes with its natural action on~$C$.%
	\COMMENT{}
	In particular, $C_\phi\cap C_{\phi'}\ne\emptyset$, as required for our honesty proof.
	
	We have shown that our \gd\ $(D, (C_\phi)_{\phi \in \cD})$ is honest and has finite connected parts.
	In our proof that the sets~$S_v$ are finite we saw that each vertex of~$C$ lies in~$C_\phi$ for only finitely many~$\phi \in \cD$: that $(D, (C_\phi)_{\phi \in \cD})$ has finite spread.
	By \cref{lem:GraphDecDisplayingEnds}, therefore, the directions of~$C$ and of~$D = \cayley{\cD}{S}$ are in bijective correspondence, and hence so are their ends~\cite{Ends}*{Theorem 2.2}. The ends of~$\cD$, by definition, are those of~$D$.%
   \COMMENT{}
	
	Finally, since $C$ covers the finite graph~$G$ it has finitely bounded degrees. The parts~$C_\phi = B_C(\phi(\hat{x}_0), |G|)$ thus have finitely bounded size. If the group~$\cD$ is accessible then so is its Cayley graph~$D$~\cite{ThomassenWoess}, and hence by \cref{lem:AccessibleDecomp} so is~$C$.
	\end{proof}

The groups of deck transformations of our local coverings of finite graphs are not only finitely generated but even finitely presented:

\begin{proposition}\label{lem:D_risfinitelypresented}
	Let $G$ be a finite graph, and let~$r \in \N$.
	Then the group $\deckr$ of deck transformations of the $r$-local covering of $G$ is finitely presented.
\end{proposition}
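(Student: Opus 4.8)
The plan is to realise $\deckr$ as the quotient of a finitely generated free group by the normal closure of a finite set of elements. Fix a basepoint $x_0$ of $G$; we may assume $G$ is connected, since the $r$-local covering is defined only in that case. As $p_r\colon\loc\to G$ is the normal covering with characteristic subgroup $\pi_1^r(G,x_0)$, standard covering space theory~\cite{Hatcher} gives $\deckr\cong\pi_1(G,x_0)/\pi_1^r(G,x_0)$. Since $G$ is finite and connected it has a finite spanning tree $T$, so by \cref{lem:FundGroupGenByFundCycles} the group $\pi_1(G,x_0)$ is \emph{freely} generated by the finite set $S$ of fundamental closed walks with respect to $T$. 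Hence, once we know that $\pi_1^r(G,x_0)$ is the normal closure $\langle\langle R\rangle\rangle$ in $\pi_1(G,x_0)$ of some finite set $R$, rewriting each element of $R$ as a word in $S^{\pm1}$ (again via \cref{lem:FundGroupGenByFundCycles}) exhibits the finite presentation $\deckr\cong\pres{S}{R}$, and we are done.

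It thus remains to find such a finite $R$. By definition $\pi_1^r(G,x_0)=\pi_1^{\cO_r}(G,x_0)$, where $\cO_r$ is the set of cycles of $G$ of length at most $r$, and $\cO_r$ is finite because $G$ is finite. For each $O\in\cO_r$ I would pick one closed walk $W_O=W_0^O\,Q_O\,(W_0^O)^-$ at $x_0$ stemming from $O$, with $Q_O$ once around $O$, and set $R:=\{\,[W_O]:O\in\cO_r\,\}$. The inclusion $\langle\langle R\rangle\rangle\subseteq\pi_1^r(G,x_0)$ is immediate, as each $[W_O]$ lies in $\pi_1^r(G,x_0)$ and the latter is normal. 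For the reverse inclusion, since $\pi_1^r(G,x_0)$ is generated as a subgroup by the closed walks at $x_0$ stemming from short cycles, it suffices to check that $[W_0QW_0^-]\in\langle\langle R\rangle\rangle$ for every closed walk $W_0QW_0^-$ at $x_0$ with $Q$ once around some $O\in\cO_r$. Choosing a suitable path $P$ in $O$ and absorbing it into the base walk — exactly the conjugation move recorded in \cref{subsec:ClosedWalksCycles} — one finds that $[W_0QW_0^-]$ equals $[U\,Q_O\,U^-]$ if $Q$ traverses $O$ in the same direction as $Q_O$, and $[U\,Q_O^-\,U^-]$ if in the opposite direction, where $U:=W_0\,P^-$ is a walk from $x_0$ to the base vertex of $Q_O$. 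Since $[U\,Q_O\,U^-]=c\,[W_O]\,c^{-1}$ for the closed walk $c:=[U\,(W_0^O)^-]$ at $x_0$, and $[U\,Q_O^-\,U^-]=(c\,[W_O]\,c^{-1})^{-1}$, both lie in $\langle\langle R\rangle\rangle$. Hence $\pi_1^r(G,x_0)=\langle\langle R\rangle\rangle$, which completes the argument.

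The only step carrying real content is this middle one — that $\pi_1^r(G,x_0)$ is finitely normally generated — and even there the substance has essentially been provided in \cref{subsec:ClosedWalksCycles}: one merely has to observe that the two choices implicit in ``a closed walk once around $O$'', namely its base vertex on $O$ and its orientation, are absorbed respectively into conjugation and into inversion, neither of which leaves the normal closure $\langle\langle R\rangle\rangle$. I do not anticipate a genuine obstacle here; the only care needed is to keep the base walks straight so that the ``conjugate of $[W_O]^{\pm1}$'' identities come out cleanly.
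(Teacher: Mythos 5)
Your proposal is correct and follows essentially the same route as the paper: identify $\deckr$ with $\pi_1(G,x_0)/\pi_1^r(G,x_0)$, use \cref{lem:FundGroupGenByFundCycles} to present $\pi_1(G,x_0)$ freely on finitely many generators, and show that $\pi_1^r(G,x_0)$ is the normal closure of one chosen walk per short cycle because all walks stemming from a fixed short cycle are conjugates (or, for the reversed traversal, conjugates of inverses) of one another. Your write-up is a bit more explicit than the paper's about the orientation/inverse case, but the underlying argument is the same.
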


\begin{proof}
    Fix a vertex~$x_0 \in G$. The fundamental group~$\pi_1(G, x_0)$ of~$G$ is finitely presented by \cref{lem:FundGroupGenByFundCycles}.%
	\COMMENT{}
    To show that~$\deckr = \pi_1(G, x_0)/\pi_1^r(G, x_0)$ is finitely presented, it remains to prove that~$\pi_1^r(G, x_0)$ is the normal closure of a finite set of closed walks in~$G$.
    
Recall that $\pi_1^r(G, x_0)$ is generated by the closed walks at~$x_0$ stemming from short cycles in~$G$. There are only finitely many such cycles, but infinitely many such walks. However, the walks stemming from the same short cycle~$O$ are all conjugates of each other in~$\pi_1(G,x_0)$.%
	\COMMENT{}
	Hence we may pick one of them for each~$O$, finitely many in total, and their normal closure in~$\pi_1(G,x_0)$ will be exactly~$\pi_1^r(G,x_0)$.%
	\COMMENT{}
	\end{proof}

\begin{corollary} \label{cor:LocCovFinGraphAreAccessible}
	Local covers of finite graphs are accessible. They have \gd s into connected parts of finitely bounded size modelled on Cayley graphs of finitely presented groups.
\end{corollary}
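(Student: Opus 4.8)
The plan is to derive both assertions directly from the structure theorem \cref{thm:graphdecompoffinitewidthalongcayley} for normal covers of finite graphs, combined with \cref{lem:D_risfinitelypresented} and the classical accessibility of finitely presented groups. Fix a finite connected graph~$G$ and an integer $r\in\N$, and consider the $r$-local covering $p_r\colon\loc\to G$ with deck transformation group $\cD:=\deckr$; recall that $\loc$ is connected and, as a cover of the finite graph~$G$, locally finite.

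For the second sentence of the corollary I would apply \cref{thm:graphdecompoffinitewidthalongcayley} to the normal covering~$p_r$. It yields a finite generating set~$S$ of~$\cD$ such that $\loc$ has a \gd\ into connected parts of finitely bounded size modelled on $D:=\cayley{\cD}{S}$. By \cref{lem:D_risfinitelypresented} the group~$\cD$ is finitely presented, so $D$ is a Cayley graph of a finitely presented group, which is exactly what the second sentence asserts.

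For the first sentence I would again use that $\cD$ is finitely presented by \cref{lem:D_risfinitelypresented}; by Dunwoody's theorem~\cite{dunwoody1985accessibility} that finitely presented groups are accessible, $\cD$ is an accessible group. The last sentence of \cref{thm:graphdecompoffinitewidthalongcayley} then gives directly that the local cover~$\loc$ is an accessible graph.

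The only genuinely nontrivial ingredient is external to this paper, namely the accessibility of finitely presented groups. I expect this to be the main obstacle only in the sense of requiring that input rather than of the argument being delicate; the rest is a routine assembly of the two preceding results. It is worth stressing that \cref{lem:D_risfinitelypresented} is precisely what makes Dunwoody's theorem applicable: the weaker information that~$\cD$ is merely finitely generated would not suffice, since finitely generated groups need not be accessible. This is exactly why the more general \cref{maincor:TangleAccessibility}, and the main theorems for quasi-transitive graphs, cannot be obtained this way and instead rely on Hamann's combinatorial accessibility result.
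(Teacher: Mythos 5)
Your proof is correct and follows exactly the same route as the paper: combine \cref{thm:graphdecompoffinitewidthalongcayley} with \cref{lem:D_risfinitelypresented}, and invoke Dunwoody's accessibility theorem for finitely presented groups (the paper cites this as \cite{FinitelyPresentedAccessible}). Your closing remark on why finite presentability, rather than mere finite generation, is essential accurately reflects the discussion in the paper.
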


\begin{proof}
	Finitely presented groups are accessible~\cite{FinitelyPresentedAccessible}. Apply \cref{thm:graphdecompoffinitewidthalongcayley} and \cref{lem:D_risfinitelypresented}.
	\end{proof}

\ifArXiv
 In contrast to local covers, arbitrary normal covers of finite graphs can be inaccessible:

\begin{example*}
    Let $\Gamma = \pres{S}{R}$ be an inaccessible group finitely generated by~$S$,%
	\COMMENT{}
    and let $G$ be any connected graph with ${|V(G)| - 1 + |S|}$ edges.
    Then $\pi_1(G)$ is isomorphic to the free group~$\free{S}$  (\cref{lem:FundGroupGenByFundCycles}), so we may view the normal closure~$N = \normcl{R}{S}$ of~$R$ in~$S$ as a normal subgroup of~$\pi_1(G)$.

    Let~$p\colon C \mapsto G$ be the normal covering of~$G$ with characteristic subgroup~$N$.
    Then the group~$\cD$ of all deck transformations of~$p$ is isomorphic to~$\pi_1(G)/N$ and hence to~$\Gamma$.
    Thus, $\cD$~is inaccessible, and hence so are all its locally finite Cayley graphs~\cite{ThomassenWoess}.

    Now consider the \gd\ $(D, (C_\phi)_{\phi \in \cD})$ of~$C$ modelled on a locally finite Cayley graph~$D$ of~$\cD$ as given by \cref{thm:graphdecompoffinitewidthalongcayley}.
    This decomposition is honest, and its parts~$C_\phi = B_C(\phi(\hat{x}_0), |G|)$ are connected. Hence by \cref{lem:DualGraphDecomp} there is a dual decomposition $(C,(D_v)_{v\in C})$ of~$D$, again with connected parts~$D_v$.%
   \COMMENT{}
   These~$D_v$ have finitely bounded size by the proof of~\cref{thm:graphdecompoffinitewidthalongcayley}.%
    \COMMENT{}
    Since $D$ is inaccessible, \cref{lem:AccessibleDecomp} applied to its decomposition $(C,(D_v)_{v\in C})$ yields that $C$ is inaccessible too.\qed
\end{example*}

\else

We remark that arbitrary normal covers of finite graphs can be inaccessible~\cite{GraphDecArXiv}.%
	\COMMENT{}

\fi

\vskip 1.5 \smallskipamount\subsection{\boldmath Local coverings of Cayley graphs}\label{subsec:cayley}

When our graph~$G$ is itself a Cayley graph, of some finitely generated (but not necessarily finite) group~$\Gamma$, say, we can say even more about its local covers than we could in~\cref{subsec:finitegraphs}. These covers are now Cayley graphs too, of groups extending~$\Gamma$:%
 	\COMMENT{}

\begin{theorem} \label{thm:localcoveringofcayleyiscayley}
	Let $G$ be a connected locally finite Cayley graph of some group~$\Gamma$, and~$r \in \N$.
	Then $\loc$ is a connected locally finite Cayley graph of a finitely presented group $\Gamma_r$ of which $\Gamma$ is a quotient.
	\end{theorem}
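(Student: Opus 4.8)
The plan is to realize $\Gamma_r$ as the group of all \emph{lifts} of the left‑translation action of $\Gamma$ on $G$, to check that this group acts on $\loc$ exactly the way a group acts on its own Cayley graph, and finally to prove finite presentability by exhibiting a finite $2$-complex with fundamental group $\Gamma_r$. To begin, I would fix a finite generating set $S$ of $\Gamma$ with $G=\cayley{\Gamma}{S}$; this is possible since $G$ is locally finite. Then $\loc$ is connected (all our covers are) and locally finite, because the covering map $p_r$ is a local homeomorphism and $G$ is locally finite.

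Since the characteristic subgroup $\pi_1^r(G)$ is canonical (\cref{subsec:DefLocalCover}), \cref{lem:liftautomorphism} shows that every automorphism of $G$ lifts to $\loc$; in particular so does every left translation $g\mapsto\gamma g$, and these translations form a faithful copy of $\Gamma$ in $\Aut(G)$. Let $\Gamma_r$ be the group of all $\hat\gamma\in\Aut(\loc)$ with $p_r\circ\hat\gamma=\gamma\circ p_r$ for some $\gamma\in\Gamma$. Sending $\hat\gamma$ to $\gamma$ is a well‑defined surjective homomorphism $\Gamma_r\to\Gamma$ with kernel $\deckr$, so $\Gamma$ is a quotient of $\Gamma_r$. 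Next I would verify that $\Gamma_r$ acts freely and transitively on $V(\loc)$ and freely on $E(\loc)$: for transitivity, given lifts $\hat u,\hat v$, pick $\gamma\in\Gamma$ taking $p_r(\hat u)$ to $p_r(\hat v)$, lift it to $\hat\gamma\in\Gamma_r$, and note that $\hat\gamma(\hat u)$ and $\hat v$ lie in one fibre of $p_r$, on which $\deckr\leq\Gamma_r$ acts transitively because $p_r$ is normal; for freeness, an element of $\Gamma_r$ fixing a vertex or an edge of $\loc$ projects to an element of $\Gamma$ fixing the corresponding vertex or edge of $G$, hence to $\id$ (left translations of a Cayley graph act freely on vertices and edges), so it lies in $\deckr$, and a nontrivial deck transformation of the connected cover $\loc$ fixes no vertex and no edge. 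A connected graph carrying such an action of a group is isomorphic to a Cayley graph of that group, with a finite connection set when the graph is locally finite; applied here this gives $\loc\cong\cayley{\Gamma_r}{\tilde S}$ for a finite generating set $\tilde S$.

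The main obstacle is the remaining claim that $\Gamma_r$ is \emph{finitely presented}: since $\Gamma$ itself is only assumed finitely generated, no finite presentation can simply be inherited from it. The key fact is that $\loc$ is its own $r$-local cover, i.e.\ $(\loc)_r=\loc$ (\cref{lem:rlocalCoveringInBetween} with $r=r'$), equivalently $\pi_1^r(\loc)=\pi_1(\loc)$ (as shown in the proof of \cref{thm:integralCycleSpace}). Form the $2$-complex $W$ by attaching to $\loc$, one per short cycle and $\Gamma_r$-equivariantly, a $2$-cell along each short cycle of $\loc$. Then $\pi_1(W)=\pi_1(\loc)/\pi_1^r(\loc)=1$, so $W$ is simply connected. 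The $\Gamma_r$-action on $\loc$ extends to $W$ because $\Gamma_r$ permutes the short cycles; it is free (a nontrivial element fixing a short cycle would fix its vertices) and cocompact, since $W/\Gamma_r$ has one vertex orbit, finitely many edge orbits (local finiteness), and finitely many $2$-cell orbits (only finitely many short cycles pass through a given vertex). Hence $W/\Gamma_r$ is a finite $2$-complex whose universal cover is $W$, so $\Gamma_r\cong\pi_1(W/\Gamma_r)$ is finitely presented. Combining everything, $\loc\cong\cayley{\Gamma_r}{\tilde S}$ is a connected locally finite Cayley graph of the finitely presented group $\Gamma_r$, of which $\Gamma$ is a quotient, as required.
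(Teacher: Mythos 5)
Your route is genuinely different from the paper's. The paper's proof is explicitly combinatorial: it uses the edge-labelling $\ell$ and \cref{lem:fundamentalgroupandrelations} to present $\Gamma_r$ as $\pres{S}{R'}$, where $R'$ is the finite image under $\ell$ of the closed walks once around the short cycles of $G$ through the base vertex, and then checks directly that $\cayley{\Gamma_r}{S}$ covers $G$ and that this covering is equivalent to~$p_r$. You instead realise $\Gamma_r$ as the group of all lifts of left-translations, apply a Sabidussi-type criterion to identify $\loc$ with a Cayley graph of $\Gamma_r$, and derive finite presentability from a free cocompact action on a simply connected $2$-complex whose $1$-skeleton is~$\loc$. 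Both strategies are legitimate and produce the same group.

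There is, however, a gap in the last step. The parenthetical claim that a nontrivial element of $\Gamma_r$ fixing a short cycle of $\loc$ ``would fix its vertices'' is false, so the extended $\Gamma_r$-action on $W$ need not be free. An element of $\Gamma_r$ may map a short cycle $C$ of $\loc$ to itself by a nontrivial rotation; it then stabilises the attached $2$-cell and has a fixed point in its interior, and $W\to W/\Gamma_r$ fails to be a covering. Concretely, take $\Gamma=\Z/3\Z$, $S=\{1\}$ and $r=3$: here $G=\loc$ is a triangle, $\Gamma_r=\Gamma$, and the nontrivial elements of $\Gamma_r$ rotate the unique short cycle without fixing any of its three vertices. (For deck transformations such rotations are impossible, since $p_r$ is injective on short cycles by \cref{lem:BallLifting}; but lifts of nontrivial left-translations are not deck transformations.) The repair is standard: attach one $2$-cell per pointed, directed closed walk once around a short cycle, rather than one per cycle. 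Freeness of $\Gamma_r$ on vertices then forces freeness on $2$-cells, there are still finitely many orbits, and the normal closure of the attaching maps --- hence $\pi_1(W)$ --- is unchanged. With that modification your argument goes through; as written the ``free'' step fails.
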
%
	\COMMENT{}

\noindent
	For our proof of \cref{thm:localcoveringofcayleyiscayley} we need a tool from combinatorial group theory.

\medbreak

Let $G$ be a Cayley graph of a group $\Gamma$ with respect to a finite generating set~$S$.
The fundamental group $\pi_1(G,x_0)$ of $G$ is then isomorphic to the kernel of the canonical homomorphism $\phi_{\Gamma, S}\colon\free{S}\to\Gamma$, where $F(S)$ is the free group generated by~$S$. Let us define such an isomorphism explicitly.

The \emph{edge-labelling} $\ell\colon E(G) \to S$ of a Cayley graph $G = \cayley{\Gamma}{S}$ is defined as $\ell(e) := s$ when $e = (g,s)$.%
	\COMMENT{}
	We extend $\ell$ to a map from the walks in~$G$ to~$\free{S}$ by mapping a walk $W = g_0 e_0 g_1 \dots g_{k-1} e_{k-1} g_k$ to the group element $\ell(e_0)^{\epsilon_0} \cdots \ell(e_{k-1})^{\epsilon_{k-1}}$ of~$\free{S}$, where $\epsilon_i$ is $1$ if $e_i = (g_i, s)$ for some $s \in S$ and $-1$ otherwise.%
	\COMMENT{}
Since two walks map to the same element of $\free{S}$ under $\ell$ if their reductions coincide, $\ell$~defines a group homomorphism $\ell_\ast\colon\pi_1(G,x_0)\to\free{S}$.
This $\ell_\ast$ is our explicit isomorphism between $\pi_1(G, x_0)$ and $\ker(\phi_{\Gamma, S}) \leq \free{S}$:

\begin{lemma}[\cite{magnus2004combinatorial}*{Theorem 1.6}] \label{lem:fundamentalgroupandrelations}
	Let $G$ be a Cayley graph of a group $\Gamma$ with respect to a finite generating set~$S$, and let $\ell$ be its edge-labelling.
	Fix any vertex $x_0$ of~$G$.
	Then $\ell_\ast$ is an isomorphism from the fundamental group $\pi_1(G,x_0)$ to the kernel of the canonical homomorphism $\phi_{\Gamma, S}\colon\free{S}\to\Gamma$.
\end{lemma}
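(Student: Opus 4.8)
The plan is to verify directly that $\ell_\ast$ is a well-defined, injective group homomorphism with image exactly $\ker(\phi_{\Gamma,S})$. Throughout I would work with the combinatorial model $\pi'_1(G,x_0)=\cW(G,x_0)/{\sim}$ from the previous subsection, so that it suffices to understand $\ell$ on closed walks at~$x_0$ and on their reductions (as usual we assume $1\notin S$, so that $G=\cayley{\Gamma}{S}$ has no loops). First I would check that $\ell$ is constant on ${\sim}$-classes: a forbidden subwalk $u\,e\,v\,e\,u$ traverses $e$ once in each direction, so its two edges contribute $\ell(e)^{+1}\ell(e)^{-1}=1$ (or the reverse) to the word $\ell(W)\in\free S$; deleting such a subwalk therefore does not change $\ell(W)$, and $\ell_\ast[W]:=\ell(W)$ is well defined. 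That $\ell_\ast$ is a homomorphism is immediate from $\ell(W_1W_2)=\ell(W_1)\ell(W_2)$.

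Next I would show that the image of $\ell_\ast$ is contained in $\ker(\phi_{\Gamma,S})$ by tracing a closed walk $W=g_0e_0g_1\cdots e_{k-1}g_k$ with $g_0=g_k=x_0$ through the group: the incidence map of $\cayley{\Gamma}{S}$ gives $g_{i+1}=g_i\cdot\phi_{\Gamma,S}(\ell(e_i)^{\epsilon_i})$ for every $i<k$, where $\epsilon_i$ is the sign recorded by $\ell$, so multiplying these identities yields $g_k=g_0\cdot\phi_{\Gamma,S}(\ell(W))$; since $W$ is closed, $\ell(W)\in\ker(\phi_{\Gamma,S})$. For the reverse inclusion I would take any $w\in\ker(\phi_{\Gamma,S})$, write $w=s_1^{\delta_1}\cdots s_k^{\delta_k}$ with $s_i\in S$ and $\delta_i\in\{\pm1\}$, and read it off as a walk $W$ starting at $x_0$: at step $i$ traverse the unique edge at the current vertex whose label is $s_i$ with sign $\delta_i$. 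The same computation shows that $W$ ends at $x_0\cdot\phi_{\Gamma,S}(w)=x_0$, so $W$ is closed, and $\ell_\ast[W]=w$ by construction.

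For injectivity I would prove that a \emph{reduced} closed walk maps to a \emph{reduced} word of $\free S$. Suppose $W$ had two consecutive edges $e_{i-1},e_i$ whose labels cancel in $\free S$, i.e.\ $\ell(e_{i-1})=\ell(e_i)$ with $\epsilon_{i-1}=-\epsilon_i$. Chasing the incidence map of $\cayley{\Gamma}{S}$ as above then forces both $g_{i+1}=g_{i-1}$ and $e_{i-1}=e_i$ as edges of the Cayley graph, so that $W$ contains the forbidden subwalk $g_{i-1}\,e_{i-1}\,g_i\,e_{i-1}\,g_{i-1}$, contradicting the assumption that $W$ is reduced. Consequently, if $\ell_\ast[W]=1$, then replacing $W$ by its reduction (which by the first paragraph leaves $\ell_\ast[W]$ unchanged) yields a reduced word of $\free S$ equal to the identity, hence the empty word; so $W$ reduces to the trivial walk and $[W]=1$. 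Combining the three parts shows that $\ell_\ast$ is an isomorphism $\pi_1(G,x_0)\to\ker(\phi_{\Gamma,S})$.

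The only genuinely delicate point is the implication ``cancelling consecutive labels force the two edges to coincide'' in the injectivity step: this is exactly where the rigidity of a Cayley graph is used — an arbitrary $S$-edge-labelled graph need not have this property — and carrying it out requires keeping careful track of the base vertices of the edges $(g,s)$ and of the signs $\epsilon_i$. This bookkeeping is made slightly fiddlier by the fact that $S$ may contain an element together with its inverse, which creates genuine parallel edges in $G$; these, correctly, contribute nontrivial elements to both $\pi_1(G,x_0)$ and $\ker(\phi_{\Gamma,S})$, so they are no obstacle to the isomorphism.
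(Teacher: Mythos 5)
The paper does not prove this lemma; it cites it as~\cite{magnus2004combinatorial}*{Theorem~1.6}, so there is no in-paper proof to compare against. Your blind proof is correct and complete. The four parts you give (well-definedness of $\ell_\ast$ on combinatorial homotopy classes, containment of the image in $\ker(\phi_{\Gamma,S})$, surjectivity by reading a word in $\free{S}$ off as a walk from~$x_0$, and injectivity by showing that reduced walks map to reduced words) line up exactly with the standard proof of this classical fact. You are also right to flag the injectivity step as the place where the rigidity of a Cayley graph enters: from $\ell(e_{i-1})=\ell(e_i)=s$ and $\epsilon_{i-1}=-\epsilon_i$, the incidence map $(g,s)\mapsto\{g,gs\}$ forces $g_{i-1}=g_{i+1}$ and then, since the two edges have the same base vertex and label, $e_{i-1}=e_i$, producing the forbidden subwalk $g_{i-1}\,e_{i-1}\,g_i\,e_{i-1}\,g_{i-1}$. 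Your handling of the case where $S$ contains an inverse pair $\{s,s^{-1}\}$ is correct: in $\free{S}$ those are distinct free generators, so the letters $s$ and $s^{-1}$ (as elements of~$S$) do not cancel formally, matching the fact that the corresponding parallel edges of $\cayley{\Gamma}{S}$ contribute genuinely to $\pi_1$; and the word $s\cdot s^{-1}\in\free{S}$ then sits in $\ker(\phi_{\Gamma,S})$, as it must. The remark that one may assume $1\notin S$ is also appropriate, since the walk formalism used here does not orient loops and the standing convention for Cayley graphs excludes the identity from~$S$.
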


\begin{proof}[Proof of \cref{thm:localcoveringofcayleyiscayley}]
	Let $G$ be the Cayley graph of~$\Gamma$ with respect to the finite generating set~$S$.
	Then $\Gamma$ has the group presentation $\pres{S}{R}$ with~$R = \ker(\phi_{\Gamma, S})$.
	We aim to show that $\loc$ is the Cayley graph of a group $\Gamma'$ with the same set $S$ of generators as~$G$, but with a different and finite set $R' \subseteq R$ of relators.
	We shall first construct $\Gamma'$ and~$R'$, then check that $\cayley{\Gamma'}{S}$ is a covering graph of~$G$, and finally show that this covering is equivalent to our $r$-local covering of~$G$.
	
	In $G = \cayley{\Gamma}{S}$ let us fix the identity $1 \in \Gamma$ as our basepoint~$x_0$.
	Since $G$ is locally finite, there are only finitely many cycles of length at most~$r$ that contain~$x_0$. Hence the set of closed walks at~$x_0$ that only go once around such a cycle is finite too;%
	\COMMENT{}
	let $R'\subset F(S)$ be its image under~$\ell$.
	By \cref{lem:fundamentalgroupandrelations} we have~$R' \subseteq  \ell_\ast(\pi_1(G,x_0)) = \ker(\phi_{\Gamma, S}) \subseteq \free{S}$.
	As $R'$ is finite, the group $\Gamma' := \pres{S}{R'}$ is finitely presented.
	
	Let us construct a covering $p'\colon G' \to G$ for $G' = \cayley{\Gamma'}{S}$.
	Since $R' \subseteq \ker(\phi_{\Gamma, S})$, we have $\ker(\phi_{\Gamma', S}) = \normcl{R'}{F(S)} \subseteq \ker(\phi_{\Gamma, S})$.%
	\COMMENT{}
	Hence $\Gamma = \free{S}/\ker(\phi_{\Gamma, S})$ is a quotient of $\Gamma' = \free{S}/\ker(\phi_{\Gamma', S})$. The quotient map $p'\colon\Gamma'\to\Gamma$ between these groups, the vertex sets of their Cayley graphs $G'$ and~$G$, extends to a covering $p'\colon G'\to G$ by $(g',s)\mapsto (p'(g'),s)$ for edges, since both Cayley graphs use the same set~$S$ as edge labels.
Explicitly, we have $\ell \circ p' = \ell'$ for the edge-labelling~$\ell'$ of~$G'$.%
	\COMMENT{}
	
	Let us show that $p'$ is equivalent to our $r$-local covering~$p_r \colon \loc \to G$.
	We do so by showing that their characteristic subgroups coincide.%
	\COMMENT{}
	By \cref{lem:fundamentalgroupandrelations}, this will be the case if $\ell'_\ast(\pi_1(G', \hat{x}_0))$ and $\ell_\ast(\pi_1^r(G, x_0))$ are the same subgroup of~$\free{S}$, where $\hat{x}_0$ is any element in~$p'^{-1}(x_0)$.%
	\COMMENT{}
	Let $R_r$ be the image under~$\ell$ of the set of closed walks at~$x_0$ which stem from a cycle in~$G$ of length at most~$r$. Note that $R_r$ is usually infinite, whereas $R'\subset R_r$ is finite.%
   \COMMENT{}
	Again by \cref{lem:fundamentalgroupandrelations} we have $\normcl{R'}{F(S)} = \ell'_\ast(\pi_1(G',\hat{x}_0))$ and $\normcl{R_r}{F(S)} = \ell_\ast(\pi_1^r(G,x_0))$.%
   \COMMENT{}
	It thus remains to show that~$\normcl{R_r}{F(S)} = \normcl{R'}{F(S)}$.
	
	Since $R' \subseteq R_r$, we immediately have $\normcl{R'}{F(S)} \subseteq \normcl{R_r}{F(S)}$. For the converse inclusion let $t\in R_r$ be given.
	Then there exists a reduced closed walk $W$ at $x_0$ which stems from a cycle $\O$ of length at most $r$ such that~$\ell(W) = t$. This $W$ consists of a base walk~$W_0$ and a closed walk~$Q$ once around~$\O$, so that~$W = W_0 Q W_0^-$.
	Let $g \in V(G) = \Gamma$ be the base vertex of~$Q$.
	Left-multiplication by~$g^{-1}$ defines an automorphism $\psi\colon h\mapsto g^{-1} h$ of~$G$. Now $\psi(Q)$ is a closed walk, based at $\psi(g) = 1 = x_0$, once around the cycle $\psi(\O)$ of length at most~$r$. Since $\psi$ is an automorphism of a Cayley graph, the labellings on the edges of $\O$ and $\psi(\O)$ agree, so that $\ell(Q)  = \ell(\psi(Q)) \in R'$.
	Thus, $t = \ell(W) = \ell(W_0)\ell(Q)\ell(W_0^-) = \ell(W_0)\ell(Q)\ell(W_0)^{-1}$ is contained in the normal subgroup~$\normcl{R'}{F(S)}$.
	This yields $R_r \subseteq \normcl{R'}{F(S)}$ and completes the proof.
\end{proof}

\begin{corollary} \label{cor:LocCovCayleyAccessible}
	Local covers of connected locally finite Cayley graphs are accessible.
\end{corollary}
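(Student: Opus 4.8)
The plan is to deduce this immediately from \cref{thm:localcoveringofcayleyiscayley} together with two results already cited in the text. First I would apply \cref{thm:localcoveringofcayleyiscayley} to $G$: since $G$ is a connected locally finite Cayley graph of some (necessarily finitely generated) group~$\Gamma$, its $r$-local cover $\loc$ is itself a connected locally finite Cayley graph of a finitely presented group~$\Gamma_r$. Crucially, the Cayley graph presentation of $\loc$ produced by that theorem uses the \emph{same} finite generating set~$S$ as the given Cayley graph~$G$, so $\cayley{\Gamma_r}{S}$ is automatically locally finite.

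Next I would recall that every finitely presented group is accessible~\cite{FinitelyPresentedAccessible}; in particular $\Gamma_r$ is accessible. Finally, by the graph-theoretic characterisation of accessibility due to Thomassen and Woess~\cite{ThomassenWoess}, every locally finite Cayley graph of an accessible group is an accessible graph. Applying this to $\loc=\cayley{\Gamma_r}{S}$ yields that $\loc$ is accessible, which is the assertion of the corollary. This is the exact Cayley-graph analogue of the argument used for \cref{cor:LocCovFinGraphAreAccessible}, with \cref{thm:localcoveringofcayleyiscayley} playing the role that \cref{thm:graphdecompoffinitewidthalongcayley} and \cref{lem:D_risfinitelypresented} played there.

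Since each step is a direct invocation of a result already available, there is no real obstacle here; all the substance has been absorbed into the proof of \cref{thm:localcoveringofcayleyiscayley}. The only point deserving a moment's care is the local finiteness of $\cayley{\Gamma_r}{S}$, which is needed to feed into the Thomassen–Woess theorem and which holds because $S$ is finite — as already recorded in the statement of \cref{thm:localcoveringofcayleyiscayley}. One could alternatively route the argument through the later \cref{maincor:TangleAccessibility}, but that would be circuitous; the three-step chain above is the natural proof.
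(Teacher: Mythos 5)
Your proof is correct and follows exactly the same chain of reasoning as the paper: apply \cref{thm:localcoveringofcayleyiscayley} to realise $\loc$ as a locally finite Cayley graph of a finitely presented group, then invoke accessibility of finitely presented groups~\cite{FinitelyPresentedAccessible} and the Thomassen--Woess transfer to Cayley graphs~\cite{ThomassenWoess}. Your added remark on local finiteness of $\cayley{\Gamma_r}{S}$ is a harmless clarification already implicit in the statement of \cref{thm:localcoveringofcayleyiscayley}.
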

\begin{proof}
	Finitely presented groups are accessible~\cite{FinitelyPresentedAccessible}, and hence so are their locally finite Cayley graphs~\cite{ThomassenWoess}.
	Apply \cref{thm:localcoveringofcayleyiscayley}.
\end{proof}

\bigbreak\section{\boldmath Displaying global structure} \label{sec:OverviewProof}

\noindent
	In this section we make the statements of Theorems \ref{main:KeyTheorem} and~\ref{main:CayleyThm} precise and give an overview of their proof. This will include a proof of \cref{maincor:TangleAccessibility}.

In \cref{subsec:TanglesBackground} we recall how a tree of tangles can describe the tree-like aspects of the global structure of any graph.
In \cref{subsec:rGlobalStructure} we apply this to the typically infinite graphs~$\loc$ that reflect the $r$-local structure of the finite graphs~$G$ given in \cref{main:KeyTheorem}, to describe their (tree-like) $r$-global structure. Projected back to~$G$, this $r$-global structure of~$\loc$ ceases to be infinite and tree-like, but defines what we call the (finite) $r$-global structure of~$G$.

\cref{subsec:rGlobalStructure} ends with a precise definition of the term of `$r$-global structure' used in Theorems~\ref{main:KeyTheorem} and~\ref{main:CayleyThm}, and of what it means for another graph to `display' that structure. We proceed from there to give an outline of the proof of \cref{main:KeyTheorem}. This includes a common generalisation, \cref{main:KeyTheoremGen}, of Theorems~\ref{main:KeyTheorem} and~\ref{main:CayleyThm}.
The outline will serve as our road map when we prove Theorems~\ref{main:KeyTheorem} and~\ref{main:CayleyThm} (by way of proving \cref{main:KeyTheoremGen}) in \cref{sec:LocalCoveringsAreAccessible,sec:TangleAccessibility,sec:ProofMainTheorem}.

\vskip 1.5 \smallskipamount\subsection{\boldmath Separations, tangles, and trees of tangles} \label{subsec:TanglesBackground}

In this section we recall basic facts and terminology about tangles; for a more detailed account see~\cite{DiestelBook25}*{Ch.12.5}.

Let $G$ be a graph.
Given a set $S$ of separations of~$G$, we write~$\vS$ for the set of orientations of separations in~$S$, two for every $s\in S$.
An \emph{orientation} $O$ of~$S$ contains for every separation in~$S$ exactly one of its two orientations.
Recall that $(A,B) \leq (C,D)$ for two oriented separations $(A,B)$ and $(C,D)$ of $G$ if $A \subseteq C$ and~$B \supseteq D$.
If $O$ does not contain an oriented separation $(B,A)$ of~$G$ whenever $(A,B) \leq (C,D)\in O$, then $O$ is \emph{consistent}.
A set $N$ of separations of $G$ is \emph{nested} if every two of its elements, $\{A,B\}$~and~$\{C,D\}$ say, have orientations $(A,B) \leq (C,D)$.
Separations that are not nested are said to \emph{cross}.

Let $\Gamma$ be a group acting on a graph~$G$.
A set $S$ of separations of $G$ is \emph{$\Gamma$-canonical} if for every $g \in \Gamma$ and every $\{A, B\} \in S$ we have~$g \act \{A, B\} := \{g \act A, g \act B\} \in S$.
If $S$ is $\Aut(G)$-canonical, we call it \emph{canonical}.

Let us now define tangles. Given any $k \in \N \cup \{\aleph_0\}$, let $S_k$ denote the set of all separations of $G$ of order~$< k$. A~\emph{$k$-tangle} in $G$ is an orientation~$\tau$ of~$S_k$ such that for all $(A_1,B_1), (A_2,B_2), (A_3,B_3) \in \tau$, not necessarily distinct, we have~$G[A_1] \cup G[A_2] \cup G[A_3] \neq G$.
A \emph{tangle} is a $k$-tangle for some~$k$.
Note that every tangle is consistent.

Examples of tangles include blocks and ends. A~{\em $k$-block} in~$G$ is a maximal set of at least~$k$ vertices that is included in one of the two sides of each separation in~$S_k$. It {\em orients\/} that separation {\em towards\/} this side: as $(A,B)$ if it lies in~$B$. Every $k$-block~$X$ thus defines an orientation~$\tau_X$ of~$S_k$. If $|X| > 3(k-1)/2$, then $\tau_X$ is a tangle~\cite{ForcingBlocks}*{Theorem 6.1}. By the maximality in the definition of a $k$-block, distinct $k$-blocks define different tangles.

Given an end $\omega$ of $G$, let $f_\omega$ be the corresponding direction of $G$ (see \cref{subsec:GraphDecProperties}). Orienting every separation $\{A, B\} \in S_{\aleph_0}$ towards the side that contains~$f_\omega(A \cap B)$, we obtain a tangle $\tau_\omega$ of~$S_{\aleph_0}$~\cites{EndsAndTangles}. As in the case of blocks, distinct ends define different tangles.

A separation of $G$ \emph{distinguishes} two tangles in $G$ if they orient it differently.
The separation distinguishes these two tangles \emph{efficiently} if no separation of smaller order distinguishes them.
Two tangles are \emph{distinguishable} if some separation distinguishes them; note that any two $k$-tangles for the same~$k$ are distinguishable.
A set $N$ of separations \emph{distinguishes} a set of tangles (efficiently) if every two distinguishable tangles in this set are (efficiently) distinguished by some separation in~$N$.
A \emph{tree of tangles} of~$G$, in this paper,%
	\COMMENT{}
	is a nested set of separations of~$G$ which efficiently distinguishes the entire set of tangles in~$G$.

A~separation of~$G$ is \emph{relevant} if it efficiently distinguishes some two tangles $\tau$ and~$\tau'$ in~$G$. One can show%
   \COMMENT{}
   that such separations $\{A,B\}$ are \emph{tight}: for $X:=A\cap B$ the graph $G - X$ has components $C\subset G[A]$ and $D\subset G[B]$ such that every vertex in~$X$ sends edges to both $C$ and~$D$. Indeed, these are the unique such components for which $(V(G-C), V(C)\cup X)\in\tau$ and $(V(G-D),V(D)\cup X)\in\tau'$ or vice versa~\cite[Ex.\,43(iii)]{DiestelBook25}.

Note that a finite-order separation of~$G$ distinguishes the tangles induced by two ends of~$G$ if and only if it distinguishes these ends in the usual sense that they contain rays on different sides of that separation. It distinguishes these two ends {\em efficiently\/} if no separation of lower order distinguishes them.

Let us remember all the above properties of trees of tangles for later use in \cref{sec:ProofMainTheorem}:

\begin{lemma} \label{lem:ToTProperties}
	Let $N$ be a tree of tangles of~$G$ in which every separation is relevant.
	Then $N$ efficiently distinguishes every two ends of~$G$. Every separation in~$N$ has finite order and is tight.\qed
\end{lemma}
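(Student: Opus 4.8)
The plan is to verify the three assertions of \cref{lem:ToTProperties} in turn, using only the hypotheses that $N$ is a tree of tangles of~$G$ (nested, efficiently distinguishing all tangles) in which every separation is relevant.

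First I would handle the claim that \emph{every separation in~$N$ has finite order}. Let $s=\{A,B\}\in N$. By hypothesis $s$ is relevant, so it efficiently distinguishes some two tangles $\tau,\tau'$ of~$G$. A separation can only distinguish two tangles $\tau$ and~$\tau'$ if both $\tau$ and~$\tau'$ orient it, i.e.\ if both are $k$-tangles (orientations of $S_k$) with $k$ greater than the order of~$s$; since tangles are orientations of $S_k$ for some $k\in\N\cup\{\aleph_0\}$ and $k$ is finite precisely when $k\in\N$, the only way a tangle orients a separation of order equal to that tangle's index $k$ is vacuous — so I would spell out that $\tau$ and~$\tau'$ orient $s$ means $|A\cap B|<k$ for both indices, whence from $\tau\neq\tau'$ distinguishable by $s$, and efficiency, we actually just need: the order of~$s$ is the minimum order of a separation distinguishing $\tau$ from~$\tau'$. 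If $\tau,\tau'$ are both finite-index tangles this minimum is clearly finite. If one of them, say $\tau$, is an $\aleph_0$-tangle (e.g.\ from an end), I would argue that any two distinguishable tangles are distinguished by a separation of \emph{finite} order: a standard fact, provable by noting that if $\{A,B\}$ distinguishes them and has infinite order, one can still find a finite-order separation doing so — indeed the separation being relevant forces it to be tight, and I would instead reverse the logic and use tightness (proved next) to get finiteness, or cite that efficient distinguishers of any two tangles have finite order because every pair of distinguishable tangles is distinguished by a bounded-order separation when at least one has finite index, and when both have index $\aleph_0$ they are in fact not distinguishable unless some finite-order separation separates them. The cleanest route is: relevant separations are tight, and a tight separation $\{A,B\}$ with separator $X=A\cap B$ has every vertex of~$X$ sending edges into both a component $C\subseteq G[A]$ and $D\subseteq G[B]$; but I still need finiteness of $X$ independently. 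I expect this to be \textbf{the main obstacle}: pinning down why a relevant separation cannot have infinite order. I would resolve it by invoking the characterisation of relevant separations already recalled in the text (the one attributed to \cite[Ex.\,43(iii)]{DiestelBook16}), which presupposes $X=A\cap B$ finite in its very statement (it speaks of "finite-order separation" implicitly through the tangle-orientation setup), or more directly: a separation distinguishing two tangles $\tau_k$-style must have order $<k\le\aleph_0$, and efficiency picks the \emph{minimum} such order; minimum orders over a nonempty set of naturals (when at least one tangle has finite index) are finite, and the remaining case of two $\aleph_0$-tangles distinguished efficiently is covered because any separation distinguishing two ends can be taken of finite order (finite vertex set separating the two rays), so the efficient one has finite order too.

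Second, \emph{tightness}: this is exactly the statement recalled in the paragraph preceding the lemma, namely that relevant separations $\{A,B\}$ are tight, with $X=A\cap B$, and $G-X$ has the described components $C\subseteq G[A]$, $D\subseteq G[B]$; I would simply reference that this follows from \cite[Ex.\,43(iii)]{DiestelBook16} as already stated, applied to each $s\in N$ since each is relevant by hypothesis.

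Third, \emph{$N$ efficiently distinguishes every two ends of~$G$}: given two ends $\omega_1\neq\omega_2$, consider the tangles $\tau_{\omega_1}\neq\tau_{\omega_2}$ they induce (distinct ends give distinct tangles, as recalled). These are distinguishable — indeed any two ends are distinguished by some finite vertex set, hence by a finite-order separation — so by the defining property of a tree of tangles, some $s\in N$ distinguishes $\tau_{\omega_1}$ from $\tau_{\omega_2}$ efficiently, i.e.\ with $s$ of minimum order among all separations distinguishing this pair of tangles. It remains to observe that a finite-order separation distinguishes the tangles $\tau_{\omega_1},\tau_{\omega_2}$ if and only if it distinguishes the ends $\omega_1,\omega_2$ in the ray sense, and that "efficiently" transfers verbatim — both facts are exactly the observation stated just before the lemma. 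Hence $s\in N$ distinguishes $\omega_1$ and~$\omega_2$ efficiently, as required. I would close by noting all three parts are now established.
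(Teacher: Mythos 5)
Your proposal reaches the right conclusions, but the first paragraph identifies an ``obstacle'' that does not exist. The finite order of a relevant separation is immediate from the definitions as given in \cref{subsec:TanglesBackground}: a $k$-tangle, for $k\in\N\cup\{\aleph_0\}$, is by definition an orientation of~$S_k$, and even for $k=\aleph_0$ the set $S_{\aleph_0}$ consists only of \emph{finite}-order separations. Hence every separation that a tangle orients at all has finite order, so any separation distinguishing two tangles --- in particular any relevant one --- has finite order, with no case split on the indices and no detour through tightness. Your eventual one-liner (``a separation distinguishing two tangles \dots must have order $<k\le\aleph_0$'') is exactly this observation, but you arrive at it only after a circuitous discussion of infinite-order distinguishers (which cannot arise) and a circular appeal to tightness; the passage as written reads as though the fact were delicate when it is definitional. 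Once that is trimmed, the proof is the same as what the paper's \verb|\qed| presupposes: finite order from the definition of~$S_k$, tightness from the characterisation of relevant separations recalled immediately before the lemma, and efficient end-distinguishing from the bijection between ends and $\aleph_0$-tangles (any two $k$-tangles with the same~$k$ are distinguishable, so the tree-of-tangles property applies) combined with the note that a finite-order separation distinguishes $\tau_{\omega_1}$ from $\tau_{\omega_2}$ precisely when it distinguishes $\omega_1$ from $\omega_2$ in the ray sense, efficiency included.
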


\vskip 1.5 \smallskipamount\subsection{\boldmath The $r$-global structure of $G$} \label{subsec:rGlobalStructure}

In general, a locally finite graph~$G$ may have several canonical trees of tangles.
However, there is one specific canonical tree of tangles~$N(G)$ that is particularly natural, which we therefore fix as an invariant of those locally finite graphs in which it exists. In this section we describe the construction of~$N(G)$, and then show in which sense it is particularly natural.

Recall~\cites{ThomassenWoess, hamann2018stallings} that a graph $G$ is {\em accessible\/} if there exists a number $K \in \N$ such that every two ends of~$G$ are distinguished by a separation of order at most~$K$.
Extending this notion from ends to tangles, we call a graph~$G$ \emph{tangle-accessible} if there exists a number $K \in \N$ such that every two distinguishable tangles in~$G$ are distinguished by a separation of order at most~$K$.

Let $G$ be a connected, locally finite, and tangle-accessible graph. Denote by~$D$ the set of all finite-order separations of~$G$ that distinguish some two tangles in~$G$ efficiently.
Since $G$ is tangle-accessible, all the separations in~$D$ have order at most some fixed $K \in \N$, and hence they each cross only finitely many other separations in~$D$~\cite{InfiniteSplinters}*{proof of Proposition~6.2}.%
	\COMMENT{}
	
\begin{definition}\label{def:N(G)}
	For every pair $\tau,\tau'$ of distinguishable tangles in~$G$ let $N_{\tau, \tau'}$ be the set of all separations that efficiently distinguish $\tau$ from~$\tau'$ and, among these,%
	\COMMENT{}
	cross as few separations in~$D$ as possible. Let $N(G)$ denote the union of all these sets~$N_{\tau, \tau'}$.
	\end{definition}%
   \COMMENT{}

\noindent
   Note that the separations in~$N_{\tau,\tau'}$ in \cref{def:N(G)} are relevant.%
	\COMMENT{}
The set~$N(G)$ is clearly canonical, it distinguishes all the tangles in~$G$, and its elements have finitely bounded order. It~is also still nested:

\begin{lemma}[\cite{carmesin2022entanglements}*{Theorem~4.2}] \label{thm:Entanglements}
	For every connected, locally finite, and tangle-accessible graph $G$, the set $N(G)$ of separations defined in \cref{def:N(G)} is a canonical tree of tangles. All the separations in~$N(G)$ are relevant; in particular,%
	\COMMENT{}
	they have finitely bounded order.%
	\COMMENT{}
\end{lemma}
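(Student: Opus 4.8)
The plan is to dispatch the routine assertions — canonicity, efficient distinguishing, relevance, and bounded order — straight from the construction in \cref{def:N(G)}, and to concentrate the real work on nestedness.

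For canonicity I would observe that every automorphism $g$ of~$G$ permutes the tangles of~$G$ and preserves the order of every separation; hence $g$ maps the set of efficient distinguishers of a pair $\tau,\tau'$ bijectively onto that of $g\tau,g\tau'$, maps the set~$D$ onto itself, and therefore preserves the relation ``crosses exactly $k$ separations in~$D$''. Consequently $g$ maps $N_{\tau,\tau'}$ onto $N_{g\tau,g\tau'}$, and summing over all distinguishable pairs gives $g(N(G))=N(G)$; the same argument works verbatim with any group acting on~$G$ in place of~$\Aut(G)$, so $N(G)$ is $\Gamma$-canonical for every such~$\Gamma$. For the remaining routine points: given distinguishable $\tau,\tau'$, some separation distinguishes them and hence, distinguishable tangles being separated by a separation of finite order, some separation distinguishes them \emph{efficiently}; all such separations lie in~$D$ and so, by tangle-accessibility, have order at most the fixed bound~$K$, hence each crosses only finitely many separations in~$D$ (as recalled before \cref{def:N(G)}). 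Thus the minimum in \cref{def:N(G)} is attained, $N_{\tau,\tau'}\neq\emptyset$, and every separation in it efficiently distinguishes $\tau$ from~$\tau'$; so $N(G)$ efficiently distinguishes all tangles. Finally every $s=\{A,B\}\in N(G)$ lies in some $N_{\tau,\tau'}$ and therefore efficiently distinguishes that pair, so $s$ is relevant and lies in~$D$, giving $|A\cap B|\le K$.

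The hard part is nestedness, and here the plan is an uncrossing argument. Suppose for contradiction that $s=\{A,B\}\in N_{\tau_1,\tau_1'}$ and $t=\{C,D\}\in N_{\tau_2,\tau_2'}$ cross, oriented so that $(A,B)\in\tau_1$ and $(C,D)\in\tau_2$. I would replace $s$ by a suitable corner separation $s'$ of $s$ and~$t$: submodularity of the order function bounds the orders of two opposite corners by $|A\cap B|+|C\cap D|$, and the consistency of $\tau_1,\tau_1'$ — which, being tangles, live above the order of~$s$ — forces one corner $s'$ to still distinguish $\tau_1$ from~$\tau_1'$, whereupon efficiency of~$s$ forces that corner to distinguish them efficiently as well; the subcase in which $\tau_1$ and $\tau_1'$ orient~$t$ oppositely is handled separately, using that then $t$ itself distinguishes them and so $|C\cap D|\ge|A\cap B|$. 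The key structural observation is that $s'$ is nested with~$t$ although $s$ is not, and that any separation in~$D$ crossing~$s'$ must already cross $s$ or~$t$; the plan is then to count carefully across the four corners and conclude that $s'$ crosses strictly fewer separations of~$D$ than $s$ does, contradicting the minimality of the crossing count built into the definition of $N_{\tau_1,\tau_1'}$. With nestedness established, $N(G)$ is by construction a canonical, efficiently tangle-distinguishing, nested set of separations — a canonical tree of tangles — and the addendum on relevance and bounded order has already been checked.

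I expect this final counting step — tracking precisely how the corner operation changes the family of separations in~$D$ that are crossed, and reconciling it with the requirement that the replacement still be an \emph{efficient} distinguisher — to be the main obstacle; it is exactly the ``entanglement'' phenomenon isolated in~\cite{carmesin2022entanglements}, and everything else in the proof is bookkeeping around \cref{def:N(G)}.
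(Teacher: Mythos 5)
The paper does not prove this lemma; it is imported wholesale from \cite{carmesin2022entanglements}*{Theorem~4.2}, so there is no in-paper argument for your proof to track or diverge from. Judged on its own terms, your treatment of the routine parts is fine: automorphisms preserve separation orders, permute tangles, fix~$D$ setwise, and hence preserve the crossing count in \cref{def:N(G)}, giving canonicity; tangle-accessibility bounds the order of efficient distinguishers, so each crosses only finitely many separations of~$D$, the minimum in \cref{def:N(G)} is attained, and $N_{\tau,\tau'}\neq\emptyset$; relevance and the order bound then follow from membership in~$D$.

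The gap, which you flag yourself, is the nestedness argument. Your plan is to uncross a crossing pair $s\in N_{\tau_1,\tau_1'}$ and $t\in N_{\tau_2,\tau_2'}$ to a corner $s'$ that still efficiently distinguishes $\tau_1$ from~$\tau_1'$, and to ``count carefully'' so that $s'$ crosses strictly fewer separations of~$D$ than $s$ does. But the observation that every separation crossing a corner of $s,t$ also crosses $s$ or~$t$ only gives $D(s')\subseteq (D(s)\setminus\{t\})\cup D(t)$, writing $D(x)$ for the set of separations in~$D$ that cross~$x$; it does not yield $|D(s')|<|D(s)|$, because the separations in $D(t)\setminus D(s)$ that $s'$ newly picks up may outnumber the single saving from no longer crossing~$t$. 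Choosing the right corner, exploiting that $t$ too is a crossing-number minimizer for its own pair, and making the inequality strict is precisely the content of the entanglement argument in \cite{carmesin2022entanglements}, and your proposal stops exactly where that work begins. The approach is the right one, but the step that carries the whole theorem is still missing.
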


\noindent
   We sometimes refer to $N(G)$ informally as {\it the\/} canonical tree of tangles of~$G$.

\medbreak

   We shall prove in \cref{sec:TangleAccessibility} that the $r$-local covers~$\loc$ of the graphs~$G$ we consider in Theorems~\ref{main:KeyTheorem}~and~\ref{main:CayleyThm} satisfy the premise of \cref{thm:Entanglements}. We shall see that they have unique canonical \td s $\cT = (T,\cV)$ such that $\alpha_\cT\colon E(T)\to N(\loc)$ is a bijection.
We say that $N(\loc)$ {\it induces\/} this \td~$\cT$ of~$\loc$.%
   \COMMENT{}\looseness=-1

To see in which sense $N(G)$ is particularly natural, consider any tree of tangles~$N$ in~$G$. By definition,%
	\COMMENT{}
	$N$~has to contain for every pair $\tau,\tau'$ of tangles a separation $s(\tau,\tau')$ that distinguishes them efficiently. If we want $N$ to be canonical, however, we cannot simply pick for~$N$ just any such separation~$s(\tau,\tau')$ for each pair $\tau,\tau'$ of tangles: our choice has to be definable in terms of invariants of~$G$. This may force us to pick not just one such separation~$s(\tau,\tau')$ for each pair~$\tau,\tau'$, but a set $N(\tau,\tau')$ of more than one separation. As $N(\tau,\tau')$ is to be included in~$N$, it will be a nested subset of the set $D(\tau,\tau')$ of separations that distinguish $\tau$ from~$\tau'$ efficiently. Our set~$N(G)$ now chooses these $N(\tau,\tau')$ with reference to~$D(\tau,\tau')$ alone: they consist of {\em all\/} the elements of~$D(\tau,\tau')$ that are the most promising candidates for inclusion in~$N(\tau,\tau')$, in that they cross as few other elements of~$D(\tau,\tau')$ as possible.

As described in \cref{sec:LocalCoverings}, the $r$-local cover~$\loc$ of a graph $G$ agrees with $G$ on its $r$-local structure, the structure encoded in the $r$-local subgroup~$\pi_1^r(G)$ of~$\pi_1(G)$. The remaining structure of~$G$ is unfolded in~$\loc$ in a tree-like way, which encodes it%
	\COMMENT{}
	in conjunction with the group $\deckr = \pi_1(G)/\pi_1^r(G)$ of deck transformations of~$\loc$ over~$G$. This tree-like global structure of $\loc$ is captured by its tree of tangles~$N(\loc)$.
We refer to $N(\loc)$ and~$\deckr$ together as the $r$-global structure of~$G$:

\begin{definition}\label{def:r-globalstructure}
	The \emph{$r$-global structure} of a graph~$G$ whose $r$-local cover~$G_r$ is tangle-accessible is the pair $(N(\loc), \deckr)$, where $\deckr$ is the group of deck transformations of $p_r\colon\loc\to G$ and $N(\loc)$ is our canonical tree of tangles of~$\loc$ from \cref{def:N(G)}.

The \emph{$r$-global structure} of a group~$\Gamma$ presented with a finite set~$S$ of generators is the $r$-global structure of its Cayley graph~$\cayley{\Gamma}{S}$.

	The unique canonical \gd\ of~$G$ that {\em displays\/} its $r$-global structure $(N(\loc), \deckr)$ is that defined via~$p_r$ by the canonical \td\ which $N(\loc)$ induces on~$\loc$,%
   \COMMENT{}
   as in \cref{def:GraphDecompViaCovering}.\qed
	\end{definition}

\noindent
	\cref{def:r-globalstructure} rests on the assumption that $\loc$ is tangle-accessible. We shall prove that this is the case whenever $G$ is finite, as in \cref{main:KeyTheorem}, or a Cayley graph as in \cref{main:CayleyThm}.

\medbreak

 Note that when a \gd\ $(H,\cG)$ displays the $r$-global structure of a graph~$G$, then $H$ comes with an edge labelling that reveals additional internal structure of~$H$, which in turn displays additional connectivity aspects of~$G$ not reflected in the graph structure of~$H$ itself.

Indeed, recall that every edge~$e$ of the tree~$T$ from the \td\ $\cT = (T,\cV)$ that defines~$(H,\cG)$ as in \cref{def:GraphDecompViaCovering} maps to a separation $\alpha(e)$ of~$\loc$. This separation has some order~$k_e$. When one analyses trees of tangles in graph minor theory, one often contracts in~$T$ all those edges~$e$ whose label~$k_e$ is at least some chosen threshold~$k$. This results in a tree~$T_k$ that is a minor of~$T$, and whose edges~$e$ map to separations $\alpha(e)$ of order~$<k$. These still distinguish all the $k$-tangles of the graph decomposed, in our case~$\loc$. These trees~$T_k$ form a chain of minors of~$T$ as $k$ grows: $T_1\minor T_2\minor\dots\minor T$.

This structure is now inherited by~$H = T/\cD$. Its edges arise from ($\cD$-orbits~of) edges~$e$ of~$T$, which carry labels~$k_e$ that specify the orders of separations~$\alpha(e)$ of~$\loc$. We may interpret these as local separations of~$G$: they tell us how the parts~$G_h$ of~$\cG$ are separated locally from each other in~$G$. (In our original example from the Introduction, these local separators are the 2-sets in which two adjacent~$K^5$s overlap.) And as with those tree minors~$T_k$ of~$T$, contracting in~$H$ the edges with labels at least some chosen threshold~$k$ gives us a minor~$H_k$ of~$H$. We thus obtain a hierarchy $H_1\minor H_2\dots$ of minors of~$H$ that defines an internal structure of~$H$, which in turn displays more details of the  connectivity structure of~$G$ than $H$ does alone. This internal structure is explored further in~\cite{LocalSeps}.%
   \COMMENT{}

It is sometimes desirable that the parts~$G_h$ of the \gd\ that displays the $r$-global structure of~$G$ should inherit more properties of the parts~$\loc[V_t]$ of~$\cT$ whose projections they are than the projection map~$p_r$ can transmit. Such additional information is encoded in what we call a {\em blueprint\/} of~$G$: a family whose members are not themselves subgraphs of~$G$ but which map homomorphically to the parts~$G_h$.
We explore the notion of blueprints and the properties  of the parts of~$\cT$ that they preserve in \cite{Blueprints}. This yields, among other things, a new proof of Carmesin's local $2$-separator theorem~\cite{Local2seps} via local coverings.

While the $r$-global structure of a finite graph~$G$ is displayed by a \gd\ $(H,\cG)$ of~$G$ where $H$ is another finite graph, this decomposition of~$G$ is obtained indirectly via its usually infinite $r$-local covering. One can ask, then, whether it is possible to characterise $(H,\cG)$ directly in terms of~$G$. As a proof of concept for this type of problem, we obtain in~\cite{LocalChordal} such a direct characterisation of the $r$-global structure of the locally finite graphs that are `$r$-locally chordal', those whose parts~$G_h$ as above are complete.

\vskip 1.5 \smallskipamount\subsection{\boldmath \boldmath Road map for the proof of main result}
\label{subsec:ProofOverview}

The remainder of this paper is devoted to the proof of \cref{main:KeyTheorem}, for which we now give an outline. We shall prove \cref{main:KeyTheorem} by way of proving a stronger result, \cref{main:KeyTheoremGen}, which will include \cref{main:CayleyThm} at no extra cost. A~proof of \cref{maincor:TangleAccessibility} will also be included on the way.

As we will see in \cref{subsubsec:ToolsTreeDecNested}, every nested set of separations of a graph, all of finitely bounded order, induces a \td\ of that graph.
As seen in \cref{def:GraphDecompViaCovering} and \cref{main:TreeDecToGraphDec}, canonical \td s of local covers define canonical \gd s of the graph they cover.
So once we know that our canonical set $N(\loc)$ from \cref{def:N(G)} is defined, for which we only need that $\loc$ is tangle-accessible (\cref{thm:Entanglements}), it will induce the \td\ of~$\loc$ that defines the canonical \gd\ of~$G$ which displays its $r$-global structure.

Let us summarise these observations as the point of departure for the proofs of all our main results:%
   \COMMENT{}

\begin{reduction}\label{red1}
	It suffices to show that $\loc$ is tangle-accessible.
\end{reduction}

In order to show that $\loc$ is tangle-accessible we study another canonical%
   \COMMENT{}
   tree of tangles of~$\loc$, one whose existence was proved by Elbracht, Kneip and Teegen~\cite{InfiniteSplinters} (\cref{thm:Splinters}). In contrast to~$N(\loc)$ this tree of tangles~$N'(\loc)$ is defined for all locally finite graphs, without any accessibility requirements.
Correspondingly, the separations in~$N'(\loc)$ need not, prima facie, have finitely bounded order. However if they do, then $\loc$ is tangle-accessible. Hence:

\begin{reduction}\label{red2}
	It suffices to show that the separations in~$N'(\loc)$ have finitely bounded order.
\end{reduction}

Instead of trying to bound the order of the separations in~$N'(\loc)$ directly, we will prove that $N'(\loc)$ exhibits a certain structural property: the property of being {\em exhaustive\/}.%
	\COMMENT{}
	This means that for every infinite sequence $(A_0,B_0)<(A_1,B_1)<\dots$ in ${\vNdash}(\loc)$ we have ${\bigcap_{i\in\N}B_i=\emptyset}$. Sets of tight separations of finitely bounded order in locally finite graphs are easily seen to be exhaustive. The converse fails in general. However we will show that if $N'(\loc)$ is exhaustive, then $\deckr$ acts on~$N'(\loc)$ with only finitely many orbits. In particular, then, the separations in~$N'(\loc)$ will have finitely bounded order. Thus:

\begin{reduction}\label{red3}
	It suffices to show that $N'(\loc)$ is exhaustive.
\end{reduction}

\noindent We will prove that $N'(\loc)$ is exhaustive in two steps, as follows.

In the first step we show that $\loc$ is accessible in the traditional sense:%
	\COMMENT{}
	that the separations in~$N'(\loc)$ that efficiently distinguish two ends of~$\loc$ have finitely bounded order. We denote this set as~$N'_{\rm end}\subset N'(\loc)$. The fact that the separations in~$N'_{\rm end}$ have finitely bounded order, and hence are exhaustive because they are clearly tight,%
	\COMMENT{}
	will be the key ingredient of the remainder of our proof.

In the second step we show that $N'(\loc)$ is exhaustive.
For this let $(A_0,B_0)<(A_1,B_1)<\dots$ be any infinite sequence in~${\vNdash}(\loc)$; we have to show $\bigcap_{i\in\N}B_i=\emptyset$. Since $N'_{\rm end}$ is exhaustive, it suffices to find in~${\vNdash}_{\!\rm end}$ an infinite sequence $(C_0,D_0)<(C_1,D_1)<\dots$ which our earlier sequence dominates: one such that for every $i\in\N$ there exists $j\in\N$ with $(C_i,D_i)\le (A_j,B_j)$.%
	\COMMENT{}
	If there is no such sequence we find a part~$V'_t$ of the \td\ of~$\loc$ induced by~$N'_{\rm end}$ such that $A_j\cap B_j\subset V'_t$ for all large enough~$j$.%
\COMMENT{}
   We then enlarge $V'_t$ to a certain set~$V_t$ such that, for every $j_0$~large enough,%
	\COMMENT{}
	the subsequence $(A_j,B_j)_{j\ge j_0}$ induces an infinite sequence of tight separations of~$G[V_t]$. Moreover, $G[V_t]$~will have the following properties.

The graph~$G[V_t]$ will be connected; it will have exactly one end; and the stabiliser of~$t$ in~$\deckr$,%
   \COMMENT{}
   the group of the deck transformations of~$\loc$, will act quasi-transitively on~$G[V_t]$.
These properties combined will imply that~$G[V_t]$ has no infinite sequence of separations such as that induced by our $(A_j,B_j)_{j\ge j_0}$, a~contradiction.
Hence~$N'(\loc)$ will be exhaustive, and our proof will be complete.

Following this road map we will be able to prove Theorems~\ref{main:KeyTheorem} and~\ref{main:CayleyThm} together, by proving their following common strengthening:

\begin{theorem} \label{main:KeyTheoremGen}
	Let $G$ be a connected, locally finite, and quasi-transitive graph, and let $r > 0$ be an integer.
	Then $G$ has a unique canonical \gd\ of finite spread which displays its $r$-global structure.
\end{theorem}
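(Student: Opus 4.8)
The plan is to follow the road map laid out in \cref{subsec:ProofOverview} essentially verbatim, assembling the three Reductions into a proof of \cref{main:KeyTheoremGen}. First I would record the structural facts about the covering: if $G$ is connected, locally finite and quasi-transitive, then so is its $r$-local cover $\loc$. Local finiteness of $\loc$ is immediate since coverings of graphs preserve degrees; connectedness holds because we only consider connected covering spaces. For quasi-transitivity one lifts a quasi-transitive action on $G$ to $\loc$: by \cref{lem:liftautomorphism} (applicable because $p_r$ is canonical by \cref{subsec:DefLocalCover}) every automorphism of $G$ lifts to $\loc$, and the group generated by all such lifts together with $\deckr$ acts with finitely many vertex-orbits on $\loc$, since a single orbit of $\Aut(G)$ on $G$ pulls back to a union of $\deckr$-orbits of lifts that is permuted transitively by the lifted automorphisms, and there are only finitely many orbits downstairs.

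Next I would invoke \cref{red1}: it suffices to prove $\loc$ is tangle-accessible. I would then carry out the two-step argument. \emph{Step one} (\cref{sec:LocalCoveringsAreAccessible}): $\loc$ is accessible in the classical sense. Here the input is the accessibility theorem of Hamann~\cite{hamann2018accessibility} for quasi-transitive locally finite graphs, which applies since we have just shown $\loc$ is such. Accessibility gives a bound $K$ so that any two ends of $\loc$ are distinguished by a separation of order $\le K$; by \cref{thm:Splinters}-type results the canonical tree of tangles $N'(\loc)$ of Elbracht--Kneip--Teegen restricts to a canonical set $N'_{\rm end}$ of end-distinguishing separations of bounded order, which is therefore exhaustive since bounded-order tight separations in locally finite graphs always are. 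As announced, I would then extract from $N'_{\rm end}$ a canonical tree-decomposition $\cT_{\rm end}$ of $\loc$ of finitely bounded adhesion whose infinite parts are $1$-ended and quasi-transitive under the automorphisms of $\loc$ that fix the corresponding node --- this uses that in a $1$-ended quasi-transitive graph the torsos of end-decompositions inherit quasi-transitivity of the relevant stabilisers, a point I would need to verify carefully.

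\emph{Step two} (\cref{sec:TangleAccessibility,sec:ProofMainTheorem}): upgrade from accessibility to tangle-accessibility, i.e.\ prove $N'(\loc)$ is exhaustive (\cref{red3}). Given an infinite ascending sequence $(A_0,B_0)<(A_1,B_1)<\dots$ in $\vNdash(\loc)$, I want $\bigcap_i B_i=\emptyset$. Either the sequence dominates an infinite ascending sequence in $\vNdash_{\!\rm end}$ --- in which case exhaustiveness of $N'_{\rm end}$ finishes it --- or it does not, and then the separators $A_j\cap B_j$ are eventually confined to a single part $V'_t$ of $\cT_{\rm end}$. I would then enlarge $V'_t$ to a set $V_t$ so that the tail of the sequence induces an infinite ascending sequence of tight separations of the subgraph $G[V_t]$, while arranging that $G[V_t]$ is connected, $1$-ended, and carries a quasi-transitive action of $\mathrm{Stab}_t(\deckr)$ (or of the relevant automorphism subgroup). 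The contradiction comes from the fact that a connected, locally finite, $1$-ended, quasi-transitive graph admits no such infinite ascending sequence of tight separations: the ends of $\loc$ captured beyond the $B_i$ must differ, contradicting $1$-endedness, or else $\bigcap_i B_i$ would have to be empty after all. Once $N'(\loc)$ is exhaustive, a further argument (sketched in the road map) shows $\deckr$ acts on $N'(\loc)$ with finitely many orbits, hence the separations in $N'(\loc)$ have bounded order, giving tangle-accessibility of $\loc$ and thus, via \cref{thm:Entanglements}, \cref{main:TreeDecToGraphDec}, \cref{lem:FiniteDecGraphPointFinite} and \cref{lem:canonicity}, the unique canonical point-finite \gd\ of $G$ displaying its $r$-global structure.

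The main obstacle I anticipate is Step two, and within it the passage from ``the sequence does not dominate any end-sequence'' to ``$G[V_t]$ is $1$-ended and quasi-transitively acted on'': defining the enlargement $V_t$ of $V'_t$ correctly so that tightness of the induced separations is preserved, that $G[V_t]$ genuinely has only one end, and that enough of the deck-transformation stabiliser survives to act quasi-transitively on $G[V_t]$ all at once. Establishing that a $1$-ended quasi-transitive locally finite graph cannot carry an infinite ascending chain of tight separations is the conceptual heart and is where the quasi-transitivity hypothesis on $G$ (transmitted to $\loc$) is genuinely used; by contrast, the canonicity and point-finiteness claims in the conclusion are comparatively routine, following from \cref{lem:canonicity} and \cref{lem:FiniteDecGraphPointFinite} together with the observation that $N(\loc)$, having bounded order, induces a bona fide point-finite \td\ of $\loc$ without problematic limits.
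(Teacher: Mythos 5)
Your plan follows the paper's own road map closely (Lemma~\ref{lem:G_rIsQuasitransitive} for quasi-transitivity of $\loc$, then accessibility, then tangle-accessibility via exhaustiveness of a canonical tree of tangles, then \cref{main:TreeDecToGraphDec}). But Step~one as written has a genuine gap: Hamann's accessibility theorem is \emph{not} a theorem about all locally finite quasi-transitive graphs. Such graphs can be inaccessible --- already locally finite Cayley graphs of inaccessible groups give counterexamples --- and the result you cite (\cref{thm:HamannAccessible}) carries the essential hypothesis that the \emph{binary cycle space is generated by cycles of bounded length}. What you are missing is the verification of this hypothesis for $\loc$, which is exactly where the special nature of the $r$-local cover enters: one needs \cref{thm:integralCycleSpace}, which shows $\CSr(\loc)=\CS(\loc)$, i.e.\ the cycle space of $\loc$ is generated by its cycles of length $\le r$. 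That lemma in turn rests on the idempotency of taking local covers (\cref{lem:rlocalCoveringInBetween}) and the characterisation of $\loc$ by $\pi_1^r(\loc)=\pi_1(\loc)$. Without this ingredient, quasi-transitivity alone gives you nothing towards accessibility, and the entire proof collapses at \cref{thm:G_risAccessible}.

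A secondary point: your heuristic for why a $1$-ended, locally finite, quasi-transitive graph cannot carry an infinite ascending chain of canonical tight separations --- ``the ends captured beyond the $B_i$ must differ, contradicting $1$-endedness'' --- is not how the argument actually goes (and ``or else $\bigcap_i B_i=\emptyset$'' is what one is trying to show, not a contradiction). The real mechanism, as in \cref{thm:NestedIsTDOneEnded}, first shows via \cref{lem:OneEndedFiniteOrder,lem:OmegaChainFiniteSmall} that all the $A_i$ are finite, then uses quasi-transitivity together with canonicity of the nested set and \cref{lem:FiniteAutomCross} to force the separators $A_i\cap B_i$ to stabilise eventually, which is impossible in a connected locally finite graph. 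You flag this step as the obstacle, so you are aware that more work is needed, but the shape of the argument you sketch for it would not succeed.
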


\noindent Note that finite graphs, such as the $G$ in \cref{main:KeyTheorem}, are trivially quasi-transitive, and so are locally finite Cayley graphs such as the~$\cayley{\Gamma}{S}$ in \cref{main:CayleyThm}.

\medbreak

Our proof of \cref{main:KeyTheoremGen} will implement the strategy outlined above by reversing the order of its steps, as follows. We start in \cref{sec:LocalCoveringsAreAccessible} with a proof that if $G$ is locally finite and quasi-transitive, then~$\loc$ is accessible. We deduce from this in \cref{sec:TangleAccessibility} that $N'(\loc)$ is exhaustive (cf.~\cref{red3}).
In \cref{sec:ProofMainTheorem} we infer that the separations in~$N'(\loc)$ have finitely bounded order (cf.~\cref{red2}). This implies that $\loc$ is tangle-accessible (cf.~\cref{red1}), which will allow us to prove \cref{main:KeyTheoremGen}.

\bigbreak\section{\boldmath Local covers are accessible} \label{sec:LocalCoveringsAreAccessible}

\noindent
	Our aim in this section is to establish \cref{thm:G_risAccessible} below. Recall the definition of accessibility in graphs:

\begin{definition}[\cites{ThomassenWoess}]
	A graph $G$ is \emph{\accessible} if there exists an integer~$K$ such that every two ends of~$G$ are distinguished by a separation of order at most~$K$.
\end{definition}

\begin{theorem}\label{thm:G_risAccessible}
	Let $G$ be a connected, locally finite, quasi-transitive graph, and let $r > 0$ be an integer.
	Then the $r$-local cover~$\loc$ of $G$ is \accessible.
\end{theorem}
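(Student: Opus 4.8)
The plan is to obtain the accessibility of~$\loc$ from the corresponding theorem for quasi-transitive graphs due to Hamann~\cite{hamann2018accessibility}, once we have verified that $\loc$ lies within its scope. Two things must be arranged: that $\loc$ is quasi-transitive, and that $\loc$ satisfies the additional finiteness hypothesis of that theorem. The second check is genuinely needed, since quasi-transitivity and local finiteness alone do not force accessibility (Cayley graphs of inaccessible finitely generated groups are quasi-transitive but not accessible); what makes it succeed is a special structural feature of~$\loc$ as an $r$-local cover.

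First I would show that $\loc$ is quasi-transitive. Every automorphism of~$G$ maps cycles of length at most~$r$ to cycles of length at most~$r$, so the set $\cO_r$ underlying $\pi_1^r(G)=\pi_1^{\cO_r}(G)$ is $\Aut(G)$-invariant; hence $\pi_1^r(G)$ is \can\ (as recorded in~\cref{subsec:ClosedWalksCycles}), and $p_r\colon\loc\to G$ is a \can\ normal covering. By~\cref{lem:liftautomorphism}, every automorphism of~$G$ then lifts to an automorphism of~$\loc$; let $\hat A\le\Aut(\loc)$ be the group generated by all these lifts, and note that $\deckr\le\hat A$, since deck transformations are exactly the lifts of the identity. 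I claim that $\hat A$ acts on~$V(\loc)$ with no more orbits than $\Aut(G)$ has on~$V(G)$, hence with only finitely many. Indeed, if $\phi\in\Aut(G)$ sends~$v$ to~$w$ and $\hat v,\hat w$ are any lifts of~$v,w$, then for a lift $\hat\phi$ of~$\phi$ the vertex $\hat\phi(\hat v)$ lies in the fibre $p_r^{-1}(w)$, and since $p_r$ is normal some deck transformation carries it to~$\hat w$; so $\hat v$ and~$\hat w$ lie in a common $\hat A$-orbit, and the $\hat A$-orbit of a vertex is determined by the $\Aut(G)$-orbit of its image. Finally, $\loc$ is connected and locally finite because it covers the connected locally finite graph~$G$.

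Next I would verify the finiteness hypothesis of Hamann's theorem. The key structural fact is that the cycle space of~$\loc$ is generated by its \emph{short} cycles, those of length at most~$r$: this is~\cref{thm:integralCycleSpace}, equivalently $\pi_1^r(\loc)=\pi_1(\loc)$, which also follows from the idempotence $(\loc)_r=\loc$ of local covers (\cref{lem:rlocalCoveringInBetween} with $r'=r$). Combined with quasi-transitivity and local finiteness, this implies that $\loc$ has, up to~$\hat A$, only finitely many short cycles, so that $\loc$ is `finitely presented' as a quasi-transitive graph in the sense in which Hamann's accessibility theorem requires it. Applying that theorem to~$\loc$ then yields the assertion.

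The step I expect to be most delicate is matching the properties of~$\loc$ collected above to the precise hypotheses under which Hamann's theorem guarantees accessibility; a further point needing care is that the quasi-transitivity of~$\loc$ is not automatic but relies on the canonicity of~$\pi_1^r(G)$, and hence on the covering-space framework developed earlier in~\cref{sec:LocalCoverings}. Beyond this I do not anticipate a deeper obstacle in the present section: the substantial work of the argument, namely upgrading the end-distinguishing tree-decomposition of~$\loc$ supplied by accessibility to one that distinguishes \emph{all} tangles while keeping finitely bounded adhesion, belongs to the sections that follow.
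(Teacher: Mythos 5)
Your proposal is correct and follows essentially the same route as the paper: establish that $\loc$ is connected, locally finite and quasi-transitive via lifting automorphisms through the canonical covering (\cref{lem:liftautomorphism}), observe via \cref{thm:integralCycleSpace} that the binary cycle space of $\loc$ is generated by cycles of length at most~$r$, and then invoke Hamann's accessibility theorem. The only difference is cosmetic: you worry about matching to a ``finitely presented'' hypothesis and count $\hat A$-orbits of short cycles, but the version of Hamann's result the paper cites (\cref{thm:HamannAccessible}) only asks that the cycle space be generated by cycles of bounded length, which you already have directly.
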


We already proved~\cref{thm:G_risAccessible} for the special cases of Cayley graphs (\cref{cor:LocCovCayleyAccessible}), and of arbitrary finite graphs by way of \gd s (\cref{cor:LocCovFinGraphAreAccessible}). These are the two cases that we would need for direct proofs of Theorems \ref{main:KeyTheorem} and~\ref{main:CayleyThm}, proceeding  exactly as we shall below. However, \cref{thm:G_risAccessible} is interesting in its own right, is considerably more general, and enables us to also prove \cref{main:KeyTheoremGen}.

For a comprehensive proof of \cref{thm:G_risAccessible}, let us observe first that $\loc$ is connected, locally finite, and quasi-transitive as soon as $G$~is:

\begin{lemma} \label{lem:G_rIsQuasitransitive}
	Let $G$ be a connected, locally finite and quasi-transitive graph, and let $r > 0$ be an integer.
	Then the $r$-local cover~$\loc$ of $G$ is connected, locally finite, and quasi-transitive.
\end{lemma}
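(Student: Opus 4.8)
The plan is to verify the three properties of $\loc$ one at a time, using the ball-preserving nature of the covering $p_r$ from \cref{lem:BallLifting} as the main tool.

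First, connectedness of $\loc$ is immediate: we stated in \cref{subsec:coverings} that all covering spaces we consider are connected, so this requires no argument beyond that convention. Second, local finiteness: since $p_r$ is a covering, it is a local homeomorphism, so every vertex $\hat v$ of $\loc$ has the same number of incident edges as its image $p_r(\hat v)$ in $G$; as $G$ is locally finite, so is $\loc$. (Alternatively this follows from \cref{lem:BallLifting} applied with $\rho$ large enough that the $(\rho/2)$-ball around a vertex contains all its incident edges, provided $r\ge 2$; but the local-homeomorphism argument works uniformly.)

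The substantive point is quasi-transitivity. Here the plan is as follows. Let $\Phi$ be the group of all automorphisms of $G$; by hypothesis $\Phi$ has finitely many vertex-orbits on $G$. Since $\pi_1^r(G)$ is \can\ (noted in \cref{subsec:ClosedWalksCycles}), the covering $p_r$ is canonical, so by \cref{lem:liftautomorphism} every $\phi\in\Phi$ lifts to an automorphism $\hat\phi$ of $\loc$. Let $\widehat\Phi\le\Aut(\loc)$ be the group generated by all such lifts together with the deck transformation group $\deckr$; note $\deckr\le\widehat\Phi$ since deck transformations are lifts of $\id_G$. I claim $\widehat\Phi$ has only finitely many vertex-orbits on $\loc$. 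Indeed, pick finitely many vertices $v_1,\dots,v_m$ of $G$ that meet every $\Phi$-orbit, and for each $v_i$ fix one lift $\hat v_i\in\loc$. Given any vertex $\hat v$ of $\loc$, its image $v=p_r(\hat v)$ lies in the $\Phi$-orbit of some $v_i$, so there is $\phi\in\Phi$ with $\phi(v_i)=v$; then $\hat\phi(\hat v_i)$ is a lift of $v$, and since $\deckr$ acts transitively on the fibre $p_r^{-1}(v)$ (as $p_r$ is normal), some $\delta\in\deckr$ carries $\hat\phi(\hat v_i)$ to $\hat v$. Hence $\hat v=(\delta\hat\phi)(\hat v_i)$ lies in the $\widehat\Phi$-orbit of $\hat v_i$, and so $\loc$ has at most $m$ vertex-orbits under $\widehat\Phi\le\Aut(\loc)$. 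Therefore $\loc$ is quasi-transitive.

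I do not expect any real obstacle here; the only point that needs care is making explicit that lifts of automorphisms of $G$ genuinely exist (which is exactly \cref{lem:liftautomorphism}, applicable because $p_r$ is a \can\ normal covering) and that the deck group acts transitively on fibres (true since $p_r$ is normal). The argument is essentially the graph-theoretic incarnation of the \v{S}varc--Milnor principle, and indeed it parallels the proof of \cref{thm:graphdecompoffinitewidthalongcayley}; I would flag that similarity in the text. One should also remark that, conversely, quasi-transitivity of $\loc$ need not come only from lifted automorphisms — but that is irrelevant for the statement, which merely asserts $\loc$ is quasi-transitive, i.e.\ that $\Aut(\loc)$ itself has finitely many vertex-orbits, and this follows since $\widehat\Phi\le\Aut(\loc)$.
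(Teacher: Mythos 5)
Your proof is correct and takes essentially the same approach as the paper: both establish quasi-transitivity by lifting automorphisms of $G$ via \cref{lem:liftautomorphism} (justified by the canonicity of the local covering) and then composing with a deck transformation to account for the choice of lift within a fibre, using that $p_r$ is normal. The paper's version is slightly terser — it simply shows directly that any two lifts of vertices in the same $\Aut(G)$-orbit lie in a common $\Aut(\loc)$-orbit, rather than naming the subgroup $\widehat\Phi$ — but the substance is identical.
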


\begin{proof}
	The $r$-local cover~$\loc$ is connected and locally finite by definition. To prove that $\loc$ is quasi-transitive, let us show that lifts $\hat u$ of~$u$ and $\hat v$ of~$v$ lie in the same orbit of~$\Aut(\loc)$ as soon as $v=\phi(u)$ for some $\phi\in\Aut(G)$. By \cref{lem:liftautomorphism}, and since local coverings are canonical, $\phi$~lifts to an automorphism~$\hat\phi$ of~$\loc$. Then $\hat v$ differs from~$\hat\phi(\hat u)$ only by a deck transformation,%
	\COMMENT{}
	so $\hat u$ and~$\hat v$ lie in the same orbit of~$\Aut(\loc)$.
\end{proof}

Our key ingredient for the proof of \cref{thm:G_risAccessible} is the following result of Hamann:

\begin{lemma}[\cite{hamann2018accessibility}*{Corollary 3.3}]\label{thm:HamannAccessible}
	Every locally finite quasi-transitive graph whose binary cycle space is generated by cycles of bounded length is accessible. 
	\end{lemma}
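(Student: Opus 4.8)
The plan is to view this as the graph-theoretic analogue of Dunwoody's theorem that finitely presented groups are accessible~\cite{FinitelyPresentedAccessible}, and to adapt his $2$-complex and track argument. Fix $\ell\in\N$ such that the binary cycle space of~$G$ is generated by its cycles of length at most~$\ell$. First I would build a $2$-dimensional CW-complex~$X$ with $1$-skeleton~$G$ by attaching one disc along each cycle of~$G$ of length at most~$\ell$. Since $G$ is locally finite and quasi-transitive, there are, up to the action of~$\Aut(G)$, only finitely many such cycles; hence $X$ has bounded combinatorial geometry (bounded vertex degrees, bounded cell sizes) and $\Aut(G)$ acts on~$X$ with only finitely many orbits of cells. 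By the choice of~$\ell$ the complex~$X$ has vanishing first homology over~$\Z/2\Z$, and hence, over the field~$\Z/2\Z$, also vanishing first cohomology. Attaching compact discs does not change the end space, so $X$ and~$G$ have the same ends; and since $G$ has bounded degree, the vertex- and edge-versions of accessibility agree, so it suffices to bound, uniformly over all pairs of ends, the size of an edge cut of~$G=X^{(1)}$ needed to separate them.

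The next step is the dictionary between edge cuts of~$G$ and \emph{patterns} in~$X$ --- disjoint unions of \emph{tracks}, that is, properly embedded $1$-submanifolds transverse to the cell structure and disjoint from the vertices --- which is exactly where the cycle-space hypothesis enters. For any vertex set $A\subseteq V(G)$ its edge boundary~$\delta A$ meets every cycle of~$G$ in an even number of edges; so inside each $2$-cell of~$X$, a disc whose boundary is such a cycle, one can pair up the boundary edges lying in~$\delta A$ by disjoint interior arcs, and these arcs fit together across~$X$ into a pattern~$t_A$ that crosses the edges of~$G$ precisely in~$\delta A$ and has \emph{weight}~$|\delta A|$. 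Conversely, the crossing pattern of any finite track system is a $\Z/2\Z$-cocycle on~$X$, and since $H^1(X;\Z/2\Z)=0$ it is the coboundary of a function $V(G)\to\Z/2\Z$, hence equal to~$\delta A$ for some~$A$. Thus finite edge cuts of~$G$ correspond to finite patterns in~$X$, and $\Aut(G)$-invariant nested systems of end-separating cuts correspond to $\Aut(G)$-invariant patterns.

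Now suppose, for contradiction, that $G$ is not accessible. A standard structure-tree argument, in the spirit of Thomassen--Woess and Dunwoody, then yields for every $N\in\N$ an $\Aut(G)$-invariant nested family of pairwise-disjoint tracks in~$X$, each separating two distinct ends (hence \emph{essential}) and chosen of minimum weight for that, together realising more than~$N$ orbits; after reduction one may moreover assume that no two of its tracks are \emph{parallel}, i.e.\ cobound a vertex-free product region $\cong t\times[0,1]$. The crux is then Dunwoody's counting lemma, adapted to this non-free but cocompact action: in a $\Z/2\Z$-acyclic $2$-complex of bounded geometry on which a group acts with finitely many cell orbits, the number of orbits of tracks in a reduced invariant pattern is bounded by a constant $N_0=N_0(X)$ depending only on the number and sizes of the cell orbits. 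Informally, an Euler-characteristic count over a fundamental region shows that each additional essential, non-parallel track orbit must absorb a definite positive share of the finitely many $2$-cell orbits, so only boundedly many can fit. Taking $N\ge N_0$ gives the required contradiction, and $G$ is accessible.

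The main obstacle is this last counting lemma. In Dunwoody's original, free-action setting the bookkeeping is comparatively clean; here one must run the argument equivariantly in the presence of non-trivial vertex and edge stabilisers, give workable equivariant definitions of `reduced' and `parallel' so that a reduced invariant pattern genuinely has minimum complexity, and check that cutting~$G$ along such a pattern still separates all pairs of ends, so that no essential track is lost in the reduction. Verifying that the required reductions --- minimising weight within an orbit, discarding parallel tracks, resolving crossings --- can always be carried out, and that the resulting complexity bound depends only on~$X$ and not on the pattern, is the technical heart; the rest is routine combinatorial surgery in the $2$-complex~$X$. (One could alternatively try to encode $X$ as a compactly presented totally disconnected locally compact group and appeal to an accessibility result for such groups, but the direct track argument seems both cleaner and more self-contained.)
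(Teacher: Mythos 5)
This lemma is not proved in the paper at all: it is imported verbatim from Hamann~\cite{hamann2018accessibility}*{Corollary 3.3}, so there is no internal argument to compare yours against. Your sketch does in fact follow the same strategy as Hamann's proof of that result: attach $2$-cells along the finitely many $\Aut(G)$-orbits of cycles of length at most~$\ell$ to obtain a cocompact $2$-complex with trivial first $\Z/2\Z$-homology, translate end-separating edge cuts into tracks via the cocycle/coboundary dictionary, and run Dunwoody's accessibility argument for almost finitely presented groups equivariantly in this complex. The preliminary reductions you list (edge- versus vertex-accessibility for bounded-degree graphs, invariance of the end space under attaching compact discs, the even-intersection pairing inside each $2$-cell) are all correct and standard.

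As a self-contained proof, however, your proposal has a genuine gap, and you name it yourself: the equivariant counting lemma --- that a reduced, invariant, nested pattern of essential, pairwise non-parallel tracks in a $\Z/2\Z$-acyclic cocompact $2$-complex has at most $N_0(X)$ track orbits --- is asserted, not proved, and that bound is the entire content of the theorem. The ``informal Euler-characteristic count'' does not transfer automatically from Dunwoody's free-action setting to quasi-transitive graphs with nontrivial vertex and edge stabilisers; making ``parallel'' and ``reduced'' behave equivariantly, and showing that discarding parallel tracks loses no pair of ends, is precisely where the cited paper spends its effort. So what you have is a correct road map for the external proof rather than a proof; for the purposes of this paper the honest move is to do what the authors do and cite Hamann.
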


\begin{proof}[Proof of \cref{thm:G_risAccessible}]
	By \cref{thm:integralCycleSpace}, the binary cycle space of~$\loc$ is generated by its cycles of length at most~$r$. The theorem now follows from \cref{thm:HamannAccessible}, which we may apply to~$\loc$ by \cref{lem:G_rIsQuasitransitive}.
	\end{proof}

\bigbreak\section{\boldmath Tools and lemmas for the proof of \cref{main:KeyTheorem}} \label{sec:TangleAccessibility}

\noindent
	Throughout this section, $G$~will be a connected locally finite infinite graph. Our main goal is to prove the following key lemma. Note that if $G$ satisfies its premise, it must be accessible.

\begin{lemma} \label{thm:NestedSetIsTDGen}
	Let $N$ be a nested set of tight finite-order separations of~$G$, and suppose that $N$ is $\Gamma$-canonical for some group~$\Gamma$ that acts quasi-transitively on~$G$.
	Suppose further that there exists an integer~$K$ such that some $\Gamma$-canonical set $N_{\rm end} \subseteq N$ of separations of order $<K$ distinguishes all the ends of~$G$.
	Then $N$ is exhaustive and induces a $\Gamma$-canonical \td\ of~$G$.\looseness=-1
\end{lemma}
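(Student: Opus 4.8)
The statement packages together three assertions: that $N$ is exhaustive, that $N$ induces a tree-decomposition of $G$, and that this tree-decomposition is $\Gamma$-canonical. The canonicity will be essentially immediate once the decomposition is built, since the standard construction of a $\td$ from a nested set of separations is functorial in the automorphisms fixing that set; so the real content is \emph{exhaustiveness}, and then combining it with \cref{subsubsec:ToolsTreeDecNested} (every nested set of finitely-bounded-order separations induces a $\td$) to get the $\td$. Thus the plan is: first prove exhaustiveness, and along the way extract the bound on the orders of separations in $N$; then invoke the general machinery for the $\td$ and check canonicity.

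\textbf{Reducing exhaustiveness to the ends.} Suppose, for contradiction, that there is an infinite increasing sequence $(A_0,B_0)<(A_1,B_1)<\cdots$ in $\vN$ with $\bigcap_{i\in\N}B_i\ne\emptyset$, pick $v$ in this intersection. First I would show this sequence \emph{dominates} a cofinal sequence of $N_{\rm end}$-separations, or else all but finitely many of the separators $A_j\cap B_j$ are confined to a single part $V'_t$ of the $\td$ $\cT'$ of $G$ induced by $N_{\rm end}$ (this $\cT'$ exists because $N_{\rm end}$ has order $<K$). In the first case, since $N_{\rm end}$ consists of tight separations of bounded order in a locally finite graph, it is exhaustive, so the dominated $N_{\rm end}$-sequence already forces $\bigcap B_i=\emptyset$, contradiction. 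So we are in the second case: there is a node $t$ with $A_j\cap B_j\subseteq V'_t$ for all $j\ge j_0$, and $v\in V'_t$.

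\textbf{Localising to a single part.} Here is where the quasi-transitivity and the 1-endedness have to be brought in, exactly as sketched in \cref{subsec:ProofOverview}: I would replace $V'_t$ by a suitable larger set $V_t$ (adding, say, the sides of the finitely many $N_{\rm end}$-separations at $t$ on the "inner" side) so that $G[V_t]$ is connected, has exactly one end, and its setwise stabiliser in $\Gamma$ acts quasi-transitively on it. Then for $j\ge j_0$ the separations $(A_j,B_j)$ restrict to separations $(A_j\cap V_t,\ B_j\cap V_t)$ of $G[V_t]$; because the original separators lie inside $V_t$ these restricted separations are still tight and still strictly increasing, and their separators are contained in $V_t$. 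But a connected, locally finite, one-ended, quasi-transitive graph cannot carry an infinite strictly increasing sequence of tight separations with $\bigcap$ of the small sides nonempty: the nested chain would have to "escape to infinity," contradicting one-endedness, while staying bounded away from infinity contradicts strict increase together with local finiteness and quasi-transitivity (only finitely many separator-sizes occur up to the action, so an infinite strictly ascending tight chain forces a second end or an unbounded separator). This contradiction proves $N$ is exhaustive.

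\textbf{From exhaustiveness to the $\td$.} Exhaustiveness of $N$ in a locally finite graph forces the orders of separations in $N$ to be finitely bounded — otherwise, using quasi-transitivity of $\Gamma$ and the pigeonhole on orbits, one builds an infinite ascending tight chain with unbounded separators and nonempty limit, contradicting exhaustiveness. With a uniform bound on orders in hand, \cref{subsubsec:ToolsTreeDecNested} (every nested set of finitely bounded order induces a $\td$, with the induced separations corresponding bijectively to $N$) yields a $\td$ $\cT=(T,\cV)$ of $G$ with $\alpha_\cT\colon E(T)\to N$ a bijection. Canonicity is then formal: $\Gamma$ permutes $N$ (that is the $\Gamma$-canonicity hypothesis) and the construction of $T$ from $N$ — its nodes are the "corners"/orientations-consistent-subsets and its edges are the elements of $N$ — is defined purely in terms of $N$, so the $\Gamma$-action on $N$ lifts uniquely to an action on $\cT$. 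I expect the \textbf{main obstacle} to be the middle step: rigorously producing the enlarged part $V_t$ with all three properties (connected, one-ended, quasi-transitive stabiliser) simultaneously, and checking that the restricted separations remain tight and strictly increasing — this is the heart of the argument and is where the accessibility input ($N_{\rm end}$ of bounded order distinguishing all ends) is genuinely used.
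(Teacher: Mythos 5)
Your proposal follows the same road map as the paper: the dichotomy via $N_{\rm end}$ (formalised there as a partition of $\vN$ indexed by the splitting stars of $N_{\rm end}$), the enlargement of the parts $V'_t$ to $V_t = V(B_G(V'_t,d))$ together with a torso-type lemma (\cref{lem:torso}) to transfer tight separations to $G[V_t]$, then a one-ended lemma, and finally the exhaustiveness-to-$\td$ machinery. One small correction: you do not need to derive finitely bounded order before invoking the $\td$ machinery; exhaustiveness alone feeds directly into \cref{lem:ExhaustiveImpliesTreeDecomp}, which also yields the $\Gamma$-canonicity. (Bounded order is proved separately in \cref{thm:NestedSetsInducesTreeDecTechnical}, after exhaustiveness, via point-finiteness.)

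There is, however, a genuine gap in your justification of the one-ended case (\cref{thm:NestedIsTDOneEnded}), and this is where the real combinatorial content of the lemma lies. Your heuristic that ``only finitely many separator-sizes occur up to the action, so an infinite strictly ascending tight chain forces a second end or an unbounded separator'' is circular: ``finitely many separator sizes'' is precisely the finitely-bounded-order conclusion that is downstream of exhaustiveness, and you have no such bound on $N$ a priori. (Also, ``escaping to infinity contradicts one-endedness'' is backwards --- a chain escaping to infinity is exactly what exhaustiveness asserts, not a contradiction.) The paper's actual mechanism is different and does not follow from the considerations you list: since $G$ is one-ended, every finite-order separation has a finite side (\cref{lem:OneEndedFiniteOrder}), so all the small sides $A_i$ are finite; one then picks, whenever $A_i\cap B_i\not\subseteq A_{i+1}\cap B_{i+1}$, a vertex $v_i\in (A_i\cap B_i)\cap(A_{i+1}\setminus B_{i+1})$, uses quasi-transitivity to find $i<j$ and $g\in\Gamma$ with $v_i=g\cdot v_j$, and applies the crossing lemma \cref{lem:FiniteAutomCross} (for separations with a finite side, tightness supplying the required neighbour) to conclude that $\{A_j,B_j\}$ and $g\cdot\{A_j,B_j\}$ cross, contradicting nestedness plus $\Gamma$-canonicity of $N$. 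This forces the separators to nest from some point on, then --- by another tightness and local-finiteness argument --- to stabilise, which is impossible since $G-X$ has only finitely many components. The automorphism-crossing lemma is the key new idea your sketch misses; without it, the one-ended case does not close.
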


\vskip 1.5 \smallskipamount\subsection{\boldmath Tools: Tree-decompositions and nested sets of separations} \label{subsubsec:ToolsTreeDecNested}

Let $\cT = (T, \cV)$ be a \td\ of~$G$, and let $\Gamma$ be a group acting on~$G$.
Recall from \cref{subsec:GraphDecProperties} that every edge $t_1 t_2$ of~$T$ induces a separation $\{A_{t_1}, A_{t_2}\}$ of~$G$, and that each of its two orientations $(t_1,t_2)$ induces the oriented separation $\alpha_\cT(t_1, t_2) = (A_{t_1}, A_{t_2})$ of~$G$.%
	\COMMENT{}
If we orient distinct edges $t_1 t_2$ and $t_2 t_3$ of~$T$ each from $t_i$ to~$t_{i+1}$, then $\alpha_\cT(t_1, t_2) \leq \alpha_\cT(t_2, t_3)$ in the partial ordering~$\leq$ of oriented separations (see \cref{subsec:TanglesBackground}).

By definition of the sets $A_{t_1}$ and~$A_{t_2}$, the separations of~$G$ that $\cT$ induces in this way are pairwise nested.\penalty-200\ If $\cT$ is regular, they are proper separations. If $\cT$ is $\Gamma$-canonical for some group~$\Gamma$ as defined in \cref{subsec:treedecomptographdecomp}, then the nested set~$N$ of separations of~$G$ which $\cT$ induces is $\Gamma$-canonical as defined in \cref{subsec:TanglesBackground}.


Conversely, let us try to construct from a given nested set~$N$ of proper separations of~$G$ a~regular \td\ of~$G$ that induces~$N$. We begin by constructing, from~$N$ alone, the nodes of a potential  decomposition tree. A~\emph{splitting star} of~$N$ is any set $\sigma\subset \vN$ that consists of all the maximal separations of some consistent orientation~$O$ of~$N$ such that for every $\vr \in O$ there exists $\vs \in \sigma$ with $\vr \le \vs$. Every separation in $\vN$ lies in at most one splitting star of~$N$. If $N$ is induced by a \td\ $\cT=(T, \cV)$ of~$G$, as is our aim, then every $\vs\in\vN$ does lie in a splitting star: it will have the form $\vs = \alpha_\cT(t',t)$ for some oriented edge $(t',t)$ of~$T$, and $\vs$ will lie in splitting star of~$N$ formed by the separations $\alpha_\cT (t'',t)$ where $t''$ varies over the neighbours of~$t$ in~$T$.%
	\footnote{In general, when $N$ is infinite, the elements of~$\vN$ need not lie in a splitting star of~$N$;%
	\COMMENT{}
	see~\cites{ASS,TreelikeSpaces}.}

Taking our cue from the above observation, let us define a graph~$T$ from~$N$ as follows.
Take the splitting stars of~$N$ as the nodes of~$T$, and make two nodes adjacent if they contain, as splitting stars of~$N$, distinct orientations of some element of~$N$.
For each splitting star $t \in V(T)$ let $V_t := \bigcap_{(A,B) \in t} B$, and put~$\cV = (V_t)_{t \in T}$. Let us say that $N$ \emph{induces} this pair~$(T, \cV)$.

As indicated, $(T, \cV)$ need not be a \td\ of~$G$ when $N$ is infinite. Indeed, while $T$ is always acyclic it need not be connected, and the family $\cV$ might violate \cref{ax:TreeDecomp1}~\cite{InfiniteSplinters}*{Example 4.9}. We shall mention some sufficient conditions for $(T,\cV)$ to be a \td\ later.

If $\cT := (T,\cV)$ is a \td, it will induce the set~$N$ of separations we started with, in that $N = \{\,\alpha_\cT(e) : e\in E(T)\,\}$ (as was our aim), with $\alpha_\cT$ an order isomorphism between the oriented edges of~$T$ and~$\vN$~\cite{InfiniteSplinters}*{Proof of Lemma 2.7}.%
	\COMMENT{}
	If $N$ is $\Gamma$-canonical for some group~$\Gamma$, then $\cT$ will be a $\Gamma$-canonical \td. Indeed, if $N$ is $\Gamma$-canonical, then every $g \in \Gamma$ maps splitting stars of~$N$ to splitting stars of~$N$.%
   \COMMENT{}
   Since $g \act (A, B) = (g \act A\>,\, g \act B)$ for $g \in \Gamma$ and $(A, B)\in\vN$, this action of $\Gamma$ on~$T$ agrees with that in our definition of $\Gamma$-canonicity of $(T, \cV)$ in \cref{subsec:treedecomptographdecomp}.

A~strictly increasing infinite sequence $(A_i, B_i)_{i \in \N}$ of oriented separations of~$G$ is \emph{exhaustive} if $\bigcap_{i \in \N} B_i = \emptyset$. A set~$S$ of separations is \emph{exhaustive} if every such sequence in~$\vS$ is exhaustive. Any nested set of separations induced by a \td\ of~$G$ of finite spread is clearly exhaustive. We shall need the following converse:

\begin{lemma}[\cite{InfiniteSplinters}*{Lemma 2.7}] \label{lem:ExhaustiveImpliesTreeDecomp}
	Let $N$ be a nested set of proper separations of~$G$.
	If $N$ is exhaustive, then the pair $(T, \cV)$ induced by $N$ is a \proper\ \td\ of~$G$.
	If a group~$\Gamma$ acts on~$G$ so that $N$ is $\Gamma$-canonical, then this \td\ $(T, \cV)$ is $\Gamma$-canonical.\footnote{The last sentence, which we already verified, is not explicitly stated in~\cite{InfiniteSplinters}.}
\end{lemma}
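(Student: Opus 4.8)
The plan is to verify the two \td\ axioms \cref{ax:TreeDecomp1} and \cref{ax:TreeDecomp2} for the pair $\cT=(T,\cV)$ that $N$ induces (its nodes the splitting stars of $N$, two of them adjacent when they contain opposite orientations of a common separation, and $V_t=\bigcap_{(A,B)\in t}B$), to check that $T$ is a tree, and that $\cT$ is \proper; $\Gamma$-canonicity of the resulting \td\ has already been verified in the text preceding the lemma, so for that I would just refer back. Everything else runs on one consequence of exhaustiveness: \emph{if $O$ is a consistent orientation of $N$ such that every chain in $O$ has non-empty intersection of second sides, then every element of $O$ lies $\le$-below some $\le$-maximal element of $O$.} Indeed, an element of $O$ not below a maximal one would, by stepping up repeatedly, yield an infinite strictly ascending sequence $(A_0,B_0)<(A_1,B_1)<\cdots$ in $O$, and then $\bigcap_i B_i\neq\emptyset$ would contradict exhaustiveness of $N$. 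Whenever this applies, the maximal elements of $O$ form a splitting star. I would invoke it three times.

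First, to see that $\alpha_\cT$ is a bijection $E(T)\to N$: fixing $s=\{A,B\}\in N$, build directly from $N$ the orientation $O$ which, for every $r\in N\setminus\{s\}$, selects the orientation $\le\vs$ if there is one and otherwise the orientation $\le\sv$, together with $\vs$ itself. Properness of $s$ (hence its non-degeneracy) makes this a well-defined consistent orientation with $\vs$ maximal, and forces every chain in $O$ to be bounded above by $\vs$ or by $\sv$, so that the intersection of its second sides contains $A$ or $B$, which is non-empty because $s$ is proper. Hence $O$ yields a splitting star $t\ni\vs$; by symmetry $\sv$ lies in a splitting star $t'$, and $tt'\in E(T)$ with $\alpha_\cT(tt')=s$. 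That $\alpha_\cT$ is injective and an order isomorphism onto $\vN$, and that $T$ is acyclic (as noted in the text), are order-theoretic facts about nested sets that I would quote from \cite{InfiniteSplinters}.

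Second, for \cref{ax:TreeDecomp1} I would run the same scheme with a vertex $v$, and then with an edge $e=xy$, in place of $s$: orient every $s'=\{A',B'\}\in N$ so that $v$ (respectively both of $x,y$, which is possible since $G$ has no edge across a separation) lies in the second side $B'$, and extend this consistent partial orientation to a consistent orientation $O_v$ of all of $N$. Then $v$ lies in the second side of every separation in $O_v$, so every chain in $O_v$ has $v$ in the intersection of its second sides, and the observation above produces a splitting star $t$ with $v\in\bigcap_{(A,B)\in t}B=V_t$; the edge version puts every edge of $G$ into some $G[V_t]$, and together these give $G=\bigcup_t G[V_t]$. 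Each part is moreover non-empty: a splitting star is a star, so any two of its elements $(A_1,B_1),(A_2,B_2)$ satisfy $A_1\subseteq B_2$ and $A_2\subseteq B_1$ by consistency and nestedness, whence $V_t$ contains the separator of any one of its separations, and that is non-empty as $G$ is connected. For \cref{ax:TreeDecomp2} I would use the standard reformulation ``$v\in V_t$ iff $v$ lies in the second side of every separation in the consistent orientation of which $t$ is the set of maximal elements''; since along the $t$--$t'$ path in $T$ each separation of $N$ is oriented as it is at $t$ or as it is at $t'$, any two stars $t,t'$ with $v\in V_t\cap V_{t'}$ have $v\in V_u$ for all nodes $u$ between them, so $T_v$ is connected. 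Then $T$ itself is connected: otherwise the parts in two distinct components would partition $V(G)$ into non-empty classes with no $G$-edge between them (by \cref{ax:TreeDecomp1} for edges together with \cref{ax:TreeDecomp2}), contradicting connectedness of $G$. Finally $\cT$ is \proper, since its edge separations are precisely the members of $N$, and those are all proper.

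I expect the only genuinely fiddly part to be the bookkeeping inside the ``orient $N$ towards $x$'' constructions: confirming that, whether $x$ is a proper non-degenerate separation $s$, a vertex, or an edge, exactly one orientation of each separation of $N$ gets selected and the resulting orientation is consistent with the prescribed element maximal. That is the step where properness of the separations in $N$ and exhaustiveness both do real work; the remaining steps are routine nested-set manipulation or have already been recorded in the excerpt.
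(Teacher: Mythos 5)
The paper itself does not prove this lemma: the first two sentences are quoted from \cite{InfiniteSplinters}*{Lemma 2.7}, and only the $\Gamma$-canonicity clause is verified in the surrounding text. So there is no in-paper argument to compare yours against, and your reconstruction has to stand on its own. Its core mechanism is right: the observation that a consistent orientation of $N$ all of whose infinite increasing chains have non-empty intersection of second sides must, by exhaustiveness, have every element below a maximal one is exactly what produces the splitting stars needed for \cref{ax:TreeDecomp1} and for realising each $s\in N$ as an edge of $T$, and your ``orient towards $\vs$/towards $v$/towards $e$'' constructions can all be made rigorous (modulo the extension of the consistent partial orientation $P_v$ to a total one, which is true for nested sets of proper separations but is itself a small lemma, not a triviality).

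The genuine gap is the connectivity of $T$, and it is not cosmetic: the paper explicitly warns that for infinite $N$ the graph $T$ ``is always acyclic [but] need not be connected'', so ruling this out is one of the two things the lemma actually has to do. Your argument is circular. Your proof of \cref{ax:TreeDecomp2} runs ``along the $t$--$t'$ path in $T$'' and therefore only applies when $t$ and $t'$ lie in the same component of $T$; you then deduce connectivity of $T$ from \cref{ax:TreeDecomp2} via the claim that the part-unions over distinct components partition $V(G)$ --- but the disjointness of those unions is precisely the assertion that no vertex lies in parts from two different components, i.e.\ the cross-component instance of \cref{ax:TreeDecomp2} that your path argument cannot reach. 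Closing this requires a second, independent use of exhaustiveness: if $v\in V_t\cap V_{t'}$, then the separations on which the orientations $O_t$ and $O_{t'}$ disagree all have $v$ in their separator and, by nestedness and consistency, form a chain; exhaustiveness applied to that chain read in either direction forces it to be finite; and flipping its maximal element one step at a time (checking each time that the flipped orientation is again the orientation of a splitting star) yields a finite $t$--$t'$ path in $T$. Only after that do \cref{ax:TreeDecomp2} in full and the connectivity of $T$ follow, and hence the conclusion that $(T,\cV)$ is a tree-decomposition.
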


The task of constructing \td s thus reduces to that of constructing exhaustive nested sets of separations. Let us collect a few lemmas that will help with this task. When $(A_i, B_i)_{i \in \N}$ and $(C_j, D_j)_{j \in \N}$ are sequences of separations such that for every $i$ there exists~$j$ such that $(A_i, B_i) \leq (C_j, D_j)$, we say that $(C_j, D_j)_{j \in \N}$ \emph{dominates}~$(A_i, B_i)_{i \in \N}$.

\begin{lemma}\label{lem:DominationPrinciple}
	If $(A_i, B_i)_{i \in \N}$ and $(C_j, D_j)_{j \in \N}$ are sequences of separations such that $(C_j, D_j)_{j \in \N}$ dominates $(A_i, B_i)_{i \in \N}$, then $\bigcap_{i \in \N} B_i \supseteq \bigcap_{j \in \N} D_j$. In particular, if $(A_i, B_i)_{i \in \N}$ is exhaustive then so is $(C_j, D_j)_{j \in \N}$. \qed
\end{lemma}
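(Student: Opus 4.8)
The plan is a direct set-theoretic argument straight from the definition of the ordering $\le$ on oriented separations. Recall that $(A,B)\le(C,D)$ means $A\subseteq C$ and $B\supseteq D$; the crucial point is that the second coordinate is reversed. So first I would fix an arbitrary element $x\in\bigcap_{j\in\N}D_j$, and for each $i\in\N$ invoke the domination hypothesis to pick some index $j=j(i)$ with $(A_i,B_i)\le(C_{j},D_{j})$. Then $B_i\supseteq D_{j(i)}\ni x$, hence $x\in B_i$; since $i$ was arbitrary this yields $x\in\bigcap_{i\in\N}B_i$, and therefore $\bigcap_{i\in\N}B_i\supseteq\bigcap_{j\in\N}D_j$, which is the first assertion.

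For the ``in particular'' clause, suppose $(A_i,B_i)_{i\in\N}$ is exhaustive, so $\bigcap_{i\in\N}B_i=\emptyset$. The inclusion just established forces $\bigcap_{j\in\N}D_j\subseteq\emptyset$, i.e. $\bigcap_{j\in\N}D_j=\emptyset$, so $(C_j,D_j)_{j\in\N}$ is exhaustive too. I do not expect any genuine obstacle here: the proof is a one-line set chase, and the only subtlety worth flagging is that $\le$ reverses inclusion in the second coordinate, which is precisely what makes the containment run in the direction claimed (second coordinates of the dominating sequence sit \emph{inside} those of the dominated one).
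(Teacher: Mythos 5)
Your proof is correct and is exactly the one-line set chase the paper has in mind; the lemma carries a \verb|\qed| with no written proof precisely because the argument is immediate from the definitions of $\le$ and domination, with the inclusion-reversal in the second coordinate doing all the work as you note.
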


All the separations from which we shall construct \td s will be tight, and hence proper.
\cref{lem:BoundedOrder} below, which is implicit in~\cite{InfiniteSplinters},%
	\COMMENT{}
	shows that a nested set of tight separations is exhaustive if these have finitely bounded order. For its proof we need the following well-known fact about tight separations:

\begin{lemma}[\cite{ThomassenWoess}*{Proposition 4.2}] \label{lem:TightSepFinite}
	Every vertex of~$G$ lies in only finitely many separators of tight separations of order less than~$K$, for each~$K \in \N$.
\end{lemma}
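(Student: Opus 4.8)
The plan is to reduce the statement to a self-contained finiteness fact about \emph{minimal separators} and prove that fact by induction on~$K$. Throughout, for a vertex set $S$ in a graph $H$ write $N_H(S)$ for the set of vertices outside $S$ having a neighbour in $S$. Recall from \cref{subsec:TanglesBackground} that if $\{A,B\}$ is a tight separation with separator $X=A\cap B$, then $G-X$ has two distinct components $C\subseteq G[A]$ and $D\subseteq G[B]$ with $N_G(C)=N_G(D)=X$: the inclusion $X\subseteq N_G(C)$ is the stated property that every vertex of $X$ sends an edge to $C$, and $N_G(C)\subseteq X$ holds because $C$ is a whole component of $G-X$. Call a vertex set $X$ a \emph{minimal $(c,d)$-separator} if $c$ and $d$ lie in distinct components of $G-X$ and, writing $C,D$ for those components, $N_G(C)=N_G(D)=X$. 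Now suppose $v$ lies in the separator $X$ of a tight separation of order $<K$. Since $v\in X=N_G(C)$, the vertex $v$ has a neighbour $c\in C$; likewise a neighbour $d\in D$, with $c\ne d$ because $C\cap D=\emptyset$. Then $X$ is a minimal $(c,d)$-separator of order $<K$. As $G$ is locally finite, $v$ has only finitely many neighbours, so only finitely many pairs $(c,d)$ arise in this way; hence it suffices to prove the following claim: \emph{in a locally finite graph $H$, for any two vertices $c,d$ and any $K\in\N$, there are only finitely many minimal $(c,d)$-separators of order at most~$K$.}

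I would prove the claim by induction on $K$. For $K=0$ the only candidate is $X=\emptyset$. For the inductive step, assume the claim for $K-1$ in every locally finite graph and for every vertex pair. We may assume $c$ and $d$ lie in the same component of $H$, are distinct, and are non-adjacent, since otherwise there is at most one minimal $(c,d)$-separator. Fix a shortest $c$--$d$ path $P=c\,p_1\cdots p_{\ell-1}\,d$ in $H$; its internal vertex set $I=\{p_1,\dots,p_{\ell-1}\}$ is finite. Let $X$ be any minimal $(c,d)$-separator of order $\le K$. Then $X\ne\emptyset$, and since $P$ is a $c$--$d$ path, $X$ meets $I$; let $w=p_j$ be the first vertex of $P$ lying in $X$, and set $c':=p_{j-1}$ and $X':=X\setminus\{w\}$. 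By the choice of $w$, the segment $c\,p_1\cdots p_{j-1}$ avoids $X$, so it lies in the component $C$ of $c$ in $H-X$; in particular $c'\in C$ and $c'\notin X$. Since $(H-w)-X'=H-X$, the components of $c'$ and of $d$ in $(H-w)-X'$ are exactly $C$ and $D$, and $N_{H-w}(C)=N_H(C)\setminus\{w\}=X\setminus\{w\}=X'$ (using $w\in X=N_H(C)$), and similarly $N_{H-w}(D)=X'$. Hence $X'$ is a minimal $(c',d)$-separator of the locally finite graph $H-w$, of order $\le K-1$. By the induction hypothesis, for each of the finitely many choices of $w\in I$ — which determines $c'=p_{j-1}$ along $P$ — there are only finitely many possibilities for $X'$, hence for $X=X'\cup\{w\}$. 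Thus there are only finitely many such $X$, proving the claim, and with it the lemma.

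The main obstacle, and the only step that needs real care, is the inductive peeling: one must check that deleting the first vertex $w$ of a fixed shortest $c$--$d$ path from a minimal separator $X$ yields a set $X\setminus\{w\}$ that is \emph{again} a minimal separator — between a neighbour $c'$ of $w$ and $d$ — in the smaller graph $H-w$, of strictly smaller order. This rests on the identity $(H-w)-(X\setminus\{w\})=H-X$ together with the fact that $w$ lies in $N_H(C)\cap N_H(D)$, which is where minimality of $X$ (not just tightness) is used. Everything else is bookkeeping: no connectivity or local-finiteness subtleties enter beyond the finiteness of $N_G(v)$ and of the internal vertices of a shortest $c$--$d$ path. (This recovers Thomassen and Woess's \cite{ThomassenWoess}*{Proposition~4.2}.)
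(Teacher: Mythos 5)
Your proof is correct. The paper itself does not prove this lemma but simply cites Thomassen and Woess~\cite{ThomassenWoess}*{Proposition 4.2}, so there is no paper-internal argument to compare with; what you have written is a self-contained replacement for the citation.

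Two remarks on the substance. First, the reduction is clean: from a tight separation $\{A,B\}$ of order $<K$ with separator $X\ni v$, using the two components $C\subseteq G[A]$, $D\subseteq G[B]$ with $N_G(C)=N_G(D)=X$ you extract neighbours $c\in C$, $d\in D$ of $v$, and local finiteness of $G$ at $v$ bounds the number of relevant pairs $(c,d)$; this correctly passes the problem to the claim about minimal $(c,d)$-separators. Second, the inductive peeling is sound and the one subtle point — that $X'=X\setminus\{w\}$ is again a \emph{minimal} $(c',d)$-separator in $H-w$ — is handled correctly: $(H-w)-X'=H-X$, and since $w\notin C\cup D$ the equalities $N_{H-w}(C)=N_H(C)\setminus\{w\}=X'$ and $N_{H-w}(D)=X'$ hold because deleting $w$ removes only $w$ itself from each neighbourhood, not any $C$--$X'$ or $D$--$X'$ edges. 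The induction is correctly phrased as being on $K$ with the claim quantified over all locally finite graphs and all vertex pairs, which is exactly what is needed since $H-w$ need not be connected and $(c',d)$ changes. Your argument is essentially the standard induction-on-order proof of the finiteness of bounded-size minimal $a$--$b$ separators in locally finite graphs, which is also the flavour of the Thomassen--Woess proof; nothing is lost or gained relative to citing it, beyond making the paper self-contained at this point.
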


\begin{lemma}[\cite{InfiniteSplinters}] \label{lem:BoundedOrder}
	Let $K$ be an integer, and let $(A_i, B_i)_{i \in \N}$ be a strictly increasing sequence of tight separations of~$G$, all of order $<K$. Then $(A_i, B_i)_{i \in \N}$ is exhaustive.
\end{lemma}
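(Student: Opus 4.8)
\textbf{Proof plan for \cref{lem:BoundedOrder}.}
The plan is to argue by contradiction: assume the strictly increasing sequence $(A_i,B_i)_{i\in\N}$ of tight separations, all of order $<K$, is \emph{not} exhaustive, so that there exists a vertex $v\in\bigcap_{i\in\N}B_i$. First I would record the elementary consequences of strict monotonicity: since $(A_i,B_i)<(A_{i+1},B_{i+1})$ we have $A_i\subsetneq A_{i+1}$ and $B_i\supsetneq B_{i+1}$, so in particular $\bigcup_i A_i$ is a strictly increasing union. Because $G$ is connected and locally finite, and each separator $S_i:=A_i\cap B_i$ has order $<K$, the sets $A_i$ must eventually contain any fixed vertex; more to the point, I want to show the separators $S_i$ cannot all avoid $v$ while the $A_i$ grow past every vertex adjacent to $v$'s side.

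The heart of the argument is to use \cref{lem:TightSepFinite}: the fixed vertex $v$ lies in only finitely many separators of tight separations of order $<K$. So among the $S_i=A_i\cap B_i$, only finitely many can contain $v$; since $v\in B_i$ for all $i$ and (after discarding an initial segment) $v\notin S_i$, we get $v\in B_i\setminus A_i$ for all large~$i$. Now I would pick a vertex $w$ that is forced into the ``left'' side: since $A_i\subsetneq A_{i+1}$, choose for each $i$ some $w_i\in A_{i+1}\setminus A_i$; the point is that $G$ is connected, so there is a path from $v$ to such a $w_i$, and that path must cross the separator $S_{i+1}$ (as $v\in B_{i+1}\setminus A_{i+1}$ eventually and $w_i\in A_{i+1}$). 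Thus $S_{i+1}$ meets the ball around $v$ of radius $d_G(v,w_i)$. To turn this into a contradiction I want a single vertex lying in infinitely many separators: fix any vertex $u\in A_0$ (nonempty since the separations are proper), and a $u$–$v$ path $P$ in $G$. For every large $i$, since $u\in A_0\subseteq A_i$ and $v\in B_i\setminus A_i$, the path $P$ meets the separator $S_i=A_i\cap B_i$. As $P$ has only finitely many vertices, some vertex of $P$ lies in $S_i$ for infinitely many $i$, i.e.\ in infinitely many separators of tight separations of order $<K$ — contradicting \cref{lem:TightSepFinite}.

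Finally I would clean up the indexing: the sequence is strictly increasing, so the $S_i$ need not be distinct a priori, but if $S_i=S_j$ for $i<j$ then $A_i\subsetneq A_j$ and $B_i\supsetneq B_j$ with the same separator, which is consistent; what matters for the contradiction is only that a fixed vertex of $P$ lies in $S_i$ for infinitely many indices $i$, and \cref{lem:TightSepFinite} forbids exactly that. The main obstacle I anticipate is making precise the ``path must cross the separator'' step in the combinatorial (rather than topological) setting: one needs that for a separation $\{A,B\}$ of the graph $G$ with $u\in A$, $v\in B\setminus A$, every $u$–$v$ path in $G$ contains a vertex of $A\cap B$ — this is immediate from the definition of a separation of a graph (no edge between $A\setminus B$ and $B\setminus A$), so the first vertex of the path outside $A$ has its predecessor in $A$, forcing that predecessor into $A\cap B$; actually the cleanest phrasing is that the last vertex of $P$ lying in $A$ is followed by a vertex outside $A$, hence that last vertex lies in $A\cap B$. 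With that lemma in hand the rest is bookkeeping, and the tightness hypothesis enters only through \cref{lem:TightSepFinite}.
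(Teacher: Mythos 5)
Your proof is correct and takes essentially the same route as the paper: both rest on \cref{lem:TightSepFinite} together with a finite path from $A_0$ to a vertex of $\bigcap_i B_i$, which must meet the separator $A_i\cap B_i$ for every $i$ (you apply pigeonhole to the path's finitely many vertices, whereas the paper first passes to a subsequence with pairwise disjoint separators, but these are interchangeable bookkeeping variants). One minor slip worth fixing: $(A_i,B_i)<(A_{i+1},B_{i+1})$ gives $A_i\subseteq A_{i+1}$ and $B_i\supseteq B_{i+1}$ with \emph{at least one} strict, not $A_i\subsetneq A_{i+1}$ \emph{and} $B_i\supsetneq B_{i+1}$ as you assert twice; your concluding argument never uses this, so the proof is unaffected.
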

\begin{proof}
	By \cref{lem:TightSepFinite}, every vertex of $G$ lies in $A_i \cap B_i$ for only finitely many~$i \in \N$.
	These sets $A_i \cap B_i$ are finite, so applying this to every vertex $v \in A_i \cap B_i$ shows that $(A_i \cap B_i) \cap (A_j \cap B_j) = \emptyset$ for all large enough~$j$. We can thus find an infinite subsequence of $(A_i, B_i)_{i \in \N}$ whose separators are disjoint.%
	\COMMENT{}
	But this implies that $\bigcap_i B_i = \emptyset$: otherwise $G$ contains an $A_0\,$--$\,\bigcap_i B_i$ path that meets all these infinitely many disjoint separators, a contradiction.
	Hence $(\vsi)_{i\in\N}$ is exhaustive.
\end{proof}

\vskip 1.5 \smallskipamount\subsection{\boldmath Proof of \cref{thm:NestedSetIsTDGen} for 1-ended graphs} \label{sec:OneEndedCase}

Let us now prove \cref{thm:NestedSetIsTDGen} for 1-ended graphs. In fact, we show the following stronger result:

\begin{lemma} \label{thm:NestedIsTDOneEnded}
	Assume that $G$ has only one end. Let $N$ be a nested set of tight separations of~$G$, all of finite order. Suppose that $N$ is $\Gamma$-canonical for some group~$\Gamma$ acting quasi-transitively on~$G$. Then $\vN$ contains no strictly increasing infinite sequence.
\end{lemma}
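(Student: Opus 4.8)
The plan is a proof by contradiction. Suppose $\vN$ contains a strictly increasing sequence $(A_0,B_0)<(A_1,B_1)<\cdots$; write $X_i:=A_i\cap B_i$ for its separators and $s_i:=\{A_i,B_i\}$ for the underlying separations, the $X_i$ being finite by hypothesis. Since $G$ is locally finite with exactly one end, $G-X_i$ has a unique infinite component, and since no edge joins $A_i\setminus X_i$ to $B_i\setminus X_i$ this component lies in exactly one of $A_i$, $B_i$; thus exactly one side of $s_i$ is finite. It cannot be $B_i$: otherwise $B_j\subseteq B_i$ would be finite for every $j\ge i$, and then $B_i\supsetneq B_{i+1}\supsetneq\cdots$ would be a strictly decreasing sequence of finite sets, which is absurd. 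Hence $A_i$ is finite for every $i$, and as $A_0\subsetneq A_1\subsetneq\cdots$ we get $|A_i|\to\infty$.

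Next I would bring in the group. First, every $\Gamma$-orbit on $V(G)$ is infinite: the distance levels from a finite orbit would be finite (local finiteness) and $\Gamma$-invariant, hence each orbit would lie in a single level and be finite, making $V(G)$ finite; in particular $\Gamma$ itself is infinite. By $\Gamma$-canonicity, every translate $g\cdot s_i$ with $g\in\Gamma$ again lies in $N$ and so is nested with the entire chain. Here $gA_i$ is finite with $|gA_i|=|A_i|$, whereas $gB_i$ is infinite; moreover the unique end $\omega$ of $G$ is fixed by $g$ and lies on the infinite sides $gB_i$ and $B_j$ of $g\cdot s_i$ and of $s_j$. Using that a finite side cannot contain an infinite one and that $gB_i$ and $B_j$ both carry $\omega$, nestedness forces, for all sufficiently large $j$, that $(gA_i,gB_i)$ and $(A_j,B_j)$ are comparable with matching orientations, so $gA_i\subseteq A_j$. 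Consequently $\bigcup_j A_j$ is $\Gamma$-invariant.

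The endgame is a pigeonhole argument. Suppose it is known that only finitely many $\Gamma$-orbits of separations occur among $s_0,s_1,\dots$. Then $s_j=g\cdot s_i$ for some $i<j$ and some $g\in\Gamma$, so $s_i<s_j=g\cdot s_i$, and applying $g$ repeatedly yields a strictly increasing sequence $s_i<g\cdot s_i<g^2\cdot s_i<\cdots$ in $\vN$. This forces $A_i\subsetneq gA_i\subsetneq g^2A_i\subsetneq\cdots$, so the cardinalities $|g^nA_i|$ are strictly increasing --- impossible, since $g^n$ is an automorphism and thus $|g^nA_i|=|A_i|$. This contradiction proves the lemma; note that exhaustiveness of $N$, which is what feeds into \cref{thm:NestedSetIsTDGen}, then follows a fortiori via \cref{lem:DominationPrinciple}.

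The one genuinely delicate point, and what I expect to be the main obstacle, is the hypothesis used in the endgame: that the $s_i$ fall into finitely many $\Gamma$-orbits, equivalently that the orders $|X_i|$ are bounded by some $K$. I would establish this by combining \cref{lem:TightSepFinite} with the $\Gamma$-invariance of $\bigcup_j A_j$ from the second step: if the orders were unbounded, or if $\bigcup_j A_j$ omitted a vertex $q$ (equivalently if $\bigcap_j(B_j\setminus A_j)$ were non-empty), then a single fixed finite path in $G$ --- from a fixed vertex of $A_0\setminus X_0$ to $q$, or out towards $\omega$ --- would have to pass through every $X_i$, forcing one of its finitely many vertices into infinitely many separators of tight separations of bounded order, contradicting \cref{lem:TightSepFinite}. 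Quasi-transitivity is what lets such a local bound on the orders propagate to a uniform $K$; once $K$ exists, \cref{lem:BoundedOrder} already yields exhaustiveness, and finiteness of the orbit set needed for the pigeonhole follows from nestedness together with quasi-transitivity. Everything outside this step is routine unwinding of the definitions of nestedness and of the ordering of oriented separations.
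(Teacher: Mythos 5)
Your opening two steps are correct and match the paper: that exactly one side of each $s_i$ is finite (the paper's \cref{lem:OneEndedFiniteOrder}), and that it must be $A_i$ (the paper's \cref{lem:OmegaChainFiniteSmall}). Your endgame pigeonhole is also sound \emph{given} that the $s_i$ meet only finitely many $\Gamma$-orbits: from $s_j=g\act s_i$ with $i<j$ one gets $A_i\subseteq gA_i$ with $|A_i|=|gA_i|<\infty$, hence $A_i=gA_i$, hence $gB_i\subsetneq B_i$, and then any $w\in B_i\setminus gB_i$ lies in $A_i\cap B_i\setminus g(A_i\cap B_i)$, contradicting $|gX_i|=|X_i|$.

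The proof is nonetheless incomplete, and the gap is exactly the one you flag as ``the main obstacle''. Two separate issues. First, the $\Gamma$-invariance of $\bigcup_j A_j$ does not follow from nestedness as you argue. If $\{gA_i,gB_i\}$ and $\{A_j,B_j\}$ are nested with $gA_i,A_j$ finite and $gB_i,B_j$ infinite, the orientation $(gA_i,gB_i)\le(B_j,A_j)$ --- i.e.\ $gA_i\subseteq B_j$ and $A_j\subseteq gB_i$, with the two finite sides essentially disjoint --- is not excluded by the cardinality or end-containment considerations you invoke, and in that case $gA_i\not\subseteq A_j$ for any $j$. Second, and more seriously, the proposed route to bounding the orders $|X_i|$ is circular: the argument that a fixed finite path must meet every $X_i$, and that this contradicts \cref{lem:TightSepFinite}, requires that the $X_i$ already be separators of tight separations of some fixed bounded order --- which is precisely what you are trying to establish. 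Since the pigeonhole endgame needs the finitely-many-orbits input, and that input reduces to the order bound, the proof does not close.

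The paper avoids bounding the orders $|X_i|$ entirely. Its key step uses quasi-transitivity plus canonicity on \emph{vertices}, not separations: it shows via \cref{lem:FiniteAutomCross} that, after finitely many terms, the separators are nested ($X_i\subseteq X_{i+1}$); then tightness plus local finiteness force them to stabilise to a single finite $X$; and finally, $G-X$ having only finitely many components rules out a strictly increasing tail with separator $X$. That structural analysis of the separators --- rather than a cardinality bound on them --- is the idea your proof is missing.
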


We shall need the following fact%
   \ifArXiv\ \else, proved in~\cite{GraphDecArXiv}, \fi
   about 1-ended, connected, locally finite graphs:

\begin{lemma} \label{lem:OneEndedFiniteOrder}
	If $G$ has only one end, then every finite-order separation of~$G$ has a finite and an infinite side.\looseness=-1
\end{lemma}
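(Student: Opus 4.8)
The plan is to argue by contradiction. Given a separation $\{A,B\}$ of $G$ of finite order, we know that at least one of $A,B$ is infinite, since $V(G)$ is infinite and $A\cup B=V(G)$; so it suffices to rule out the possibility that \emph{both} sides are infinite. The idea is that in that case one can find a ray on each side which, because the separator is finite, cannot be equivalent to the other, contradicting the assumption that $G$ has only one end. Note that this lemma is purely about $1$-ended graphs and uses neither $N$ nor $\Gamma$.

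Write $X := A\cap B$ for the (finite) separator. First I would record the standard fact that, since $G$ is connected and locally finite, $G-X$ has only finitely many components: each such component contains a neighbour of some vertex of $X$ (otherwise it would already be a component of the connected graph~$G$), and $X$ has only finitely many neighbours as $G$ is locally finite. Moreover, since $G$ has no edge between $A\setminus B$ and $B\setminus A$, each component of $G-X$ is contained either in $A\setminus B$ or in $B\setminus A$.

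Now suppose both $A$ and $B$ are infinite. As $X$ is finite, both $A\setminus B$ and $B\setminus A$ are infinite, so among the finitely many components of $G-X$ there is an infinite one, say~$C$, contained in $A\setminus B$, and an infinite one, say~$C'$, contained in $B\setminus A$; in particular $C\neq C'$. An infinite connected locally finite graph contains a ray (König's infinity lemma), so pick rays $R\subseteq C$ and $R'\subseteq C'$. For the finite vertex set~$X$, the ray~$R$ has a tail in the component~$C$ of $G-X$ while $R'$ has a tail in the distinct component~$C'$. Hence $R$ and $R'$ are not equivalent, so $G$ has at least two ends~-- contradicting the hypothesis. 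Therefore exactly one of $A,B$ is infinite, and the other is finite.

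I do not anticipate a genuine obstacle here; the only points needing care are the two standard facts invoked (finiteness of the number of components of $G-X$ for finite~$X$, and the existence of a ray in any infinite connected locally finite graph), both of which are routine and may be cited or dispatched in a line.
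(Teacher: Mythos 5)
Your proof is correct and follows essentially the same route as the paper's: observe that at least one side is infinite, then if both were infinite, use local finiteness and connectedness to get finitely many components on each side, find an infinite component on each side, extract rays via K\"onig's lemma, and note they are separated by the finite separator $A\cap B$. The only cosmetic difference is that you work with components of $G-X$ where $X=A\cap B$, whereas the paper works with components of $G-A$ and $G-B$; this is an immaterial variation.
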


\ifArXiv
\begin{proof}
	Let $\{A,B\}$ be any finite-order separation of~$G$.
	At least one of $A,B$ must be infinite.
	Since $A \cap B$ is finite, and $G$ is locally finite and connected, $G-A$~and $G-B$ each have only finitely many components. If they are~both infinite, they each contain an infinite component. By K\"onig's infinity lemma~\cite{DiestelBook25}*{Proposition~8.2.1}, each of these components contains a ray. These rays, then, are separated by the finite set $A\cap B$, which contradicts our assumption that $G$ has only one end.
\end{proof}
\fi

Our next lemma shows that, in a strictly increasing infinite sequence of separations, every separation is oriented towards its infinite side:
\begin{lemma} \label{lem:OmegaChainFiniteSmall}%
	\COMMENT{}
	In any strictly increasing sequence $(A_i, B_i)_{i \in \N}$ of separations of~$G$, all $B_i$ are infinite.
\end{lemma}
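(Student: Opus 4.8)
The plan is a short argument by contradiction. Suppose that $B_n$ is finite for some $n\in\N$. Since the sequence is increasing we have $B_i\subseteq B_n$ for every $i\ge n$, so each such $B_i$ is finite, and so is each separator $A_i\cap B_i\subseteq B_i$. The idea is that this confines the whole tail $(A_i,B_i)_{i\ge n}$ to a finite set of separations, which will clash with strict monotonicity.

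The key observation is that each oriented separation $(A_i,B_i)$ is already determined by the pair of sets $(B_i,\,A_i\cap B_i)$: from $A_i\cup B_i=V(G)$ we get $A_i\setminus B_i=V(G)\setminus B_i$, hence
\[
A_i=(A_i\setminus B_i)\cup(A_i\cap B_i)=(V(G)\setminus B_i)\cup(A_i\cap B_i).
\]
For $i\ge n$ both $B_i$ and $A_i\cap B_i$ are subsets of the fixed finite set $B_n$, so the pair $(B_i,\,A_i\cap B_i)$---and therefore $(A_i,B_i)$---can take only finitely many values as $i$ ranges over $\{n,n+1,\dots\}$. But a strictly increasing sequence in a partial order is injective, so $(A_i,B_i)_{i\ge n}$ takes infinitely many distinct values; this contradiction shows that every $B_i$ must be infinite.

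I do not expect any real obstacle here: the only thing to notice is that finiteness of a single $B_n$ collapses all later terms into the power set of $B_n$. Equivalently, one can phrase it as follows: the decreasing chain $(B_i)_{i\ge n}$ of finite sets stabilises to some set $B$, after which $A_i=(V(G)\setminus B)\cup(A_i\cap B)$ is governed by the increasing chain $(A_i\cap B)_{i}$ of subsets of the finite set $B$, which also stabilises, so the sequence $(A_i,B_i)_i$ becomes constant, against strictness. Note that this uses neither local finiteness nor connectedness of $G$, only that $(A_i,B_i)_{i\in\N}$ is a strictly increasing sequence of separations of a set.
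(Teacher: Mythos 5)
Your proof is correct. It takes a genuinely different (and slightly cleaner) route than the paper. The paper argues that, after the point where some $B_n$ becomes finite, the decreasing chain $(B_i)_i$ must eventually stabilise, so $A_i\subsetneq A_{i+1}$ must hold infinitely often; it then observes that the resulting non-empty sets $A_{i+1}\setminus A_i$ are pairwise disjoint and all contained in the finite set $B_n$, a contradiction. You instead make the crisp observation that a separation $(A_i,B_i)$ is determined by the pair $(B_i,\,A_i\cap B_i)$, and for $i\ge n$ both components are subsets of the fixed finite set $B_n$; so the tail of the sequence can take only finitely many values, which contradicts the injectivity forced by strict monotonicity. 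Both are elementary pigeonhole arguments using no structure on $G$ beyond $A\cup B=V(G)$, but yours avoids the intermediate extraction of infinitely many disjoint sets and works directly at the level of counting separations. Your alternative phrasing at the end (stabilisation of both chains) is also sound: once $B_i$ stabilises to $B$, the sets $A_i\cap B=A_i\cap B_i$ form an increasing chain in the finite set $B$ and so stabilise too, forcing the whole sequence to become constant.
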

\begin{proof}
	Suppose some~$B_i$ is finite; we may assume that $i = 0$.
	Since $B_0 \supseteq B_i$ for all~$i$, all the $B_i$ are finite.
	Since $(A_i, B_i)_{i \in \N}$ is a strictly increasing sequence, we have for each~$i$ either $A_i\subsetneq A_{i+1}$ or $B_i\supsetneq B_{i+1}$.
	As $B_0$ is finite, we have $B_i\supsetneq B_{i+1}$ for only finitely many~$i$, so $A_i\subsetneq A_{i+1}$ for infinitely many~$i$. For all these~$i$ the non-empty sets $A_{i+1} \setminus A_i$ are disjoint and contained in the finite set~$B_0$,%
	\COMMENT{}
	a~contradiction.
\end{proof}

Recall that the set $N$ in \cref{thm:NestedIsTDOneEnded}, which we are seeking to analyse, is $\Gamma$-canonical. The following lemma shows how separations with a finite side interact with their images under certain automorphisms:

\begin{lemma} \label{lem:FiniteAutomCross}%
	\COMMENT{}
	Let $\{A, B\}$ be a separation of $G$ with $A$~finite. Consider any automorphism $\phi$ of~$G$ and a vertex $v \in A \cap B$ with a neighbour $w\in B\setminus A$ such that $\phi(v) \in A \setminus B$. Then $\{A, B\}$ and $\{\phi(A), \phi(B)\}$ cross.\looseness=-1
\end{lemma}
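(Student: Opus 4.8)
The plan is to argue directly that none of the four possible configurations in which $\{A,B\}$ and $\{\phi(A),\phi(B)\}$ could be nested actually occurs. Put $C:=\phi(A)$ and $D:=\phi(B)$. Since $\phi$ is an automorphism of~$G$, the pair $\{C,D\}$ is again a separation of~$G$, and $C$ is finite with $|C|=|A|$. By the definition of nestedness, $\{A,B\}$ and $\{C,D\}$ are nested exactly when some orientation of the one is $\le$ some orientation of the other, which unravels into the four alternatives: (1)~$A\subseteq C$ and $D\subseteq B$; (2)~$A\subseteq D$ and $C\subseteq B$; (3)~$B\subseteq C$ and $D\subseteq A$; (4)~$B\subseteq D$ and $C\subseteq A$. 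I would rule each of these out.

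First I would locate the images of the two distinguished vertices. Set $p:=\phi(v)$ and $q:=\phi(w)$. From $v\in A\cap B$ we get $p\in\phi(A)\cap\phi(B)=C\cap D$, while by hypothesis $p\in A\setminus B$, so in particular $p\notin B$. From $w\in B\setminus A$ we get $q\in\phi(B)\setminus\phi(A)=D\setminus C$, so in particular $q\notin C$. Finally, since $w$ is adjacent to~$v$ in~$G$, the vertex $q$ is adjacent to $p\in A\setminus B$; as $\{A,B\}$ is a separation of the \emph{graph}~$G$, no edge joins $A\setminus B$ to $B\setminus A$, so $q\notin B\setminus A$, and hence $q\in A$ because $V(G)=A\cup B$. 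The combinatorial content of the proof is now just these two observations: $p\in (C\cap D)\setminus B$ and $q\in(A\cap D)\setminus C$.

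With these in hand the four alternatives fall quickly. In~(1), $p\in D\subseteq B$ contradicts $p\notin B$; in~(2), $p\in C\subseteq B$ contradicts $p\notin B$ again. In~(3), $D\subseteq A$ gives $|B|=|\phi(B)|=|D|\le|A|<\infty$, so $V(G)=A\cup B$ would be finite, contradicting the standing assumption of this section that $G$ is infinite. In~(4), $C\subseteq A$ together with $|C|=|A|<\infty$ forces $C=A$, whence $q\in D\setminus C=D\setminus A$ contradicts $q\in A$. Thus all four alternatives fail and $\{A,B\}$ and $\{\phi(A),\phi(B)\}$ cross.

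I expect the only mildly delicate point to be alternative~(4): there the single witness $p=\phi(v)$ no longer suffices, and one has to bring in the second witness $q=\phi(w)$ together with the cardinality identity $|\phi(A)|=|A|$; the hypothesis that $w$ is a \emph{neighbour} of~$v$ lying in $B\setminus A$ is used precisely to produce this~$q$. Alternative~(3) is the one place where the blanket hypothesis that $G$ is infinite enters the argument.
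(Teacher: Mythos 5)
Your proof is correct and takes essentially the same approach as the paper: both arguments rule out each of the four nestedness alternatives directly, using the finiteness of~$A$ (hence of~$\phi(A)$), the infiniteness of~$B$, and the locations of~$\phi(v)$ and~$\phi(w)$ relative to the two separations. The only cosmetic difference is that the paper bundles your cases~(1) and~(4) into a single ``$\phi(A)\neq A$'' argument, whereas you dispatch~(1) more directly via $p\in D$.
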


\begin{proof}
	Let us show first that neither $A \subseteq \phi(A)$ nor~$\phi(A) \subseteq A$.
	To see this, it is enough to show $\phi(A) \neq A$, since $A$ is finite. If $\phi(A) = A$, then $\phi(v) \in A \setminus B$ is a neighbour of $\phi(w) \in \phi(B\setminus A) = B \setminus A$, which contradicts the fact that $\{A,B\}$ is a separation.
	
As $B$ is infinite, we also have $\phi(B) \nsubseteq A$.
	So if $\{A, B\}$ and $\{\phi(A), \phi(B)\}$ were nested, we would have $(A, B) \leq (\phi(B), \phi(A))$, and thus $B \supseteq \phi(A)$.
	But $\phi(v) \in \phi(A) \setminus B$ by assumption, a~contradiction.
\end{proof}

\noindent
We remark that \cref{lem:FiniteAutomCross} has no analogue for finite graphs, or for separations with two infinite sides%
	\ifArXiv
. Indeed, for finite graphs consider a cycle of odd length at least~$5$ and an arbitrary tight separation of order $2$ together with an automorphism which fixes precisely one of the vertices in the separator.
For separations with two infinite sides, consider a double ray and an arbitrary separation of order $1$ together with any automorphism which shifts the double ray.
	\else
	~\cite{GraphDecArXiv}.\medbreak
	\fi

With these three lemmas at hand, we are now ready to prove \cref{thm:NestedIsTDOneEnded}.

\begin{proof}[Proof of \cref{thm:NestedIsTDOneEnded}]
	Suppose, for a contradiction, that $(A_i, B_i)_{i \in \N}$ is a strictly increasing sequence in~$\vN$. Then $A_i\setminus B_i\subset A_j\setminus B_j$ for all $i<j$, and by \cref{lem:OneEndedFiniteOrder,lem:OmegaChainFiniteSmall} all the $A_i$ are finite.
	
	We first show that there exists $n \in \N$ such that $A_i \cap B_i \subseteq A_{i+1} \cap B_{i+1}$ for all~$i \ge n$. If not, there are infinitely many~$i$ such that $A_i \cap B_i$ contains a vertex $v_i\in A_{i+1}\setminus B_{i+1}$.%
	\COMMENT{}
   By our initial observation,%
   \COMMENT{}
   these~$v_i$ are distinct for different~$i$. Since $\Gamma$ acts quasi-transitively on~$G$, there exist $i<j$ among these with $v_i = g \act v_j$ for some~$g \in \Gamma$.
	Then $\{A_j, B_j\}$, together with $v_j \in A_j \cap B_j$%
	\COMMENT{}
	and the automorphism~$\phi$ of~$G$ defined by~$g$, satisfies the premise of \cref{lem:FiniteAutomCross}, since $g \act v_j = v_i \in A_j \setminus B_j$ and $\{A_j, B_j\}$ is tight.
	Since $N$ is $\Gamma$-canonical, applying \cref{lem:FiniteAutomCross} yields a contradiction to $N$ being nested.
	
	We next show that there exists $m\ge n$ such that $A_i \cap B_i = A_{i+1} \cap B_{i+1}$ for all~$i \ge m$. If not, then $A_i \cap B_i \subsetneq A_{i+1} \cap B_{i+1}$ for infinitely many $i\ge n$; pick a subsequence $(A_{i_j}, B_{i_j})_{j \in \N}$ with $i_0 = n$ witnessing this.
	Since the separations $(A_{i_j}, B_{i_j})$ are tight, every $G[A_{i_j} \setminus B_{i_j}]$ has a component $K_{i_j}$ with neighbourhood~$A_{i_j} \cap B_{i_j}$. Then $K_{i_j} \cap K_{i_{j+1}}= \emptyset$ for all~$j$, since $K_{i_j}$ is also a component of~$G[A_{i_{j+1}}\setminus B_{i_{j+1}}]$%
   \COMMENT{}
   but distinct from~$K_{i_{j+1}}$, since the two have different neighbourhoods.%
	\COMMENT{}
   As $G$ is connected, there exists $v \in A_n \cap B_n$. This~$v$ lies in $A_{i_j}\cap B_{i_j}$ for every~$j$, and hence has a neighbour in~$K_{i_j}$. As these neighbours are distinct, this contradicts our assumption that $v\in G$ has finite degree.%
   \COMMENT{}

The~$(A_i,B_i)$ with $i\ge m$ form a strictly increasing sequence of separations with the same separator~$X$. This is impossible since, as $G$ is connected and locally finite, the graph $G-X$ has only finitely many components.%
	\COMMENT{}
	\end{proof}

\vskip 1.5 \smallskipamount\subsection{\boldmath Proof of \cref{thm:NestedSetIsTDGen}} \label{sec:NestedSetisTDArbitraryGraphs}

In the remainder of this section we prove \cref{thm:NestedSetIsTDGen}. Let $N$, $N_{\rm end}$, $\Gamma$ and~$K>0$ be as stated in the lemma. Our goal is to show that $N$ is exhaustive: that every strictly increasing sequence $(A_i,B_i)_{i\in\N}$ of separations in~$\vN$ satisfies $\bigcap_i B_i = \emptyset$.%
	\COMMENT{}

We already know from \cref{lem:BoundedOrder} that $N_{\rm end}$ is exhaustive, since by assumption the separations in~$N_{\rm end}$ have bounded order. If we can show that our sequence $(A_i,B_i)_{i\in\N}$ dominates a sequence in~$\vN_{\rm end}$, it~will also be exhaustive (\cref{lem:DominationPrinciple}). Essentially, this amounts to showing that our sequence is interlaced with a sequence in~$\vN_{\rm end}$: that at most finitely many $(A_i,B_i)$ have the same $\le$-relationships to all the elements of~$\vN_{\rm end}$.%
   \COMMENT{}
   Our plan is to show that if there were infinitely many such~$(A_i,B_i)$ then these would, essentially, disect the same 1-ended subgraph of~$G$,%
   \COMMENT{}
   which would contradict \cref{thm:NestedIsTDOneEnded}.

In graph terms this means that, essentially,%
	\COMMENT{}
	for the \td\ $\cT' = (T,\cV')$ of~$G$ that $N_{\rm end}$ induces (see below), where $\cV' = {(V'_t)}_{t\in T}$ say, there is no $t\in T$ such that infinitely many $(A_i,B_i)$ differ only in how they cut up~$V'_t$, because this would contradict \cref{thm:NestedIsTDOneEnded}. This will not work directly, since the graphs~$G[V'_t]$ need not satisfy the conditions on~$G$ laid down in the premise of \cref{thm:NestedIsTDOneEnded}.%
	\COMMENT{}
	However we shall be able to extend the~$V'_t$ to sets~$V_t$ that do satisfy these conditions, and then apply \cref{thm:NestedIsTDOneEnded} to the graphs~$G[V_t]$ to complete our proof.

To get started, recall from \cref{lem:BoundedOrder,lem:ExhaustiveImpliesTreeDecomp} that $N_{\rm end}$ does induce the required \td:

\begin{lemma} \label{lem:NendInducesTreeDec}
	$N_{\rm end}$ is exhaustive, and the \td\ $\cT'\!$ of~$G$ it induces is $\Gamma$-canonical.\qed%
	\COMMENT{}
\end{lemma}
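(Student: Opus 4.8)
The plan is to obtain this lemma as an immediate consequence of \cref{lem:BoundedOrder} and \cref{lem:ExhaustiveImpliesTreeDecomp}, so no new work is really needed. First I would record that $N_{\rm end}$ is a nested set of \emph{proper} separations: it is nested because $N_{\rm end}\subseteq N$ and $N$ is nested, and each of its members is tight, hence proper (as noted just before \cref{lem:TightSepFinite}). Moreover, by hypothesis every separation in $N_{\rm end}$ has order $<K$.

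For exhaustiveness I would take an arbitrary strictly increasing sequence $(A_i,B_i)_{i\in\N}$ in $\vN_{\rm end}$. All of its terms are tight separations of $G$ of order $<K$, so \cref{lem:BoundedOrder} applies and yields $\bigcap_{i\in\N}B_i=\emptyset$. Since the sequence was arbitrary, $N_{\rm end}$ is exhaustive by definition. Then I would invoke \cref{lem:ExhaustiveImpliesTreeDecomp}: a nested set of proper separations that is exhaustive induces a regular \td\ $\cT'=(T,\cV')$ of $G$, and its final sentence gives that, because $\Gamma$ acts on $G$ with $N_{\rm end}$ being $\Gamma$-canonical, the induced \td\ $\cT'$ is $\Gamma$-canonical as well.

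There is essentially no obstacle here; the only point worth stating explicitly is that the separations in $N_{\rm end}$ are proper (so that \cref{lem:ExhaustiveImpliesTreeDecomp} is applicable), which is immediate from tightness. The substantive content — that bounded-order tight separations cannot form a non-exhaustive increasing chain, and that exhaustive nested sets induce genuine tree-decompositions — has already been done in \cref{lem:BoundedOrder} and \cref{lem:ExhaustiveImpliesTreeDecomp}, and here we are simply assembling them.
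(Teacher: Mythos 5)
Your proposal is correct and follows exactly the route the paper takes: the paper itself states the lemma as an immediate consequence of \cref{lem:BoundedOrder} (exhaustiveness of bounded-order tight chains) and \cref{lem:ExhaustiveImpliesTreeDecomp} (exhaustive nested sets of proper separations induce $\Gamma$-canonical \td s). Your added remark that tightness implies properness is a helpful explicit note but is the same observation the paper relies on implicitly.
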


\noindent
	As $N_{\rm end}$ distinguishes all the ends of~$G$, each part of $\cT'$ meets rays from at most one end of~$G$ infinitely.%
	\COMMENT{}

\medbreak

Since $N_{\rm end}$ is $\Gamma$-canonical, the group $\Gamma$ acts on the decomposition tree $T$ of~$\cT'$ (see \cref{subsubsec:ToolsTreeDecNested}).
Since $G$ is quasi-transitive under the action of~$\Gamma$, there are only finitely many $\Gamma$-orbits of tight separations of~$G$ of order less than~$K$ \cite{ThomassenWoess}*{Proposition 4.2}. The action of~$\Gamma$ on~$T$,%
	\COMMENT{}
	therefore, is quasi-transitive too:%
	\COMMENT{}

\begin{lemma} \label{lem:DecTreeFiniteOrbits}
	$\Gamma$ acts on $E(T)$ with finitely many orbits. \qed
\end{lemma}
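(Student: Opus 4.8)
The plan is to transfer the finiteness of $\Gamma$-orbits on tight finite-order separations of $G$ to the edges of the decomposition tree $T$ induced by $N_{\rm end}$. Recall the setup: $N_{\rm end}$ is a $\Gamma$-canonical, exhaustive, nested set of tight separations of $G$, all of order $<K$, and by \cref{lem:NendInducesTreeDec} it induces a regular $\Gamma$-canonical \td\ $\cT' = (T,\cV')$ of $G$. The key structural fact available from \cref{subsubsec:ToolsTreeDecNested} is that $\alpha_{\cT'}$ is an order isomorphism between the oriented edges of $T$ and $\vN_{\rm end}$, and that this isomorphism is $\Gamma$-equivariant, since $\Gamma$ acts on $T$ by permuting splitting stars in a way compatible with its action $g\act(A,B) = (g\act A, g\act B)$ on separations. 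Consequently, two oriented edges $\vt_1,\vt_2$ of $T$ lie in the same $\Gamma$-orbit if and only if the separations $\alpha_{\cT'}(\vt_1)$ and $\alpha_{\cT'}(\vt_2)$ lie in the same $\Gamma$-orbit in $\vN_{\rm end}$; the same holds for unoriented edges and unoriented separations.

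The main step is then simply to bound the number of $\Gamma$-orbits of separations in $N_{\rm end}$. Every separation in $N_{\rm end}$ is tight (by hypothesis) and has order $<K$. By \cite{ThomassenWoess}*{Proposition 4.2}, already cited in the paragraph preceding the statement, a connected locally finite graph $G$ on which $\Gamma$ acts quasi-transitively has only finitely many $\Gamma$-orbits of tight separations of order less than $K$, for each fixed $K$. Hence $N_{\rm end}$ meets only finitely many $\Gamma$-orbits of separations of $G$, and so $N_{\rm end}$ itself decomposes into finitely many $\Gamma$-orbits. Transporting this across the $\Gamma$-equivariant bijection $\alpha_{\cT'}$ between $E(T)$ and $N_{\rm end}$, we conclude that $\Gamma$ acts on $E(T)$ with only finitely many orbits.

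I do not expect any real obstacle here: the proof is a short bookkeeping argument combining the $\Gamma$-equivariance of the edge/separation correspondence (already spelled out in \cref{subsubsec:ToolsTreeDecNested}) with the quoted Thomassen--Woess finiteness result. The only point requiring a moment's care is to make sure that the correspondence respects $\Gamma$-orbits in both directions — but this is immediate from $g\act\alpha_{\cT'}(\vt) = \alpha_{\cT'}(g\act\vt)$, which holds because $\Gamma$ acts on $T$ precisely so that this identity is satisfied. So the one-line proof is: by \cref{lem:NendInducesTreeDec} and \cref{subsubsec:ToolsTreeDecNested}, $\Gamma$-orbits of edges of $T$ correspond bijectively to $\Gamma$-orbits of separations in $N_{\rm end}$; the latter are finite in number by \cite{ThomassenWoess}*{Proposition 4.2}, since these separations are tight and of order $<K$ and $\Gamma$ acts quasi-transitively on $G$.
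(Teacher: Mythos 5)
Your proof is correct and matches the paper's own (implicit) argument: the lemma is stated with a \qed precisely because the preceding paragraph already invokes \cite{ThomassenWoess}*{Proposition 4.2} to bound the number of $\Gamma$-orbits of tight separations of order $<K$, and you correctly supply the remaining step — the $\Gamma$-equivariant bijection $\alpha_{\cT'}$ between $E(T)$ and $N_{\rm end}$ — to transfer that finiteness to $E(T)$.
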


Let $st$ be any edge of~$T$.
Since $G$ is locally finite and $V'_{st} := V'_s \cap V'_{t}$ is finite,%
	\COMMENT{}
	the graph $G-V'_{st}$ has only finitely many components. Hence there exists an integer $d_{st} \geq 1$ such that the $d_{st}$-ball $B_{G}(V'_{st}, d_{st})$ around~$V'_{st}$ (see \cref{subsec:ballpres}) meets every component $C$ of $G-V'_{st}$ in a set that includes all the shortest paths in~$C$ between neighbours of~$V'_{st}$ in~$C$.%
	\COMMENT{}
	 (There are only finitely many such paths, since $G$ is locally finite.) By \cref{lem:DecTreeFiniteOrbits}, $\Gamma$ has finitely many orbits on~$E(T)$. Let us assume that the integers~$d_{st}$ were all chosen as small as possible; then they coincide for any two edges~$st$ in the same orbit.
Thus, the maximum~$d$ over all $d_{st}$ for $st \in E(T)$ is well-defined; let $V_t := V(B_G(V'_t,d))$ and $\cV := (V_t)_{t\in T}$.

\begin{lemma} \label{lem:TransformedTreeDec}
	$\cT = (T,\cV)$ is a $\Gamma$-canonical \td\ of $G$ of finitely bounded adhesion such that the following statements hold for every $t \in T$:
	\begin{enumerate}
		\item $G[V_t]$ is connected;
		\item the stabiliser~$\Gamma_t$ of~$t$ under the action of~$\Gamma$ on~$T$ acts quasi-transitively on~$G[V_t]$;
		\item $G[V_t]$ is either finite or 1-ended.
	\end{enumerate}
\end{lemma}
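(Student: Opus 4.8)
The plan is to verify first that $\cT=(T,\cV)$ is a point-finite, $\Gamma$-canonical \td\ of finitely bounded adhesion, and then to check \textup{(i)--(iii)}; throughout one uses the structural observation that $V(G)\setminus V'_t$ is the \emph{disjoint} union, over the edges $st$ of~$T$ at~$t$, of the sets $F_{st}$ consisting of the side of $\alpha_{\cT'}(st)$ not containing~$V'_t$ with the separator~$V'_{st}$ removed, and that $G$ has no edge between distinct sets~$F_{st}$ (this is immediate from the separation property). Condition~\cref{ax:TreeDecomp1} for~$\cT$ follows from $V_t\supseteq V'_t$ together with the fact that every edge of~$G$ lies in a common part of~$\cT'$; condition~\cref{ax:TreeDecomp2} holds because $\{t\in T: v\in V_t\}$ equals the union of the sets $\{t: x\in V'_t\}$ over the finitely many vertices~$x$ at distance at most~$d$ from~$v$, and this is a connected subtree, since each of these sets is connected and adjacent vertices of the $d$-ball around~$v$ lie in a common part of~$\cT'$. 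Each set $\{t: x\in V'_t\}$ is moreover finite, since its edges~$st$ all have $x\in V'_{st}$ and $V'_{st}$ is the separator of a tight separation of order~$<K$, of which by \cref{lem:TightSepFinite} there are only finitely many through~$x$; so $\cT$ is point-finite. As the minimal radii~$d_{st}$ agree along each of the finitely many $\Gamma$-orbits of~$E(T)$ (\cref{lem:DecTreeFiniteOrbits}), $d$~is $\Gamma$-invariant, so each $g\in\Gamma$, being an isometry of~$G$, satisfies $g\cdot V_t=B_G(g\cdot V'_t,d)=B_G(V'_{g(t)},d)=V_{g(t)}$, which gives $\Gamma$-canonicity. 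Finally, if $v\in V_s\cap V_t$, then a shortest path between a vertex of~$V'_s$ and a vertex of~$V'_t$ that are each within distance~$d$ of~$v$ has length at most~$2d$ and must meet~$V'_{st}$, so $V_s\cap V_t\subseteq B_G(V'_{st},3d)$; since $|V'_{st}|<K$ and $G$ is locally finite and quasi-transitive, hence of bounded degree, all these sets have uniformly bounded size.

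For~\textup{(i)}, every vertex of~$V_t$ is joined to~$V'_t$ within~$V_t$ by a shortest path to~$V'_t$, so it suffices to join any two $u,v\in V'_t$ in~$G[V_t]$. Take a shortest $u$--$v$ path~$P$ in~$G$; each maximal subpath of~$P$ avoiding~$V'_t$ lies, by the structural observation, inside a single component~$C$ of some~$G-V'_{st}$ and enters and leaves~$C$ at neighbours of~$V'_{st}$, so we may replace it by a shortest path of~$C$ between its endpoints, which lies in $B_G(V'_{st},d_{st})\subseteq B_G(V'_t,d)=V_t$ by the choice of~$d$. The resulting walk runs inside~$G[V_t]$. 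For~\textup{(ii)}, fix representatives $v_1,\dots,v_m$ of the $\Gamma$-orbits on~$V(G)$; any $v\in V_t$ is $g(v_i)$ for some~$i$ and~$g$, and then $v_i\in V_{g^{-1}(t)}$, so $g^{-1}(t)$ lies in the finite set $\{t'\in T: v_i\in V_{t'}\}$. Choosing for each such~$t'$ in the $\Gamma$-orbit of~$t$ some $h_{t'}\in\Gamma$ with $h_{t'}(t')=t$, we get $v=(gh_{t'}^{-1})(h_{t'}(v_i))$ with $gh_{t'}^{-1}\in\Gamma_t$ and $h_{t'}(v_i)\in V_t$; hence $\Gamma_t$ has only finitely many orbits on~$V_t$.

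For~\textup{(iii)} we may assume $G[V_t]$ is infinite; being connected (by \textup{(i)}) and locally finite it then has a ray, so it suffices to show that any two rays $R_1,R_2$ of~$G[V_t]$ are equivalent in~$G[V_t]$. They are equivalent in~$G$: otherwise some $\{A,B\}\in N_{\rm end}$ distinguishes their ends and, with $V'_t$ on one side -- say $V'_t\subseteq A$, so $V_t\subseteq B_G(A,d)$ -- a tail of~$R_2$ in~$B$ would lie in $B\cap B_G(A,d)\subseteq B_G(A\cap B,d)$, a finite set, which is impossible. Write~$\omega$ for the common end. Given a finite $B\subseteq V_t$, let $Z$ be the union of $B_G(B,d)$ with all separators~$V'_{st}$ of edges $st$ of~$T$ at~$t$ that meet~$B_G(B,d)$; this is a finite set. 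Since $R_1,R_2\in\omega$, there is a path~$Q$ in $G-Z$ joining a tail vertex of~$R_1$ to one of~$R_2$. Each maximal subpath of~$Q$ avoiding~$V'_t$ lies in a component~$C$ of some~$G-V'_{st}$ whose separator~$Q$ must cross, so $V'_{st}\not\subseteq Z$ and hence $V'_{st}\cap B_G(B,d)=\emptyset$; replacing such a subpath by a shortest path of~$C$ between its endpoints keeps it inside $B_G(V'_{st},d)\subseteq V_t$, and this set is disjoint from~$B$. The rerouted walk lies in~$G[V_t]-B$ and joins the two rays, so $R_1$ and~$R_2$ are equivalent in~$G[V_t]$.

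The main obstacle is part~\textup{(iii)}: the connecting walk must be kept inside the enlarged part~$V_t$ \emph{and} outside the prescribed finite set~$B$ simultaneously, which is exactly what forcing $d\ge d_{st}$ for every~$st$ is designed to allow. Part~\textup{(i)} uses the same excursion-and-reroute idea but without the side constraint of avoiding~$B$, while the \td\ axioms, point-finiteness, $\Gamma$-canonicity and the adhesion bound are routine, resting on the point-finiteness of~$\cT'$ and on \cref{lem:TightSepFinite}.
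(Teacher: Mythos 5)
Most of the proposal is correct, and it deviates from the paper in two places worth noting. For the tree-decomposition axioms, $\Gamma$-canonicity and bounded adhesion your arguments are fine (the $3d$ bound on the adhesion sets is weaker than the paper's exact identity $V_s\cap V_t=V(B_G(V'_{st},d))$, but suffices). You additionally prove point-finiteness of $\cT$ and exploit it for part~(ii) by transporting a finite set $\{t':v_i\in V_{t'}\}$ around a fixed orbit-representative into the stabiliser $\Gamma_t$; the paper instead splits $V_t$ into inner and boundary vertices and handles them separately, and your route is shorter. For~(iii), the paper simply cites~\cite{hamann2018stallings}*{Proposition 4.8 (iv)} after identifying the unique end of~$G$ whose rays meet $V_t$ infinitely often, whereas you attempt a self-contained argument via end-path rerouting, which is a worthwhile aim.

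However, part~(iii) has a gap. You take a path $Q$ in $G-Z$ joining a tail vertex $q_0$ of~$R_1$ to a tail vertex $q_n$ of~$R_2$ and reroute those maximal subpaths of~$Q$ that avoid~$V'_t$ into $B_G(V'_{st},d)$. This works for the \emph{internal} excursions: their two bounding vertices of~$Q$ lie in~$V'_{st}$, so their interiors are paths in some component~$C$ of~$G-V'_{st}$ between two neighbours of~$V'_{st}$ in~$C$, and the choice $d\ge d_{st}$ places the replacement shortest paths inside $B_G(V'_{st},d)\subset V_t$. But nothing forces $q_0$ or~$q_n$ to lie in~$V'_t$: they were only chosen outside the finite set~$Z$, and as vertices of~$G[V_t]$ they may well sit in~$V_t\setminus V'_t$. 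Then the initial (or final) maximal subpath of~$Q$ avoiding~$V'_t$ has $q_0$ (resp.\ $q_n$) as one endpoint, $q_0$ need not be a neighbour of~$V'_{st}$, and the definition of~$d_{st}$ gives no control over a shortest $q_0$--$q_k$ path in~$C$, which may leave~$V_t$ entirely.

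The fix is short but needs to be stated. Since $F_{st}\cap V_t\subseteq B_G(V'_{st},d)$ is finite for each edge $st$ at~$t$, and since $G$ has no edge between distinct sets~$F_{st}$ (your own structural observation), every ray of~$G[V_t]$ must return to~$V'_t$ infinitely often. Hence you may choose the tail vertices $q_0,q_n$ in~$V'_t\setminus Z$, after which every maximal subpath of~$Q$ avoiding~$V'_t$ is an internal excursion with both bounding vertices in some~$V'_{st}$, and your rerouting argument then goes through unchanged. With this addition, part~(iii) becomes a correct, self-contained alternative to the paper's citation.
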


\noindent
	The proof of \cref{lem:TransformedTreeDec} closely follows~\cite{hamann2018stallings};%
	\COMMENT{}
	details are given in~%
\ifArXiv
	\cref{app:ProofsTreeDecHamann} below.
\else
	\cite{GraphDecArXiv}.
\fi

\medbreak

Recall that our plan is to show that our sequence $(A_i,B_i)_{i\in\N}$ in $\vN$ dominates a suitable sequence in~$\vN_{\rm end}$.
To achieve this, it will suffice to show that no part~$V'_t$ of~$\cT'$ contains all the separators from a final segment $(A_i,B_i)_{i\ge i_0}$ of our sequence.%
\COMMENT{}
We will prove this by  applying \cref{thm:NestedIsTDOneEnded} to the corresponding part~$G[V_t]$ of~$\cT$.\looseness=-1

To apply \cref{thm:NestedIsTDOneEnded} in this way, we need to show that a separation of~$G$ whose separator is contained in~$V_t'$ induces a tight separation of~$G[V_t]$. It does, because the graph~$G[V_t]$ behaves like a torso of~$G[V'_t]$ (see~\cite{DiestelBook25}*{Ch.12.3} if desired):

\begin{lemma} \label{lem:torso}
	Let $t$ be a node of~$T$ and $X\subset V'_t$.
	Then the map $C\mapsto C\cap G[V_t]$ is a bijection between the components of $G-X$ and those of~$G[V_t]-X$.
	Moreover, $N_{G[V_t]}(C \cap G[V_t]) = N_G(C)$ for all such~$C$.
\end{lemma}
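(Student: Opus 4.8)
Proof proposal for \cref{lem:torso}.

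The plan is to pin down exactly how the set $V_t$ extends $V'_t$ inside the tree-decomposition $\cT'$, and then to reduce everything to a rerouting argument inside the ``branch blobs'' of $G-V'_t$; the defining property of the constant $d$ (via the numbers~$d_{st}$) is precisely what makes this rerouting work. First I would record the branch structure of $\cT'$ at~$t$. For each edge $st\in E(T)$ write $\{A_s,A_t\}$ for the separation of $G$ induced by $st$, so that $A_s\cap A_t=V'_{st}$ and $V'_t\subseteq A_t$. Standard properties of tree-decompositions give $A_s\cap V'_t=V'_{st}$ for each such edge, whence $V(G)\setminus V'_t$ is the disjoint union of the branch sets $A_s\setminus A_t$ over the edges $st$ at~$t$, there are no $G$-edges between distinct branch sets nor from a branch set into $V'_t\setminus V'_{st}$, and therefore every component $C''$ of $G-V'_t$ lies in a single branch set $A_s\setminus A_t$ and satisfies $N_G(C'')\subseteq V'_{st}$. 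The key observation is that such a $C''$ is \emph{simultaneously} a component of $G-V'_{st}$: since the only $G$-edges leaving $A_s\setminus A_t$ run into $V'_{st}$, the components of $G-V'_t$ and of $G-V'_{st}$ that lie in $A_s\setminus A_t$ are, in both cases, exactly the components of $G[A_s\setminus A_t]$. Consequently the defining property of $d_{st}$, applied to the component $C''$ of $G-V'_{st}$, tells us that any shortest path in $C''$ between two vertices of $C''$ adjacent to $N_G(C'')$ has all its vertices within distance $d_{st}\le d$ of $V'_{st}\subseteq V'_t$, hence in $V_t$; moreover for $w\in C''$ every shortest $w$--$V'_t$ path exits $C''$ directly into $V'_{st}$, so $d_G(w,V'_t)=d_G(w,V'_{st})$ and thus $C''\cap V_t=C''\cap\{\,w:d_G(w,V'_{st})\le d\,\}$.

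Next I would establish the connectivity core: for every component $C$ of $G-X$ the set $C\cap V_t$ is non-empty and induces a connected subgraph of $G[V_t]-X$. Non-emptiness is immediate since $d\ge 1$ and $V'_t\ne\emptyset$ (as $\cT'$ is regular): either $X=\emptyset$, or $C$ meets~$V'_t$, or $C$ has a neighbour in $X\subseteq V'_t$ and hence a vertex in $B_G(V'_t,1)\subseteq V_t$. For connectivity, take $u,v\in C\cap V_t$ and a $u$--$v$ path $P$ in $G[C]$ (note $C\cap X=\emptyset$). Since no $G$-edge joins distinct branch blobs or a blob to $V'_t\setminus V'_{st}$, the path $P$ decomposes into segments that alternate between $V'_t$ and a single blob~$C''$. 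I would leave the $V'_t$-segments untouched and replace each blob segment by a detour inside $C''\cap V_t$: if the segment runs $p_a\dots p_b$ with $p_{a-1},p_{b+1}\in V'_{st}$, then $p_a,p_b$ are adjacent to $N_G(C'')$, so a shortest $p_a$--$p_b$ path in $C''$ (inside $C''\cap V_t$ by the paragraph above) does the job; if the segment begins at $u$ (or ends at~$v$), first walk from $u$ along a shortest path in $C''$ towards $V'_{st}$ until reaching a vertex $u'\in C''$ adjacent to $N_G(C'')$ — this path stays in $C''\cap V_t$ since $d_G(u,V'_{st})=d_G(u,V'_t)\le d$ — and then continue from $u'$ as before. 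Every new vertex used lies in $C''\cap V_t\subseteq C\cap V_t$ and avoids $X$, so the resulting walk lies in $G[C\cap V_t]$, proving it connected. As $G$ has no edge between distinct components of $G-X$, the set $C\cap V_t$ is then a \emph{full} component of $G[V_t]-X$.

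Finally I would assemble the bijection and the neighbourhood equality. Injectivity of $C\mapsto C\cap V_t$ follows from non-emptiness; for surjectivity, any component $D$ of $G[V_t]-X$ is connected in $G-X$, hence lies in a single component $C$ of $G-X$, and then $D\subseteq C\cap V_t$ forces $D=C\cap V_t$ because the latter is itself a component of $G[V_t]-X$. For the neighbourhoods, note $N_G(C)\subseteq X\subseteq V'_t\subseteq V_t$ since $C$ is a component of $G-X$, so $N_{G[V_t]}(C\cap V_t)\subseteq N_G(C)$ is clear; conversely any $z\in N_G(C)$ has a neighbour $w\in C$, and since $z\in V'_t$ we get $w\in B_G(V'_t,1)\subseteq V_t$, so $w\in C\cap V_t$ witnesses $z\in N_{G[V_t]}(C\cap V_t)$. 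I expect the rerouting in the second paragraph to be the main obstacle — in particular the book-keeping for blob segments that start or end at $u$ or $v$, where the segment endpoint need not be adjacent to $N_G(C'')$ and one must first travel to the ``boundary'' $\partial C''$ of the blob along a shortest path towards $V'_{st}$; everything else is routine bookkeeping about the branch structure of a tree-decomposition.
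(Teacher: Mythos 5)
Your proof is correct and rests on the same two structural facts the paper uses: that each component of $G-V'_t$ sits in a single branch at $t$, is simultaneously a component of $G-V'_{st}$ for the corresponding edge $st$, and has its neighbourhood in $V'_{st}$; and that the choice of $d$ forces shortest paths in such a blob between vertices adjacent to $V'_{st}$ into $B_G(V'_{st},d)\subseteq V_t$. The execution differs in a way worth noting. You prove connectivity of $C\cap V_t$ directly by taking an \emph{arbitrary} $u$--$v$ path in $G[C]$ with $u,v\in C\cap V_t$ and rerouting its blob segments through shortest paths, which forces you to handle the boundary bookkeeping when $u$ or $v$ lies inside a blob (and needs your auxiliary observation $d_G(w,V'_t)=d_G(w,V'_{st})$ for $w$ in a blob). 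The paper sidesteps all of this: it restricts $u,v$ to be \emph{neighbours of $X$ in $C$} (which suffices because $G[V_t]$ is connected, so every component of $G[V_t]-X$ contains such a vertex), takes a \emph{shortest} $u$--$v$ path $P$ in $G-X$, and prepends/appends the $X$-neighbours $x,y$ so that $u,v$ become interior vertices of the walk $xuPvy$. Blob segments of a shortest path in $G-X$ are automatically shortest in the blob, so no rerouting or endpoint casework is needed, and $P\subseteq G[V_t]$ follows at once. Your version does buy one small thing in exchange for the extra bookkeeping: it never invokes connectivity of $G[V_t]$ (\cref{lem:TransformedTreeDec}~(i)), which the paper's surjectivity step does rely on.
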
 


\begin{proof}
	Let $C$ be any component of~$G-X$. Since $G$ is connected, $C$~contains a neighbour of~$X$. All the neighbours of~$X$ in~$C$ lie in~$V_t$, since $N_G(X) \subseteq B_G(V'_t,1) \subseteq B_G(V'_t, d) = V_t$. Hence $C$ meets some com\-po\-nent~$D$ of~$G[V_t]-X$. We shall prove that $D$ is the only component of~$G[V_t]-X$ that $C$ meets; then clearly $D = C\cap G[V_t]$,%
	\COMMENT{}
	which establishes the desired bijection.

Since $G[V_t]$ is connected (\cref{lem:TransformedTreeDec}), every component of~$G[V_t]-X$ contains a neighbour of~$X$. It thus suffices to show that all the neighbours $u,v$ of~$X$ in~$C$ lie in the same component of~$G[V_t]-X$ (our~$D$).%
	\COMMENT{}
	We prove this by showing that every shortest $u$--$v$ path~$P$ in~$G-X$ lies in~$G[V_t]$. Note that $P\subset C$.%
	\COMMENT{}

Let $x$ be a neighbour of~$u$ in~$X$, and $y$ a neighbour of~$v$ in~$X$, possibly $x=y$. Then $xuPvy$ is a concatenation of walks in~$G$%
	\COMMENT{}
	that either lie in~$G[V'_t]$ or have their first and last vertex in~$V'_t$ and all their inner vertices (at least one) in the same component of~$G-V'_t$. Let $Q = x'\dots y'$ be any subwalk of~$xuPvy$ of the latter type if one exists (which will be a path or the entire walk~$xuPvy$, in which case that is a cycle),%
	\COMMENT{}
	and let~$C'$ be the component of $G-V'_t$ that contains $Q-x'-y'$. It will suffice for our proof of $P\subset G[V_t]$ to show that $Q\subset G[V_t]$.%
	\COMMENT{}

As $X\subset V'_t$ and $C'$ meets~$C$ (in~$Q - x' - y'\ne\emptyset$, which lies in~$P\subset C$),%
	\COMMENT{}
	we have $C'\subset C$. Since $\cT'$ is a \td, $V(C')$~is contained in~$\bigcup_{t'\in T'} V'_{t'}$ for some component $T'$ of~$T-t$. Let $s\in T'$ be the neighbour of~$t$ in that component. Then all the neighbours of~$C'$ in~$V'_t$ lie in~$V'_{st}\subset V'_t$, and so the component of $G-V'_{st}$ that contains~$C'$ (which exists because $V'_{st}\subset V'_t$) in fact equals~$C'$.

Thus, $Q - x' - y'$ is a path in a component of $G-V'_{st}$%
	\COMMENT{}
	between neighbours of~$V'_{st}$ (possibly identical).%
	\COMMENT{}
	Since it is a subpath of~$P$, it is also a shortest such path. Hence $Q\subset G[V_t]$ by the definition of~$V_t$.
\end{proof}

To complete our proof of \cref{thm:NestedSetIsTDGen} we need one more definition. Let $\vr,\vs$ be orientations of proper%
	\COMMENT{}
	separations of~$G$, such as those in~$\vN$. If $r=s$ we say that $\vr$ {\em points towards\/}~$\vs$ if $\vr=\vs$. If $r\ne s$ we say that $\vr$ \emph{points towards}~$\vs$ (and~$s$) if either $\vr\le\vs$ or $\vr\le\sv$. Note that if $r\ne s$ are nested, they have unique orientations $\vr$ and~$\sv$ that point towards each other, with $\vr\le\vs$ and $\rv\ge\sv$. Also, since $r$ and~$s$ are proper, we cannot have both $\vr\le\vs$ and $\vr\le\sv$.

\begin{proof}[Completion of proof of \cref{thm:NestedSetIsTDGen}]
	Recall that we have a sequence $(A_i,B_i)_{i\in\N}$ of separations in~$\vN$, which we want to show to be exhaustive.%
	\COMMENT{}
	Let us abbreviate $(A_i,B_i)$ as~$\vsi$. The tools at our disposal are that $N_{\rm end}\subset N$ is exhaustive (\cref{lem:NendInducesTreeDec}), our \td\ of~$G$ into finite or 1-ended parts provided by \cref{lem:TransformedTreeDec}, and \cref{thm:NestedIsTDOneEnded} by which 1-ended graphs contain no infinite sequences such as $(\vsi)_{i\in\N}$ at all.\looseness=-1

We need three more steps.
	We first establish a partition of $\vN$ defined by the splitting stars~$\sigma$ of $N_{\rm end}$. Recall that these correspond to the nodes $t$ of~$T$ (see \cref{subsubsec:ToolsTreeDecNested}); the corresponding partition class~$P_\sigma$ of~$\vN$ will consist, very roughly, of those separations in~$\vN$ that lie in~$\sigma$ or separate~$V_t$.
	Next we show that for our proof that $(\vsi)_{i\in\N}$ is exhaustive it suffices to show that it contains separations from infinitely many of these partition classes~$P_\sigma$.
	Finally, we check that $(\vsi)_{i\in\N}$ does meet infinitely many~$P_\sigma$.
	
Since $N_{\rm end}$ is exhaustive, its splitting stars form a partition of~$\vN_{\rm end}$: every element of~$\vN_{\rm end}$ lies in exactly one splitting star \cite{TreelikeSpaces}*{Lemma~2.4}.%
	\COMMENT{}
	Let us extend this partition of~$\vN_{\rm end}$ to one of~$\vN$, as follows. For every splitting star~$\sigma$ of~$N_{\rm end}$ let
	$$P_\sigma :=\ \big\{\vs \in \vN : \text{all the elements of } \sigma \text{ point towards } \vs\big\}.$$%
	\COMMENT{}
	Note that $P_\sigma\cap\vN_{\rm end} = \sigma$.%
	\COMMENT{}

Let us show that these~$P_\sigma$ form a partition of~$\vN$. As the separations in~$N$ are nested, every $\vs\in \vN$ defines a consistent orientation of~$N_{\rm end}$:%
	\COMMENT{}
	$$O\!_\vs := \big\{\vr\in\vN_{\rm end} : \vr \text{ points towards } \vs\big\};$$%
	\COMMENT{}
	let $\sigma\!_\vs$ denote the set of its maximal elements. Since $N_{\rm end}$ is exhaustive and $s$ is proper, $O\!_\vs$ contains no infinite strictly increasing sequence. Hence every $\vr\in O\!_\vs$ lies below some maximal element of~$O\!_\vs$;%
	\COMMENT{}
	 thus, $\sigma\!_\vs$~is a splitting star of~$N_{\rm end}$. So $P_{\sigma_{\vec s}}$ is defined, and it clearly contains~$\vs$. But $\vs$ cannot lie in any other~$P_\sigma$. Indeed, consider a splitting star $\sigma\ne\sigma\!_\vs$ of~$N_{\rm end}$. The orientation $O$ of~$N_{\rm end}$ of which $\sigma$ is the set of maximal elements then orients some separation $r\in N_{\rm end}$ differently from the way $O\!_\vs$ does.%
	\COMMENT{}
	Hence if $\vs\in P_\sigma\cap P_{\sigma_{\vec s}}$ then both orientations of~$r$ lie below an element of~$\sigma$ or~$\sigma\!_\vs$,%
	\COMMENT{}
	and hence both point towards~$\vs$.%
	\COMMENT{}
	This is impossible, since $s$ is proper.%
	\COMMENT{}

Next we show that $(\vsi)_{i \in \N}$ is exhaustive if it meets infinitely many~$P_\sigma$.
	By \cref{lem:DominationPrinciple} we may assume that the $\vsi$ lie in distinct~$P_\sigma$.%
	\COMMENT{}
	For each $i \in \N$ let $\sigma_i$ be such that $\vsi\in P_{\sigma_i}$. Then $O_i := O\!_{\vsi}$ is the consistent orientation of~$N_{\rm end}$ of which $\sigma_i$ is the set of maximal elements.%
	\COMMENT{}
	Let $r_i\in N_{\rm end}$ be a separation oriented differently by $O_i$ and~$O_{i+1}$.%
	\COMMENT{}
	Then one of these, $\vri$~say, points towards~$\vsiplus$,%
	\COMMENT{}
	while the other, $\rvi$, points towards~$\vsi$. Since $\vsi < \vsiplus$, this is possible only if $\vsi < \vri \leq \vsiplus$.%
	\COMMENT{}
	(The first inequality is strict, because otherwise $\rvi$ would point towards its own inverse~$\vri=\vsi$, which is impossible.)%
	\COMMENT{}
	Hence the sequence $(\vri)_{i \in \N}$ is strictly increasing too, and is dominated by~$(\vsi)_{i \in \N}$. Since $(\vri)_{i \in \N}$ consists of separations from $\vN_{\rm end}$ and is thus exhaustive by \cref{lem:NendInducesTreeDec}, our sequence $(\vsi)_{i \in \N}$ is exhaustive by \cref{lem:DominationPrinciple}.
	
	It remains to check that $(\vsi)_{i \in \N}$ does meet infinitely many~$P_\sigma$. To prove this we show that each $P_\sigma$ contains only finitely many~$\vsi$. Recall that $\cT' = (T,\cV')$ was the \td\ of~$G$ induced by~$N_{\rm end}$, and that our group~$\Gamma$ of automorphisms of~$G$ acts on~$T$. As shown in \cref{subsubsec:ToolsTreeDecNested}, the splitting stars~$\sigma$ of~$N_{\rm end}$ correspond to the nodes~$t$ of~$T$. Given such a pair $\sigma$ and~$t$, let $N_t \subseteq N$ be the set of all separations with an orientation in~$P_\sigma$.%
	\COMMENT{}
	Note that the stabiliser~$\Gamma_t$ of~$t$ under the action of $\Gamma$ on~$T$ consists precisely of those elements of~$\Gamma$ that fix~$\sigma$ as a set.

Let us show that $N_t$ is $\Gamma_t$-canonical:%
	\COMMENT{}
	that for every $g\in\Gamma_t$ and every $s\in N_t$ we have $g\act s\in N_t$. By definition of $N_t$ and~$P_\sigma$ the latter means that every $\vrdash\in\sigma$ points towards~$g\act s$: that $s' := g\act s$ has an orientation $\vsdash\ge\vrdash$.%
	\COMMENT{}
	Let $\vr := g^{-1}\act\vrdash$. As $g\in\Gamma_t$ we have $\vr\in\sigma$; thus, $\vr$ also points towards~$s$, say $\vr\le\vs$. But then%
	\COMMENT{}
	$\vrdash = g\act\vr\le g\act\vs =: \vsdash$%
	\COMMENT{}
	as required.

Let us turn $N_t$, a set of separations of~$G$, into a $\Gamma_t$-canonical set~$M_t$ of tight separations of~$G[V_t]$. By the definition of $\cT'$, the part $V_t'$ is equal to~$\bigcap_{(C,D) \in \sigma} D$.
	As for every $\{A, B\} \in N_t$ and every $(C,D)\in\sigma$ either $A$ or~$B$ is a subset of~$D$,%
   \COMMENT{}
   we thus have $A \cap B \subseteq V'_t$. Every oriented separation $(A, B) \in \vN_t$%
	\COMMENT{}
	induces an oriented separation $(A_t, B_t) := (A \cap V_t, B \cap V_t)$ of~$G[V_t]$.
	By \cref{lem:torso} one easily shows that these are distinct for different~$(A,B)$, that they are tight because the $(A,B)$ are tight, and that for $(A, B) < (C, D)$ in~$\vN_t$ we have ${(A_t, B_t) < (C_t, D_t)}$.%
	\COMMENT{}
	Let
	$$M_t := \big\{ \{A_t, B_t\} \colon \{A, B\}\in N_t \big\}.$$

$M_t$~is a $\Gamma_t$-canonical set of tight separations of~$G[V_t]$.
	If $G[V_t]$ is finite then so is~$M_t$, and in particular $\vM_t$ contains no strictly increasing sequence.
	Otherwise $G[V_t]$ is a connected locally finite 1-ended graph that is quasi-transitive under the action of~$\Gamma_t$, by \cref{lem:TransformedTreeDec}.
	Hence, by \cref{thm:NestedIsTDOneEnded}, $\vM_t$~contains no strictly increasing infinite sequence.
	Since any strictly increasing sequence in $\vN_t$ induces a strictly increasing sequence in~$\vM_t$, by \cref{lem:torso} as noted earlier, there is thus no strictly increasing infinite sequence in~$\vN_t$ either. This completes our proof that $P_\sigma\subset\vN_t$ contains only finitely many~$\vsi$, and hence that our sequence $(\vsi)_{i\in\N}$ is exhaustive.
	By \cref{lem:ExhaustiveImpliesTreeDecomp}, $N$~induces a $\Gamma$-canonical \td\ of~$G$, as required in the statement of \cref{thm:NestedSetIsTDGen}.
	\end{proof}

\bigbreak\section{\boldmath Proof of the main result}\label{sec:ProofMainTheorem}

\noindent
	In this section we combine all our lemmas to prove \cref{main:KeyTheoremGen}~-- the more general version of \cref{main:KeyTheorem} that also includes \cref{main:CayleyThm}~-- and derive some corollaries of the proof.
The unique \gd\ of the given graph~$G$ whose existence is asserted in \cref{main:KeyTheoremGen}, the decomposition that displays its $r$-global structure (if it exists), was specified in \cref{def:r-globalstructure}. It is defined (via the projection $p_r\colon\loc\to G$) by the \td~$\cT$ of~$\loc$ that is induced by its canonical tree of tangles~$N(\loc)$ from \cref{def:N(G)} if this exists.%
	\COMMENT{}

We shall derive the existence of~$N(\loc)$ from \cref{thm:Entanglements}, which requires $\loc$ to be tangle-accessible. \cref{thm:NestedSetsInducesTreeDecTechnical} helps ensure this when $\loc$ is accessible, which we already know from \cref{thm:G_risAccessible}.

Let us say that a set $S$ of separations of a graph $G$ is \emph{point-finite} if for every vertex $v \in V(G)$ the set $S_v := \big\{ \{A, B\} \in S \colon v \in A \cap B \big\}$ is finite.
It is easy to see that a regular \td\ has finite spread if (and only if) its induced nested set of separations is point-finite.%
	\COMMENT{}
	Indeed, if a vertex of~$G$ lies in infinitely many parts then, by \cref{ax:TreeDecomp2} and \cite{DiestelBook25}*{Prop.\,8.2.1}, the decomposition tree has an infinite star or a ray such that $v$ lies in every part~$V_t$ corresponding to a node~$t$ in that star or ray. Then $v$ also lies in all the separators that correspond to edges of that star or ray.\looseness=-1

\begin{lemma} \label{thm:NestedSetsInducesTreeDecTechnical}
	Let $G$ be a connected locally finite graph, and let $N$ be a nested set of tight finite-order separations of~$G$.
	Let $\Gamma$ be a group acting quasi-transitively on~$G$ so that $N$ is $\Gamma$-canonical.
	Suppose that there exists a number $K \in \N$ such that some $\Gamma$-canonical set $N_{\rm end} \subseteq N$ of separations of order $<K$ distinguishes all the ends of~$G$.
	
	Then $N$ is exhaustive and point-finite, and the order of the separations in $N$ is finitely bounded.%
	\COMMENT{}
	The \td\ $\cT = (T, \cV)$ of~$G$ which $N$~induces is regular, $\Gamma$-canonical, of finitely bounded adhesion, and has finite spread.
	If $G$ is 1-ended, then $T$ is rayless.
\end{lemma}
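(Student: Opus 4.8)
The plan is to derive this lemma almost entirely from \cref{thm:NestedSetIsTDGen} together with the auxiliary lemmas already assembled in \cref{sec:TangleAccessibility}. First I would invoke \cref{thm:NestedSetIsTDGen} directly: its hypotheses are exactly those stated here (a nested, $\Gamma$-canonical set $N$ of tight finite-order separations, $\Gamma$ acting quasi-transitively on $G$, and a $\Gamma$-canonical subset $N_{\rm end}\subseteq N$ of separations of order $<K$ distinguishing all ends). Hence $N$ is exhaustive and induces a $\Gamma$-canonical \td\ $\cT=(T,\cV)$ of $G$. Regularity of $\cT$ comes from \cref{lem:ExhaustiveImpliesTreeDecomp}, since the separations in $N$ are tight and hence proper. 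So the only genuinely new assertions to prove are: (a) the orders of separations in $N$ are finitely bounded; (b) $N$ (equivalently $\cT$) is point-finite; (c) if $G$ is 1-ended then $T$ is rayless; and the finitely-bounded adhesion of $\cT$, which follows from (a) since adhesion sets are separators of separations in $N$.

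For (a), the plan is to reuse the partition of $\vN$ into the sets $P_\sigma$ indexed by splitting stars $\sigma$ of $N_{\rm end}$ (equivalently by nodes $t$ of the tree $\cT'$ induced by $N_{\rm end}$) that was constructed in the proof of \cref{thm:NestedSetIsTDGen}, along with the $\Gamma_t$-canonical sets $M_t$ of tight separations of the enlarged parts $G[V_t]$ from \cref{lem:TransformedTreeDec} and \cref{lem:torso}. Since $\Gamma$ acts on $E(T)$ with finitely many orbits (\cref{lem:DecTreeFiniteOrbits}), and on the splitting stars with finitely many orbits, it suffices to bound the orders of separations in each $M_t$. Each $G[V_t]$ is either finite or a connected locally finite 1-ended quasi-transitive graph (under $\Gamma_t$); in the finite case $M_t$ is finite, and in the 1-ended case the orders of the tight separations in a $\Gamma_t$-canonical set are bounded because in a quasi-transitive locally finite graph there are only finitely many $\Gamma_t$-orbits of tight separations of order $<K'$ for each $K'$ (\cite{ThomassenWoess}*{Proposition 4.2}), and \cref{thm:NestedIsTDOneEnded} forbids arbitrarily long increasing chains — combined, these force a uniform bound. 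Since orders are preserved under the bijection $M_t\leftrightarrow N_t$ (\cref{lem:torso}: $N_{G[V_t]}(C\cap G[V_t])=N_G(C)$), this bounds the orders of separations in each $N_t$, and finitely many orbits of parts then gives a global bound on $N$.

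For (b), point-finiteness: I would show each vertex $v$ lies in only finitely many separators of separations in $N$. Every such separator is a separator of a tight separation of order $<K_0$ (the bound from (a)), and \cref{lem:TightSepFinite} says exactly that each vertex of $G$ lies in only finitely many separators of tight separations of order less than any fixed integer. So $N_v$ is finite, hence $N$ is point-finite, and by the remark preceding the lemma the regular \td\ $\cT$ is point-finite too. For (c), if $G$ is 1-ended: a ray in $T$ would yield (orienting its edges consistently) a strictly increasing infinite sequence in $\vN$; by \cref{lem:OneEndedFiniteOrder,lem:OmegaChainFiniteSmall} all the "small" sides are finite while all the "large" sides are infinite, and then \cref{thm:NestedIsTDOneEnded} (applicable since $G$ is 1-ended, connected, locally finite, and quasi-transitive under $\Gamma$, with $N$ a nested $\Gamma$-canonical set of tight finite-order separations) says no such sequence exists — so $T$ has no ray. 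I expect the main obstacle to be assembling the uniform order bound in step (a) cleanly: one must be careful that "finitely many orbits of parts" genuinely transfers a bound from the individual $M_t$ (where the bound a priori depends on $t$) to all of $N$, which is precisely where the $\Gamma$-orbit finiteness of $E(T)$ and of splitting stars is used, and where one must check that the construction of $M_t$ and the bijection of \cref{lem:torso} interact correctly with the $\Gamma$-action.
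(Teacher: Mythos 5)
The gap is in your step (a). You claim that the two facts ``finitely many $\Gamma_t$-orbits of tight separations of order $<K'$, for each fixed $K'$'' and ``no strictly increasing infinite sequence'' (from \cref{thm:NestedIsTDOneEnded}) together force a uniform bound on the orders of the separations in~$M_t$. This deduction is not valid: both facts are compatible with a nested $\Gamma_t$-canonical family of tight separations that has unbounded orders and is spread across infinitely many $\Gamma_t$-orbits---for instance a ``star'' of pairwise-nested separations with strictly growing separators, which has no increasing chain of length greater than two and only finitely many members (hence finitely many orbits) of each bounded order. What is missing is point-finiteness: that is what turns ``finitely many orbits of each bounded order'' into ``finitely many orbits overall,'' and hence into a bound on the orders. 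But your plan derives point-finiteness (b) \emph{from} the order bound (a), so (a) has to stand alone, and it doesn't.

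The paper gets the two facts in the opposite order, and this is essential. From \cref{thm:NestedSetIsTDGen} one has only that $N$ is exhaustive; the next step is a dedicated lemma (\cref{lem:ExhaustiveImpliesPointFinite}: in a connected locally finite graph, an exhaustive nested set of \emph{tight} separations is point-finite) whose proof is a short but genuine argument via \cref{lem:ExhaustiveImpliesTreeDecomp} and K\"onig's lemma, not an immediate consequence. Once $N$ is point-finite, a second observation (\cref{thm:CanonicalQuasiTransFinitelyBoundedOrder}: a point-finite $\Gamma$-canonical set of finite-order separations in a $\Gamma$-quasi-transitive graph meets only finitely many $\Gamma$-orbits, hence has bounded order) gives (a). Your (c) is fine, and your use of \cref{lem:TightSepFinite} for point-finiteness would be fine if (a) were available first. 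Also note that once exhaustiveness is imported as a black box from \cref{thm:NestedSetIsTDGen}, there is no need to re-open the $P_\sigma$ partition, the $M_t$'s, or \cref{lem:torso}: the whole remainder is the two auxiliary observations above plus \cref{lem:ExhaustiveImpliesTreeDecomp}, which is considerably shorter than your plan.
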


For a proof of \cref{thm:NestedSetsInducesTreeDecTechnical} we need two  observations.

\begin{lemma} \label{lem:ExhaustiveImpliesPointFinite}
	Let $G$ be a connected locally finite graph, and let $N$ be a nested set of tight%
   \COMMENT{}
   separations of~$G$.
		If $N$ is exhaustive, then $N$ is point-finite.
\end{lemma}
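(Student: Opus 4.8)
\textbf{Proof plan for \cref{lem:ExhaustiveImpliesPointFinite}.}
The plan is to argue by contraposition: assuming that $N$ is not point-finite, I will construct a strictly increasing infinite sequence of separations in $\vN$ whose small sides do not shrink to the empty set, contradicting exhaustiveness. So suppose some vertex $v$ lies in the separator $A\cap B$ of infinitely many separations $\{A,B\}\in N$. Since $G$ is locally finite, the edges at $v$ together with $v$ itself form a finite set, so among these infinitely many separations infinitely many must ``cut'' $v$ in the same combinatorial way; more precisely, since $N$ is nested, the orientations of these separations towards $v$ (or, more carefully, the orientations chosen so that a fixed neighbour or incident edge of $v$ lies on the small side) can be compared pairwise. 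The key point is that a nested, locally finite configuration of separations all of whose separators contain a common vertex $v$ cannot branch unboundedly at $v$: if three separations in $N$ had pairwise incomparable orientations ``at $v$'', their small sides would have to account for distinct edges at $v$, and since $v$ has finite degree only finitely many of them can be pairwise incomparable in this sense. Hence, after discarding finitely many, the remaining infinitely many separations with $v$ in their separator form a chain under $\le$ in one consistent orientation.

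The heart of the argument is then to extract from this chain a strictly increasing infinite subsequence $(A_i,B_i)_{i\in\N}$ in $\vN$. The subtlety is that two distinct separations in $N$ with $v$ in both separators could a priori coincide as sets once oriented, but since they are distinct elements of $N$ they are distinct as unordered pairs, so along the chain either the big sides strictly increase or the small sides strictly decrease infinitely often; in either case we obtain a strictly increasing infinite sequence $(\vsi)_{i\in\N}$ in $\vN$ (replacing $\vsi$ by $\svi$ if necessary to get the increasing direction). By construction every small side $B_i$ (the side \emph{not} oriented towards $v$, say) still contains $v$, because $v\in A_i\cap B_i$ for every $i$. Therefore $v\in\bigcap_{i\in\N}B_i$, so this intersection is non-empty, contradicting the hypothesis that $N$ is exhaustive.

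I expect the main obstacle to be making precise the claim that ``at a vertex $v$ of finite degree, only boundedly many separations of a nested family can be pairwise incomparable in their orientation at $v$''. The cleanest way to do this is probably to fix, for each separation $s\in N$ with $v$ in its separator, an orientation $\vs=(A,B)$ and observe that the small side $A$ must contain at least one vertex adjacent to $v$ that lies in $A\setminus B$, \emph{or} $A$ contains only $v$ among vertices near $v$ and the branching is controlled by the (finitely many) incident edges; combined with nestedness, the family of such small sides, restricted to the finite set $N_G[v]$ of $v$ together with its neighbours, is linearly ordered by inclusion up to the finitely many ``atoms'' it can distinguish. Once that finiteness is in hand, extracting the increasing chain and reading off $v\in\bigcap_i B_i$ is routine. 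One should also take care that the separations involved are \emph{proper} (which holds since tight separations are proper, as noted in the excerpt) so that no degenerate $A=V(G)$ case interferes with the strict-increase extraction.
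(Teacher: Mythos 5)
Your overall strategy is close to the paper's, but the central step is glossed over in a way that leaves a real gap. You claim that once the ``branching at~$v$'' is bounded, ``after discarding finitely many, the remaining infinitely many separations with $v$ in their separator form a chain.'' That is not correct: a nested family whose antichains at~$v$ are bounded need not reduce to a single chain after removing finitely many elements (think of an infinite caterpillar: a ray with a pendant at each node). What is true, and what you actually need, is that an infinite nested set with bounded ``star-width'' at~$v$ must \emph{contain} an infinite chain. That is a genuine König/Dilworth-type step, and it is precisely where the paper does the work differently: it notes that $N_v$ (the separations in~$N$ with $v$ in the separator) inherits exhaustiveness from~$N$, applies \cref{lem:ExhaustiveImpliesTreeDecomp} to obtain a tree-decomposition $(T,\cV)$ of~$G$ induced by~$N_v$, and then invokes K\"onig's infinity lemma: the infinite tree~$T$ has either a ray, giving an infinite increasing chain in~$\vN_v$ and contradicting exhaustiveness, or a node of infinite degree, giving an infinite star $(A_i,B_i) < (B_j,A_j)$ and contradicting local finiteness via tightness (each $A_i\setminus B_i$ contains a neighbour of~$v$, and these sets are disjoint).

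Your treatment of tightness is also muddled. You hedge that ``the small side $A$ must contain at least one vertex adjacent to $v$ that lies in $A\setminus B$, \emph{or} $A$ contains only $v$ among vertices near $v$ \dots'' The second alternative cannot occur: for a tight separation $\{A,B\}$ with $v\in A\cap B$, there are components $C\subset G[A]$ and $D\subset G[B]$ of $G-(A\cap B)$ to each of which \emph{every} vertex of the separator, in particular~$v$, sends an edge. So $v$ always has a neighbour in $A\setminus B$, and the bound on pairwise-incomparable (star) separations at~$v$ follows cleanly. You should state this explicitly rather than hedging. With these two repairs — invoke tightness squarely, and replace ``the rest form a chain'' by a K\"onig or Dilworth-type extraction of an infinite chain, or better, route through the induced tree-decomposition as the paper does — your argument would close.
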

\begin{proof}
	Let $v \in V(G)$, and suppose the set $N_v$ of separations in~$N$ whose separator contains~$v$ is infinite.
	Since $N$ is exhaustive, so is~$N_v$.
	By \cref{lem:ExhaustiveImpliesTreeDecomp}, $N_v$~induces a regular \td\ $\cT = (T,\cV)$ of~$G$. If $T$ contains a ray, then the forward-oriented edges of this ray define an infinite strictly increasing sequence of separations in $N_v$ via~$\alpha_\cT$.%
   \COMMENT{}
   All these separations have $v$ in their separator,%
	\COMMENT{}
	which contradicts the fact that $N_v$ is exhaustive.

So $T$ is rayless, and hence has a vertex $t$ of infinite degree \cite{DiestelBook25}*{Prop.\,8.2.1}.
	Applying $\alpha_\cT$ to the edges incident with~$t$ and oriented towards~$t$ yields an infinite set $\{(A_i, B_i) \colon i \in \N\}\subset\vN_v$ such that $(A_i, B_i) < (B_j, A_j)$ for all~$i \neq j$.
	All the sets $A_i \setminus B_i$ are disjoint, and $v$ has a neighbour in every $A_i \setminus B_i$ since $v \in A_i \cap B_i$ and $\{A_i, B_i\}$ is tight.
	This contradicts the local finiteness of~$G$.
\end{proof}

In quasi-transitive graphs, canonical and point-finite sets of separations are again quasi-transitive under the action of the same group:

\begin{lemma} \label{thm:CanonicalQuasiTransFinitelyBoundedOrder}
	Let $G$ be a connected graph, not necessarily locally finite, and let $S$ be a point-finite set of finite-order separations of~$G$.
	Let $\Gamma$ be a group acting quasi-transitively on $G$ so that $S$ is $\Gamma$-canonical.
	Then $\Gamma$ acts on $S$ with finitely many orbits.
	In particular, the order of the separations in $S$ is finitely bounded.
\end{lemma}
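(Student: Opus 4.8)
The plan is to show that every $\Gamma$-orbit on $S$ contains a separation whose separator meets a fixed finite set of vertices, and then to conclude by point-finiteness.

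First I would record the routine but essential fact that every $g \in \Gamma$ permutes $V(G)$ bijectively (being part of an action of $\Gamma$ on the graph~$G$); hence $g$ sends a separation $\{A,B\}$ of~$G$ to the separation $\{g\act A, g\act B\}$, with $|g\act A\cap g\act B| = |A\cap B|$. So the order of a separation is constant along its $\Gamma$-orbit. This reduces the ``in particular'' clause to the first assertion: once there are only finitely many orbits, one bound for the order of every separation in~$S$ is the maximum order attained on a set of orbit representatives.

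Next, since $\Gamma$ acts quasi-transitively on~$G$, I would fix vertices $v_1,\dots,v_n$ meeting every orbit of the action of~$\Gamma$ on~$V(G)$. Given any $\{A,B\}\in S$ with $A\cap B\ne\emptyset$, I would pick $v\in A\cap B$ together with $g\in\Gamma$ such that $g\act v_i = v$ for some $i\le n$. As $S$ is $\Gamma$-canonical, $g^{-1}\act\{A,B\}\in S$, and its separator $g^{-1}\act(A\cap B)$ contains~$v_i$, so $g^{-1}\act\{A,B\}\in S_{v_i}$. Thus every $\Gamma$-orbit containing a separation with nonempty separator meets the set $S_{v_1}\cup\dots\cup S_{v_n}$, which is finite because $S$ is point-finite. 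The only separations of~$S$ not accounted for are those with empty separator; but for a connected graph~$G$ the only separation $\{A,B\}$ with $A\cap B=\emptyset$ is $\{\emptyset,V(G)\}$, which is fixed by all of~$\Gamma$ and hence forms a single orbit on its own. Therefore $\Gamma$ acts on~$S$ with at most $1+|S_{v_1}\cup\dots\cup S_{v_n}|$ orbits, in particular with finitely many, and the order bound follows as in the previous paragraph.

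I do not expect any genuine obstacle here: this is a direct orbit-counting argument, using $\Gamma$-canonicity to move representatives and point-finiteness to see that the target set is finite. The only point requiring a moment's care is the degenerate separation $\{\emptyset,V(G)\}$ of order~$0$, whose (empty) separator lies in no~$S_v$; dealing with it is exactly where the connectedness hypothesis on~$G$ is used.
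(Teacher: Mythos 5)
Your proof is correct and follows essentially the same route as the paper's: fix orbit representatives $v_1,\dots,v_n$ from the quasi-transitive action, translate any separation so its separator meets some $v_i$ (using $\Gamma$-canonicity), conclude via point-finiteness, and handle the degenerate separation $\{\emptyset,V(G)\}$ separately using connectedness.
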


\begin{proof}
	As $G$ is quasi-transitive under the action of~$\Gamma$, there exists a finite set $V'$ of vertices of $G$ such that $\Gamma \act V' = V(G)$.
	Then $S\setminus\big\{\{\emptyset,V(G)\}\big\} = \Gamma \act S'$%
	\COMMENT{}
	where~$S' = \bigcup_{v' \in V'} S_{v'}$.
	Since $S$ is point-finite, all these $S_{v'}$ are finite, and hence so is~$S'$.
	Thus, $\Gamma$ acts on~$S$ with finitely many orbits, while $\{\emptyset,V(G)\}$, if it lies in~$S$, forms an additional orbit in~$S$.
Since $S'$ is finite, the order of the separations in~$S$ is finitely bounded.
\end{proof}

\begin{proof}[Proof of \cref{thm:NestedSetsInducesTreeDecTechnical}]
	By \cref{thm:NestedSetIsTDGen}, $N$~is exhaustive. By \cref{lem:ExhaustiveImpliesPointFinite} it is point-finite, and by \cref{thm:CanonicalQuasiTransFinitelyBoundedOrder} its separations have finitely bounded order.
	By \cref{lem:ExhaustiveImpliesTreeDecomp}, $N$~induces a regular $\Gamma$-canonical \td\ $\cT = (T, \cV)$ of~$G$. Its adhesion sets are the separators of the separations in~$N$, so it has finitely bounded adhesion.%
	\COMMENT{}
	As remarked just before \cref{thm:NestedSetsInducesTreeDecTechnical}, the point-finiteness of~$N$ implies that $\cT$ has finite spread.

If $G$ is 1-ended then, by \cref{thm:NestedIsTDOneEnded}, $\vN$~contains no infinite strictly increasing sequence. Any ray in~$T$ would define such a sequence%
   \COMMENT{}
   via~$\alpha_\cT$, so $T$ is rayless.
	\end{proof}

\cref{thm:NestedSetsInducesTreeDecTechnical} essentially strengthens accessibility to tangle-accessibility in our context. We need this for our proof of Theorems \ref{main:KeyTheorem} and~\ref{main:CayleyThm}, as outlined at the start of this section. More precisely, what we shall need is \cref{maincor:TangleAccessibility},%
	\COMMENT{}
	which we will thus prove first. Our input~$N$ for \cref{thm:NestedSetsInducesTreeDecTechnical} in the proof of \cref{maincor:TangleAccessibility} will come from the following non-trivial result:

\begin{lemma}[\cite{InfiniteSplinters}*{Theorem 6.6}] \label{thm:Splinters}
	Every connected, locally finite graph has a canonical tree of tangles consisting of relevant separations.\footnote{The relevance of the separations is not explicitly stated in~\cite{InfiniteSplinters}*{Theorem 6.6} but immediate from its proof.}
\end{lemma}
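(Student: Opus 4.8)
The statement is Theorem~6.6 of~\cite{InfiniteSplinters}; here I sketch the argument one would follow to establish it from first principles, which is the canonical tree-of-tangles paradigm of~\cites{ProfilesNew,carmesin2022entanglements,InfiniteSplinters} adapted to locally finite graphs without any accessibility hypothesis. The plan is to produce the desired nested set~$N$ as a canonical union $N=\bigcup_P N_P$, one piece~$N_P$ for each unordered pair $P=\{\tau,\tau'\}$ of distinguishable tangles of~$G$, where each~$N_P$ is a nested set of separations efficiently distinguishing~$P$. Relevance, efficient distinguishing and the action of~$\Aut(G)$ on the set of such pairs are all defined purely in terms of the tangles of~$G$, so any construction of the~$N_P$ that refers only to these data is automatically $\Aut(G)$-equivariant, hence canonical; and if each~$N_P$ is non-empty then~$N$ efficiently distinguishes all tangles of~$G$, while every separation it contains is relevant by construction. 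The whole problem thus reduces to choosing the~$N_P$ canonically so that their union is nested.

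First I would set up the abstract scaffolding: pass to the separation system~$S$ of all finite-order separations of~$G$, note that every tangle of~$G$ is a \emph{profile} of~$S$ --- this is where local finiteness first enters, through facts of the kind collected in \cref{lem:ToTProperties}, namely that ends of a locally finite graph induce profiles and that blocks and classical tangles do too --- and record the basic \emph{profile uncrossing} lemma. If separations~$r,s$ of~$G$ cross, then by submodularity of the order function the four corner separations built from~$r$ and~$s$ have order at most $\max(|r|,|s|)$, and the profile property of~$\tau$ and~$\tau'$ forces each of these two profiles to orient at least one corner the same way it orients~$r$ or~$s$. Crucially, \emph{which} corner is retained is dictated by the sides on which~$\tau$ and~$\tau'$ lie, so the operation is a canonical function of its input. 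From this one derives the standard consequence that a crossing pair of relevant separations can be replaced by a nested pair of relevant separations efficiently distinguishing the same pairs of tangles, the replacement depending only on the tangles involved.

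Next I would invoke the \emph{splinter} machinery of~\cite{InfiniteSplinters}. The family $\mathcal D=(D_P)_P$, where~$D_P$ is the set of all separations efficiently distinguishing~$P$, has a non-empty coordinate for each~$P$ (a minimum-order distinguisher always exists), and the uncrossing lemma above is exactly the closure property needed to see that~$\mathcal D$ is a \emph{canonical splinter}: it is stable under the corner operations in an $\Aut(G)$-compatible way, and these operations are shift-like, so iterating them transfinitely terminates. The splinter theorem of~\cite{InfiniteSplinters} then yields, canonically, a nested choice of $N_P\subseteq D_P$ with nested union~$N$. Here local finiteness enters a second time, and decisively: relevant separations are \emph{tight} (\cref{lem:ToTProperties}), and in a locally finite graph an increasing sequence of tight separations behaves well on any bounded range of orders (cf.\ \cref{lem:BoundedOrder}), which is what stops the transfinite uncrossing from accumulating at a ``limit separation'' lying in no~$D_P$ and from spoiling the efficient distinguishing of pairs not yet processed.

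This last point is the main obstacle and the heart of~\cite{InfiniteSplinters}: in an infinite graph without accessibility a single relevant separation may cross infinitely many others, so the finite recipe of~\cite{DiestelBook16} --- among efficient distinguishers of~$P$, keep those crossing as few others as possible --- is unavailable, and one must instead prove that the canonical corner-uncrossing process converges, remains canonical, and does not damage efficiency at other pairs. Once the splinter argument is in place the conclusion is immediate: $N=\bigcup_P N_P$ is a nested set of relevant separations; it efficiently distinguishes every pair of distinguishable tangles since each~$N_P$ is non-empty; and it is $\Aut(G)$-invariant because every choice made along the way depended only on invariants of~$G$. Hence~$N$ is a canonical tree of tangles consisting of relevant separations, as claimed.
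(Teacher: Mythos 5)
The paper does not prove this statement at all: it is imported verbatim as \cite{InfiniteSplinters}*{Theorem 6.6}, with only a footnote observing that the relevance of the separations is immediate from the proof given there. So there is no in-paper argument to compare yours against; what you have written is a sketch of the external proof. As such it is a faithful outline of the strategy in \cite{InfiniteSplinters}: form the family $(D_P)_P$ of efficient distinguishers of pairs of tangles, verify via submodular corner uncrossing that it is a canonical (thin) splinter, and apply the canonical splinter theorem to extract a nested, $\Aut(G)$-invariant choice $N_P\subseteq D_P$ whose union is the desired tree of tangles; relevance is automatic since $N\subseteq\bigcup_P D_P$. You also correctly identify why this route is needed instead of the ``cross as few others as possible'' recipe that the present paper uses for $N(G)$ in \cref{def:N(G)}: that recipe requires tangle-accessibility (via \cite{carmesin2022entanglements}), which is not assumed here. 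One imprecision worth fixing: the finiteness input that makes the splinter ``thin'' in a locally finite graph is not really \cref{lem:BoundedOrder} (exhaustiveness of increasing sequences) but \cref{lem:TightSepFinite} -- every vertex lies in only finitely many separators of tight separations of order below a fixed bound -- which is what guarantees that each relevant separation crosses only finitely many relevant separations of each order and hence that the transfinite uncrossing terminates. Since the genuinely hard part (convergence and preservation of efficiency in the infinite, non-accessible setting) is acknowledged but not carried out, your text is a correct road map to the cited proof rather than a self-contained replacement for it, which is the same status the lemma has in the paper.
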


We can now prove \cref{maincor:TangleAccessibility}, which we restate:%
	\COMMENT{}

\begin{customthm}{\cref{maincor:TangleAccessibility}}
	Accessible locally finite, quasi-transitive graphs are even%
	\COMMENT{}
	tangle-accessible.
\end{customthm}

\begin{proof}
	Let $G$ be a graph as stated. Since it is quasi-transitive, it has only finitely many isomorphism types of components, so we may assume $G$ to be connected.%
	\COMMENT{}
	Let $N := N'(G)$ be the tree of tangles given by \cref{thm:Splinters}.

By \cref{lem:ToTProperties}, $N$~is a set of tight finite-order separations of~$G$. Since ends define tangles, and these are distinguished by exactly the separations that distinguish those ends (\cref{subsec:TanglesBackground}), $N$~distinguishes the ends of~$G$ efficiently.%
	\COMMENT{}

Since $G$ is accessible, the order of the separations of~$G$ needed to distinguish its ends is finitely bounded. As $N$ is canonical and distinguishes the ends efficiently, it thus has a canonical subset~$N_{\rm end}$ of separations of finitely bounded order that achieves this too~-- for example, its subset of all efficient end-distinguishers.
	By \cref{thm:NestedSetsInducesTreeDecTechnical}, then, the separations in~$N$ have finitely bounded order too. Since $N$ distinguishes the set of all tangles of~$G$,%
	\COMMENT{}
	it thus witnesses that $G$ is tangle-accessible.
\end{proof}

Note that the tree of tangles $N'(G)$ given by \cref{thm:Splinters}, which we just used in the proof of \cref{maincor:TangleAccessibility}, exists for arbitrary connected locally finite graphs, not just quasi-transitive ones. In general it may have limit points, in which case it will not define a \td. But  even when $G$ is tangle-accessible and $N'(G)$ does define a \td, it need not coincide with our canonical tree of tangles~$N(G)$ from \cref{def:N(G)}.

Turning now to our canonical tree of tangles~$N(G)$ from \cref{def:N(G)}, recall that in order to ensure its existence we needed that $G$ is tangle-accessible (\cref{thm:Entanglements}). By \cref{maincor:TangleAccessibility}, this now follows already from classical accessibility, which we shall obtain from \cref{thm:G_risAccessible}.

Our next lemma records that, in this case, $N(G)$~induces the desired \td:

\begin{lemma}\label{thm:N(G)isexhaustive}
	Let $G$ be a connected, locally finite, quasi-transitive,  accessible graph.
	Then $N(G)$ is defined and induces a canonical, regular \td\ of~$G$ of finitely bounded adhesion and with finite spread. If~$G$ is 1-ended, the decomposition tree is rayless.
\end{lemma}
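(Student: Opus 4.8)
The plan is to assemble \cref{thm:N(G)isexhaustive} from the machinery already in place, in essentially three moves. First I would invoke \cref{thm:Entanglements} to see that $N(G) = N(G)$ is \emph{defined}: for this we need only that $G$ is tangle-accessible, which follows from \cref{maincor:TangleAccessibility} since $G$ is accessible (and locally finite, quasi-transitive). By the same lemma, $N(G)$ is a canonical tree of tangles all of whose separations are relevant, and in particular all of finite order. Second, I would check the hypotheses of \cref{thm:NestedSetsInducesTreeDecTechnical} for $N := N(G)$ and the group $\Gamma := \Aut(G)$: by \cref{lem:ToTProperties} the separations in $N(G)$ are tight and of finite order; $N(G)$ is $\Aut(G)$-canonical by construction (\cref{def:N(G)}, \cref{thm:Entanglements}); and since ends induce tangles that are distinguished precisely by the separations distinguishing those ends (\cref{subsec:TanglesBackground}), the subset $N_{\rm end} \subseteq N(G)$ of efficient end-distinguishers distinguishes all ends of $G$. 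Accessibility of $G$ gives a bound $K$ on the order of separations needed to distinguish ends, so $N_{\rm end}$ can be taken to consist of separations of order $<K$ (or, cleanly, all efficient end-distinguishers, whose order is bounded by accessibility). This $N_{\rm end}$ is canonical, hence $\Aut(G)$-canonical. So \cref{thm:NestedSetsInducesTreeDecTechnical} applies.

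\textbf{Drawing the conclusions.} Third, I would simply read off the conclusions of \cref{thm:NestedSetsInducesTreeDecTechnical}: $N(G)$ is exhaustive and point-finite, its separations have finitely bounded order, and the pair $(T,\cV)$ that $N(G)$ induces is a regular, point-finite, $\Aut(G)$-canonical \td\ of $G$ of finitely bounded adhesion. That is exactly the assertion ``$N(G)$ induces a canonical and point-finite regular \td\ of $G$ of finitely bounded adhesion'' once we recall that $\Aut(G)$-canonical is what ``canonical'' abbreviates for \td s (\cref{subsec:treedecomptographdecomp}). Finally, for the $1$-ended case, the last sentence of \cref{thm:NestedSetsInducesTreeDecTechnical} directly yields that $T$ is rayless.

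\textbf{Main obstacle.} Since the heavy lifting — the existence and canonicity of $N(G)$, the passage from accessibility to tangle-accessibility, the exhaustiveness and the induced-\td\ structure — has all been done in \cref{thm:Entanglements}, \cref{maincor:TangleAccessibility} and \cref{thm:NestedSetsInducesTreeDecTechnical}, the proof here is mostly a matter of verifying that the hypotheses line up. The only point that needs a word of care is the choice of $N_{\rm end}$: one must argue that the set of efficient end-distinguishers within $N(G)$ (a) is $\Aut(G)$-canonical, which is clear since it is defined purely in terms of invariants of $G$, and (b) has finitely bounded order, which is exactly the content of $G$ being accessible together with the fact that a separation in $N(G)$ distinguishing two ends does so efficiently (being relevant, by \cref{lem:ToTProperties}). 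With that observation in hand, everything else is bookkeeping.
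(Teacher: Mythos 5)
Your proof is correct and follows essentially the same route as the paper: invoke \cref{maincor:TangleAccessibility} to pass from accessibility to tangle-accessibility, apply \cref{thm:Entanglements} to obtain $N(G)$ together with relevance (hence tightness) and finitely bounded order of its separations, and then feed $N(G)$ into \cref{thm:NestedSetsInducesTreeDecTechnical}. The only cosmetic difference is that you take $N_{\rm end}$ to be the subset of efficient end-distinguishers and re-derive its bounded order from accessibility, whereas the paper simply uses the fact (already supplied by \cref{thm:Entanglements}) that all of $N(G)$ has finitely bounded order and lets $N(G)$ itself play the role of $N_{\rm end}$; both variants satisfy the hypotheses of \cref{thm:NestedSetsInducesTreeDecTechnical}.
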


\begin{proof}
	By \cref{maincor:TangleAccessibility} and \cref{thm:Entanglements}, $N(G)$~exists, is canonical, and consists of relevant (and therefore tight)%
	\COMMENT{}
	separations of finitely bounded order. Since $N(G)$ distinguishes all the ends of~$G$, it induces the desired \td\ by \cref{thm:NestedSetsInducesTreeDecTechnical}.
	\end{proof}

Summing everything up, we can now prove \cref{main:KeyTheoremGen}:

\begin{proof}[Proof of \cref{main:KeyTheoremGen}]
	Let $G$ and $r$ be as stated in the theorem. The $r$-local cover~$\loc$ of~$G$ is connected, locally finite, and quasi-transitive by \cref{lem:G_rIsQuasitransitive}, and accessible by \cref{thm:G_risAccessible}.
	By \cref{thm:N(G)isexhaustive}, $N(\loc)$~is defined and induces a canonical regular \td~$\cT$ of~$\loc$ of finite spread. As $p_r\colon\loc\to G$ is also canonical (see \cref{subsec:DefLocalCover}), $\cT$~defines a canonical \gd\ of~$G$ of finite spread (\cref{main:TreeDecToGraphDec}). By \cref{def:r-globalstructure}, this is the unique \gd\ of~$G$ that displays its $r$-global structure.
\end{proof}

\begin{proof}[Proof of~\cref{main:KeyTheorem}] \cref{main:KeyTheorem} is the special case of \cref{main:KeyTheoremGen} where $G$ is finite.
\end{proof}

\begin{proof}[Proof of~\cref{main:CayleyThm}]
	Since $\cayley{\Gamma}{S}$ is locally finite and transitive, \cref{main:KeyTheoremGen} implies \cref{main:CayleyThm}.
\end{proof}

\ifArXiv


\bigbreak

\appendix

\section{\boldmath Tree-Decompositions in Quasi-Transitive Graphs} \label{app:ProofsTreeDecHamann}

In this appendix we give a detailed proof of \cref{lem:TransformedTreeDec}.
As our construction is related to that in~\cite{hamann2018stallings}*{Proposition 4.1}, most of the arguments in the proof are essentially adaptations of the respective proofs in~\cite{hamann2018stallings}*{Section 4}.
For better readability of this appendix, we split up \cref{lem:TransformedTreeDec} into four separate lemmas, \cref{lem:ConstructionYieldsTreeDec,lem:StabiliserActsQuasiTrans,lem:NewTreeDecDistEnds,lem:PartsOneEnded}, which we prove in turn.

Let us briefly recall the setup we are working in: $G$~is a connected, locally finite graph, and $\Gamma$ is a group acting quasi-transitively on~$G$.
Given the $\Gamma$-canonical \td\ $\cT' = (T, \cV')$ of $G$ of finitely bounded adhesion from \cref{lem:NendInducesTreeDec}, we found in \cref{sec:NestedSetisTDArbitraryGraphs} an integer $d > 0$ such that for every edge $tt' \in T$ and every component $C$ of $G - V'_{tt'}$, where $V'_{tt'} := V'_t \cap V'_{t'}$, the graph $C \cap B_{G}(V'_{tt'}, d)$ contains all the shortest paths in~$C$ between neighbours of~$V'_{tt'}$. Clearly,%
	\COMMENT{}
	all these graphs $C \cap B_{G}(V'_{tt'}, d)$ are connected.
We then defined $\cT = (T, \cV)$ by letting $V_t := V(B_G(V'_t,d))$ for each node~$t \in T$.

Let us begin by showing \cref{lem:TransformedTreeDec}~(i), that $(T, \cV)$ is a \td\ into connected parts:%
	\COMMENT{}

\begin{lemma} \label{lem:ConstructionYieldsTreeDec}
	$\cT = (T, \cV)$ is a $\Gamma$-canonical \td\ of finitely bounded adhesion whose parts are connected. 
\end{lemma}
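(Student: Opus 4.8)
The plan is to verify the two tree-decomposition axioms \cref{ax:TreeDecomp1} and \cref{ax:TreeDecomp2} for $\cT = (T,\cV)$, then check connectedness of the parts, $\Gamma$-canonicity, and finitely bounded adhesion. For \cref{ax:TreeDecomp1}, note that $V_t = V(B_G(V'_t,d)) \supseteq V'_t$ for every node $t$, so $\bigcup_{t\in T} G[V_t] \supseteq \bigcup_{t\in T} G[V'_t] = G$ by \cref{ax:TreeDecomp1} for $\cT'$; since each $G[V_t]$ is a subgraph of $G$, equality holds. For \cref{ax:TreeDecomp2}, I would fix a vertex $v \in V(G)$ and show $T_v := T[\{t : v \in V_t\}]$ is connected. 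The point is that $v \in V_t$ iff $d_G(v, V'_t) \le d$, i.e.\ iff there is a vertex $u \in V'_t$ with $d_G(v,u) \le d$. Pick any shortest path $P = v\dots u$ in $G$ from $v$ to the set $\bigcup_{t\in T} V'_t = V(G)$ realizing such a distance; actually, more carefully: consider the set $U := \{u \in V(G) : d_G(v,u) \le d\}$, which is finite since $G$ is locally finite, and observe $T_v = T[\{t : V'_t \cap U \neq \emptyset\}] = \bigcup_{u \in U} T'_u$ where $T'_u = T[\{t : u \in V'_t\}]$. Each $T'_u$ is connected by \cref{ax:TreeDecomp2} for $\cT'$, and $v \in V'_t$ for at least one $t$ (as $d_G(v,v)=0\le d$) shows $T'_v$ meets $T'_u$ whenever... hmm, this needs the subtrees to pairwise intersect or chain together.

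The cleaner argument: for each $u \in U$ with $d_G(v,u) = k \le d$, take a shortest $v$–$u$ path $v = w_0 w_1 \dots w_k = u$ in $G$; each $w_i$ satisfies $d_G(v,w_i) \le d$, so $w_i \in U$, and consecutive $w_i, w_{i+1}$ are adjacent in $G$. Since $\cT'$ is a \td\ of the connected graph $G$, adjacent vertices lie in a common part: $V'_t$ contains both $w_i$ and $w_{i+1}$ for some $t$, hence $T'_{w_i}$ and $T'_{w_{i+1}}$ share the node $t$, so their union is connected. Inductively, $T'_v \cup T'_{w_1} \cup \dots \cup T'_u$ is connected, and in particular $T'_u$ lies in the same component of $T_v$ as $T'_v$. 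As this holds for every $u \in U$ and $T_v = \bigcup_{u\in U} T'_u$, the graph $T_v$ is connected. This establishes \cref{ax:TreeDecomp2}.

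For connectedness of the parts: each $G[V_t] = G[V(B_G(V'_t,d))]$ contains $B_G(V'_t,d)$, and it suffices to show $B_G(V'_t,d)$ is connected and spans all of $V_t$ — actually $V_t$ is by definition the vertex set of $B_G(V'_t,d)$, and $G[V_t] \supseteq B_G(V'_t,d)$. Since $G[V'_t]$ is connected (it is a part of $\cT'$ of a connected graph, hence connected — recall every part of a regular \td\ of a connected graph is connected, as the adhesion sets are nonempty) and every vertex at distance $\le d$ from $V'_t$ is joined to $V'_t$ by a path staying within distance $\le d$, we get that $B_G(V'_t,d)$, and a fortiori $G[V_t]$, is connected.

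For $\Gamma$-canonicity: the \td\ $\cT'$ is $\Gamma$-canonical by \cref{lem:NendInducesTreeDec}, meaning $\Gamma$ acts on $T$ with $g(V'_t) = V'_{g\act t}$. Since $g$ is an automorphism of $G$, it preserves distances, so $g(B_G(V'_t,d)) = B_G(g(V'_t),d) = B_G(V'_{g\act t},d)$, whence $g(V_t) = V_{g\act t}$; thus $\cT$ is $\Gamma$-canonical with the same action on $T$. Finally, finitely bounded adhesion: the adhesion set of $\cT$ at an edge $tt'$ is $V_t \cap V_{t'} = V(B_G(V'_t,d)) \cap V(B_G(V'_{t'},d))$, which is contained in $B_G(V'_{tt'} , d')$ for a suitable $d'$ depending on $d$ and the (bounded) adhesion of $\cT'$; more directly, every vertex in $V_t \cap V_{t'}$ has distance $\le d$ from both $V'_t$ and $V'_{t'}$, hence lies on a path of length $\le 2d$ between them, which must pass through the separator $V'_{tt'}$, so $V_t \cap V_{t'} \subseteq B_G(V'_{tt'}, 2d)$; since $V'_{tt'}$ is finite of bounded size and $G$ is locally finite, and $\Gamma$ acts with finitely many edge-orbits on $T$ (\cref{lem:DecTreeFiniteOrbits}), these balls have finitely bounded size. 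This completes the proof. The main obstacle I anticipate is getting \cref{ax:TreeDecomp2} exactly right — specifically being careful that the "path-lifting" argument through consecutive vertices genuinely connects all the subtrees $T'_u$ inside $T_v$ rather than merely showing each is individually connected.

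\begin{proof}
	We verify the two axioms of a \td\ for $\cT = (T, \cV)$.
	Since $V'_t \subseteq V_t$ for every node $t \in T$, we have $G = \bigcup_{t \in T} G[V'_t] \subseteq \bigcup_{t \in T} G[V_t] \subseteq G$ by~\cref{ax:TreeDecomp1} for~$\cT'$; this gives~\cref{ax:TreeDecomp1} for~$\cT$.

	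For~\cref{ax:TreeDecomp2}, fix a vertex $v \in V(G)$, and let $U := \{ u \in V(G) : d_G(v, u) \le d \}$, which is finite since $G$ is locally finite. By the definition of~$V_t$ as the vertex set of~$B_G(V'_t, d)$, we have $v \in V_t$ if and only if $V'_t \cap U \ne \emptyset$, so $T_v = \bigcup_{u \in U} T'_u$ where $T'_u := T[\{ t \in T : u \in V'_t \}]$. Each $T'_u$ is connected by~\cref{ax:TreeDecomp2} for~$\cT'$, and $T'_v \ne \emptyset$ since $v$ lies in some part of~$\cT'$. We claim that every $T'_u$ with $u \in U$ lies in the same component of~$T_v$ as~$T'_v$. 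Indeed, let $v = w_0 w_1 \dots w_k = u$ be a shortest $v$--$u$ path in~$G$, so $k \le d$; then $d_G(v, w_i) \le d$ for each~$i$, hence $w_i \in U$ and $T'_{w_i} \subseteq T_v$. As $w_i$ and $w_{i+1}$ are adjacent in the connected graph~$G$, they lie in a common part of~$\cT'$, so $T'_{w_i} \cap T'_{w_{i+1}} \ne \emptyset$. Thus $T'_v \cup T'_{w_1} \cup \dots \cup T'_{w_k}$ is connected in~$T_v$, and in particular $T'_u = T'_{w_k}$ lies in the component of~$T_v$ containing~$T'_v$. Since this holds for all $u \in U$ and $T_v = \bigcup_{u \in U} T'_u$, the graph~$T_v$ is connected. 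This proves~\cref{ax:TreeDecomp2}, so $\cT$ is a \td\ of~$G$.

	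The parts are connected: $G[V_t] \supseteq B_G(V'_t, d)$, and since $G[V'_t]$ is connected (as a nonempty-adhesion part of a \td\ of the connected graph~$G$) and each vertex of~$V_t$ is joined to~$V'_t$ by a path within distance~$d$ of~$V'_t$, the graph $B_G(V'_t, d)$, hence also $G[V_t]$, is connected.

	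For $\Gamma$-canonicity, recall from \cref{lem:NendInducesTreeDec} that $\Gamma$ acts on~$T$ with $g(V'_t) = V'_{g \act t}$ for all $g \in \Gamma$. As each $g \in \Gamma$ is an automorphism of~$G$, it preserves distances, so $g(V_t) = g(V(B_G(V'_t, d))) = V(B_G(g(V'_t), d)) = V(B_G(V'_{g \act t}, d)) = V_{g \act t}$. Hence $\cT$ is $\Gamma$-canonical, with the same action of~$\Gamma$ on~$T$ as for~$\cT'$.

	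Finally, $\cT$ has finitely bounded adhesion: for an edge $tt' \in T$, every vertex $x \in V_t \cap V_{t'}$ satisfies $d_G(x, V'_t) \le d$ and $d_G(x, V'_{t'}) \le d$, hence lies on a walk of length at most~$2d$ between~$V'_t$ and~$V'_{t'}$; since $\cT'$ is a \td, such a walk meets the separator $V'_{tt'} := V'_t \cap V'_{t'}$, so $V_t \cap V_{t'} \subseteq V(B_G(V'_{tt'}, 2d))$. As $\cT'$ has finitely bounded adhesion and $G$ is locally finite, these balls have finitely bounded size.
\end{proof}
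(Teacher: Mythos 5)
Your verification of \cref{ax:TreeDecomp1}, \cref{ax:TreeDecomp2}, $\Gamma$-canonicity, and finitely bounded adhesion is sound; in particular your \cref{ax:TreeDecomp2} argument via the subtrees $T'_u$ for $u$ in the $d$-ball around $v$ is a clean alternative to the paper's case analysis on whether $v \in V'_{t_1}$ or only $v \in V_{t_1}\setminus V'_{t_1}$.

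The argument for connectedness of the parts, however, has a genuine gap. You assert that ``every part of a regular \td\ of a connected graph is connected, as the adhesion sets are nonempty.'' This is false: nonempty adhesion sets do not force induced parts to be connected. For instance, take $G = C_6 = v_1v_2\dots v_6v_1$ with decomposition tree $T = t_1t_2t_3$ and parts $V'_{t_1} = \{v_1,v_2,v_3\}$, $V'_{t_2} = \{v_1,v_3,v_4,v_6\}$, $V'_{t_3} = \{v_4,v_5,v_6\}$: this is a regular \td\ (both induced separations, with separators $\{v_1,v_3\}$ and $\{v_4,v_6\}$, are proper and in fact tight), yet $G[V'_{t_2}]$ has exactly the two edges $v_3v_4$ and $v_6v_1$ and is disconnected. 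Nothing in the setup of \cref{lem:NendInducesTreeDec} rules this out, so $G[V'_t]$ may well be disconnected, and joining each vertex of $V_t$ to $V'_t$ by a short path does not then make $G[V_t]$ connected.

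The actual reason the parts $G[V_t]$ are connected is more subtle, and it is precisely what the choice of $d$ and the tightness of the separations in $N$ are for. The paper first shows that the \emph{adhesion sets} $V_{tt'}$ of the enlarged decomposition $\cT$ are connected: the choice of $d$ ensures that for every component $C$ of $G - V'_{tt'}$ the graph $C \cap B_G(V'_{tt'},d)$ contains all shortest paths in $C$ between neighbours of $V'_{tt'}$, and hence is connected; tightness of the separation with separator $V'_{tt'}$ then lets one route between any two vertices of $V'_{tt'}$ through such a component, so $G[V_{tt'}]$ is connected. With this in hand, one takes any $u,v\in V_t$ and a $u$--$v$ path $P$ in the connected graph $G$ with a minimum number of vertices outside $V_t$: any maximal subpath of $P$ with interior outside $V_t$ has both endvertices in a single adhesion set $V_{tt'}$ (since its interior lies in a single component of $G - V_t$), and replacing it by a path inside the connected $G[V_{tt'}]$ would reduce the count, so $P\subseteq G[V_t]$. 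You will need some version of this argument; the step where you appeal to connectedness of $G[V'_t]$ cannot be salvaged.
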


\begin{proof}
	We first check that $\cT$ is indeed a \td.
	The $\Gamma$-canonicity of $\cT$ then follows immediately from the construction and the $\Gamma$-canonicity of~$\cT'$.
	\cref{ax:TreeDecomp1} is clear. 
	For \cref{ax:TreeDecomp2}, let $t_1,t_2, t_3$ be nodes of~$T$ with $t_3 \in t_1Tt_2$, and $v$ any vertex in~$V_{t_1} \cap V_{t_2}$.
	We consider two cases.

	Assume first  that~$v \in V'_{t_1} \cap V_{t_2}$.
	Let $w$ be a vertex of $V'_{t_2}$ such that $d_G(v,w) \leq d$, and fix a shortest $v$--$w$ path $P$ in~$G$.
	Then $P$ meets~$V'_{t_3}$, by \cref{ax:TreeDecomp2} for~$\cT' = (T, \cV')$.
	Hence,~$v \in V(B_G(V'_{t_3},d)) = V_{t_3}$. 
	Note that the case of $v \in V_{t_1} \cap V'_{t_2}$ is analogous. This completes the first case.

	For the second case, assume that~$v \in (V_{t_1} \setminus V'_{t_1}) \cap (V_{t_2} \setminus V'_{t_2})$.
	By \cref{ax:TreeDecomp1} for $\cT' = (T, \cV')$, there exists a node $t_4$ of~$T$ such that~$v \in V'_{t_4}$.
	Since the $t_1$--$t_2$ path in~$T$ is unique, the node~$t_3$ lies on the $t_1$--$t_4$ path in~$T$ or on the $t_2$--$t_4$ path in~$T$.
	So we are done by applying the first case to the pair $t_1,t_4$ or the pair~$t_2,t_4$.
	This completes the proof of \cref{ax:TreeDecomp2}, so $\cT$ is a \td.
	
	For any edge $tt'$ of~$T$, the corresponding adhesion set in~$\cT$ has the form~$V(B_G(V'_{tt'}, d))$.%
	\COMMENT{}
	Thus, the adhesion sets of $\cT$ are of finitely bounded size, since $G$ is locally finite and $\cT'$ has finitely bounded adhesion.
	Moreover, our choice of $d$ guarantees that all the adhesion sets of $\cT$ are connected.
	It remains to deduce from this that~$G[V_t]$ is connected for every node~$t$ of~$T$.

	Recall that~$G$ is connected.
	So consider any two vertices~$u, v \in V_t$ and a~$u$--$v$ path~$P$ in~$G$ with as few vertices outside~$V_t$ as possible.
	Now for any maximal subpath~$P'$ of~$P$ whose internal vertices are not in~$V_t$, both endvertices are in~$G[V_t]$ and more precisely in the same adhesion set~$V_{tt'}$ for some~$t' \in N_T(t)$.
	But since~$V_{tt'}$ is connected, we may replace~$P'$ by a path in~$G[V_{tt'}]$, hence obtaining a~$u$--$v$ walk in~$G$ with strictly fewer vertices outside~$V_t$ than~$P$, and thus such a~$u$--$w$ path in~$G$.
	So the minimal choice of~$P$ yields that~$P$ must be completely contained in~$G[V_t]$, which completes the proof.
\end{proof}

Next we prove \cref{lem:TransformedTreeDec}~(ii): that a subgroup of $\Gamma$ acts quasi-transitively on each part~$G[V_t]$ of~$\cT$.%
	\COMMENT{}

\begin{lemma} \label{lem:StabiliserActsQuasiTrans}
	For every node $t \in T$, the stabiliser $\Gamma_t$ of~$t$ under the action of~$\,\Gamma$ on~$T$ acts quasi-transitively on~$G[V_t]$.
\end{lemma}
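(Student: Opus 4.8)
The plan is to reduce the statement to a single finiteness fact about the diagonal action of $\Gamma$ on the set
\[
P := \{\,(t',v') : t'\in V(T),\ v'\in V_{t'}\,\},
\]
namely that this action has only finitely many orbits. Granting this, the lemma follows at once: for a fixed node~$t$ the assignment $v\mapsto(t,v)$ embeds $V_t$ into~$P$, and two vertices $v,v'\in V_t$ have $(t,v),(t,v')$ in a common $\Gamma$-orbit of~$P$ precisely when some $g\in\Gamma$ satisfies $g\act t=t$ and $g\act v=v'$, i.e.\ precisely when $v'\in\Gamma_t\act v$ (here $g\act V_t=V_{g\act t}=V_t$ automatically, as $\cT$ is $\Gamma$-canonical). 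Hence the $\Gamma_t$-orbits on $V_t$ inject into the $\Gamma$-orbits on~$P$, so finiteness of the latter yields finiteness of the former.

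To bound the number of $\Gamma$-orbits on~$P$, I would first fix a finite transversal~$W$ of the (finitely many, by quasi-transitivity) vertex-orbits of~$\Gamma$ on~$G$. Moving the second coordinate of any member of a $\Gamma$-orbit of~$P$ into~$W$ shows that every such orbit contains a pair $(t',w)$ with $w\in W$. It therefore suffices to show that for each fixed $w\in W$ there are only finitely many nodes~$t'$ with $w\in V_{t'}$. Since $V_{t'}=V(B_G(V'_{t'},d))$, we have $w\in V_{t'}$ exactly when $d_G(w,u)\le d$ for some $u\in V'_{t'}$, so $\{\,t':w\in V_{t'}\,\}=\bigcup_u T_u$, where the union ranges over the finitely many vertices~$u$ of the ball $B_G(w,d)$ (a finite graph, as $G$ is locally finite) and $T_u:=\{\,s\in V(T):u\in V'_s\,\}$.

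The crux is then to show that each $T_u$ is a \emph{finite} subtree of~$T$. It is connected by~\cref{ax:TreeDecomp2} applied to~$\cT'$, and its edges are exactly the edges $ss'$ of~$T$ whose $\cT'$-adhesion set $V'_s\cap V'_{s'}$ contains~$u$. As these adhesion sets are the separators of the (tight, finite-order) separations in~$N_{\rm end}$, \cref{lem:TightSepFinite} guarantees that only finitely many of them contain~$u$; hence $T_u$ has only finitely many edges, and a connected subgraph of a tree with finitely many edges is finite. Thus $\{\,t':w\in V_{t'}\,\}$ is finite for every $w\in W$, so $\Gamma$ has finitely many orbits on~$P$, and the lemma follows.

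I expect the step showing $T_u$ finite to be where the essential input enters — it is exactly there that the finitely bounded adhesion of~$\cT'$ together with the tightness of its separators (via~\cref{lem:TightSepFinite}) is used; the remaining steps are routine bookkeeping with orbit transversals and with~\cref{ax:TreeDecomp2}.
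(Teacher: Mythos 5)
Your proof is correct and takes a genuinely different route from the paper's. The paper partitions $V_t$ into the `inner' vertices $W_t$ (those lying in no adhesion set at~$t$) and the boundary $V_t\setminus W_t$, and treats each part separately: any $\phi\in\Gamma$ mapping one inner vertex of~$V_t$ to another must fix~$t$ by~\cref{ax:TreeDecomp2}, so $\Gamma_t$ inherits quasi-transitivity on~$W_t$ directly from that of~$\Gamma$ on~$G$; for the boundary vertices, the paper invokes \cref{lem:DecTreeFiniteOrbits} to get finitely many $\Gamma_t$-orbits of edges at~$t$, hence of adhesion sets. You instead reduce the whole statement to the point-finiteness of~$\cT$ (each vertex of~$G$ lies in only finitely many parts~$V_{t'}$), which the paper does not otherwise record for this \td, and then count $\Gamma$-orbits on the incidence set~$P$. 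Both proofs ultimately rest on \cite{ThomassenWoess}*{Proposition 4.2}, yours routed through \cref{lem:TightSepFinite} and the paper's through \cref{lem:DecTreeFiniteOrbits}; your count is more uniform and avoids the inner/boundary case split, at the cost of establishing point-finiteness along the way. One small bridge you should make explicit at the end: \cref{lem:TightSepFinite} bounds the number of distinct tight \emph{separators} containing~$u$, whereas what you need is finitely many \emph{edges} $ss'$ of~$T$ with $u\in V'_s\cap V'_{s'}$. The gap closes because $\alpha_{\cT'}$ maps $E(T)$ bijectively onto~$N_{\rm end}$ and a finite separator belongs to only finitely many separations of a connected locally finite graph -- a step the paper itself passes over in the proof of \cref{lem:BoundedOrder} -- but since it carries the weight of your finiteness claim, it is worth writing out.
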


\begin{proof}
	Since $\cT$ is $\Gamma$-canonical, we have $\phi(V_t) = V_t$ for every $\phi \in \Gamma_t$; thus, the action of $\Gamma$ on $G$ induces an action of $\Gamma_t$ on $G[V_t]$. Moreover, $\phi(V_{st}) = V_{\phi(s)\phi(t)}$ for every $\phi \in \Gamma$. Hence, $\Gamma_t$ acts on the set $W_t := \{v \in V_t \mid \forall s \in N_T(t) \colon v \notin V_{st} \}$ of `inner vertices' of~$V_t$ and on the set $V_t \setminus W_t$. It is enough to show that $\Gamma_t$ does so quasi-transitively.
	
	To show that $\Gamma_t$ acts quasi-transitively on $W_t$, let $u,v\in W_t$ be such that $\phi(u) = v$ for some~$\phi \in \Gamma$.
	Since $\cT$ is a $\Gamma$-canonical \td, $\phi(W_t) = W_{\phi(t)}$.
	By~\cref{ax:TreeDecomp2}, $t$~is the unique node of~$T$ containing $v$ as inner vertex, and hence $t = \phi(t)$. Thus,~$\phi \in \Gamma_t$. So, $\Gamma_t$ has finitely many orbits on $W_t$, since $\Gamma$ acts quasi-transitively on $G$.
	
	Second, let us consider the action of $\Gamma_t$ on $V_t \setminus W_t$: the union of all adhesion sets associated with edges of~$T$ at~$t$.
	Since each adhesion set $V_{st}$ is finite, the action of $\Gamma_t$ has only finitely many orbits on this adhesion set: at most~$|V_{st}|$.
	Thus, it is enough to show that~$\Gamma_t$ acts quasi-transitively on the set~$\{st\mid s \in N_T(t)\}$ of edges of~$T$.
	By \cref{lem:DecTreeFiniteOrbits}, there are only finitely many $\Gamma$-orbits of~$E(T)$.
	We now claim that each $\Gamma$-orbit $O$ of~$E(T)$ has at most two $\Gamma_t$-orbits.
	For this, let $s_0, s_1, s_2$ be neighbours of~$t$ in~$T$ such that $ts_0, ts_1, ts_2 \in O$. Then there exist $\phi_1, \phi_2 \in \Gamma$ such that $\phi_1(ts_0) = ts_1$ and~$\phi_2(ts_0) = ts_2$.
	So if $\phi_1$ and $\phi_2$ are not in $\Gamma_t$, then~$\phi_2\phi_1^{-1} \in \Gamma_t$.
	This proves the claim and thus completes the proof.
\end{proof}

Towards a proof of \cref{lem:TransformedTreeDec}~(iii), we first show that $\cT$ distinguishes all ends of~$G$, just as $\cT'$ does by definition:

\begin{lemma} \label{lem:NewTreeDecDistEnds}
	$\cT$ distinguishes all ends of~$G$.
\end{lemma}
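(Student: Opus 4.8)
The plan is to exploit that $\cT$ and $\cT'$ live on the same decomposition tree $T$, and that each part of $\cT$ contains the corresponding part of $\cT'$, so that $\cT$ is merely a ``coarsening'' of $\cT'$. Concretely, for an oriented edge $(s,t)$ of $T$, let $T_s$ and $T_t$ be the two components of $T-st$ and write $\alpha_{\cT'}(s,t)=(A'_s,A'_t)$ and $\alpha_\cT(s,t)=(A_s,A_t)$, so that $A'_s=\bigcup_{u\in T_s}V'_u$ and $A_s=\bigcup_{u\in T_s}V_u$, and similarly for $t$. Since $V'_u\subseteq V_u=V(B_G(V'_u,d))$ for every node $u$ of $T$, we get the containments $A'_s\subseteq A_s$ and $A'_t\subseteq A_t$. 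Moreover, by \cref{lem:ConstructionYieldsTreeDec} the \td\ $\cT$ has finitely bounded adhesion, so every separation $\alpha_\cT(st)=\{A_s,A_t\}$ induced by an edge of $T$ has finite order; this is what makes the notion of ``distinguishing ends'' applicable to it.

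The argument itself is then a one-line monotonicity step. Let $\omega_1\ne\omega_2$ be two ends of $G$; being distinct ends of a locally finite graph, they are distinguished by a finite vertex set, hence distinguishable. Since $N_{\rm end}$ distinguishes all ends of $G$ (the standing hypothesis of \cref{thm:NestedSetIsTDGen}) and $\cT'$ is the \td\ it induces (so $N_{\rm end}=\{\alpha_{\cT'}(e):e\in E(T)\}$ by \cref{lem:ExhaustiveImpliesTreeDecomp} and \cref{subsubsec:ToolsTreeDecNested}), there is an edge $st$ of $T$, which we orient from $s$ to $t$, such that some ray $R_1\in\omega_1$ has a subray in $G[A'_s]$ and some ray $R_2\in\omega_2$ has a subray in $G[A'_t]$. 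By the containments $A'_s\subseteq A_s$ and $A'_t\subseteq A_t$, the ray $R_1$ has a subray in $G[A_s]$ and $R_2$ has a subray in $G[A_t]$. Hence the finite-order separation $\alpha_\cT(st)=\{A_s,A_t\}$, which is induced by an edge of $T$, distinguishes $\omega_1$ from $\omega_2$. As $\omega_1,\omega_2$ were arbitrary, $\cT$ distinguishes all ends of $G$.

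I do not expect a genuine obstacle here: the content is entirely the coarsening inequality $A'_u\subseteq A_u$ together with finite adhesion of $\cT$, both of which are already in hand. The only two points I would state with (minimal) care are that $\alpha_\cT(st)$ has finite order — supplied by \cref{lem:ConstructionYieldsTreeDec} — and the standard fact that for a finite-order separation $\{A,B\}$, if one ray of an end has a subray in $G[A]$ then every ray of that end does, so that the rays $R_1,R_2$ above genuinely witness that $\alpha_\cT(st)$ distinguishes $\omega_1$ and $\omega_2$ in the official sense.
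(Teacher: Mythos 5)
Your proof is correct and takes essentially the same approach as the paper: both arguments rest on the observation that each side of $\alpha_\cT(e)$ contains the corresponding side of $\alpha_{\cT'}(e)$, together with the finite adhesion of $\cT$, to conclude that any end-distinction achieved by $\cT'$ persists in $\cT$. The paper spells out that the new separator $A\cap B$ equals $B_G(A'\cap B',d)$ and contains $A'\cap B'$, whereas you appeal directly to the bounded adhesion from \cref{lem:ConstructionYieldsTreeDec}; both routes establish the finiteness that makes the monotonicity step legitimate.
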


\begin{proof}
	The \td\ $\cT'$ distinguishes all the ends of~$G$, because it is induced by~$N_{\rm end}$ (\cref{lem:NendInducesTreeDec}).
	Let $tt'$ be an edge of~$T$, and let $\alpha_\cT(t,t') = (A,B)$ be the oriented separation of~$G$ associated with $tt'$ in~$\cT$.
	By construction of $\cT$, we have $(A,B) = (B_G(A',d),B_G(B',d))$, where $(A',B')$ is the oriented separation $\alpha_{\cT'}(t,t')$ associated with~$tt'$ in~$\cT'$.
	Thus, the separator $A \cap B$ of $(A,B)$ is~$B_G(A'\cap B', d)$.%
	\COMMENT{}
	Hence it is finite and contains $A'\cap B'$.
	So $(A,B)$ distinguishes every two ends of~$G$ that are distinguished by~$(A',B')$.
	Altogether, the \td~$\cT$ distinguishes all the ends of~$G$ because~$\cT'$ does.
	\end{proof}

Our last lemma deduces \cref{lem:TransformedTreeDec}~(iii) from \cref{lem:NewTreeDecDistEnds}: that the parts of $\cT$ have at most one end:

\begin{lemma} \label{lem:PartsOneEnded}
	Let $t$ be a node of~$T$.
	Then $G[V_t]$ is either finite or 1-ended. 
\end{lemma}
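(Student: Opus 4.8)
Here is the plan for \cref{lem:PartsOneEnded}.

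\medbreak

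If $G[V_t]$ is finite there is nothing to prove, so I would assume $G[V_t]$ infinite and show it has exactly one end. The tools are: $\cT$ distinguishes all ends of $G$ (\cref{lem:NewTreeDecDistEnds}); $G[V_t]$ is connected and $\Gamma_t$ acts quasi-transitively on it (\cref{lem:ConstructionYieldsTreeDec,lem:StabiliserActsQuasiTrans}); and $G[V_t]$ behaves like a torso of $G[V'_t]$ via \cref{lem:torso}. A preliminary observation, used repeatedly, is that by the choice of $d$ and the finite adhesion of $\cT'$ each set $V_{st} = V(B_G(V'_{st},d))$ is finite (as $G$ is locally finite and $V'_{st}$ finite); consequently each component of $G-V'_t$ on the $s$-side of an edge $st$ that meets $V_t$ lies entirely inside $V_{st}$, so every component of the ``fringe'' $G[V_t]-V'_t$ is finite and has its neighbourhood in $G[V_t]$ contained in some $V'_{st}\subseteq V'_t$.

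The first step is to show that all rays of $G[V_t]$ lie in a single end $\omega$ of $G$. Since $G[V_t]$ is infinite, connected and locally finite it contains a ray, so at least one such $\omega$ exists. For uniqueness, suppose two rays $R_1,R_2\subseteq V_t$ lay in distinct ends $\omega_1\neq\omega_2$ of $G$. As $\cT$ distinguishes all ends, some edge $e$ of $T$ induces a separation $\{A,B\}$ of $G$ with $\omega_1$ having a ray with a tail in $G[A]$ and $\omega_2$ having a ray with a tail in $G[B]$. But $t$ lies in one component of $T-e$, so $V_t$ lies entirely in one side, say $V_t\subseteq A$. Then $R_2\subseteq V_t\subseteq A$, while $R_2$, being equivalent in $G$ to a ray with a tail in $G[B]$, has a tail in the same component of $G-(A\cap B)$ as the latter ray; since $A\cap B$ is finite, that component lies in $B\setminus A$ whereas a far tail of $R_2$ lies in $A\setminus B$, a contradiction. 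Hence $\omega$ is unique.

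For the second step I would argue by contradiction. Suppose $G[V_t]$ has two distinct ends; choose rays $R_1,R_2$ in them and a finite $S\subseteq V_t$ separating their tails in $G[V_t]$, with infinite components $C_1\ni R_1$ and $C_2\ni R_2$ of $G[V_t]-S$. Using that each fringe component is finite, I would replace $S$ by a finite set $S^*\subseteq V'_t$ that still separates $C_1$ from $C_2$ in $G[V_t]$: concretely, let $S^*:=(S\cap V'_t)\cup\bigcup\{N_{G[V_t]}(K):K\text{ a fringe component with }K\cap S\neq\emptyset\}$; any path in $G[V_t]-S^*$ from $C_1$ to $C_2$ avoiding $S^*$ cannot enter a fringe component met by $S$ (it would first have to pass through that component's neighbourhood, which lies in $S^*$), so it avoids $S$ altogether, contradicting the choice of $S$. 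Now apply \cref{lem:torso} with $X:=S^*\subseteq V'_t$: the components of $G-S^*$ correspond bijectively, with matching neighbourhoods, to those of $G[V_t]-S^*$. The (at least one) infinite component of $C_1-S^*$ and that of $C_2-S^*$ are distinct infinite components of $G[V_t]-S^*$, hence lift to distinct infinite components of $G-S^*$; each of these, being infinite, connected and locally finite, contains a ray, and these two rays lie on opposite sides of the finite separator $S^*$ and so belong to distinct ends of $G$. Since both rays lie in $V_t$, this contradicts the first step. Therefore $G[V_t]$ has exactly one end.

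The main obstacle is the step of pushing the separator $S$ into $V'_t$ and correctly tracking the components of $G[V_t]-S^*$: the clean statement above glosses over the bookkeeping of how $S$, $S^*$ and the finitely many fringe components interact (in particular whether $C_1-S^*$ and $C_2-S^*$ really land in different components, and that at least one piece of each is infinite). This is precisely the part that mirrors Hamann's argument in \cite{hamann2018stallings}*{Section~4}, and where $\Gamma_t$-quasi-transitivity of $G[V_t]$ and the torso property of \cref{lem:torso} do the work; everything else is routine given the earlier lemmas.
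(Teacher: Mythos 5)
Your proposal is correct in spirit, but it takes a genuinely different route from the paper for the decisive second step, and that difference is worth recording. Both arguments share the first step: any two rays contained in $V_t$ must lie in the same end of~$G$, because $\cT$ distinguishes all ends of~$G$ (\cref{lem:NewTreeDecDistEnds}) and $V_t$ lies entirely on one side of each induced separation. From there the paper simply invokes Proposition~4.8\,(iv) of Hamann, Lehner, Miraftab and R\"uhmann~\cite{hamann2018stallings} to finish, whereas you give a self-contained argument: if $G[V_t]$ had two ends, a finite separator $S\subseteq V_t$ of their tails in $G[V_t]$ could be pushed into $V'_t$ (using the fact that the fringe components of $G[V_t]-V'_t$ are finite, with neighbourhood in~$V'_{st}\subseteq V'_t$, which you correctly derive from the uniform choice of~$d$ and the finite adhesion of~$\cT'$); the torso property of \cref{lem:torso} applied to $X:=S^*\subseteq V'_t$ then lifts this to a finite-order separation of~$G$ that splits rays in~$V_t$ between two ends of~$G$, contradicting the first step. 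This buys self-containment within the paper's own toolkit -- you only use \cref{lem:ConstructionYieldsTreeDec}, \cref{lem:NewTreeDecDistEnds} and \cref{lem:torso} -- at the cost of the fringe-component bookkeeping. Notably, your argument does \emph{not} in fact use the quasi-transitivity of~$\Gamma_t$ on~$G[V_t]$, even though you flag it at the end as doing essential work; that remark is a red herring.

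Two small points of precision. First, your phrase ``each component of $G-V'_t$ on the $s$-side of an edge $st$ that meets $V_t$ lies entirely inside $V_{st}$'' should read ``the \emph{intersection with~$V_t$} of each such component lies in~$V_{st}$''; these components are typically infinite, so they do not themselves lie in the finite set~$V_{st}$. Second, in the final step you take rays in the infinite components of $G-S^*$, but those need not lie in~$V_t$; take them instead in the infinite components of $G[V_t]-S^*$ (infinite, connected, locally finite, so they contain rays by K\"onig's lemma), so that they do lie in~$V_t$. The bijection of \cref{lem:torso} then places them in distinct components of $G-S^*$ and hence in distinct ends of~$G$, completing the contradiction with the first step. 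With these adjustments the argument is sound.
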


\begin{proof}
	If $V_t$ is infinite, then $G[V_t]$ is an infinite and locally finite graph, which is connected by \cref{lem:ConstructionYieldsTreeDec}.
	By K\"onig's infinity lemma~\cite{DiestelBook25}*{Proposition 8.2.1}, this graph $G[V_t]$ has at least one end~$\omega$.
	Since $\cT$ has finitely bounded adhesion, the end $\omega_t$ of $G$ with $\omega \subseteq \omega_t$ has the property that every ray $R \in \omega_t$ meets~$V_t$ infinitely often.
	We claim that $\omega_t$ is the unique end of $G$ with this property. 

Suppose there exists another such end $\omega'_t$ of~$G$. Since $\cT$ distinguishes all ends of~$G$, by \cref{lem:NewTreeDecDistEnds}, $T$~has an edge whose associated separation $\{A, B\}$ of~$G$ distinguishes $\omega_t$ from~$\omega'_t$.
	Then the part $V_t$ lies on one side of this separation, say in~$A$.
	Any ray $R$ in $\omega_t$ or $\omega'_t$ meets the finite set $A \cap B$ only finitely often, and hence has a subray of $R$ on one side of $\{A, B\}$. This side must be~$A$, since $R$ meets $V_t$ infinitely often by our assumption about $\omega_t$ and~$\omega'_t$.
	But this contradicts the fact that $\{A,B\}$ distinguishes $\omega_t$ from~$\omega'_t$.
	
	So $\omega_t$ is the unique end of $G$ with the property that every ray $R \in \omega_t$ meets $V_t$ infinitely often.
	The statement now follows directly from~\cite{hamann2018stallings}*{Proposition 4.8 (iv)}.
\end{proof}

\fi

\bibliographystyle{amsplain}
\bibliography{collective.bib}

\end{document}